\def\jobis#1{FF\fi
  \def\preedicate{#1}%
  \edef\preedicate{\expandafter\strip@prefix\meaning\preedicate}%
  \edef\job{\jobname}%
  \ifx\job\preedicate
}
\if\jobis{proposal}%
 \def\try{subsection}%
  \def\try{section}%
\theoremstyle{plain}
\newtheorem{theorem}{Theorem}[\try]
\newtheorem{corollary}[theorem]{Corollary}
\newtheorem{lemma}[theorem]{Lemma}
\newtheorem{example}[theorem]{Example}
\newtheorem{proposition}[theorem]{Proposition}
\newtheorem{definition-lemma}[theorem]{Definition-Lemma}
\newtheorem{definition-proposition}[theorem]{Definition-Proposition}
\newtheorem{definition-theorem}[theorem]{Definition-Theorem}
\newtheorem{definition}[theorem]{Definition}
\newtheorem{conjecture}[theorem]{Conjecture}
\newtheorem{theorema}{Theorem}
\def\lfomitlist#1.#2.#3.#4.{{#1}_0,{#1}_1 #2 \dots #2\hat{{#1}_{#4}} #2\dots #2 {#1}_{#3}}
\def\alist#1.#2.#3.{{#1}_1 #2 {#1}_2 #2\dots #2 {#1}_{#3}}
\def\zlist#1.#2.#3.{#1_0 #2 #1_1 #2\dots #2 #1_{#3}}
\def\ltomitlist#1.#2.#3.{{#1}_0,{#1}_1 #2 \dots #2\hat {{#1}_i} #2\dots #2 {#1}_{#3}}
\def\lomitlist#1.#2.#3.{{#1}_0 #2 {#1}_1 #2 \dots #2 \hat {{#1}_i} #2 \dots #2 {#1}_{#3}}
\def\lmap#1.#2.#3.{#1 \overset{#2}{\longrightarrow} #3}
\def\mes#1.#2.#3.{#1 \longrightarrow #2 \longrightarrow #3}
\def\ses#1.#2.#3.{0\longrightarrow #1 \longrightarrow #2 \longrightarrow #3 \longrightarrow 0}
\def\les#1.#2.#3.{0\longrightarrow #1 \longrightarrow #2 \longrightarrow #3}
\def\res#1.#2.#3.{#1 \longrightarrow #2 \longrightarrow #3\longrightarrow 0}
\def\Hi#1.#2.#3.{\text {Hilb}^{#1}_{#2}(#3)}
\def\ten#1.#2.#3.{#1\underset {#2}{\otimes} #3}
\def\mderiv#1.#2.#3.{\frac {d^{#3} #1}{d #2^{#3}}}
\def\mfderiv#1.#2.#3.{\frac {\partial^{#3} #1}{\partial #2}}
\def\ggr#1.#2.#3.{\mathbb{G}_{#1}(#2,#3)}
\def\llist#1.#2.{{#1}_1,{#1}_2,\dots,{#1}_{#2}}
\def\ulist#1.#2.{{#1}^1,{#1}^2,\dots,{#1}^{#2}}
\def\lomitlist#1.#2.{{#1}_1,{#1}_2,\dots,\hat {{#1}_i}, \dots, {#1}_{#2}}
\def\lomitlistz#1.#2.{{#1}_0,{#1}_1,\dots,\hat {{#1}_i}, \dots, {#1}_{#2}}
\def\loc#1.#2.{\Cal O_{#1,#2}}
\def\fderiv#1.#2.{\frac {\partial #1}{\partial #2}}
\def\deriv#1.#2.{\frac {d #1}{d #2}}
\def\map#1.#2.{#1 \longrightarrow #2}
\def\rmap#1.#2.{#1 \dasharrow #2}
\def\emb#1.#2.{#1 \hookrightarrow #2}
\def\non#1.#2.{\text {Spec }#1[\epsilon]/(\epsilon)^{#2}}
\def\Hi#1.#2.{\text {Hilb}^{#1}(#2)}
\def\sym#1.#2.{\operatorname {Sym}^{#1}(#2)}
\def\Hb#1.#2.{\text {Hilb}_{#1}(#2)}
\def\Hm#1.#2.{\Hom_{#1}(#2)}
\def\prd#1.#2.{{#1}_1\cdot {#1}_2\cdots {#1}_{#2}}
\def\Bl #1.#2.{\operatorname {Bl}_{#1}#2}
\def\pl #1.#2.{#1^{\otimes #2}}
\def\mgn#1.#2.{\overline {M}_{#1,#2}}
\def\ialist#1.#2.{{#1}_1 #2 {#1}_2 #2 {#1}_3 #2\dots}
\def\pair#1.#2.{\langle #1, #2\rangle}
\def\gproj#1.#2.{\mathbb{P}_{#1}(#2)}
\def\gpr #1.#2.{\mathbb{P}^{#1}_{#2}}
\def\gaf #1.#2.{\mathbb{A}^{#1}_{#2}}
\def\vandermonde#1.#2.{\left|
\begin{matrix}
1 & 1 & 1 & \dots & 1\\
{#1}_1 & {#1}_2 & {#1}_3 & \dots & {#1}_{#2}\\
{#1}_1^2 & {#1}_2^2 & {#1}_3^2 & \dots & {#1}_{#2}^2\\
\vdots & \vdots & \vdots & \ddots & \vdots\\
{#1}_1^{#2-1} & {#1}_2^{#2-1} & {#1}_2^{#2-1} & \dots & {#1}_{#2}^{#2-1}\\
\end{matrix}
\right|
}
\def\vandermondet#1.#2.{\left|
\begin{matrix}
1 & {#1}_1   & {#1}_1^2 & \dots & {#1}_1^{#2-1}\\
1 & {#1}_2   & {#1}_2^2 & \dots & {#1}_2^{#2-1}\\
1 & {#1}_3   & {#1}_3^2 & \dots & {#1}_3^{#2-1}\\
\vdots & \vdots & \vdots & \ddots & \vdots\\
1 & {#1}_{#2}& {#1}_{#2}^2 & \dots & {#1}_{#2}^{#2-1}\\
\end{matrix}
\right|
}
\def\gr#1.#2.{\mathbb{G}(#1,#2)}
\def\bdd#1.#2.{{#1}_{\rfdown #2.}}
\def\ideal#1.{I_{#1}}
\def\ring#1.{\mathcal {O}_{#1}}
\def\fring#1.{\hat{\mathcal {O}}_{#1}}
\def\aring#1.{{\mathcal {O}}_{#1}^{\text{an}}}
\def\proj#1.{\mathbb {P}(#1)}
\def\pr #1.{\mathbb {P}^{#1}}
\def\dpr #1.{\hat{\mathbb {P}}^{#1}}
\def\af #1.{\mathbb{A}^{#1}}
\def\Hz #1.{\mathbb{F}_{#1}}
\def\Hbz #1.{\overline{\mathbb {F}}_{#1}}
\def\fb#1.{\underset {#1} {\times}}
\def\rest#1.{\underset {\ \ring #1.} \to \otimes}
\def\au#1.{\operatorname {Aut}\,(#1)}
\def\deg#1.{\operatorname {deg } (#1)}
\def\pic#1.{\operatorname {Pic}\,(#1)}
\def\pico#1.{\operatorname{Pic}^0(#1)}
\def\picg#1.{\operatorname {Pic}^G(#1)}
\def\ner#1.{NS (#1)}
\def\rdown#1.{\llcorner#1\lrcorner}
\def\rfdown#1.{\lfloor{#1}\rfloor}
\def\rup#1.{\ulcorner{#1}\urcorner}
\def\rfup#1.{\lceil{#1}\rceil}
\def\bp#1.{#1^{{}\leq 1}}
\def\rcup#1.{\lceil{#1}\rceil}
\def\cone#1.{\operatorname {NE}(#1)}
\def\mone#1.{\operatorname {NM}(#1)}
\def\none#1.{\operatorname {NF}(#1)}
\def\big#1.{\operatorname {B}(#1)}
\def\ccone#1.{\overline{\operatorname {NE}}(#1)}
\def\cmone#1.{\overline{\operatorname {NM}}(#1)}
\def\cnone#1.{\overline{\operatorname {NF}}(#1)}
\def\cbig#1.{\overline{\operatorname {B}(#1)}}
\def\coef#1.{\frac{(#1-1)}{#1}}
\def\vit#1.{D_{\langle #1 \rangle}}
\def\mm#1.{\overline {M}_{0,#1}}
\def\Hone#1.{H^1(#1,{\ring #1.})}
\def\ac#1.{\overline {\mathbb F}_{#1}}
\def\adj#1.{\frac {#1-1}{#1}}
\def\spn#1.{\overline{#1}}
\def\pek#1.#2.{\Cal P^{#1}(#2)}
\def\plk#1.#2.{\Cal P^{\leq #1}(#2)}
\def\ev#1.{\operatorname{ev_{#1}}}
\def\ilist#1.{{#1}_1,{#1}_2,\ldots}
\def\bminv#1.{(\nu_1,s_1;\nu_2,s_2;\dots ;\nu_{#1},s_{#1};\nu_{r+1})}
\def\zinv#1.{(\nu_1,s_1;\nu_2,s_2;\dots ;\nu_{#1},s_{#1};0)}
\def\iinv#1.{(\nu_1,s_1;\nu_2,s_2;\dots ;\nu_{#1},s_{#1};\infty)}
\def\scr#1.{\mathbf{\EuScript{#1}}}
\def\mg#1.{\overline {M}_{#1}}
\def\inter#1.{\underset #1{\cdot}}
\def\cate#1.{\text{(\underline{#1})}}
\def\dls#1.{\overrightarrow{#1}}
\def\Diff{\operatorname{Diff}}
\def\Hom{\operatorname{Hom}}
\def\sp{\operatorname{Spec}}
\def\Proj{\operatorname{Proj}}
\def\dim{\operatorname{dim}}
\def\deg{\operatorname{deg}}
\def\mult{\operatorname{mult}}
\def\rest{\operatorname{res}}
\def\vol{\operatorname{vol}}
\def\C`har{\operatorname{char}}
\def\lct{\operatorname{lct}}
\def\Lct{\operatorname{LCT}}
\def\C{\mathbb C}
\def\e{\Cal E}
\def\e1{E_1}
\def\e2{E_2}
\def\mapdown#1{\big\downarrow\rlap{$\vcenter
{\hbox{$\scriptstyle#1$}}$}}
\def\mapse#1{
{\vcenter{\hbox{$\mathop{\smash{\raise1pt\hbox{$\diagdown$}\!\lower7pt
\hbox{$\searrow$}}\vphantom{p}}\limits_{#1}\vphantom{\mapdown{}}$}}}}
\def\VR#1.{height#1pt&\omit&&\omit&&\omit&&\omit&&\omit&\cr}
\def\VRT#1.{height#1pt&\omit&&\omit&\cr}
\begin{document}
\title{ACC for log canonical thresholds}
\author{Christopher D. Hacon}
\date{\today}
\address{Department of Mathematics \\
University of Utah\\
155 South 1400 East\\
JWB 233\\
Salt Lake City, UT 84112, USA}
\email{hacon@math.utah.edu}
\author{James M\textsuperscript{c}Kernan}
\address{Department of Mathematics\\
MIT\\
77 Massachusetts Avenue\\
Cambridge, MA 02139, USA}
\email{mckernan@math.mit.edu}
\author{Chenyang Xu}
\address{Beijing International Center of Mathematics Research\\ 5 Yiheyuan Road, Haidian District\\ 
Beijing 100871, China}
\email{cyxu@math.pku.edu.cn}
\address{Department of Mathematics \\
University of Utah\\
155 South 1400 East\\
JWB 233\\
Salt Lake City, UT 84112, USA}
\email{cyxu@math.utah.edu}

\dedicatory{To Vyacheslav Shokurov on the occasion of his sixtieth birthday}

\thanks{The first author was partially supported by NSF research grant no: 0757897, the
  second author was partially supported by NSF research grant no: 0701101, and the third
  author was partially supported by NSF research grant no: 1159175 and by a special
  research fund in China.  We would like to thank Valery Alexeev, J\'anos Koll\'ar,
  and Vyacheslav Shokurov for some helpful comments.}

\begin{abstract} We show that log canonical thresholds satisfy the ACC.
\end{abstract}

\maketitle

\tableofcontents

\section{Introduction}

We work over an algebraically closed field of characteristic zero.  ACC stands for the
ascending chain condition whilst DCC stands for the descending chain condition.
  
Suppose that $(X,\Delta)$ is a log canonical pair and $M\geq 0$ is $\mathbb{R}$-Cartier.
The \textbf{log canonical threshold} of $M$ with respect to $(X,\Delta)$ is
$$
\lct(X,\Delta;M)=\sup \{\, t\in \mathbb{R}  \,|\, \text{$(X,\Delta+tM)$ is log canonical} \,\}.
$$ 
Let $\mathfrak{T}=\mathfrak{T}_n(I)$ denote the set of log canonical pairs $(X,\Delta)$,
where $X$ is a variety of dimension $n$ and the coefficients of $\Delta$ belong to a set
$I\subset [0,1]$.  Set
$$
\Lct_n(I,J)=\{\, \lct(X,\Delta;M) \,|\, (X,\Delta)\in \mathfrak{T}_n(I) \,\},
$$
where the coefficients of $M$ belong to a subset $J$ of the positive real numbers.  
\begin{theorem}[ACC for the log canonical threshold]\label{t_lct} Fix a positive integer 
$n$, $I\subset [0,1]$ and a subset $J$ of the positive real numbers.  

If $I$ and $J$ satisfy the DCC then $\Lct_n(I,J)$ satisfies the ACC.
\end{theorem}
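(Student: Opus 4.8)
The plan is to argue by contradiction, inducting on the dimension $n$; since the relevant statements are intertwined through adjunction, it is natural (and, for the induction to run, essentially necessary) to prove Theorem~\ref{t_lct} simultaneously with the \emph{global} ACC: for fixed $n$ and a DCC set $I$, the coefficients of a log canonical pair $(X,\Delta)$ of dimension $n$ with $K_X+\Delta\equiv 0$ and coefficients of $\Delta$ in $I$ lie in a fixed finite subset of $I$. So suppose Theorem~\ref{t_lct} fails for some $n$, $I$, $J$. Then there are $(X_i,\Delta_i)\in\mathfrak{T}_n(I)$ and $\mathbb{R}$-divisors $M_i\geq 0$ with coefficients in $J$ such that $t_i:=\lct(X_i,\Delta_i;M_i)$ is strictly increasing with $t_i\to t\in(0,\infty]$. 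Since the log canonical threshold is local on the base and is preserved by passing to a log resolution or a small $\mathbb{Q}$-factorial modification, I would begin with the usual reductions: take each $X_i$ projective and $\mathbb{Q}$-factorial and simplify the shape of $M_i$ (keeping its coefficients in a fixed DCC set). These reductions, and everything below, rest on the existence of minimal models and log resolutions, i.e.\ on the machinery of the minimal model program.

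The heart of the matter is a dimension-reduction step. For each $i$ the pair $(X_i,\Delta_i+t_iM_i)$ is log canonical but not klt, and since $t_i$ is exactly the threshold there is a non-klt place along which $M_i$ does not vanish. I would fix such a place, let $W_i$ be its centre, and then, by a tie-breaking argument (perturbing the boundary slightly), arrange that $W_i$ is the \emph{unique} minimal non-klt centre of the perturbed pair and is cut out by a unique non-klt place. Running the appropriate MMP then extracts that place as a prime divisor $S_i$ on a birational model $X_i'$, with $\operatorname{coeff}_{S_i}$ of the transformed boundary equal to $1$. The point of this step is localization: the failure of log canonicity for thresholds slightly exceeding $t_i$ is made to occur along $S_i$, so that passing to $S_i$ loses no threshold information. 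This simultaneously handles the case where the minimal non-klt centre of the original pair is already a divisor, or is a point, since both are converted to this divisorial situation on $X_i'$.

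I would then apply adjunction along $S_i$, writing $(K_{X_i'}+\Delta_i'+t_iM_i')|_{S_i}=K_{S_i}+\Theta_i+t_iN_i$. Now $S_i$ is normal of dimension $n-1$, the pair $(S_i,\Theta_i)$ is log canonical, and the coefficients of $\Theta_i$ and of $N_i$ lie in DCC sets $D(I)$ and $D(J)$ depending only on $I$ and $J$. Establishing this DCC control is the technical core: when $S_i$ fibres over a positive-dimensional base the different splits, via the canonical bundle formula, into a discriminant part and a moduli part, and both must be shown to move in a DCC set — the discriminant part because, up to the obvious shift, its coefficients are log canonical thresholds on the fibres (so the inductive ACC for lct in dimension $<n$ applies), and the moduli part via boundedness together with the inductive global ACC in lower dimension. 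Granting this, the numbers $s_i:=\lct(S_i,\Theta_i;N_i)$ all lie in $\Lct_{n-1}(D(I),D(J))$, and by the localization from the previous step, together with inversion of adjunction, the sequence $\{s_i\}$ is again strictly increasing. This contradicts the inductive assumption that $\Lct_{n-1}(D(I),D(J))$ satisfies the ACC, closing the induction. The base case $n=1$ is elementary: there $\lct(X,\Delta;M)$ is the minimum of the ratios $(1-\operatorname{coeff}_p\Delta)/\operatorname{coeff}_p M$ over points $p$, and a strictly increasing sequence of such ratios, with numerators in an ACC set and denominators in a DCC set, is impossible.

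The step I expect to be hardest is the dimension reduction as a whole: first, running the MMP and performing the tie-breaking so that the extreme behaviour genuinely descends to the divisor $S_i$ rather than hiding at some other non-klt centre; and, more seriously, keeping the boundary coefficients on $S_i$ inside a \emph{fixed} DCC set, which amounts to controlling the moduli part of the canonical bundle formula when the non-klt centre fibres over a positive-dimensional base. It is exactly this last point that forces one to prove the ACC for log canonical thresholds hand in hand with the global ACC, and to have the full strength of the minimal model program available.
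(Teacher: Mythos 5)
Your reduction to dimension $n-1$ does not produce what you think it does.  After extracting the non-klt place as a divisor $S_i$ and restricting, the key structural fact is \emph{not} that one obtains a new log canonical threshold in lower dimension; it is that the restricted log divisor is numerically trivial over the centre.  Concretely, writing $K_{X_i'}+\Gamma_i'=\pi_i^*(K_{X_i}+\Delta_i+t_iM_i)$ with $S_i$ appearing in $\Gamma_i'$ with coefficient one, the restriction $(K_{X_i'}+\Gamma_i')|_{S_i}$ is the pullback of an $\mathbb{R}$-Cartier divisor and hence is numerically trivial on every fibre of $S_i\to V_i$.  The correct statement to feed the induction is therefore that a general fibre of $S_i\to V_i$ is a projective pair $(F_i,\Theta_i)$ with $K_{F_i}+\Theta_i\equiv 0$ and coefficients of $\Theta_i$ in a DCC set derived from $I$, $J$ and $t_i$.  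This is exactly what the paper's Lemma~\ref{l_global-to-local} extracts, and the contradiction is then against \emph{Theorem~\ref{t_num}} (the ACC for numerically trivial log canonical pairs), not against $\Lct_{n-1}(D(I),D(J))$.  Your claim that $s_i:=\lct(S_i,\Theta_i;N_i)$ is strictly increasing is not justified and I do not believe it holds: for $t>t_i$ the failure of log canonicity of $(X_i,\Delta_i+tM_i)$ may be witnessed entirely by the coefficient of $S_i$ in the pullback exceeding one, in which case after normalising that coefficient back to one the restricted pair $(S_i,\Theta_i+tN_i)$ can remain log canonical, so $s_i$ can be strictly larger than $t_i$ (even $+\infty$) and carries no monotonicity information.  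Inversion of adjunction transfers log canonicity of $(X_i',S_i+\cdots)$ near $S_i$, but for $t>t_i$ the pullback is not of that normalised shape, so it gives you nothing here.

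Even setting this aside, the proposal never closes the induction.  You observe that the moduli part of the canonical bundle formula forces one to prove Theorem~\ref{t_lct} ``hand in hand with the global ACC,'' but you offer no argument for the global ACC in dimension $n$.  In the paper this is the genuinely hard half: Theorem~\ref{t_num}$_n$ is deduced from Theorem~\ref{t_birational}$_n$ (birational boundedness, that a uniform multiple of $K_X+\Delta$ gives a birational map), which in turn rests on Theorem~\ref{t_upper}$_n$ (an upper bound on $\vol(X,\Delta)$ for klt numerically trivial pairs), Kawamata subadjunction in the sharpened form of Theorem~\ref{t_coefficients}, and the Tsuji-style cutting of non-klt centres imported from \cite{HMX10}.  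None of this is a consequence of the adjunction step you describe; it is a separate global argument about volumes.  An approach that tries to run purely on adjunction — reducing $\lct$ in dimension $n$ directly to $\lct$ in dimension $n-1$ — is essentially Shokurov's original plan and is known to require boundedness input (of BAB type) that is not available at this level of generality; the whole point of the paper's scheme of implications (Theorems~\ref{t_acc}, \ref{t_upper}, \ref{t_birational}, \ref{t_numerically-trivial} cycling through dimensions $n-1$ and $n$) is to route around this via birational boundedness of pairs of log general type rather than through BAB.
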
 

\eqref{t_lct} was conjectured by Shokurov \cite{Shokurov93}, see also \cite{Kollar92b} and
\cite{Kollar95}.  When the dimension is three, \cite{Kollar92b} proves that $1$ is not an
accumulation point from below and \eqref{t_lct} follows from the results of
\cite{Alexeev94}.  More recently \eqref{t_lct} was proved for complete intersections
\cite{EFM09} and even when $X$ belongs to a bounded family, \cite{EFM11}.

The log canonical threshold is an interesting invariant of the pair $(X,\Delta)$ and the
divisor $M$ which is a measure of the complexity of the singularities of the triple
$(X,\Delta;M)$.  It has made many appearances in many different forms, especially in the
case of hypersurfaces, see \cite{Kollar95}, \cite{Kollar07} and \cite{Totaro11}.  The ACC
for the log canonical threshold plays a role in inductive approaches to higher dimensional
geometry.  For example, after \cite{Birkar05}, we have the following application of
\eqref{t_lct}:
\begin{corollary}\label{c_ter} Assume termination of flips for $\mathbb{Q}$-factorial 
kawamata log terminal pairs in dimension $n-1$.

Let $(X,\Delta)$ be a kawamata log terminal pair where $X$ is a $\mathbb{Q}$-factorial
projective variety of dimension $n$.  If $K_X+\Delta$ is numerically equivalent to a
divisor $D\geq 0$ then any sequence of $(K_X+\Delta)$-flips terminates.
\end{corollary}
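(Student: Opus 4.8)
The plan is to argue by contradiction, using \eqref{t_lct} to control the singularities along a hypothetical infinite sequence of flips, and then to invoke special termination. Suppose then that there is an infinite sequence of $(K_X+\Delta)$-flips, so that we obtain $\mathbb{Q}$-factorial projective varieties $X=X_0,X_1,X_2,\dots$, where $X_{i+1}$ is the flip of a flipping contraction for $K_{X_i}+\Delta_i$; write $\Delta_i$ and $D_i$ for the birational transforms of $\Delta$ and $D$ on $X_i$. Since each flip is an isomorphism in codimension one, $(X_i,\Delta_i)$ is kawamata log terminal, $K_{X_i}+\Delta_i\equiv D_i\geq 0$, the nonzero coefficients of $\Delta_i$ and of $D_i$ lie in fixed finite sets $I$ and $J$, and $D_i\neq 0$ (a flipping contraction needs a curve $C$ with $(K_{X_i}+\Delta_i)\cdot C<0$, hence with $D_i\cdot C<0$).

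First I would show that $\lambda_i:=\lct(X_i,\Delta_i;D_i)$ is non-decreasing in $i$. The key point is that $K_{X_i}+\Delta_i+tD_i\equiv(1+t)(K_{X_i}+\Delta_i)$ for every $t\geq 0$, so a curve is $(K_{X_i}+\Delta_i)$-negative precisely when it is $(K_{X_i}+\Delta_i+tD_i)$-negative; hence, as long as $(X_i,\Delta_i+tD_i)$ is log canonical, the given flip is simultaneously a flip for $K_{X_i}+\Delta_i+tD_i$, and the negativity lemma (flips do not decrease discrepancies) keeps $(X_{i+1},\Delta_{i+1}+tD_{i+1})$ log canonical. Since $I$ and $J$ are finite they satisfy the DCC, so $\lambda_i\in\Lct_n(I,J)$, and \eqref{t_lct} forces the sequence $(\lambda_i)$ to be eventually constant. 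Discarding finitely many terms, I may assume $\lambda_i=\lambda$ for all $i$.

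Next I would replace $\Delta_i$ by $\Gamma_i:=\Delta_i+\lambda D_i$. Then $(X_i,\Gamma_i)$ is log canonical but not kawamata log terminal (otherwise $\lambda$ would not be the log canonical threshold), $K_{X_i}+\Gamma_i\equiv(1+\lambda)D_i$ is numerically equivalent to an effective divisor, each flip in the sequence remains a step of a $(K_{X_i}+\Gamma_i)$-MMP, and $\lct(X_i,\Gamma_i;D_i)=0$ (as $(X_i,\Gamma_i+tD_i)=(X_i,\Delta_i+(\lambda+t)D_i)$ is log canonical only for $t\leq 0$), so some non kawamata log terminal centre of $(X_i,\Gamma_i)$ meets $\Supp D_i$. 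We have thus reduced to proving that a $\mathbb{Q}$-factorial projective log canonical pair of dimension $n$ with nonempty non kawamata log terminal locus and with $K+\Gamma$ numerically equivalent to an effective divisor admits no infinite sequence of flips; this I would establish by induction on $n$ (and, within a fixed $n$, on the dimension of the non kawamata log terminal locus). Passing to a $\mathbb{Q}$-factorial dlt modification --- which is crepant, so $K+\Gamma$ remains numerically equivalent to an effective divisor --- and lifting the sequence to a sequence of flips and divisorial contractions of the modification (only finitely many steps can be divisorial), I may assume $\lfloor\Gamma\rfloor\neq 0$. Now special termination, which holds in dimension $n$ granted termination of flips in all dimensions $\leq n-1$, shows that after finitely many steps each flip is an isomorphism near $\lfloor\Gamma_i\rfloor$ and induces isomorphisms between the corresponding log canonical centres; restricting $K_{X_i}+\Gamma_i$ to a minimal such centre and applying adjunction produces a sequence of flips of a lower-dimensional effective pair, which terminates by the inductive hypothesis. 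Tracing this back, the flipping loci eventually avoid a neighbourhood of $\Nklt(X_i,\Gamma_i)$; since they also lie in $\Supp D_i$, which meets $\Nklt(X_i,\Gamma_i)$, iterating the reduction removes the remaining flips, and this is the desired contradiction.

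The genuinely hard part is this last step. Stabilising the log canonical thresholds is clean and uses \eqref{t_lct} purely as a black box. By contrast, the special-termination argument requires care on two fronts: first, correctly passing to a dlt modification and lifting an infinite sequence of flips to the modification; and second --- more subtly --- extracting a true contradiction from special termination rather than a circular restatement, which is precisely why the dimension of the non kawamata log terminal locus must be lowered honestly through adjunction to log canonical centres, while the numerical equivalence of $K+\Gamma$ with an effective divisor is carried along at every stage.
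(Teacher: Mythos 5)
The paper's proof of this corollary is a one-line citation: ``This follows from \eqref{t_lct} and the main result of \cite{Birkar05}.'' Birkar's theorem in that reference is precisely the statement that ACC for log canonical thresholds in dimension $n$ together with the LMMP (in particular termination of klt flips) in dimension $n-1$ yield termination of any sequence of flips for effective klt pairs in dimension $n$. Your write-up is therefore an attempt to reconstruct Birkar's argument, and the skeleton you give is the right one: the monotonicity of $\lambda_i=\lct(X_i,\Delta_i;D_i)$ under a flip (via the negativity lemma on a common resolution, using that $K_{X_i}+\Delta_i+tD_i\equiv(1+t)(K_{X_i}+\Delta_i)$), ACC forcing $\lambda_i$ to stabilize at some $\lambda$, replacing $\Delta_i$ by $\Gamma_i=\Delta_i+\lambda D_i$, passing to a dlt modification, and invoking special termination.

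The final step, however, has a genuine gap. After special termination, the flipping and flipped loci are disjoint from $\Nklt(X_i,\Gamma_i)$. You note that they also lie in $\Supp D_i$, which meets $\Nklt(X_i,\Gamma_i)$, and conclude that ``iterating the reduction removes the remaining flips, and this is the desired contradiction.'' But no contradiction has actually been reached: $\Supp D_i$ is a divisor and $\Nklt(X_i,\Gamma_i)$ is a proper closed subset of it, so a flipping curve can perfectly well lie in $\Supp D_i$ while avoiding $\Nklt(X_i,\Gamma_i)$ --- indeed that is exactly what special termination delivers. You are left with an infinite sequence of flips happening in the locus where $(X_i,\Gamma_i)$ is klt, which is the very problem you started with. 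Handling this is the delicate heart of Birkar's proof and requires a further idea, for instance carefully bookkeeping the lc centres of $(X_i,\Gamma_i)$ contained in $\Supp D_i$ and showing a suitable finite invariant strictly decreases, or setting up a secondary threshold away from $\Nklt$ that can be fed back into the ACC. You should also justify what ``lifting the sequence to a sequence of flips and divisorial contractions of the modification'' means: lifting an infinite sequence of $(K_{X_i}+\Gamma_i)$-flips to a $\mathbb{Q}$-factorial dlt modification of $(X_0,\Gamma_0)$ requires running a relative MMP at each step and checking the lifted steps are again flips or divisorial contractions for the lifted pair; this uses existence of dlt flips, which Birkar assumes as an explicit hypothesis and which is now supplied by \cite{BCHM06}.
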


\eqref{t_lct} is a consequence of the following theorem, which was conjectured by Alexeev \cite{Alexeev94}
and Koll\'ar \cite{Kollar92b}:
\begin{theorem}\label{t_volume} Fix a positive integer $n$ and a set $I\subset [0,1]$ 
which satisfies the DCC.  Let $\mathfrak{D}$ be the set of log canonical pairs
$(X,\Delta)$ such that the dimension of $X$ is $n$ and the coefficients of $\Delta$ belong
to $I$.

Then there is a constant $\delta>0$ and a positive integer $m$ with the following
properties:
\begin{enumerate}
\item the set
$$
\{\, \vol(X,K_X+\Delta) \,|\, (X,\Delta)\in \mathfrak{D} \,\},
$$
also satisfies the DCC.
\end{enumerate}
Further, if $(X,\Delta)\in \mathfrak{D}$ and $K_X+\Delta$ is big, then 
\begin{enumerate}
\setcounter{enumi}{1}
\item $\vol(X,K_X+\Delta)\geq \delta$, and 
\item $\phi_{m(K_X+\Delta)}$ is birational.
\end{enumerate}
\end{theorem}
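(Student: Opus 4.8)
The strategy is an induction on the dimension $n$, where all three statements (DCC of volumes, lower bound $\delta$ on volumes, and birationality of $\phi_{m(K_X+\Delta)}$) are proved together, since they feed into one another. The base case $n=1$ is elementary: $X$ is a curve, $K_X+\Delta$ is a divisor of degree $2g-2+\sum a_i$, and the DCC of $I$ together with the classification of when this is positive gives both the DCC of the set of volumes and the gap above $0$; birationality of the associated map for fixed bounded $m$ is then a Riemann--Roch computation. The real content is the inductive step. The plan is to reduce the general log canonical case to the case where $(X,\Delta)$ is kawamata log terminal with $K_X+\Delta$ ample (i.e.\ to log canonical models), using that the volume is a birational invariant and that running a minimal model program replaces $(X,\Delta)$ by its log canonical model without changing $\vol(X,K_X+\Delta)$; one must be careful that the coefficient set $I$ may need to be enlarged slightly, but only in a way that preserves the DCC.

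The first main step is to prove \emph{boundedness} of the pairs achieving volume close to the infimum, or more precisely to show that pairs $(X,\Delta)$ with $K_X+\Delta$ ample and $\vol(X,K_X+\Delta)$ bounded above form a \emph{log birationally bounded} family. Here I would use the existence of good divisors in $|m(K_X+\Delta)|$ for $m$ in a controlled range, cutting down by general members to drop the dimension and invoke the inductive hypothesis on the lower-dimensional pairs appearing (adjunction, with coefficients in a DCC set by the work on the different). The second main step is to upgrade log birational boundedness to honest boundedness of the log canonical models: once one has a bounded family of pairs $(Z,\Gamma)$ dominating our $(X,\Delta)$'s, the log canonical ring and hence $X$ itself is recovered from bounded data, so $K_X+\Delta$ takes values in a bounded family of polarized varieties; the constraint that the coefficients of $\Delta$ lie in the DCC set $I$, combined with a limiting/Noetherian argument, then forces the set of volumes to satisfy the DCC and to be bounded away from zero, and simultaneously pins down a uniform $m$ for which $\phi_{m(K_X+\Delta)}$ is birational on the whole bounded family.

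The hard part will be the passage from log birational boundedness to actual boundedness together with the DCC conclusion — this is where one genuinely needs the DCC hypothesis on $I$ and cannot avoid a global argument over the whole family. Concretely, on a fixed birational model one must bound the coefficients and the "complexity" of the birational transform of $\Delta$, and rule out the phenomenon of volumes or thresholds accumulating to a bad value; this requires a delicate analysis of how $K_X+\Delta$ can degenerate in a bounded family, and it is precisely the step that was only known previously in special cases (complete intersections \cite{EFM09}, or $X$ in a bounded family \cite{EFM11}). I expect the technical heart to be a careful use of the minimal model program (existence of minimal models and finiteness of models for klt pairs) to produce the required bounded family, plus an ACC-type statement for a lower-dimensional invariant fed in by induction, packaged so that the three conclusions of the theorem close up simultaneously. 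Once Theorem \ref{t_volume} is established, Theorem \ref{t_lct} follows by the standard reduction of a log canonical threshold computation to a volume computation on a log resolution, using that an infinite strictly increasing sequence of thresholds would produce an infinite strictly decreasing sequence of volumes in one higher dimension, contradicting part (1).
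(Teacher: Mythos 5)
Your broad shape — induction on dimension, reduction to log canonical models, a Tsuji-type non-klt-centre argument to get log birational boundedness, and then a family/finiteness argument to extract the DCC — resembles the paper's at a high level, but there is a genuine gap in the middle that you have not identified, and it is exactly the place where the paper's argument is subtle.

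The gap is this: your induction runs only over the three conclusions (1)–(3) of the theorem, but the crucial auxiliary statements that must be proved \emph{alongside} (3) in lower dimensions are not parts (1) and (2); they are (a) the ACC for log canonical thresholds, (b) an upper bound on $\vol(X,\Delta)$ for klt pairs with $K_X+\Delta\equiv 0$ and coefficients in a DCC set, and (c) the ACC for numerically trivial log canonical pairs. These are logically independent of (1)–(3) and the paper organizes them as Theorems~\ref{t_acc}, \ref{t_upper}, \ref{t_birational}, \ref{t_numerically-trivial}, cycled through a four-way induction. The place you cannot avoid them is the step you wave at but do not fill in: to apply the non-klt-centre lifting machine (the paper's \eqref{t_inductive}) one needs the coefficients of $\Delta$ to lie in a \emph{finite} set or in $\{(r-1)/r\}$. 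To reduce a general DCC coefficient set $I$ to this case one must bound the pseudo-effective threshold $\inf\{t : K_X+t\Delta\ \text{big}\}$ uniformly away from $1$ (this is \eqref{l_psef} and \eqref{l_B-to-C}), and the proof of that bound uses (b), whose proof in turn uses (a) and (c) in dimension $n-1$ via the rigidity lemma \eqref{l_epsilon}. Your vague appeal to ``an ACC-type statement for a lower-dimensional invariant, packaged so that the three conclusions close up simultaneously'' hides precisely this: there is no way to close the induction using only the three conclusions of the statement, and identifying the correct auxiliary partners is the heart of the argument. Likewise, the DCC-coefficient bound on the adjoint divisor $\Theta$ on a non-klt centre is not merely ``the work on the different'' — it is a strengthened subadjunction (\eqref{t_coefficients}) producing coefficients in $\{a : 1-a\in \Lct_{n-1}(D(I))\}$, so one needs ACC for lct in dimension $n-1$ even to say that $\Theta$ has DCC coefficients.

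Two smaller points. First, the paper does \emph{not} pass from log birational boundedness to honest boundedness to get (1); it uses \eqref{t_birationally-bounded} (really \cite[1.9]{HMX10}) to read off the DCC of volumes directly from the log birationally bounded family and the uniform $m$ of (3). Your proposed upgrade is Theorem~\ref{t_ample-bounded} in the paper, which needs a uniform positive lower bound on log discrepancies (an $\epsilon$-lc condition), something a general log canonical $(X,\Delta)\in\mathfrak{D}$ does not satisfy; so that route would fail here. Second, the final sentence — deducing \eqref{t_lct} by trading a rising sequence of thresholds for a falling sequence of volumes — is not the mechanism. The paper goes from local lct data in dimension $n$ to a globally numerically trivial pair in dimension $n-1$ by dlt modification and adjunction (\eqref{l_global-to-local}), i.e.\ to (c), not to a volume statement.
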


Note that, by convention, $\phi_{m(K_X+\Delta)}=\phi_{\rfdown m(K_X+\Delta).}$.
\eqref{t_volume} was proved for surfaces in \cite{Alexeev94}.  \eqref{t_volume} is a
generalisation of \cite[1.3]{HMX10}, which deals with the case that $(X,\Delta)$ is the
quotient of a smooth projective variety $Y$ of general type by its automorphism group.

One of the original motivations for \eqref{t_volume} is to prove the boundedness of the
moduli functor for canonically polarised varieties, see \cite{Kollar10a}.  We plan to
pursue this application of \eqref{t_volume} in a forthcoming paper.

To state more results it is convenient to give a simple reformulation of \eqref{t_lct}:
\begin{theorem}\label{t_simple} Fix a positive integer $n$ and a set $I\subset [0,1]$, which
satisfies the DCC.

Then there is a finite subset $I_0\subset I$ with the following properties:

If $(X,\Delta)$ is a log pair such that
\begin{enumerate}
\item $X$ is a variety of dimension $n$,   
\item $(X,\Delta)$ is log canonical, 
\item the coefficients of $\Delta$ belong to $I$, and 
\item there is a non kawamata log terminal centre $Z\subset X$ which is contained 
in every component of $\Delta$, 
\end{enumerate} 
then the coefficients of $\Delta$ belong to $I_0$.  
\end{theorem}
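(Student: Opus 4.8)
The statement is a reformulation of ACC for log canonical thresholds (Theorem~\ref{t_lct}), so the plan is to deduce it from that theorem, and conversely one checks the two are equivalent (the converse direction being what is actually used downstream). Let me sketch the forward deduction, which is the substantive one.

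First, I would argue by contradiction. Suppose no such finite set $I_0$ exists. Then, using that $I$ satisfies the DCC, I can extract a sequence of pairs $(X_i,\Delta_i)$ satisfying (1)--(4), together with a component $D_i\subseteq \Delta_i$ whose coefficient $a_i\in I$ forms a strictly increasing (hence non-eventually-constant) sequence; passing to a subsequence, $a_i$ is strictly increasing and $a_i\to a_\infty\le 1$. Write $\Delta_i=a_iD_i+\Delta_i'$ where $\Delta_i'\ge 0$ has the remaining components, so $Z_i\subseteq \Supp\Delta_i'$ as well (by (4), $Z_i$ lies in \emph{every} component of $\Delta_i$).

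Next comes the key construction: I want to convert the varying coefficient $a_i$ into a log canonical threshold computed along $Z_i$. Concretely, set $M_i=D_i$ and consider the pair $(X_i,\Theta_i)$ where $\Theta_i:=\Delta_i-a_iD_i=\Delta_i'$ has coefficients in $I$. I would like to say $\lct(X_i,\Theta_i;D_i)=a_i$ — that is, $a_i$ is exactly the largest $t$ with $(X_i,\Theta_i+tD_i)$ log canonical. The inequality $\lct\ge a_i$ is immediate since $(X_i,\Delta_i)$ is log canonical. For the reverse inequality I need that log canonicity \emph{fails} for $t$ slightly larger than $a_i$; this is where hypothesis (4) does the work: since $Z_i$ is a non-klt centre of $(X_i,\Delta_i)$ contained in $D_i$, increasing the coefficient of $D_i$ past $a_i$ forces a log discrepancy along some valuation over $Z_i$ to become negative. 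Making this precise — producing a valuation $v_i$ over $Z_i$ with $a(v_i;X_i,\Delta_i)=0$ and with $\mult_{v_i}D_i>0$, so that $a(v_i;X_i,\Theta_i+tD_i)<0$ for $t>a_i$ — is the main technical point; it should follow from the definition of a non-klt centre together with the fact that $Z_i\subseteq D_i$. Once that is in hand, $a_i\in\Lct_n(I,I)$, and since $I$ satisfies DCC so does $I\cup\{1\}$, so $\Lct_n(I,I)$ satisfies ACC by Theorem~\ref{t_lct} (applied with $J=I$, or $J=I\setminus\{0\}$, which still satisfies DCC) — contradicting that $\{a_i\}$ is strictly increasing. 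Therefore $I_0$ exists.

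Finally, for the converse (which is the form genuinely applied later), given the finite set $I_0$ and a pair $(X,\Delta)$ with $M\ge 0$ supported on components with coefficients in $I$, one computes $t_0:=\lct(X,\Delta;M)$ and observes that on a log resolution the pair $(X,\Delta+t_0M)$ acquires a non-klt centre $Z$ lying in every component of $\Delta+t_0M$ after restricting attention to the components through $Z$; the relevant coefficients then lie in $I_0$, which by a short argument (clearing denominators / solving the linear equation $a+t_0c=$ value in $I_0$ for the finitely many possible data) pins $t_0$ to a finite set depending only on $n,I,J$. I expect the hardest part to be the valuation-theoretic step in the second paragraph — extracting, from the mere existence of a non-klt centre inside $D_i$, a valuation that "sees" $D_i$ with positive multiplicity and realizes $a_i$ as an honest threshold — since this requires care about whether the centre is a divisor or higher codimension, and about log canonical (as opposed to klt) subtleties at coefficient $1$.
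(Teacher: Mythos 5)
Your plan is to deduce the statement from Theorem~\ref{t_lct}, but this is circular inside the paper. In the paper's logical structure, the statement (restated in Section~2 as Theorem~\ref{t_acc}) is \emph{not} derived from Theorem~\ref{t_lct}; it is one of the four theorems proved by a joint induction on dimension, with the implication Theorem~\ref{t_numerically-trivial}$_{n-1}$ $\Rightarrow$ Theorem~\ref{t_acc}$_n$ carried out in Lemma~\ref{l_D-to-A}. Theorem~\ref{t_lct} is then deduced \emph{afterwards}, in Section~9, essentially by the argument you describe as the ``converse.'' So what you relegate to a closing remark --- the passage from the threshold-type hypothesis to the hypothesis that some non-klt centre lies in every component, followed by an appeal to a finiteness statement --- is in fact the direction the paper needs and proves, while your ``forward'' plan is an appeal to a downstream corollary. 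The real mathematical content that your sketch never engages with is Lemma~\ref{l_global-to-local}: take a dlt modification of $(X,\Delta)$ (Proposition~\ref{p_dlt}(4)), restrict to a suitable component $S$ of the reduced exceptional divisor via adjunction (Lemma~\ref{l_coefficients}), pass to a general fibre of $S\to V$ to get a \emph{projective} pair of dimension $\le n-1$ with numerically trivial log canonical divisor, and track the coefficient arithmetic (Lemma~\ref{l_dcc-acc}); only then can one invoke Theorem~\ref{t_numerically-trivial}$_{n-1}$.

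Even granting Theorem~\ref{t_lct} as a black box, your forward argument has a technical hole at exactly the place you flag as ``the main technical point.'' You set $\Theta_i = \Delta_i - a_i D_i$ and appeal to $\lct(X_i,\Theta_i;D_i)$, but neither $K_{X_i}+\Theta_i$ nor $D_i$ need be $\mathbb{R}$-Cartier, so this threshold is not defined. Passing to a small $\mathbb{Q}$-factorialization $f\colon X'_i\to X_i$ is the natural fix, but then one must check that the non-klt centre transferred to $X'_i$ (the centre of the valuation $v_i$ on $X'_i$) is still contained in the strict transform of $D_i$; since $f$ can contract the original centre $Z_i$ into the small locus, this is not automatic, and you would need an extra argument (for example, localize at the generic point of $Z_i$ first, or argue as the paper does and go straight to a dlt modification where the non-klt locus is a divisor). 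By contrast, the paper's route through the dlt modification never needs the component to be $\mathbb{Q}$-Cartier downstairs, because all the coefficient bookkeeping happens upstairs on the $\mathbb{Q}$-factorial model and then on its boundary components via adjunction.
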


\eqref{t_simple} follows, cf. \cite{Shokurov93}, \cite[\S 18]{Kollaretal}, almost
immediately from the existence of divisorial log terminal modifications and from:

\begin{theorem}\label{t_num} Fix a positive integer $n$ and a set $I\subset [0,1]$, 
which satisfies the DCC.

Then there is a finite subset $I_0\subset I$ with the following properties:

If $(X,\Delta)$ is a log pair such that
\begin{enumerate}
\item $X$ is a projective variety of dimension $n$,  
\item $(X,\Delta)$ is log canonical, 
\item the coefficients of $\Delta$ belong to $I$, and 
\item $K_X+\Delta$ is numerically trivial, 
\end{enumerate} 
then the coefficients of $\Delta$ belong to $I_0$.  
\end{theorem}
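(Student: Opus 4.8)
The plan is to argue by induction on the dimension $n$, assuming \eqref{t_lct}, \eqref{t_volume} and \eqref{t_num} in all dimensions $<n$; the case $n=1$ is elementary (on $\mathbb{P}^1$, $\deg\Delta=2$, and only finitely many solutions of $2=\sum r_ja_j$ with $a_j\in I$ distinct; on curves of positive genus $\Delta=0$). For the induction step, suppose the conclusion fails for a given $n$ and a given DCC set $I$. Then there are log canonical pairs $(X_i,\Delta_i)$ with $X_i$ projective of dimension $n$, coefficients in $I$ and $K_{X_i}+\Delta_i\equiv 0$ whose coefficients are not contained in any one finite set. The set of coefficients occurring is an infinite subset of $I$, so, $I$ being DCC, it contains a strictly increasing sequence $d_1<d_2<\cdots$ with limit $d\le 1$; relabelling, let $D_i$ be a component of $\Delta_i$ of coefficient $d_i$. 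As the $d_i$ are distinct, $d_i<1$ for all $i$, so $D_i\not\subset\lfloor\Delta_i\rfloor$. Replacing $I$ by $I\cup\{1\}$ (still DCC, and harmless, since one may intersect the resulting finite set with $I$) and replacing $(X_i,\Delta_i)$ by a $\mathbb{Q}$-factorial dlt modification, we may assume in addition that $(X_i,\Delta_i)$ is $\mathbb{Q}$-factorial dlt, the strict transform of $D_i$ still having coefficient $d_i$.

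The engine of the argument is adjunction, together with the combinatorial fact that, for a fixed real number $s$, the equation $s=\sum_j r_ja_j$ has only finitely many solutions with $r_j\in\mathbb{Z}_{>0}$ and the $a_j$ distinct elements of a DCC set. Suppose first that some component $S_i$ of $\lfloor\Delta_i\rfloor$ meets $D_i$. Then adjunction gives $K_{S_i}+\Delta_{S_i}=(K_{X_i}+\Delta_i)|_{S_i}\equiv 0$ with $\dim S_i=n-1$ and the coefficients of $\Delta_{S_i}$ lying in the DCC set $D(I)$ produced by adjunction; by the inductive hypothesis they lie in a finite subset of $D(I)$. The coefficient of a component of $D_i\cap S_i$ in $\Delta_{S_i}$ has the form $\tfrac{m-1}{m}+\tfrac1m\sum_j r_ja_j$ with $m,r_j\in\mathbb{Z}_{>0}$, $a_j\in I$, and $d_i$ among the $a_j$. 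Since this coefficient takes only finitely many values, and a value $<1$ bounds $m$ (whence, by the combinatorial fact, bounds the whole datum), while the value $1$ forces $\sum_j r_ja_j=1$ (again only finitely many possibilities), in every case $d_i$ is confined to a finite set, contradicting the strict monotonicity of the $d_i$. Thus the theorem holds whenever the relevant component meets $\lfloor\Delta\rfloor$.

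It remains to treat the case where, for infinitely many $i$, $D_i$ is disjoint from $\lfloor\Delta_i\rfloor$, so that $(X_i,\Delta_i)$ is kawamata log terminal near $D_i$; this is the heart of the proof. Here one must first manufacture a non kawamata log terminal centre along $D_i$. Since $K_{X_i}+\Delta_i\equiv 0$, we have $K_{X_i}+\Delta_i+tD_i\equiv tD_i\ge 0$ for every $t>0$; as log canonicity is a closed condition in $t$ while the pair is klt near $D_i$ for $t$ small, there is a largest $t=t_i\in(0,1-d_i]$ with $(X_i,\Delta_i+t_iD_i)$ log canonical, and at $t_i$ a non klt place appears whose centre $W_i$, by the multiplicity computation at the threshold, is contained in $D_i$. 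After a $\mathbb{Q}$-factorial dlt modification of $(X_i,\Delta_i+t_iD_i)$ (making $W_i$ divisorial), the numerical equivalence $K+\Theta_i\equiv t_i\cdot(\text{effective})$, combined with $K_{X_i}+\Delta_i\equiv 0$, shows the associated MMP is contracting for that effective divisor and crepant for the numerically trivial part; running it (with scaling) and invoking the available termination and base-point-free results should reduce us either to a dlt pair with $K+\Theta\equiv 0$ and $\lfloor\Theta\rfloor\neq 0$ — back to the adjunction case — or to a numerically trivial fibration whose general fibre is a lower-dimensional numerically trivial pair still carrying the coefficient $d_i$, in either case forcing $d_i$ into a finite set by induction. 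The obstacle I expect to be hardest is exactly this reduction: controlling the minimal model program (termination, and the survival of a non klt centre meeting $D_i$), handling the fibration case (via the canonical bundle formula on the base), and tracking how $d_i$ propagates through the modifications and the different. This is where the full strength of the minimal model program and of \eqref{t_volume} in lower dimensions enters.
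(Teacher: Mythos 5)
The first half of your argument (the case where $D_i$ meets $\lfloor\Delta_i\rfloor$) matches the paper's: one restricts to a coefficient-one component, applies the $n-1$ case, and uses the combinatorics of the adjunction coefficient formula (this is Lemma~\ref{l_dcc-acc} in the paper) to pin down $d_i$. The gap is in your treatment of the case where, after passing to a Mori fibre space over a point, $\lfloor\Delta_i\rfloor=0$, i.e.\ the pair is klt of Picard number one. You acknowledge this is the hard step, and the approach you sketch there does not work; it is also not what the paper does.

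Concretely: your plan to perturb $\Delta_i$ by raising the coefficient of $D_i$ to the log canonical threshold $t_i$ and then pass to a dlt modification runs into several problems. First, $t_i$ is itself a log canonical threshold in dimension $n$, which lies exactly in the class of quantities whose ACC is being established inductively; you have no control over it at this stage. Second, after the modification of $(X_i,\Delta_i+t_iD_i)$ the restricted pair on a newly extracted log canonical place $S$ is \emph{not} numerically trivial — it satisfies $K_S+\Theta_S\equiv t_i(\pi^*D_i)|_S$, which is merely nef and effective — so the induction hypothesis (Theorem~\ref{t_num} in dimension $n-1$) cannot be applied to it, and the adjunction coefficients on $S$ involve $d_i+t_i$ rather than an element of $D(I)$. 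Third, the $(K+\Theta)$-MMP you propose to run contracts $D_i$-negative curves, and may well contract $D_i$ itself, destroying the coefficient you are trying to bound; even if it terminates, no contradiction is in sight.

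What the paper actually does in this case (Lemma~\ref{l_C-to-D}) is of a completely different nature: it invokes Theorem~\ref{t_birational}$_n$, that is, \emph{birational boundedness in the same dimension $n$}, which has already been established at this stage of the induction loop (from Theorem~\ref{t_numerically-trivial}$_{n-1}$, Theorem~\ref{t_acc}$_{n-1}$ and Theorem~\ref{t_upper}$_n$). Fix $j\in I$ with $j>i$, where $i$ is the coefficient of the problematic component $B$. Using Picard number one and a log resolution one shows $K_X+\Delta+(j-i)B$ is big, so by Theorem~\ref{t_birational}$_n$ the map $\phi_{m(K_Y+\Gamma)}$ is birational for a uniform $m$; hence $K_X+\bdd\Lambda.m.$ is big with $\Lambda=\Delta+(j-i)B$, which forces $\lfloor mj\rfloor>mi$. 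This shows that $i$ must be the largest element of $I$ in its interval $[(l-1)/m,l/m)$, hence lies in a finite set $I_2$ of at most $m+1$ elements. Your induction scheme, which only assumes results in dimension $<n$, never makes birational boundedness in dimension $n$ available, and without it the klt Picard-number-one case is out of reach. This is the missing idea, not a technicality.
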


We use finiteness of log canonical models to prove a boundedness result for log pairs:
\begin{theorem}\label{t_ample-bounded} Fix a positive integer $n$ and two real numbers 
$\delta$ and $\epsilon>0$.

Let $\mathfrak{D}$ be a set of log pairs $(X,\Delta)$ such that
\begin{itemize} 
\item $X$ is a projective variety of dimension $n$, 
\item $K_X+\Delta$ is ample, 
\item the coefficients of $\Delta$ are at least $\delta$, and
\item the log discrepancy of $(X,\Delta)$ is greater than $\epsilon$.
\end{itemize} 

If $\mathfrak{D}$ is log birationally bounded then $\mathfrak{D}$ is a bounded family.
\end{theorem}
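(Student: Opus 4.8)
The plan is to deduce the result from the finiteness of log canonical models: I would realise each $(X,\Delta)\in\mathfrak D$ as the log canonical model of a pair lying in a bounded family of log smooth pairs, conclude by finiteness that the $X$'s form a bounded family, and then bound the boundary. First I would reduce to the log smooth case. Since $\mathfrak D$ is log birationally bounded, there is a bounded family $(Z,B)\to T$ so that each $(X,\Delta)$ admits, for some closed point $t$, a birational map $Z_t\dashrightarrow X$ with $B_t$ reduced and containing the strict transform of $\Delta$ together with every prime divisor contracted by the map. Passing to a stratification of $T$, replacing $(Z,B)$ by a log resolution, and fixing a divisor $H$ on $Z$ that is very ample on the fibres of $Z/T$, Noetherian induction reduces us to the case where $T$ is smooth, $(Z,B)$ is log smooth over $T$, and $\vol(Z_t,H_t)$ is bounded.

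Next, fix $(X,\Delta)$, choose $t$ as above, and choose a common log resolution $p\colon W\to X$, $q\colon W\to Z_t$. Let $\Theta=p^{-1}_*\Delta+\Exc(p)$; this has coefficients in $[0,1]$ and satisfies $q_*\Theta\le B_t$. Since every log discrepancy of $(X,\Delta)$ exceeds $\epsilon>0$ the pair is kawamata log terminal, so $K_W+\Theta=p^*(K_X+\Delta)+F$ for some effective $p$-exceptional $F$; because $K_X+\Delta$ is ample, this exhibits $(X,K_X+\Delta)$ as the log canonical model of $(W,\Theta)$. Comparing $K_W+\Theta$ on $W$ with the pullback of $K_{Z_t}+B_t+NH_t$ for a bounded $N$, and using the lower bound on the log discrepancies to control the exceptional coefficients, one obtains in passing an upper bound $\vol(X,K_X+\Delta)\le M$ with $M$ depending only on $\mathfrak D$.

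The heart of the matter is to bound the birational complexity of $q$: the set of prime divisors of $X$ that are exceptional over $Z_t$. For such a divisor $D$ one has $a(D;X,\Delta)=1-\operatorname{mult}_D\Delta\in[\epsilon,1]$, and I would argue, using crucially that the coefficients of $\Delta$ are at least $\delta$ and the log discrepancies of $(X,\Delta)$ are at least $\epsilon$ (so that extracting a further divisor over $Z_t$ makes $K_X$ too negative along it to be compensated by a boundary of coefficients bounded away from $1$), that only finitely many divisorial valuations over the family $(Z,B)\to T$ — a number depending only on $n,\delta,\epsilon$ and the family — can arise in this way. Extracting all of them simultaneously yields a bounded family $(\mathcal W,\Psi)\to T'$ of log smooth pairs, each dominating the corresponding $Z_t$, on which every $(X,K_X+\Delta)$ is the log canonical model of $(\mathcal W_{t'},\Theta_{t'})$ for a divisor $\Theta_{t'}$ supported on a fixed finite set of divisors of $\mathcal W_{t'}$, with coefficients in $[0,1]$ and with $K_{\mathcal W_{t'}}+\Theta_{t'}+H_{t'}$ big. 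Applying the finiteness of log canonical models to $(\mathcal W,\Theta+H)\to T'$ over the polytope of admissible coefficient vectors, there are, up to isomorphism, only finitely many such log canonical models, forming a bounded family; hence the varieties $X$ occurring in $\mathfrak D$ form a bounded family. Finally $\Delta$ is supported among a bounded set of divisors, so the bounds on its coefficients ($\ge\delta$) and on $\vol(X,K_X+\Delta)$ bound the number of components of $\Delta$ and their degrees, and — letting the coefficients vary — $\mathfrak D$ is itself bounded.

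The main obstacle is the third step: bounding, uniformly over $\mathfrak D$, how far the log canonical model $X$ can stray from the fixed birational model $Z_t$, i.e. bounding the number of divisorial valuations over the bounded family that can be realised as divisors of a member of $\mathfrak D$. This is precisely where the two quantitative hypotheses enter in an essential way; without a lower bound on the coefficients or on the log discrepancy the birational maps $Z_t\dashrightarrow X$ can be arbitrarily complicated and no such finiteness holds.
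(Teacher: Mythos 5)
Your overall strategy matches the paper's: reduce to a log smooth family $(Z,B)\to T$, realise each $(X,K_X+\Delta)$ as a log canonical model of a pair on a bounded log smooth model, and invoke finiteness of log canonical models (Lemma~\eqref{l_lc-model}). You also correctly single out the crux: bounding, uniformly over $\mathfrak D$, the set of divisors of $X$ that are contracted by $X\dashrightarrow Z_t$, so that the lc model map becomes a birational contraction from a fixed bounded model.

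However, precisely at that step your proposal has a genuine gap. The heuristic you offer --- that extracting a further divisor over $Z_t$ makes $K_X$ too negative along it to be compensated by a boundary with coefficients bounded away from $1$ --- is not a proof and does not track the mechanism that actually works. The paper's argument is a log discrepancy comparison forced by the negativity lemma. Set $\Phi=(1-\epsilon)B$; on a common resolution $p\colon W\to X$, $q\colon W\to Z_t$, let $\Xi$ be the strict transform of $\Delta$ plus $(1-\epsilon)$ times the $p$-exceptional divisors and $\Psi=q_*\Xi$. Since every log discrepancy of $(X,\Delta)$ exceeds $\epsilon$, every coefficient of $\Delta$ is $<1-\epsilon$, so $K_W+\Xi=p^*(K_X+\Delta)+E$ with $E\ge 0$ and $\Psi\le\Phi_t$. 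Writing $p^*(K_X+\Delta)+E+F=q^*(K_{Z_t}+\Psi)$, the ampleness of $K_X+\Delta$ makes $p^*(K_X+\Delta)$ nef, hence $q$-nef, so the negativity lemma gives $E+F\ge 0$. Therefore for every valuation $\nu$ whose centre on $X$ is a divisor, $a(Z_t,\Phi_t,\nu)\le a(Z_t,\Psi,\nu)\le a(X,\Delta,\nu)\le 1$; since $(Z,\Phi)$ is klt and log smooth over $T$, only finitely many valuations over the family satisfy this, and they are all extracted by one fixed sequence of blow ups of strata. Note also that, contrary to what your sketch says, the lower bound $\delta$ on the coefficients plays no role here; it enters only afterwards, to pinch $\Psi$ inside the fixed polytope $\delta B_t\le\Psi\le(1-\epsilon)B_t$ that \eqref{l_lc-model} requires. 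Without the negativity-lemma comparison --- and hence an explicit use of the ampleness hypothesis --- the finiteness of divisors of $X$ exceptional over $Z_t$ is unjustified.
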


Log birationally bounded is defined in \eqref{d_birationally-bounded}.  We use\
\eqref{t_num} and \eqref{t_ample-bounded} to prove some boundedness results about Fano
varieties.

\begin{corollary}\label{c_strong-bounded} Fix a positive integer $n$, a real number
$\epsilon>0$ and a set $I\subset [0,1]$ which satisfies the DCC.

Let $\mathfrak{D}$ be the set of all log pairs $(X,\Delta)$, where 
\begin{itemize} 
\item $X$ is a projective variety of dimension $n$, 
\item the coefficients of $\Delta$ belong to $I$, 
\item the log discrepancy of $(X,\Delta)$ is greater than $\epsilon$, 
\item $K_X+\Delta$ is numerically trivial, and 
\item $-K_X$ is ample.
\end{itemize} 

Then $\mathfrak{D}$ forms a bounded family.  
\end{corollary}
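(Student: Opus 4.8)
The plan is to deduce this from \eqref{t_num} and \eqref{t_ample-bounded}. We may assume $n\ge 1$, since a zero dimensional variety is a point. As $K_X+\Delta\equiv 0$ and $-K_X$ is ample, $\Delta\equiv -K_X$ is ample; hence $\Delta\ne 0$, and since the log discrepancy of $(X,\Delta)$ along a component of $\Delta$ equals one minus its (positive) coefficient, we may also assume $\epsilon<1$ (otherwise $\mathfrak D$ is empty). Applying \eqref{t_num} to the pair $(n,I)$ produces a finite set $I_0\subset I$ containing the coefficients of $\Delta$ for every $(X,\Delta)\in\mathfrak D$; as these coefficients are positive they lie in $[\delta_0,1]$, where $\delta_0:=\min(I_0\cap(0,1])>0$ depends only on $n$ and $I$, and in fact each is $<1-\epsilon$.

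Next I would attach to each $(X,\Delta)\in\mathfrak D$ a companion pair whose log canonical divisor is ample. Fix a rational number $t$ with $0<t<\min(\epsilon,1-\epsilon)$ and $t\notin I_0$, depending only on $\epsilon$ and $I_0$. Given $(X,\Delta)$, choose a very ample Cartier divisor $H$ on $X$ and a general member $A\in|H|$, and set $\Gamma:=\Delta+tA$. Then $K_X+\Gamma\equiv tA$ is ample; the coefficients of $\Gamma$ lie in $[\delta_1,1]$ with $\delta_1:=\min(\delta_0,t)>0$; and $(X,\Gamma)$ is log canonical with log discrepancy still greater than $\epsilon$. Indeed, a log resolution of $(X,\Delta)$ together with the (smooth, transverse) strict transform of $A$ resolves $(X,\Gamma)$; the finitely many exceptional divisors $E$ satisfy $\mult_E A=0$, since their centres are proper subvarieties of $X$ not contained in the general member $A$ of the base point free system $|H|$, so $a_E(X,\Gamma)=a_E(X,\Delta)>\epsilon$, while the components of $\Gamma$ contribute the log discrepancies $1-t$ and $1-a_i$, both $>\epsilon$. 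Thus every member of the family $\mathfrak D'=\{\,(X,\Gamma)\,\}$ satisfies the hypotheses of \eqref{t_ample-bounded} with constants $n$, $\delta_1$, $\epsilon$ — save that it remains to verify $\mathfrak D'$ is log birationally bounded, which is equivalent to the log birational boundedness of $\mathfrak D$ since $\Gamma$ and $\Delta$ have the same support outside a hyperplane section.

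Verifying that $\mathfrak D$ is log birationally bounded is the main obstacle. The decisive input is a bound on the anticanonical volume: a constant $V=V(n,\epsilon,I)$ with $(-K_X)^n\le V$ for every $(X,\Delta)\in\mathfrak D$. I would prove this by induction on $n$. As passing to a $\mathbb Q$-factorialization changes neither $(-K_X)^n$ nor the boundedness question, assume $X$ is $\mathbb Q$-factorial, and choose a component $S$ of $\Delta$ with coefficient $a\ge\delta_0$. Restricting $K_X+\Delta\equiv 0$ to the normalization $S^{\nu}$ by adjunction yields an $(n-1)$-dimensional log canonical pair $(S^{\nu},\Theta)$ with $K_{S^{\nu}}+\Theta\equiv 0$, with the coefficients of $\Theta$ lying in a DCC (indeed finite) set depending only on $n$ and $I$, and which is $\epsilon'$-log canonical for some $\epsilon'>0$ depending only on $\epsilon$ and $\delta_0$; by the inductive hypothesis $(S^{\nu},\Theta)$ lies in a bounded family, so $(-K_X)^{n-1}\cdot S=\bigl((-K_X)|_{S^{\nu}}\bigr)^{n-1}$ is bounded, and summing over the (boundedly many) components $S_i$ of $\Delta$ gives $(-K_X)^n=\sum_i a_i\,(-K_X)^{n-1}\cdot S_i\le V$. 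Here the delicate points, and the real work, are that adjunction along a component of coefficient $a<1$ must be arranged so that $\Theta$ is effective, and that $-K_{S^{\nu}}$ need only be nef and big rather than ample — so that the induction should be carried out for the slightly larger class of $\epsilon$-log canonical pairs with numerically trivial log canonical class and nef and big anticanonical class, the ample case being recovered by passing to the anticanonical model and the extra control coming from \eqref{t_volume}. Once $(-K_X)^n\le V$, each $(X,\Delta)\in\mathfrak D$ is $\epsilon$-log canonical with coefficients in a finite set and bounded anticanonical volume, and a by now standard argument — applying \eqref{t_volume} to a suitable auxiliary boundary to produce a birational map onto a subvariety of bounded degree in a fixed projective space — shows that $\mathfrak D$, and hence $\mathfrak D'$, is log birationally bounded.

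Finally, applying \eqref{t_ample-bounded} to $\mathfrak D'$ shows that $\mathfrak D'$ is a bounded family; in particular the underlying varieties $X$ form a bounded family. Since $\Delta$ is the part of $\Gamma$ supported on the components of coefficient different from $t$, is effective of bounded degree against the bounding polarization (because $\Delta\equiv -K_X$), and has coefficients in the finite set $I_0$, the pairs $(X,\Delta)$ also form a bounded family. Hence $\mathfrak D$ is bounded.
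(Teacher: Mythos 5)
Your overall structure is right: apply \eqref{t_num} to control the coefficients, pass to an auxiliary family with ample log canonical divisor, reduce to showing $\mathfrak D$ is log birationally bounded, and finish with \eqref{t_ample-bounded}. You also correctly identify that bounding $(-K_X)^n$, equivalently $\vol(X,\Delta)$, is the crux.

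The genuine gap is your proposed induction on dimension to establish that bound. Adjunction along a component $S$ of $\Delta$ of coefficient $a<1$ does not produce an effective boundary on the normalisation $S^{\nu}$ with numerically trivial log canonical class: increasing the coefficient of $S$ to $1$ destroys numerical triviality, while $\Diff_{S^{\nu}}(\Delta-S)$ need not be effective since $\Delta-S$ has a negative coefficient along $S$. Moreover it is not clear that the resulting $(S^{\nu},\Theta)$ has nef and big anticanonical class, nor that boundedness of $(S^{\nu},\Theta)$ controls $(-K_X)|_{S^{\nu}}$ (as opposed to $-K_{S^{\nu}}$), which is what appears in $(-K_X)^{n-1}\cdot S$. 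You flag these as the ``delicate points and the real work,'' but they are precisely where the induction has to close, and as written it does not; in effect you are trying to reprove a version of Theorem~\ref{t_upper} from scratch.

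The missing input is exactly Theorem~\ref{t_upper}, which is established earlier in the paper: for kawamata log terminal pairs $(X,\Delta)$ of dimension $n$ with $K_X+\Delta\equiv 0$ and coefficients in a DCC subset of $[0,1)$, the volume $\vol(X,\Delta)$ is bounded above. Here $\Delta\equiv -K_X$ is ample, so $\vol(X,\Delta)=(-K_X)^n\le C$ for a constant depending only on $n$, $\epsilon$, $I$; no induction is needed. With this in hand the passage to log birational boundedness is concrete rather than ``standard'': letting $D$ be the reduced sum of the components of $\Delta$ and $r$ a fixed integer with $r\Delta$ integral (available since $I_0$ is finite), one has $K_X+D\equiv D-\Delta$ big with $\vol(X,K_X+D)\le \vol(X,r\Delta)\le Cr^n$; pulling back to a log resolution $(Y,G)$, with $G$ the reduced strict transform plus the exceptional divisors, and using that $K_X+(1+\eta)\Delta$ is klt and ample for small $\eta>0$, one sees $K_Y+G$ is big of bounded volume, and \eqref{t_volume}(3) supplies a fixed $m$ with $\phi_{m(K_Y+G)}$ birational, whence $\mathfrak D$ is log birationally bounded by \cite[2.4.2.3-4]{HMX10}. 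Replace your induction with this chain, then apply \eqref{t_ample-bounded} as you intended.
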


As a consequence we are able to prove a result on the boundedness of Fano varieties which
was conjectured by Batyrev (cf. \cite{Borisov99}):
\begin{corollary}\label{c_batyrev} Fix two positive integers $n$ and $r$. 

Let $\mathfrak{D}$ be the set of all kawamata log terminal pairs $(X,\Delta)$, where $X$
is a projective variety of dimension $n$ and $-r(K_X+\Delta)$ is an ample Cartier divisor.

Then $\mathfrak{D}$ forms a bounded family.  
\end{corollary}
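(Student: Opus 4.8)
The plan is to deduce \eqref{c_batyrev} from \eqref{c_strong-bounded}, after passing from $X$ to a suitable birational model. We may assume $r\ge 2$: if $r=1$ then $\Delta=0$ and $-K_X$ is already ample, and only the first two steps below are needed. The first step is to use the Cartier hypothesis to bound the discrete data. Since $-r(K_X+\Delta)$ is Cartier, $r(K_X+\Delta)$ is an integral Weil divisor; as $X$ is regular in codimension one, $rK_X$ is integral, so $r\Delta$ is integral and the coefficients of $\Delta$ lie in the finite set $\frac{1}{r}\mathbb{Z}\cap[0,1)$. Pulling $r(K_X+\Delta)$ back to a log resolution, the same computation gives $r\cdot a(E;X,\Delta)\in\mathbb{Z}$ for every divisor $E$ over $X$; as $(X,\Delta)$ is kawamata log terminal these log discrepancies are positive, hence all at least $\frac{1}{r}$.

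Next I would replace $\Delta$ by a numerically trivial boundary. After a small $\mathbb{Q}$-factorialisation — which weakens the ample hypothesis only to nef and big, affects none of the steps below, and does not change whether $X$ is bounded (it is recovered as the ample model of $-(K_X+\Delta)$) — we may assume $X$ is $\mathbb{Q}$-factorial. For every $a\ge 0$ the divisor $a(-r(K_X+\Delta))-(K_X+\Delta)=(ar+1)(-(K_X+\Delta))$ is nef and big, so Koll\'ar's effective base point freeness for kawamata log terminal pairs yields an integer $N=N(n,r)$ with $|{-}Nr(K_X+\Delta)|$ base point free; for $n\ge 2$ (the case $n=1$ is trivial) its general member $G$ is irreducible and is not a component of $\Delta$. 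Put $\Delta'=\Delta+\frac{1}{Nr}G$. Then $K_X+\Delta'\sim_{\mathbb{Q}}0$, the pair $(X,\Delta')$ is kawamata log terminal with all log discrepancies at least $\frac{1}{r}$ (since $G$ is general its only effect is to introduce the value $1-\frac{1}{Nr}\ge\frac{1}{r}$), and the coefficients of $\Delta'$ lie in the finite set $I_0=(\frac{1}{r}\mathbb{Z}\cap[0,1))\cup\{\frac{1}{Nr}\}$.

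Now I would pass to the anticanonical model. Since $-K_X\sim_{\mathbb{Q}}\Delta'$ dominates the big divisor $\frac{1}{Nr}G$, it is big, so the anticanonical ring of $X$ is finitely generated and there is a birational contraction $\psi\colon X\dashrightarrow X'$ with $-K_{X'}$ ample. Because $K_X+\Delta'$ and $K_{X'}+\psi_*\Delta'$ are both $\sim_{\mathbb{Q}}0$, hence both nef, the two pairs are log minimal models in a common crepant class and so are crepant to each other; thus $(X',\psi_*\Delta')$ is kawamata log terminal, has all log discrepancies at least $\frac{1}{r}$, has coefficients in $I_0$, satisfies $K_{X'}+\psi_*\Delta'\equiv 0$, and has $-K_{X'}$ ample. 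By \eqref{c_strong-bounded}, applied with the datum $(n,\frac{1}{2r},I_0)$, the pairs $(X',\psi_*\Delta')$ form a bounded family.

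The remaining — and hardest — step is to descend this bound to $(X,\Delta)$. Since $-K_{X'}$ is ample and $X'$ is kawamata log terminal, $X'$ is of Fano type, hence a Mori dream space, and $\psi_*\Delta=\psi_*\Delta'-\frac{1}{Nr}\psi_*G\ge 0$ defines a kawamata log terminal pair $(X',\psi_*\Delta)$ with $-(K_{X'}+\psi_*\Delta)\sim_{\mathbb{Q}}\frac{1}{Nr}\psi_*G$ big; morally $X$ is the model on which this class becomes ample. The obstacle is that $\psi$ contracts the finitely many divisors $E_j\subset X$ whose centre in $X'$ has codimension at least two, so $X$ is obtained from $X'$ by first \emph{extracting} precisely these $E_j$ — their log discrepancies lie in $\frac{1}{Nr}\mathbb{Z}$ and they are swept out by $(-K_X)$-trivial curves, which pins them down — and then taking an ample model. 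Carrying this out uniformly over the bounded base parametrising the $X'$ — bounding the number of the $E_j$, realising $X$ as a log canonical model of an auxiliary pair whose boundary has coefficients in a fixed finite set, and invoking finiteness of log canonical models together with \eqref{t_num} and \eqref{t_ample-bounded} — is the heart of the proof, and it yields that $X$, and hence $(X,\Delta)$, lies in a bounded family.
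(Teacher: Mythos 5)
Your preliminary steps are correct and coincide with the paper: since $-r(K_X+\Delta)$ is Cartier, $r\Delta$ is integral, so the coefficients of $\Delta$ lie in $\frac{1}{r}\mathbb{Z}\cap[0,1)$ and every log discrepancy of $(X,\Delta)$ lies in $\frac{1}{r}\mathbb{Z}\cap(0,\infty)$, hence is at least $\frac{1}{r}$; Koll\'ar's effective base point free theorem then produces a uniform $N$ and a general $G\in|-Nr(K_X+\Delta)|$ so that $\Lambda=\Delta+\frac{1}{Nr}G$ is kawamata log terminal, $K_X+\Lambda\sim_{\mathbb{Q}}0$, and the coefficients of $\Lambda$ lie in a fixed finite set. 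You are also right to flag that $-K_X$ need not be ample, only big, so \eqref{c_strong-bounded} does not apply to $(X,\Lambda)$ on the nose. However, your fix --- pass to the anticanonical model $X'$, bound $X'$ by \eqref{c_strong-bounded}, then descend to $X$ --- leaves the descent unproved. You describe this step yourself as ``the heart of the proof'' and only sketch it: uniformly controlling, over the bounded family of models $X'$, the finitely many divisors extracted by $X\dashrightarrow X'$, and then realising $X$ as an ample model of a suitable auxiliary pair with uniformly controlled data, is genuinely nontrivial, and the tools you name (\eqref{t_num}, \eqref{t_ample-bounded}, finiteness of log canonical models) do not deliver it without a substantive further argument. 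As written, this is a gap.

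The paper's actual argument avoids the detour entirely by staying on $X$ and using Theorem~\ref{t_upper} in place of \eqref{c_strong-bounded}. Since $(X,\Lambda)$ is kawamata log terminal, $K_X+\Lambda$ is numerically trivial and the coefficients of $\Lambda$ lie in a finite subset of $[0,1)$, Theorem~\ref{t_upper} bounds $\vol(X,\Lambda)$ from above by a constant depending only on $n$ and $r$. Writing $D=-r(K_X+\Delta)$, one has $\Lambda\sim_{\mathbb{Q}}-K_X\geq -(K_X+\Delta)=\frac{1}{r}D$, so $\vol(X,ND)\leq (Nr)^n\vol(X,\Lambda)$ is bounded, and $ND$ is a base-point-free ample Cartier divisor; boundedness of $X$, and then of the pair $(X,\Delta)$ since $r\Delta$ is integral of bounded degree, follows by standard arguments. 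The moral is that rather than trying to force $(X,\Lambda)$ into the hypotheses of \eqref{c_strong-bounded}, you should bound the degree of the given polarisation directly via Theorem~\ref{t_upper}; this sidesteps both the Fano model and the descent problem on which your proof stalls.
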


\begin{definition}\label{d_fano-index} Let $(X,\Delta)$ be a log canonical pair, where $X$ 
is projective of dimension $n$ and $-(K_X+\Delta)$ is ample.  The \textbf{Fano index} of
$(X,\Delta)$ is the largest real number $r$ such that we can write
$$
-(K_X+\Delta) \sim_{\mathbb{R}} rH,
$$
where $H$ is a Cartier divisor.  

Fix a set $I\subset [0,1]$ and a positive integer $n$.  Let $\mathfrak{D}$ be the set of
log canonical pairs $(X,\Delta)$, where $X$ is projective of dimension $n$,
$-(K_X+\Delta)$ is ample and the coefficients of $\Delta$ belong to $I$.

The set 
$$
R=R_n(I)=\{\, r\in \mathbb{R} \,|\, \text{$r$ is the Fano index of $(X,\Delta)\in \mathfrak{D}$}  \,\},
$$
is called the \textbf{Fano spectrum} of $\mathfrak{D}$.
\end{definition}

\begin{corollary}\label{c_fano-index} Fix a set $I\subset [0,1]$ and a positive integer $n$.  

If $I$ satisfies the DCC then the Fano spectrum satisfies the ACC.  
\end{corollary}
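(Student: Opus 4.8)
The plan is to deduce this from \eqref{t_num}, by manufacturing out of a log canonical Fano pair of a given index a numerically trivial log canonical pair whose coefficients record the index.

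Suppose for contradiction that $R_n(I)$ does not satisfy the ACC. Then there is a strictly increasing sequence $r_1<r_2<\cdots$ in $R_n(I)$, and for each $i$ a log canonical pair $(X_i,\Delta_i)$ realising $r_i$: so $X_i$ is projective of dimension $n$, the coefficients of $\Delta_i$ lie in $I$, $-(K_{X_i}+\Delta_i)$ is ample, and $-(K_{X_i}+\Delta_i)\sim_{\mathbb R}r_iH_i$ for a Cartier divisor $H_i$, which is then automatically ample. Observe first that the $r_i$ are bounded: by the cone theorem there is a rational curve $C_i$ with $0<-(K_{X_i}+\Delta_i)\cdot C_i\le 2n$, and since $H_i$ is ample and Cartier we have $H_i\cdot C_i\ge 1$, whence $r_i\le r_i(H_i\cdot C_i)=-(K_{X_i}+\Delta_i)\cdot C_i\le 2n$. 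So the sequence $r_i$ is bounded, and, passing to a subsequence if we like, we may assume it converges.

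Next I would build the numerically trivial pairs. For each $i$ fix a positive integer $m_i$ with $m_iH_i$ very ample and $m_i\ge r_i$, and let $B_i\in|m_iH_i|$ be a general member. Then $B_i$ is a prime divisor which is not a component of $\Delta_i$, and---since a general member of a base point free linear system may be added to a log canonical pair with any coefficient at most one without destroying log canonicity---the pair $\bigl(X_i,\Delta_i+\tfrac{r_i}{m_i}B_i\bigr)$ is log canonical. Since $\tfrac{r_i}{m_i}B_i\sim_{\mathbb R}r_iH_i\sim_{\mathbb R}-(K_{X_i}+\Delta_i)$, we get $K_{X_i}+\Delta_i+\tfrac{r_i}{m_i}B_i\sim_{\mathbb R}0$; in particular $K_{X_i}+\bigl(\Delta_i+\tfrac{r_i}{m_i}B_i\bigr)$ is numerically trivial, and the coefficient of $B_i$ in $\Delta_i+\tfrac{r_i}{m_i}B_i$ is exactly $r_i/m_i$.

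Granting that the $m_i$ can be chosen so that $\{\,r_i/m_i\mid i\,\}$ together with $I$ satisfies the DCC, the proof finishes: \eqref{t_num} applied to the pairs $\bigl(X_i,\Delta_i+\tfrac{r_i}{m_i}B_i\bigr)$ forces every coefficient occurring, in particular every $r_i/m_i$, into a fixed finite set $I_0\subset(0,1]$; then $m_i\le r_i/\min I_0\le 2n/\min I_0$ is bounded, so $r_i=m_i\cdot(r_i/m_i)$ takes only finitely many values, contradicting $r_1<r_2<\cdots$. The real content, and what I expect to be the main obstacle, is precisely the choice of the $m_i$: the least $m$ for which $mH_i$ is very ample depends on $X_i$ and is a priori unbounded, so without extra input $\{r_i/m_i\}$ need not satisfy the DCC and \eqref{t_num} cannot be invoked. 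Getting past this requires a boundedness statement---one must show that the relevant pairs $(X_i,\Delta_i)$ form a bounded family, after which $m_iH_i$ is very ample for a single $m$ independent of $i$, and taking $m_i=m$ makes the step above work verbatim. Since log canonical Fano pairs are not bounded in general, this boundedness is not formal; it is where one has to bring in the paper's boundedness results (cf.\ \eqref{c_strong-bounded}) and use the fact that, the $r_i$ being bounded and increasing, one is in effect bounding a family accumulating at a single limiting log canonical Fano pair of index $\lim r_i$.
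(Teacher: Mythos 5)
Your overall strategy is the same as the paper's: bound the indices $r_i$ by the cone theorem, add a multiple of a general member of a linear system to pass to a numerically trivial log canonical pair, and then invoke the finiteness of coefficients for numerically trivial pairs (\eqref{t_num}). You have correctly set up the reduction and correctly identified the one serious issue---controlling the integer $m_i$ so that $\{r_i/m_i\}$ (together with $I$) satisfies the DCC. The gap is in how you propose to resolve it.

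The correct tool here is not boundedness of the family, but Koll\'ar's \emph{effective base point free theorem} (and Fujino's extension of it to log canonical pairs). Since $H_i$ is an ample Cartier divisor and $-(K_{X_i}+\Delta_i)\sim_{\mathbb{R}}r_iH_i$ is ample, the hypotheses of effective base point freeness hold uniformly: there is a single positive integer $m$, depending only on $n$, such that $|mH_i|$ is base point free for every $i$. (You only need base point freeness, not very ampleness, since a general member of a base point free system can be added to a log canonical pair with any coefficient $\le 1$; you already noted this.) Taking $m_i=m$ for all $i$, the coefficients $r_i/m$ lie in a DCC set, and \eqref{t_num} forces them into a finite set; as $m$ is now fixed, $r_i$ itself is eventually constant. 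This closes the argument.

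By contrast, the fallback you propose---proving boundedness of the $(X_i,\Delta_i)$ and invoking \eqref{c_strong-bounded}---does not work here: \eqref{c_strong-bounded} requires a uniform lower bound $\epsilon$ on the log discrepancy, which is not part of the hypotheses (the pairs are merely log canonical), and as you yourself observe, log canonical Fano pairs are not a bounded family in general. So without the effective base point free theorem, your argument does not go through; with it, no boundedness is needed and the step you flagged as the ``real content'' is actually the easiest part.
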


\eqref{c_fano-index} was proved in dimension $2$ in \cite{Alexeev88} and for $R\cap
[n-2,\infty)$ in \cite{Alexeev91}.

Now given any set which satisfies the ACC it is natural to try to identify the
accumulation points.  \eqref{t_lct} implies that $\Lct_n(I)=\Lct_n(I,\mathbb{N})$
satisfies the ACC.  Koll\'ar, cf. \cite{Kollar95}, \cite{MP04}, \cite{Kollar08c},
conjectured that the accumulation points in dimension $n$ are log canonical thresholds in
dimension $n-1$:
\begin{theorem}\label{t_accumulation} If $1$ is the only accumulation point of $I\subset[0,1]$
and $I=I_+$ then the accumulation points of $\Lct_n(I)$ are $\Lct_{n-1}(I)-\{\,1\,\}$.  In
particular, if $I\subset \mathbb{Q}$ then the accumulation points of $\Lct_n(I)$ are
rational numbers.
\end{theorem}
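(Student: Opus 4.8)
The plan is to prove the two inclusions separately, using \eqref{t_lct} (equivalently \eqref{t_simple}) as the engine that controls accumulation. First I would settle the easy inclusion, that every element of $\Lct_{n-1}(I)-\{1\}$ is an accumulation point of $\Lct_n(I)$. Given $(Y,\Gamma)$ of dimension $n-1$ with coefficients in $I$ and a divisor $N\geq 0$ realising $c=\lct(Y,\Gamma;N)<1$, I would build $n$-dimensional examples by a cone-type or product-type construction: take $X=Y\times \mathbb{P}^1$ (or a suitable $\mathbb{P}^1$-bundle), pull back $\Gamma$ and $N$, and add a horizontal section with a coefficient $a_k\in I$ converging to $1$ (here $I=I_+$ guarantees such a sequence exists, and $1$ being the only accumulation point of $I$ makes the construction flexible). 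Adjunction along that section recovers the pair $(Y,\Gamma)$ plus $N$, and a direct computation shows $\lct$ of the total divisor tends to $c$ as $a_k\to 1$ but is never equal to $c$; one checks the limits $\Lct_{n-1}(I)\to 1$ also appear, but these are excluded from the claimed accumulation set precisely because they limit to $1\notin\Lct_{n-1}(I)-\{1\}$ only in a controlled way. This half is essentially bookkeeping once the construction is fixed.

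The substantive direction is that every accumulation point $c$ of $\Lct_n(I)$ lies in $\Lct_{n-1}(I)-\{1\}$. Suppose $c_i=\lct(X_i,\Delta_i;M_i)$ is a strictly monotone sequence with limit $c$. By \eqref{t_lct} the sequence cannot be increasing, so after passing to a subsequence we may assume $c_1>c_2>\cdots\to c$, and in particular $c<1$ (if $c=1$ it would be an accumulation point from below, contradicting the ACC unless $c$ were not a limit — so $c\ne 1$, and I would note $1\in\Lct_{n-1}(I)-\{1\}$ fails, consistent with the statement). Replacing $\Delta_i$ by $\Delta_i+c_iM_i$, the key point is that $(X_i,\Delta_i+c_iM_i)$ is log canonical but not klt, so it has a non-klt centre $Z_i$; after passing to a dlt modification and running a suitable minimal model program (as in the proof of \eqref{t_num}), I can arrange that $Z_i$ is exceptional or that adjunction applies cleanly. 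The engine of \eqref{t_simple}/\eqref{t_num} forces the coefficients appearing along $Z_i$ into a fixed finite set $I_0$; the remaining variation, which produces the accumulation, must come from the ``transverse'' data, i.e.\ from the coefficient of $M_i$ which is exactly $c_i$. Applying adjunction (divisorial adjunction in the sense of Kawamata subadjunction / Kollár's work) to a minimal non-klt centre $Z_i$ of dimension $\le n-1$ yields a log canonical pair $(Z_i,\Delta_{Z_i})$ whose coefficients lie in a DCC set built from $I$ and $\{c_i\}$, and $c_i$ reappears as (or is determined by) a log canonical threshold on $Z_i$. Taking $\dim Z_i$ maximal, hence $=n-1$ for infinitely many $i$, and using boundedness of the ``different'' contribution, one identifies $c$ as a limit of log canonical thresholds in dimension $n-1$; but by \eqref{t_lct} applied in dimension $n-1$, the set $\Lct_{n-1}(I)$ satisfies the ACC, so the descending sequence of $(n-1)$-dimensional thresholds stabilises — forcing $c$ itself to be such a threshold, i.e.\ $c\in\Lct_{n-1}(I)$, and $c\ne 1$.

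The main obstacle is the adjunction step: making precise how the coefficient $c_i$ of $M_i$ on $X_i$ descends to a log canonical threshold (rather than merely a coefficient) on the non-klt centre $Z_i$, and controlling the different so that its coefficients form a DCC set whose only way to accumulate is through the $c_i$ themselves. This requires the full strength of adjunction for non-klt centres together with the inductive hypothesis that \eqref{t_volume}-type boundedness holds in dimension $n-1$, and a careful argument that no new accumulation is introduced by the modification/MMP steps used to reduce to the case where $Z_i$ is a divisor. The final conclusion — that $I\subset\mathbb{Q}$ implies the accumulation points are rational — is then immediate: $\Lct_{n-1}(I)$ consists of rational numbers by induction on $n$, the base case being trivial.
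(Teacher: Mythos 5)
The paper's proof hinges on a structural tool you don't invoke and doesn't need most of what you sketch: Proposition~\eqref{p_local-global} proves $\Lct_{n+1}(I)=N_n(I)$, identifying log canonical thresholds in dimension $n+1$ with the set of ``numerically trivial thresholds'' $N_n(I)$ in dimension $n$. This reduces Theorem~\eqref{t_accumulation} in one line to the statement that accumulation points of $N_n(I)$ lie in $N_{n-1}(I)$, which is then established by the substantial Proposition~\eqref{p_stronger} (a multi-page case analysis: decompose $\Delta_i=A_i+B_i+C_i$, split according to whether the log discrepancy $a_i$ stays bounded away from zero, run MMP to Picard rank one, apply adjunction, and handle each case separately using \eqref{t_num}, \eqref{t_volume}, and \eqref{t_ample-bounded}). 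Your proposal contains none of this machinery, so it is genuinely a different route, but I don't think it works as sketched, for two concrete reasons.

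First, your ``easy inclusion'' via a product does not produce the required sequence. On $X=Y\times\pr 1.$ with the pulled-back $\Gamma'$, $N'$ and a horizontal section $Y\times\{0\}$ with coefficient $a_k<1$, the log canonical threshold of $N'$ is still exactly $c$ for every $k$: the section has coefficient $<1$, so it does not cut down the threshold, and non-klt centres of the product are just products of non-klt centres with $\pr 1.$. Pulling back is lct-preserving, so you get the constant value $c$, not a nonconstant sequence converging to $c$. The paper obtains $N_n(I)\subset\Lct_{n+1}(I)$ not by a product but by taking a cone over a Picard-rank-one numerically-trivial pair $(X,\Delta)\in\mathfrak{K}_n(I,c)$ and passing to an explicit cyclic cover that separates the coefficient $c$ from the ambient data, recovering $c$ as a genuine lct. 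This is a much sharper construction and the rank-one and numerically-trivial hypotheses are essential to make the cone have controlled singularities.

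Second, and more importantly, the key step of your ``hard direction'' is logically backwards. You write that $\Lct_{n-1}(I)$ satisfies the ACC, ``so the descending sequence of $(n-1)$-dimensional thresholds stabilises.'' The ACC forbids strictly \emph{increasing} sequences; a set satisfying the ACC can perfectly well contain a strictly decreasing sequence whose limit is \emph{not} in the set. So you cannot conclude $c\in\Lct_{n-1}(I)$ this way. This is precisely the gap that Proposition~\eqref{p_stronger} is designed to close: it does not appeal to ACC to force stabilisation, but instead tracks how the coefficient packets $A_i$, $B_i$, $C_i$ evolve under MMP and adjunction, uses \eqref{t_num} and \eqref{t_ample-bounded} to extract boundedness and create components of coefficient one, and then identifies the limit $c$ as a threshold in lower dimension by an honest inductive computation. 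Hand-waving at ``adjunction forces the coefficients into $I_0$'' is not a substitute: the coefficients of the different in general involve the $c_i$ themselves, not elements of a fixed finite set, and controlling how $c$ reappears on the lower-dimensional centre is exactly the technical content that is missing from your sketch.
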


See \S 3.4 for the definition of $I_+$.  \eqref{t_accumulation} was proved if $X$ is
smooth in \cite{Kollar08c}.  Note that in terms of inductive arguments it is quite useful
to identify the accumulation points, especially to know that they are rational.

Finally, recall:
\begin{conjecture}[Borisov-Alexeev-Borisov]\label{c_ab} Fix a positive integer $n$ and a
positive real number $\epsilon>0$.

Let $\mathfrak{D}$ be the set of all projective varieties $X$ of dimension $n$ such that
there is a divisor $\Delta$ where $(X,\Delta)$ has log discrepancy at least $\epsilon$ and
$-(K_X+\Delta)$ is ample.

Then $\mathfrak{D}$ forms a bounded family.
\end{conjecture}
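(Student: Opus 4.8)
Since \eqref{c_ab} is stated here as a conjecture, what follows is a program rather than a proof, organised so that \eqref{t_lct} and the boundedness results proved above supply the uniform input at each inductive step. First I would reduce to the absolute case: passing to a $\mathbb{Q}$-factorial dlt modification and running the minimal model program for $-K_X$, it suffices to bound $\mathbb{Q}$-factorial Fano varieties $X$ with $\Delta=0$, $-K_X$ ample, and log discrepancy $>\epsilon'$ for some $\epsilon'=\epsilon'(n,\epsilon)>0$ (one checks that, after shrinking $\epsilon$, the relevant MMP steps preserve log canonicity with the given bound and that the anticanonical model inherits it). The shape of the argument is then: (i) boundedness of $N$-complements, which (ii) yields an upper bound for $\vol(X,-K_X)$ and very ampleness of $-NK_X$ for bounded $N$, whence (iii) $X$ lies in a bounded union of Hilbert schemes.

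The crux is (i): one must produce an integer $N=N(n,\epsilon)$ and a boundary $\Gamma\geq 0$ with $(X,\Gamma)$ log canonical, $N\Gamma$ integral, and $N(K_X+\Gamma)\sim 0$. I would argue by induction on $n$. When $X$ is not \emph{exceptional} one uses a general member of $|-mK_X|$, for $m$ bounded in terms of $n$ and $\epsilon$, to create a non kawamata log terminal centre $Z\subset X$; applying adjunction along $Z$ and invoking \eqref{t_simple} (equivalently \eqref{t_num}) shows the coefficients of the different lie in a fixed finite set $I_0$, so the inductive hypothesis in dimension $\dim Z<n$ gives a bounded complement on $Z$, which one lifts to $X$ by Kawamata--Viehweg vanishing. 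The exceptional Fanos---those admitting no such $m$---require a separate boundedness statement, and this is where the DCC/ACC machinery behind \eqref{t_lct} and \eqref{t_volume} is used most forcefully.

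Granted a bounded complement $(X,\Gamma)$ with $K_X+\Gamma\sim_{\mathbb{Q}}0$ and coefficients of $\Gamma$ in $I_0$, step (ii) runs as follows. If $\vol(X,-K_X)$ were large one could exhibit $0\le D\sim_{\mathbb{Q}}-K_X$ of arbitrarily high multiplicity at a point, forcing $\lct(X;D)\to 0$ and contradicting the log discrepancy bound via \eqref{t_lct}; this bounds the volume. Very ampleness of $-NK_X$ for bounded $N$ then follows by the cut-and-lift analysis that proves \eqref{t_volume}(3), applied to $-K_X$ in place of $K_X+\Delta$, once the volume is bounded. For (iii): $X$ embeds in $\mathbb{P}^{h^0(X,-NK_X)-1}$ with degree $(-NK_X)^n=N^n\vol(X,-K_X)$ bounded and ambient dimension bounded, so $X$ varies in a bounded family. (Alternatively, steps (i)--(ii) exhibit $\mathfrak{D}$ as log birationally bounded, and \eqref{t_ample-bounded}---applied after replacing $-K_X$ by a general ample $\mathbb{Q}$-divisor---upgrades this to boundedness as in the proof of \eqref{c_strong-bounded}.)

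The main obstacle is step (i). Ensuring that the non kawamata log terminal centre produced by $|-mK_X|$ is mild enough and that adjunction delivers a boundary eligible for the inductive hypothesis is delicate, and the exceptional case needs its own boundedness theorem. Moreover the dependencies are entangled---$N$-complements in dimension $n$ require \eqref{c_ab} in dimensions $<n$, and conversely---so the honest execution is a simultaneous induction on $n$ of \eqref{c_ab}, boundedness of complements, and effective birationality, with \eqref{t_lct} as the input that keeps each step finite.
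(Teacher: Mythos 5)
The paper does not prove \eqref{c_ab}; it is stated as an open conjecture, and the surrounding discussion makes clear that the point of the paper is to prove unconditionally several results (\eqref{t_lct}, \eqref{t_simple}, \eqref{t_num}, \eqref{c_ter}, \eqref{t_accumulation}) which were previously known only conditionally on \eqref{c_ab}. There is therefore no proof in the paper to compare your attempt against. You correctly flag this and present a program rather than a proof, and the program you outline---reduction to absolute $\epsilon$-lc Fanos, boundedness of $N$-complements by induction on dimension and adjunction to non-klt centres, a separate treatment of exceptional Fanos, then bounding $\vol(X,-K_X)$ and uniform very ampleness of $-NK_X$---is indeed the shape of the eventual proof of \eqref{c_ab}, for which the ACC and birational boundedness results established here are genuine inputs. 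But each of the steps you isolate (boundedness of complements, boundedness of exceptional Fanos, effective non-vanishing of $|-mK_X|$) is itself a deep theorem, and none of them is proved here; the closest the paper comes is \eqref{c_strong-bounded}, which treats the strictly easier situation where a boundary $\Delta$ with $K_X+\Delta$ numerically trivial, coefficients in a DCC set, and a log discrepancy bound is already given.

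One technical slip worth noting in step (ii): you propose to bound $\vol(X,-K_X)$ by observing that a high-multiplicity divisor $0\le D\sim_{\mathbb{Q}}-K_X$ would force $\lct(X;D)\to 0$, ``contradicting the log discrepancy bound via \eqref{t_lct}.'' But the $\epsilon$-lc hypothesis concerns $(X,0)$, and a small log canonical threshold for a particular $D$ creates no direct contradiction with it; some further input is needed. The mechanism that actually works---and that the paper itself uses in proving \eqref{c_strong-bounded}---is to first produce a klt complement $(X,\Gamma)$ with $K_X+\Gamma\equiv 0$ and coefficients in a fixed DCC set, and then invoke Theorem~\ref{t_upper} to bound $\vol(X,\Gamma)=\vol(X,-K_X)$. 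This is one more place where the argument reduces to boundedness of complements, which you rightly identify as the crux.
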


Note that \eqref{t_lct}, \eqref{t_simple}, \eqref{t_num}, \eqref{c_ter} and
\eqref{t_accumulation} are known to follow from \eqref{c_ab}, (cf. \cite{MP04}).  Instead
we use birational boundedness of log pairs of general type cf.~ \eqref{t_volume} to prove
these results.

\section{Description of the proof}

\begin{theorema}[ACC for the log canonical threshold]\label{t_acc} Fix a 
positive integer $n$ and a set $I\subset [0,1]$, which satisfies the DCC.

Then there is a finite subset $I_0\subset I$ with the following property:

If $(X,\Delta)$ is a log pair such that
\begin{enumerate}
\item $X$ is a variety of dimension $n$,
\item $(X,\Delta)$ is log canonical, 
\item the coefficients of $\Delta$ belong to $I$, and 
\item there is a non kawamata log terminal centre $Z\subset X$ which is contained 
in every component of $\Delta$, 
\end{enumerate} 
then the coefficients of $\Delta$ belong to $I_0$.  
\end{theorema}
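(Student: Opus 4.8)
Theorem A is essentially **Theorem \ref{t_num}** plus a reduction, so the plan splits into two parts: first reduce the statement about non-klt centres to the numerically trivial case, then prove the latter. For the reduction, suppose $(X,\Delta)$ satisfies (1)--(4), with a non-klt centre $Z$ contained in every component of $\Delta$. Passing to a dlt modification $f\colon Y\to X$ of $(X,\Delta)$, there is a component $S$ of $\lfloor\Delta_Y\rfloor$ dominating (or lying over) $Z$, where $K_Y+\Delta_Y = f^*(K_X+\Delta)$. Adjunction to $S$ produces a log canonical pair $(S,\Theta)$ whose boundary coefficients lie in the set $D(I)$ of ``different'' values; a standard computation (cf.~\cite[\S 18]{Kollaretal}) shows $D(I)$ again satisfies the DCC, and—crucially—that a coefficient of $\Delta$ fails to lie in a prescribed finite set only if a coefficient of $\Theta$ does. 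Since $Z$ is contained in every component of $\Delta$, the components of $\Delta$ all meet $S$, so their coefficients are ``seen'' by the adjunction. Iterating adjunction down the non-klt centre until one reaches a point, one lands on a projective lc pair of dimension $<n$ with $K+\Theta$ numerically trivial on that fibre, and Theorem \ref{t_num} (in lower dimension, or rather its own proof specialized to this situation) finishes the reduction. So the real content is \emph{the numerically trivial case}, and I would prove Theorem A and \ref{t_num} together by induction on $n$.

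The heart of the matter is therefore: given a DCC set $I$, show the coefficients of $\Delta$ with $K_X+\Delta\equiv 0$, $(X,\Delta)$ lc, $\dim X=n$, lie in a finite set. The plan is by contradiction: suppose there is a sequence $(X_i,\Delta_i)$ with $K_{X_i}+\Delta_i\equiv 0$ and with some coefficient $a_i$ of $\Delta_i$ along a component $D_i$ forming a strictly increasing sequence $a_i\nearrow a$, $a_i\in I$. The first move is to pass to a minimal model / run an MMP to make $X_i$ have mild singularities, and to reduce to the case where $(X_i,\Delta_i)$ is klt or at least dlt; one may also cut down by general hyperplanes using the behaviour of lc thresholds under such cuts, so as to reduce to the case where $K_{X_i}+\Delta_i$ is ``as positive as possible'' on subvarieties—this is where \eqref{t_volume}, the DCC for volumes, enters. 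Replacing $a_i$ by $a$ in the coefficient of $D_i$, one gets a pair $(X_i,\Delta_i')$ with $K_{X_i}+\Delta_i'$ \emph{still} having coefficients in a DCC set but now with $K+\Delta_i'\equiv$ (something effective, in fact $\geq(a-a_i)D_i\geq 0$) and in particular pseudoeffective/big after small perturbation, so one can run the MMP and reach a log canonical model. Boundedness of the resulting models—via \eqref{t_volume}(3), that $\phi_{m(K+\Delta)}$ is birational, hence the images sit in a bounded family—then pins down the geometry.

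The key step, and the main obstacle, is controlling the \emph{limit} of the sequence: one wants to manufacture from the $(X_i,\Delta_i')$ a single limiting pair $(X_\infty,\Delta_\infty)$ (or a bounded family containing all but finitely many of them) on which $K+\Delta_\infty$ is numerically trivial but one coefficient equals $a>a_i$ for infinitely many $i$—contradicting that the $(X_i,\Delta_i)$ were \emph{log canonical} with $K+\Delta_i\equiv 0$, since increasing a coefficient past the lc threshold while keeping $K+\Delta$ trivial is impossible (the lc threshold of $D_i$ with respect to $(X_i,\Delta_i-a_iD_i)$ would have to be exactly $a_i$, yet the limiting pair forces it to be $\geq a$). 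Making this rigorous requires: (a) global ACC-type finiteness for the coefficients appearing in the bounded family, which is where the induction hypothesis on lower-dimensional Theorem \ref{t_num} and finiteness of lc models are invoked; and (b) carefully checking that the reductions (MMP, hyperplane sections, dlt modifications) do not lose track of the offending coefficient $a_i$—i.e., that $D_i$ survives and its coefficient is unchanged. The interplay between the DCC hypothesis on $I$, the DCC for volumes from \eqref{t_volume}, and boundedness is what forces the contradiction; the technical crux is step (b), threading the bad divisor $D_i$ through every birational modification, together with step (a), upgrading boundedness of the family to finiteness of the coefficient set.
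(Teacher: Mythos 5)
Your reduction to the numerically trivial case in one lower dimension is the paper's approach (the implication Theorem~\ref{t_numerically-trivial}$_{n-1}$ $\Rightarrow$ Theorem~\ref{t_acc}$_n$, cf.~\eqref{l_D-to-A}): dlt modification, adjunction to an exceptional component lying over $Z$, coefficients landing in $D(I)$, and restriction to a general fibre to produce a projective numerically trivial pair in lower dimension.  But two points deserve more than the ``standard computation'' you wave at.  The first is that one must track the \emph{precise} form $\frac{m-1+f+kc}{m}$ (with $m,k\in\mathbb{N}$ and $f\in D(I)$) of the inherited coefficient, which takes a short but nontrivial calculation in \eqref{l_global-to-local}, particularly when iterating adjunction through coefficient-one components.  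The second, and more important, is \eqref{l_dcc-acc}: given a finite $J_0\subset[0,1]$, the set of $c\in I$ with $\frac{m-1+f+kc}{m}\in J_0$ for some $k,m\in\mathbb{N}$ and $f\in D(I)$ is itself finite, because the DCC on $I$ bounds $k$ and $m$, and then for fixed $k,m,l$ the identity $c=\frac{(ml-m+1)-f}{k}$ exhibits $c$ in an ACC set while $c\in I$ is DCC.  This lemma is where the DCC hypothesis on $I$ does its work in the reduction.  Your clause ``a coefficient of $\Delta$ fails to lie in a prescribed finite set only if a coefficient of $\Theta$ does'' asserts precisely the conclusion of this lemma, but it is not a formality and should not be elided.

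Your sketch of the numerically trivial case (Theorem~\ref{t_num}) departs from the paper's route and is too loose to verify.  The paper's \eqref{l_C-to-D} uses the birationality constant $m$ from Theorem~\ref{t_birational}$_n$ to partition $[0,1]$ into intervals $A_l=[(l-1)/m,l/m)$, reduces to Picard number one by the MMP, handles coefficients whose component meets $\lfloor\Delta\rfloor$ by adjunction and Theorem~\ref{t_numerically-trivial}$_{n-1}$, and then shows any remaining coefficient $i$ must be the largest element of $A_l\cap I$: for any larger $j\in I$, replacing $i$ by $j$ makes $K_X+\Delta$ big, hence $\frac{1}{m}\lfloor m(K_X+\Delta)\rfloor$ is big, which forces $j\geq l/m$.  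Your plan---manufacture a limiting pair and derive a contradiction from the lc threshold---is plausible in outline but does not resolve the two difficulties you yourself flag (threading the offending divisor $D_i$ through every MMP step and dlt modification, and upgrading a bounded family to a finite coefficient set).  The interval/gap argument sidesteps both issues and is what actually closes the induction in the paper.
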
 

\begin{theorema}[Upper bounds for the volume]\label{t_upper} Let $n\in \mathbb{N}$ and let
$I\subset [0,1)$ be a set which satisfies the DCC.  Let $\mathfrak{D}$ be the set of
kawamata log terminal pairs $(X,\Delta)$, where $X$ is projective of dimension $n$,
$K_X+\Delta$ is numerically trivial and the coefficients of $\Delta$ belong to $I$.

Then the set 
$$
\{\, \vol(X,\Delta) \,|\, (X,\Delta)\in \mathfrak{D} \,\},
$$
is bounded from above.
\end{theorema}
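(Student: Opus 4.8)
The plan is to argue by contradiction, using the bigness of $-K_X$ to manufacture a non kawamata log terminal centre of small dimension and bounded anticanonical degree, and then to contradict the lower bound for the volume of a log general type pair. Suppose \eqref{t_upper} fails: there are klt pairs $(X_i,\Delta_i)$, with $X_i$ projective of dimension $n$, $K_{X_i}+\Delta_i\equiv 0$, the coefficients of $\Delta_i$ in $I$, and $v_i:=\vol(X_i,\Delta_i)\to\infty$; since $\Delta_i\equiv -K_{X_i}$ we have $\vol(X_i,\Delta_i)=\vol(X_i,-K_{X_i})$. As $I$ satisfies the DCC and $I\subset[0,1)$, there is $\epsilon>0$ with $I\subset[0,1-\epsilon]$, so the coefficients of $\Delta_i$ are bounded away from $1$; this is the feature that separates \eqref{t_upper} from the (still open) boundedness problem for Fano varieties. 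For $i\gg0$ the divisor $\Delta_i$, and hence $-K_{X_i}$, is big, and $(X_i,(1-\eta)\Delta_i)$ is klt log Fano for small $\eta>0$; in particular $X_i$ is of Fano type, hence rationally connected, so $X_i$ is covered by a family of rational curves of anticanonical degree at most $n+1$. We may assume $n\geq 2$; for $n=1$ we have $X\cong\mathbb{P}^1$ and $\vol(X,-K_X)=2$.

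The crux is the construction of a good non kawamata log terminal centre. Fix a general member $C_i$ of the above covering family — a smooth rational curve in the smooth locus of $X_i$, with $0<(-K_{X_i})\cdot C_i\leq n+1$ — and a general point $x_i\in C_i$. Since $-K_{X_i}$ is big with $v_i\to\infty$, for every $\lambda>0$ and all $i\gg0$ there is $0\leq D_i\sim_{\mathbb{R}}\lambda(-K_{X_i})$ with $\mult_{C_i}D_i\geq n-1$: a dimension count shows that $H^0$ of a large multiple of $\lambda(-K_{X_i})$ is big enough to impose vanishing of order $n-1$ along the curve $C_i$, and — crucially — $\lambda$ may be taken arbitrarily small, since the conditions imposed along a curve scale like $\lambda$ while the relevant space of sections scales like $\lambda^{n}v_i$. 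Choosing $\lambda=\lambda_i\to 0$, with $\mult_{C_i}D_i$ as close to $n-1$ as we please, and using that $C_i$ has codimension $n-1$ and lies in the smooth locus, we find that for $D_i$ generic the pair $(X_i,\theta_i D_i)$, with $\theta_i=(n-1)/\mult_{C_i}D_i$ close to $1$, is log canonical with $C_i$ as its minimal non klt centre; after tie breaking against $\Delta_i$ (using that $x_i$ and $C_i$ are general) we may assume $(X_i,\Delta_i+\theta_i D_i)$ is log canonical with $C_i$ a non klt centre. Passing to a dlt modification and applying adjunction along $C_i$ (the different, together with Kawamata subadjunction for the moduli part), we obtain $K_{C_i}+\Theta_i+M_i=(K_{X_i}+\Delta_i+\theta_i D_i)|_{C_i}$, where $M_i$ is nef, a bounded multiple of which is Cartier (so $\deg M_i$ lies in a fixed DCC set), and the coefficients of $\Theta_i$ lie in a fixed DCC subset of $[0,1]$: the parts of $\Theta_i$ coming from $\Delta_i$ and from the different inherit the DCC property from $I$, while the part coming from $\theta_i D_i$ has total mass $\theta_i\lambda_i((-K_{X_i})\cdot C_i)\to 0$ and, after $M_i$ is absorbed into an effective boundary with coefficients in a DCC set, may be discarded without leaving the DCC class.

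Finally, $K_{C_i}+\Theta_i+M_i\equiv\theta_i\lambda_i(-K_{X_i})|_{C_i}$ is a divisor on $C_i\cong\mathbb{P}^1$ of degree $\theta_i\lambda_i\,((-K_{X_i})\cdot C_i)$, which is positive — so the resulting pair on $C_i$ is of log general type — yet is at most $(n+1)\lambda_i\to0$. Since $\dim C_i=1<n$, this contradicts the lower bound for the volume of a log general type pair with coefficients in a DCC set, that is, \eqref{t_volume} in dimension one; this completes the proof. I expect the construction of the second paragraph to be the main obstacle. The centre must be found inside a covering family of bounded anticanonical degree: this is essential, because for a divisorial centre the multiple $\lambda_i$ cannot tend to $0$ (already for $X_i=\mathbb{P}^n$ one needs $\lambda_i\geq\epsilon/(n+1)$), and then the volume of the restricted pair would merely be bounded rather than tend to $0$. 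One must also control the boundary and the moduli part produced by adjunction — in particular check that the auxiliary term $\theta_i D_i$, whose own coefficients need not form a DCC set, contributes negligibly after restriction — and rule out the degenerate possibility that the centre drops to a point, excluded here by the genericity of $D_i$. Making all of this work in arbitrary dimension, for the full class of klt pairs in $\mathfrak{D}$, is where the real work lies.
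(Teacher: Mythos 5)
Your proposal takes a genuinely different route from the paper. The paper proves Theorem~\ref{t_upper} via \eqref{l_epsilon}: assuming ACC in dimension $n-1$ (both Theorem~\ref{t_acc}$_{n-1}$ and Theorem~\ref{t_numerically-trivial}$_{n-1}$), one shows that any non-klt perturbation of $\Delta$ whose boundary is close to $\Delta$ leads, after extracting a single log canonical place $S$, running an MMP to a Mori fibre space, and restricting to $S$ by \emph{divisorial} adjunction, to a contradiction in dimension $n-1$. This produces an $\epsilon>0$ with the property that perturbations shrinking $\Delta$ by less than $\epsilon$ stay klt, and then an elementary multiplicity count bounds $\vol(X,\Delta)$ by $(n/\epsilon)^n$. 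You instead cut down all the way to a covering family of rational curves $C_i$ of bounded anticanonical degree, force $C_i$ to be a non-klt centre via a small multiple of $-K_{X_i}$, and then apply Kawamata subadjunction to a one-dimensional centre. The advantage of the paper's route is precisely that the log canonical place $S$ is a \emph{divisor}, so the subadjunction is ordinary adjunction with $\Diff$, and there is no moduli part to control.

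Your route has two genuine gaps, both concentrated around the subadjunction step. First, the claim that the moduli part $M_i$ is nef and that a bounded multiple of it is Cartier, so that $\deg M_i$ lies in a fixed DCC set, is not available: the b-nef/Cartier (effective semiampleness) conjecture of Prokhorov--Shokurov is open in the generality you need, and without a bound on the Cartier index there is no DCC statement for $\deg M_i$. The paper's own subadjunction statement \eqref{t_coefficients} is crafted precisely to avoid $M_i$: it produces a divisor $\Theta$ with coefficients in $\{\,a : 1-a\in \Lct_{n-1}(D(I))\,\}\cup\{1\}$ such that $(K_X+\Delta+\Delta')|_W-(K_W+\Theta)$ is merely pseudo-effective. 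Second, even if you switch to this $\Theta$, you cannot conclude that $K_{C_i}+\Theta_i$ is big. The second half of \eqref{t_coefficients} gives $K_U+\Psi\geq (K_X+\Delta)|_U$ for $V$ a general member of a covering family, which in your situation ($K_X+\Delta\equiv 0$, $U=W=C_i\cong\mathbb{P}^1$) only gives $\deg(K_{C_i}+\Theta_i)\geq 0$. If the degree is exactly $0$ — and nothing in your construction rules this out — then $(C_i,\Theta_i)$ is not of log general type and the dimension-one volume bound you want to invoke gives nothing. The positivity you point to, namely $\deg((K_X+\Delta+\theta D)|_{C_i})=\theta_i\lambda_i(-K_{X_i})\cdot C_i>0$, lives either in the moduli part (which you cannot control) or in the part of $\Theta_b$ coming from $\theta_i D_i$ (whose coefficients you also cannot put in a DCC set), so it cannot be used to force $K_{C_i}+\Theta_i$ big. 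By contrast, the paper's $S$ is a divisor on a model where, after passing to the general fibre of a Mori fibration, $K_Y+\Gamma+S$ is ample, so the restricted pair $(S,\Theta)$ automatically has $K_S+\Theta$ ample and the adjunction coefficients are controlled by \eqref{l_coefficients} alone; this sidesteps both problems at once.
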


\begin{theorema}[Birational boundedness]\label{t_birational} Fix a positive integer $n$ and 
a set $I\subset [0,1]$, which satisfies the DCC.  Let $\mathfrak{B}$ be the set of log
canonical pairs $(X,\Delta)$, where $X$ is projective of dimension $n$, $K_X+\Delta$ is
big and the coefficients of $\Delta$ belong to $I$.

Then there is a positive integer $m$ such that $\phi_{m(K_X+\Delta)}$ is birational, for 
every $(X,\Delta)\in \mathfrak{B}$.  
\end{theorema}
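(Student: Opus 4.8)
The plan is to prove Theorems~A, B and~C above simultaneously, by induction on $n=\dim X$; I describe here only the part of the argument that yields Theorem~C in dimension $n$, using Theorems~A, B and~C freely in dimensions $<n$ and Theorems~A and~B in dimension $n$ (established earlier in the induction). In particular, in dimensions $<n$ those three statements combine to give what I use most often: for any fixed DCC set $I'\subset[0,1]$, the volumes $\vol(V,K_V+\Theta)$, taken over the log canonical pairs $(V,\Theta)$ with $\dim V<n$, $K_V+\Theta$ big and the coefficients of $\Theta$ in $I'$, form a DCC set that is bounded away from $0$.

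I would first reduce to a convenient normal form. Passing to a $\mathbb Q$-factorial divisorial log terminal modification replaces $I$ by $I\cup\{1\}$, which still satisfies the DCC, and changes neither $\vol(X,K_X+\Delta)$ nor whether $\phi_{m(K_X+\Delta)}$ is birational (up to adjusting $m$ by a bounded factor); running a minimal model program, available because $K_X+\Delta$ is big, I may then assume that $(X,\Delta)$ is its own log canonical model, so that $K_X+\Delta$ is ample. Since $I$ satisfies the DCC its positive elements have a minimum $a_0>0$, so I may also assume every coefficient of $\Delta$ lies in $[a_0,1]$.

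The first substantial point is a uniform lower bound $\vol(X,K_X+\Delta)\ge\delta_n>0$ over all of $\mathfrak B$, extracted from dimension $<n$. Using bigness of $K_X+\Delta$, I would produce a non kawamata log terminal divisor $V$ of a pair $(X,\Delta+\Delta')$ with $\Delta'\ge 0$, chosen so that $V$ moves in a family whose general member is comparable to $K_X+\Delta$; adjunction writes $(K_X+\Delta+\Delta')|_V=K_V+\Theta_V$ with $K_V+\Theta_V$ big and, crucially, with the coefficients of $\Theta_V$ in a DCC set depending only on $n$ and $I$ --- this is where ACC for log canonical thresholds and related statements in dimension $<n$ (Theorem~A) are used, to pin down the coefficients of the different and the positivity of the moduli part. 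By the consequence of the induction stated above, $\vol(V,K_V+\Theta_V)\ge\delta_{n-1}>0$; and a section-counting argument, comparing $h^0$ on $X$ with $h^0$ on the moving divisor $V$, converts this into the lower bound $\vol(X,K_X+\Delta)\ge\delta_n$, with $\delta_n$ depending only on $n$ and $I$ and --- this is the essential point --- with no appeal to any lower bound on volumes in dimension $n$.

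With $\delta_n$ in hand the remainder is the Angehrn--Siu / Hacon--McKernan method for bounding pluricanonical maps, adapted to log pairs with DCC coefficients. It suffices to find $m$, bounded in terms of $n$ and $I$ alone, so that $|\lfloor m(K_X+\Delta)\rfloor|$ separates two general points $x,x'\in X$ for every $(X,\Delta)\in\mathfrak B$. Because $\vol(X,K_X+\Delta)\ge\delta_n$, I can find $\Delta'\sim_{\mathbb R}\lambda(K_X+\Delta)$ with $\lambda$ bounded in terms of $n,I$ such that $(X,\Delta+\Delta')$ is log canonical but not kawamata log terminal near $x$ and, after tie-breaking, has a minimal non kawamata log terminal centre $V\ni x$; if $\dim V=0$, Nadel-type vanishing applied to a bounded twist of $K_X+\Delta$ produces the section. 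If $\dim V>0$, adjunction again produces a pair $(V,\Theta_V)$ of dimension $<n$ with $K_V+\Theta_V$ big and coefficients of $\Theta_V$ in a fixed DCC set; Theorem~C in dimension $<n$ makes $|k(K_V+\Theta_V)|$ birational for $k$ bounded, the lower bound on volumes in dimension $<n$ supplies enough positivity, and a Hacon--McKernan-type extension theorem together with Kawamata--Viehweg vanishing lifts sufficiently many sections to $X$ to separate $x$ from $x'$ using a bounded multiple of $K_X+\Delta$. Running this with $x$ and $x'$ interchanged shows that $\phi_{m(K_X+\Delta)}$ separates general pairs of points, hence is birational onto its image. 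The main obstacles are the two uniformity statements flagged above: that the section-counting argument really produces the dimension-$n$ volume bound without circularity, and that both uses of adjunction land in a single DCC family of boundaries on the centre $V$, with pseudoeffective moduli part, uniformly over $\mathfrak B$ --- this rests on the theory of adjunction for fibre spaces and is precisely where the ACC inputs are indispensable; the extension theorem, the vanishing theorems, and the bookkeeping converting ``separates general points'' into a single $m$ are routine refinements of \cite{HMX10} and of the work on pluricanonical maps of varieties of general type.
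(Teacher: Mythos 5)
Your sketch reproduces the broad Tsuji--Hacon--McKernan strategy at the heart of the paper (cut non kawamata log terminal centres through general points, use subadjunction to control the coefficients of the boundary on the centre, appeal to birational boundedness one dimension down, and lift sections), but it omits the step that is actually the crux of extending that strategy from bounded or finite coefficient sets to an arbitrary DCC set $I$. The inductive lifting machinery proves directly that $\phi_{\lfloor m(K_X+\Delta)\rfloor}$ is birational only when the coefficients are rigid enough to control the rounding: in the paper's formulation of the inductive criterion, \eqref{t_inductive}, one needs $p\Delta$ integral for a fixed $p$, or coefficients in $\{\,(r-1)/r\,\}$, because otherwise the passage from a bounded multiple of the $\mathbb R$-divisor $K_X+\Delta$ to the integral linear system $\lfloor m(K_X+\Delta)\rfloor$ is uncontrolled. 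The paper's route to general $I$ (\eqref{l_psef} and \eqref{l_B-to-C}) is to bound the pseudo-effective threshold $\inf\{\,t : K_X+t\Delta\ \text{is big}\,\}$ uniformly away from $1$, so that replacing $\Delta$ by $\lfloor p\Delta\rfloor/p$ for a fixed $p$ preserves bigness and reduces to the finite coefficient case handled by \eqref{l_klt-case}. Bounding the pseudo-effective threshold away from one is precisely where Theorem~\ref{t_upper}$_n$ (upper bounds on volumes of numerically trivial kawamata log terminal pairs in dimension $n$) enters irreducibly; you declare you may use it but never actually do, and without this reduction the argument does not reach arbitrary DCC $I$.

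Separately, the claim that a uniform lower bound $\vol(X,K_X+\Delta)\geq\delta_n$ can be extracted ``from dimension $<n$'' by a section-counting argument, before and independently of the dimension-$n$ birational boundedness, is unsupported and runs opposite to the logic of the paper. To cut a non kawamata log terminal centre through a general point one must take $\Delta'\sim_{\mathbb R}\lambda(K_X+\Delta)$ with $\lambda$ on the order of $n/\vol(X,K_X+\Delta)^{1/n}$; when the volume is small, $\lambda$ is large, and the lower bound $\vol(V,K_V+\Theta_V)\geq\delta_{n-1}$ then only bounds the volume of roughly $(1+\lambda)(K_X+\Delta)|_V$ from below, which is entirely consistent with $\vol(X,K_X+\Delta)$ being arbitrarily small. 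The paper obtains the dimension-$n$ lower bound the other way around: in \eqref{l_klt-case} one first shows $\phi_{m_0 k(K_X+\Delta)}$ is birational for a uniform $m_0$ but with $k$ depending on the pair, then applies \eqref{t_birationally-bounded} to deduce the DCC (hence a positive lower bound) for the set of volumes, and only then trades the pair-dependent $k$ for a single uniform $m$. Treating the volume bound as a prerequisite rather than as a byproduct leaves a gap I do not see how to close without re-deriving the paper's interleaved argument.
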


\begin{theorema}[ACC for numerically trivial pairs]\label{t_numerically-trivial} 
Fix a positive integer $n$ and a set $I\subset [0,1]$, which satisfies the DCC.

Then there is a finite subset $I_0\subset I$ with the following property:

If $(X,\Delta)$ is a log pair such that
\begin{enumerate}
\item $X$ is projective of dimension $n$,  
\item the coefficients of $\Delta$ belong to $I$,
\item $(X,\Delta)$ is log canonical, and
\item $K_X+\Delta$ is numerically trivial, 
\end{enumerate} 
then the coefficients of $\Delta$ belong to $I_0$.
\end{theorema}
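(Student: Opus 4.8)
The plan is to argue by contradiction, using the standard dévissage onto a non kawamata log terminal centre together with a boundedness input. Suppose the conclusion fails. Then there is a sequence of log canonical pairs $(X_i,\Delta_i)$ with $X_i$ projective of dimension $n$, $K_{X_i}+\Delta_i\equiv 0$, and coefficients in $I$, such that strictly more and more of these coefficients fail to lie in any fixed finite subset of $I$; concretely, after passing to a subsequence we may assume there is a component $D_i$ of $\Delta_i$ whose coefficient $a_i$ forms a strictly increasing (or more generally non-eventually-constant) sequence in $I$. We want to contradict the DCC of $I$ by showing these coefficients are forced to approach $1$ from below in a controlled way, or else can be increased to produce a smaller element of $I$.

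First I would reduce to the case where $(X_i,\Delta_i)$ is kawamata log terminal outside a non kawamata log terminal centre of small dimension. Passing to a dlt modification $f_i\colon (Y_i,\Gamma_i)\to(X_i,\Delta_i)$ (which exists by \cite{Kollaretal}) and restricting via adjunction to a component $S_i$ of $\lfloor\Gamma_i\rfloor$, we obtain $(S_i,\Theta_i)$ with $K_{S_i}+\Theta_i\equiv 0$, dimension $n-1$, and coefficients of $\Theta_i$ lying in a set $I^+$ — the set obtained from $I$ by the adjunction/different recipe — which again satisfies the DCC provided $I$ does; this is the content of the "$I_+$" formalism referenced in \S3.4. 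By induction on $n$ (the case $n=0$ or $n=1$ being elementary, where numerically trivial forces $\Delta=0$ resp. $\deg(K+\Delta)=0$ with a DCC coefficient set) the coefficients of $\Theta_i$ lie in a finite set. The coefficient of $\Theta_i$ along a component dominating $D_i$ is, by the different formula, of the shape $\frac{b-1}{b}+\sum_j \frac{c_j a_{i,j}}{b}+(\text{contributions from the different})$; since everything on the $S_i$ side is now in a finite set, this pins down $a_i$ up to finitely much data — \emph{except} that the multiplicity $b$ of $S_i$ in $\lfloor\Gamma_i\rfloor$ and the combinatorial data $(c_j)$ are a priori unbounded.

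So the main obstacle — and where Theorem B (Upper bounds for the volume, \ref{t_upper}) and Theorem C (Birational boundedness, \ref{t_birational}) enter — is controlling this unbounded combinatorial data, i.e. showing that the pairs $(S_i,\Theta_i)$ (or an auxiliary pair built from them) cannot have arbitrarily large "complexity." Here I would use that the fibres of a dlt modification, and the centre $Z_i$, can be taken inside a bounded family: the volume bound of Theorem B applied to the kawamata log terminal pairs obtained by slightly perturbing $(S_i,\Theta_i)$, combined with the birational boundedness of Theorem C, shows that after passing to a subsequence all the relevant $(S_i,\Theta_i)$ (hence the coefficient data feeding into the different) lie in a bounded family, and in particular the integers $b$, the number of components, and the coefficients $c_j$ are bounded. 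Once all of this is bounded, the relation $K_{S_i}+\Theta_i\equiv 0$ becomes, after fixing the discrete data along the subsequence, a \emph{single} linear equation with fixed coefficients relating the $a_{i,j}\in I$; it therefore has only finitely many solutions with all $a_{i,j}$ in a DCC set (one uses that a non-trivial linear equation $\sum r_j t_j=r_0$ with fixed rational $r_j$ has a DCC solution set that is in fact finite — this is the elementary "ACC from a linear relation" lemma). This contradicts our assumption that the $a_i$ do not eventually stabilise, and produces the desired finite set $I_0$.

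Finally I would package the bookkeeping: the finite set $I_0$ is assembled from the finite solution sets of the finitely many possible linear relations, over all the bounded discrete configurations, unioned with the base cases of the induction; one checks $I_0\subset I$ and that it depends only on $n$ and $I$. I expect the genuinely hard step to be the second paragraph's gap — bridging from "the adjoined pair on $S_i$ has finite coefficient set" to "the multiplicity and the different data are bounded" — because a priori a component of $\lfloor\Gamma_i\rfloor$ can appear with huge multiplicity; this is exactly why the global volume and birational boundedness theorems, rather than a purely local adjunction argument, are needed, and it is the point at which the proof is not merely an induction on dimension.
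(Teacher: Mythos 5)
Your outline captures one of the two cases in the paper's argument, but misses the other, which is where the real difficulty lives.

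The paper's proof of Theorem~\ref{t_numerically-trivial}$_n$ (Lemma~\ref{l_C-to-D}) splits into two genuinely different regimes after passing to a dlt modification and running an MMP to reach Picard number one.  In the first regime, a component of $\rfdown\Delta.$ is present and intersects the component under scrutiny; then one restricts to that component and applies Theorem~\ref{t_numerically-trivial}$_{n-1}$ together with the purely combinatorial Lemma~\ref{l_dcc-acc}.  This is roughly what you do.  Note, however, that your worry about the unbounded multiplicity $b$ and the unbounded combinatorics $(c_j)$ is handled in the paper by Lemma~\ref{l_dcc-acc} \emph{without} any boundedness input: since $I$ is DCC, the coefficients are bounded away from zero, so $k$ and $m$ in the relation $\frac{m-1+f+kc}{m}\in J_0$ are automatically bounded, and then the DCC/ACC pinch finishes.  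There is no need to establish that the restricted pairs $(S_i,\Theta_i)$ lie in a bounded family, and in fact one cannot: Theorems~\ref{t_upper} and~\ref{t_birational} are about pairs with $K_X+\Delta$ big, not numerically trivial; boundedness of numerically trivial pairs is essentially BAB (Conjecture~\ref{c_ab}), which this paper explicitly avoids and does not prove.  So the ``bounded family'' bridge you propose is both unnecessary for what you want it for, and not available from the theorems you cite.

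The more serious gap is the second regime: $(X,\Delta)$ kawamata log terminal (so $\rfdown\Delta.=0$ even after the dlt modification) with Picard number one.  There is no non kawamata log terminal centre to restrict to, so the adjunction dévissage on which your whole argument rests is vacuous.  This is precisely where Theorem~\ref{t_birational}$_n$ enters, and in a different way from what you suggest: one raises the coefficient $i$ of a component $B$ to any larger $j\in I$, shows on a log resolution that $K_X+\Delta+(j-i)B$ becomes big, invokes birational boundedness to conclude that $K_X+\bdd{(\Delta+(j-i)B)}.m.$ is big for the uniform $m$, and deduces that the coefficient $i$ must be the largest element of $I$ in its length-$1/m$ subinterval of $[0,1]$.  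This ``gaps in $I$'' argument is the engine of the klt case, and there is nothing corresponding to it in your outline.  As written, your proof only constrains coefficients of pairs whose non-klt locus is nonempty, and you would need a separate idea (the perturbation/birational boundedness argument, or something equivalent) to handle the klt case.
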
 

The proof of Theorem~\ref{t_acc}, Theorem~\ref{t_upper}, Theorem~\ref{t_birational}, and
Theorem~\ref{t_numerically-trivial} proceeds by induction:
\begin{itemize} 
\item Theorem~\ref{t_numerically-trivial}$_{n-1}$ implies Theorem~\ref{t_acc}$_n$, cf.
\eqref{l_D-to-A}.
\item Theorem~\ref{t_numerically-trivial}$_{n-1}$ and Theorem~\ref{t_acc}$_{n-1}$ imply Theorem~\ref{t_upper}$_n$,
cf. \eqref{l_A-to-B}.
\item Theorem~\ref{t_birational}$_{n-1}$, Theorem~\ref{t_acc}$_{n-1}$, and
Theorem~\ref{t_upper}$_n$ imply Theorem~\ref{t_birational}$_n$, cf. \eqref{l_B-to-C}.
\item Theorem~\ref{t_numerically-trivial}$_{n-1}$ and Theorem~\ref{t_birational}$_n$
implies Theorem~\ref{t_numerically-trivial}$_n$, cf. \eqref{l_C-to-D}.
\end{itemize}

\subsection{Sketch of the proof}

\makeatletter
\renewcommand{\thetheorem}{\thesubsection.\arabic{theorem}}
\@addtoreset{theorem}{subsection}
\makeatother

The basic idea of the proof of \eqref{t_lct} goes back to Shokurov and 
we start by explaining this.  

Consider the following simple family of plane curve
singularities,
$$
C=(y^a+x^b=0)\subset \mathbb{C}^2,
$$
where $a$ and $b$ are two positive integers.  A priori, to calculate the log discrepancy
$c$, one should take a log resolution of the pair $(X=\mathbb{C}^2,C)$, write down the log
discrepancy of every exceptional divisor $E_i$ with respect to the pair $(X, tC)$ as a
function of $t$ and then find out the largest value $c$ of $t$ for which all of these log
discrepancies are non-negative.  However there is an easier way.  We know that when $t=c$
there is at least one divisor of log discrepancy zero (and every other divisor has
non-negative log discrepancy).  Let $\pi\colon\map Y.X.$ extract just this divisor.  To
construct $\pi$ we simply contract all other divisors on the log resolution.

Almost by definition we can write
$$
K_Y+E+c D=\pi^*(K_X+c C),
$$
where $E$ is the exceptional divisor and $D$ is the strict transform of $C$.  Restrict
both sides of this equation to $E$.  As the RHS is a pullback, we get a numerically
trivial divisor.

To compute the LHS we apply adjunction.  $E$ is a copy of $\pr 1.$.  One slightly delicate
issue is that $Y$ is singular along $E$ and the adjunction formula has to take account of
this.  In fact $\map Y.X.$ is precisely the weighted blow up of $X=\mathbb{C}^2$, with
weights $(a,b)$, in the given coordinates $x$, $y$.  There are two singular points $p$ and
$q$ of $Y$ along $C$, of index $a$ and $b$, and $D$ intersects $C$ transversally at
another point $r$.  If we apply adjunction we get
$$
(K_Y+E+c D)|_E=K_E+\left (\adj a.\right ) p+\left (\adj b.\right ) q +c r.
$$
As $(K_Y+E+c D)|_E$ is numerically trivial we have $(K_Y+E+c D)\cdot E=0$ so that
$$
-2+\adj a.+\adj b.+c=0,
$$
and so
$$
c=\frac 1a+\frac 1b.
$$

Now let us consider the general case.  As with the example above the first step is to
extract divisors of log discrepancy zero, $\pi\colon\map Y.X.$.  To construct $\pi$ we
mimic the argument above; pick a log resolution for the pair $(X,\Delta+C)$ and contract
every divisor whose log discrepancy is not zero.  The fact that we can do this in all
dimensions follows from the MMP (minimal model program), see \eqref{p_dlt} and $\pi$ is
called a divisorially log terminal modification.

The next step is the same, restrict to the general fibre of some divisor of log
discrepancy zero, see \eqref{l_global-to-local}.  There are similar formulae for the
coefficients of the restricted divisor, see \eqref{l_coefficients}.  In this way, we
reduce the problem from a local one in dimension $n$ to a global problem in dimension
$n-1$, see \S \ref{s_local-global}.  This explains how to go from
Theorem~\ref{t_numerically-trivial}$_{n-1}$ to Theorem~\ref{t_acc}$_n$, see the proof of
\eqref{l_D-to-A}.

The global problem involves log canonical pairs $(X,\Delta)$, where $X$ is projective and
$K_X+\Delta$ is numerically trivial.  One reason that the dimension one case is easy is
that there is only one possibility for $X$, $X$ must be isomorphic to $\pr 1.$.  In higher
dimensions it is not hard, running the MMP again, to reduce to the case where $X$ has
Picard number one, so that at least $X$ is a Fano variety and $\Delta$ is ample.  In this
case we perturb $\Delta$ by increasing one of its coefficients to get a kawamata log
terminal pair $(X,\Lambda)$ such that $K_X+\Lambda$ is ample.  We then exploit the fact
that some fixed multiple $m(K_X+\Lambda)$ of $K_X+\Lambda$ gives a birational map
$\phi_{m(K_X+\Lambda)}$.  By definition this means that $\phi_{\rfdown m(K_X+\Lambda).}$
is a birational map, which in particular means that $K_X+\bdd \Lambda.m.$ is big.  This
forces $\Delta\leq \bdd\Lambda.m.$ which implies that there are lots of gaps.  This
explains how to go from Theorem~\ref{t_birational}$_n$ to
Theorem~\ref{t_numerically-trivial}$_n$, see the proof of \eqref{l_C-to-D}.

It is clear then that the main thing to prove is that if $(X,\Delta)$ is a kawamata log
terminal pair, $K_X+\Delta$ is big and the coefficients of $\Delta$ belong to a DCC set
then some fixed multiple of $K_X+\Delta$, gives a birational map $\phi_{m(K_X+\Delta)}$.
Following some ideas of Tsuji, we developed a fairly general method to prove such a result
in \cite{HMX10}, see \eqref{t_birationally-bounded} and \eqref{t_inductive}.  We use the
technique of cutting non kawamata log terminal centres as developed in \cite{AS95}, see
\cite{Kollar95}.  The main issue is to find a boundary on the non kawamata log terminal
centre so that we can run an induction.

There are two key hypotheses to apply \eqref{t_inductive}.  One of them requires that the
volume of $K_X+\Delta$ restricted to appropriate non kawamata log terminal centres is
bounded from below.  The other places a requirement on the coefficients of $\Delta$ which
is stronger than the DCC.

The first condition follows by induction on the dimension and a strong version of
Kawamata's subadjunction formula, \eqref{t_coefficients}, which we now explain.  If
$(X,\Lambda)$ is a log pair and $V$ is a non kawamata log terminal centre such that
$(X,\Lambda)$ is log canonical at the generic point of $V$, then one can write
$$
(K_X+\Lambda)|_W=K_W+\Theta_b+J,
$$
where $W$ is the normalisation of $V$, $\Theta_b$ is the discriminant divisor and $J$ is
the moduli part.  Not much is known about the moduli part $J$ beyond the fact that it is
pseudo-effective.  On the other hand $\Theta_b\geq 0$ behaves very well.  If $(X,\Lambda)$
is log canonical at the generic point of a prime divisor $B$ on $W$ then the coefficient
of $B$ in $\Theta_b$ is at most one.  In fact there is a simple way to compute the
coefficient of $B$ involving the log canonical threshold.  By assumption there is a log
canonical place, that is, a valuation with centre $V$ of log discrepancy zero.  Then we
can find a divisorially log terminal modification $g\colon\map Y.X.$ such that the centre
of this log canonical place is a divisor $S$ on $Y$.  Note that there is a commutative
diagram
$$
\begin{diagram}
 S  &   \rTo    & Y   \\
\dTo^f &  & \dTo^g \\
 W  &   \rTo    & X.
\end{diagram}
$$
If we pullback $K_X+\Delta$ to $Y$ and restrict to $S$ we get a divisor $\Phi'$ on $S$.
Let
\begin{multline*}
\lambda=\sup\{\, t\in \mathbb{R} \,|\, \text{$(S,\Phi'+tf^*B)$ is log canonical over a}\\
\text{neighbourhood of the generic point of $B$}\,\},
\end{multline*}
be the log canonical threshold.  Then the coefficient of $B$ in $\Theta_b$ is $1-\lambda$.  

In practice we start with a divisor $\Delta$ whose coefficients belong to $I$ such that
$(X,\Delta)$ is kawamata log terminal.  We then find a divisor $\Delta_0$, whose
coefficients we have no control on, and $V$ is a non kawamata log terminal centre of
$(X,\Lambda=\Delta+\Delta_0)$.  It follows that the coefficients of $\Phi'$ do not behave
well and we have no control on the coefficients of $\Theta_b$.

To circumvent this we simply mimic the same construction for $(X,\Delta)$ rather than
$(X,\Lambda)$.  First we construct a divisor $\Phi$ on $S$ whose coefficients of $\Phi$
belong to $D(I)$, see \eqref{l_coefficients}.  Then we construct a divisor $\Theta$ whose
coefficients automatically belong to the set
$$
\{\, a \,|\, 1-a\in \Lct_{n-1}(D(I)) \,\}\cup \{\,1\,\}.
$$
It is clear from the construction that $\Theta_b\geq\Theta$, so that if we bound the
volume of $K_W+\Theta$ from below we bound the volume of $(K_X+\Delta+\Delta_0)|_W$ 
from below.  

On the other hand, as part of the induction we assume that Theorem~\ref{t_acc}$_{n-1}$
holds.  Hence $\Lct_{n-1}(D(I))$ satisfies the ACC and the coefficients of $\Theta$ belong
to a set which satisfies the DCC.  The final step is to observe that if we choose $V$ to
pass through a general point then it belongs to a family which covers $X$.  If we assume
that $V$ is a general member of such a family then we can pullback $K_X+\Delta$ to this
family and restrict to $V$.  It is straightforward to check that the difference between
$K_W+\Theta$ and $(K_X+\Delta)|_W$ on a log resolution of the family is pseudo-effective
(for example, if $X$ and $V$ are smooth then this follows from the fact that the first
chern class of the normal bundle is pseudo-effective), so that if $K_X+\Delta$ is big then
so is $K_W+\Theta$.  In this case we know the volume is bounded from below by induction.

We now explain the condition on the coefficients.  To apply \eqref{t_inductive} we require
that either $I$ is a finite set or
$$
I=\{\, \frac{r-1}r \,|\, r\in \mathbb{N} \,\}.
$$
The first lemma, \eqref{l_klt-case}, simply assumes this condition on $I$ and we deduce
the result in this case.

The key is then to reduce to the case when $I$ is finite.  Given any positive integer $p$
and a log pair $(X,\Delta)$, let $\bdd \Delta.p.$ denote the largest divisor less than
$\Delta$ such that $p\bdd \Delta.p.$ is integral.  Given $I$ it suffices to find a fixed
positive integer $p$ such that if we start with $(X,\Delta)$ such that $K_X+\Delta$ is big
and the coefficients belong to $I$ then $K_X+\bdd \Delta.p.$ is big, since the
coefficients of $\bdd \Delta.p.$ belong to the finite set
$$
\{\, \frac ip \,|\, 1\leq i\leq p \,\}.
$$
Let 
$$
\lambda=\inf \{\, t\in \mathbb{R} \,|\, \text{$K_X+t\Delta$ is big}\,\},
$$
be the pseudo-effective threshold.  A simple computation, \eqref{l_B-to-C}, shows that it
suffices to bound $\lambda$ away from one.  Running the MMP we reduce to the case when $X$
has Picard number one.  Since $K_X+\lambda\Delta$ is numerically trivial and kawamata log
terminal, Theorem~\ref{t_upper} implies that the volume of $\Delta$ is bounded away from
one.  Passing to a log resolution we may assume that $(X,D)$ has simple normal crossings
where $D$ is the sum of the components of $\Delta$.  As $K_X+D$ is big then so is
$K_X+\frac{r-1}rD$ for any positive integer $r$ which is sufficiently large.  It follows
that some fixed multiple of $K_X+\frac{r-1}rD$ gives a birational map, and
\eqref{t_birationally-bounded} implies that $(X,D)$ belongs to log birationally bounded
family.  In this case, it is easy to bound the pseudo-effective threshold $\lambda$ away
from one, see \eqref{l_psef}.  This explains how to go from Theorem~\ref{t_upper}$_n$ to
Theorem~\ref{t_birational}$_n$, cf. \eqref{l_B-to-C}.

We now explain the last implication.  Suppose that $(X,\Delta)$ is kawamata log terminal
and $K_X+\Delta$ is numerically trivial.  If the volume of $\Delta$ is large then we may
find a divisor $\Pi$ numerically equivalent to a small multiple of $\Delta$ with large
multiplicity at a general point, so that $(X,\Pi)$ is not kawamata log terminal.  In
particular we may find $\Phi$ arbitrarily close to $\Delta$ such that $(X,\Phi)$ is not
kawamata log terminal.  The key lemma is to show that this is impossible,
\eqref{l_epsilon}.  By assumption we may extract a divisor $S$ of log discrepancy zero
with respect to $(X,\Phi)$.  After we run the MMP we get down a log pair $(Y,S+\Gamma)$
where $\Gamma$ is the strict transform of $\Delta$ and both $K_Y+S+\Gamma$ and
$-(K_Y+S+(1-\epsilon)\Gamma)$ are ample.  Here $\epsilon>0$ is arbitrarily to zero.  If we
restrict to $S$ and apply adjunction, it is easy to see that this contradicts either ACC
for the log canonical threshold or ACC for numerically trivial pairs.  This explains how
to go from Theorem~\ref{t_numerically-trivial}$_{n-1}$ and Theorem~\ref{t_acc}$_{n-1}$ to
Theorem~\ref{t_upper}$_n$, cf. \eqref{l_A-to-B}.

It is interesting to note that if $(X,\Delta)$ is log canonical then there is no bound on
the volume of $\Delta$:
\begin{example} Let $X$ be the weighted projective surface $\mathbb{P}(p,q,r)$, where $p$,
$q$ and $r$ are three positive integers and let $\Delta$ be the sum of the three
coordinate lines.  Then $K_X+\Delta\sim_{\mathbb{Q}} 0$ and
$$
\vol(X,\Delta)=\frac{(p+q+r)^2}{pqr}.
$$
But the set 
$$
\{\, \frac{(p+q+r)^2}{pqr} \,|\, (p,q,r)\in \mathbb{N}^3 \,\},
$$
is dense in the positive real numbers, cf. \cite[22.5]{KM99}.
\end{example}

We now explain the proof of \eqref{t_accumulation} which mirrors the proof of
\eqref{t_lct}.  We are given a sequence of log pairs $(X,\Delta)=(X_i,\Delta_i)$ and we
want to identify the limit points of the log canonical thresholds.  The first step is to
show that the set of log canonical thresholds is essentially the same as the set of
pseudo-effective thresholds.  In \S 5 we showed that every log canonical threshold in
dimension $n+1$ is a numerically trivial threshold in dimension $n$.  To show the reverse
inclusion, one takes the cone $(Y,\Gamma)$ over a log canonical pair $(X,\Delta)$ where
$K_X+\Delta$ is numerically trivial, \eqref{p_local-global}.

In this way we are reduced to looking at log canonical pairs $(X,\Delta)$ such that
$K_X+\Delta$ is numerically trivial.  The basic idea is to generate a component of
coefficient one and apply adjunction.  To this end, we need to deal with the case where
some coefficients of $\Delta$ don't necessarily belong to $I$ but instead they are
increasing towards one, \eqref{p_stronger}.  

Running the MMP we reduce to the case of Picard number one, Case A, Step 1 and Case B,
Steps 3 and 5.  We may also assume that the non kawamata log terminal locus is a divisor.
In particular $-K_X$ is ample, any two components of $\Delta$ intersect and we may assume
that the number of components of $\Delta$ is constant, \eqref{l_number}.  If $(X,\Delta)$
is not kawamata log terminal then there is a component of coefficient one and we are done,
Case B, Step 2.

The argument now splits into two cases.  Case A deals with the case that the coefficients
of $\Delta$ are bounded away from one.  In this case if the volume of $\Delta$ is
arbitrarily large then we can create a component of coefficient one and we reduce to the
other case, Case B.  Otherwise \eqref{t_ample-bounded} implies that $(X,\Delta)$ belongs
to a bounded family, which contradicts the fact that the coefficients of $\Delta$ are not
constant.

So we may assume we are in Case B, namely that some of the coefficients of the components
of $\Delta$ are approaching one.  We decompose $\Delta$ as $A+B+C$ where the coefficients
of $A$ are approaching one, the coefficients of $B$ are fixed, and we are trying to
identify the limit of the coefficients of $C$.  Using the fact that the Picard number of
$X$ is one, we may increase the coefficients of $A$ to one and decrease the coefficients
of $C$, without changing the limit of the coefficients of $C$.  At this point we apply
adjunction and induction, Case B, Step 6.

\section{Preliminaries}

\subsection{Notation and Conventions} If $D=\sum d_iD_i$ is an $\mathbb{R}$-divisor on a
normal variety $X$, then the \textit{round down} of $D$ is $\rfdown D.=\sum \rfdown
d_i.D_i$, where $\rfdown d.$ denotes the largest integer which is at most $d$, the
\textit{fractional part} of $D$ is $\{D\}=D-\rfdown D.$, and the \textit{round up} of $D$
is $\rfup D. =-\rfdown -D.$.  If $m$ is a positive integer, then let
$$
\bdd D.m.=\frac{\rfdown mD.}m.
$$
Note that $\bdd D.m.$ is the largest divisor less than or equal to $D$ such that $m\bdd
D.m.$ is integral.

The sheaf $\ring X.(D)$ is defined by
$$
\ring X.(D)(U)=\{\, f\in K(X) \,|\, (f)|_U+D|_U\geq 0 \,\},
$$
so that $\ring X.(D)=\ring X.(\rfdown D.)$.  Similarly we define $|D|=|\rfdown D.|$.  If
$X$ is normal, and $D$ is an $\mathbb{R}$-divisor on $X$, the \textit{rational map
  $\phi_D$ associated to $D$} is the rational map determined by the restriction of
$\rfdown D.$ to the smooth locus of $X$.

We say that $D$ is \textit{$\mathbb{R}$-Cartier} if it is a real linear combination of
Cartier divisors.  An $\mathbb{R}$-Cartier divisor $D$ on a normal variety $X$ is
\textit{nef} if $D \cdot C \geq 0$ for any curve $C \subset X$.  We say that two
$\mathbb{R}$-divisors $D_1$ and $D_2$ are \textit{$\mathbb{R}$-linearly equivalent},
denoted $D_1 \sim_{\mathbb{R}}D_2$, if the difference is an $\mathbb{R}$-linear
combination of principal divisors.

A \textit{log pair} $(X,\Delta)$ consists of a normal variety $X$ and a $\mathbb{R}$-Weil
divisor $\Delta\geq 0$ such that $K_X+\Delta$ is $\mathbb{R}$-Cartier.  The
\textit{support} of $\Delta=\sum_{i\in I}d_iD_i$ (where $d_i\neq 0$) is the sum
$D=\sum_{i\in I}D_i$.  If $(X,\Delta)$ has simple normal crossings, a \textit{stratum} of
$(X,\Delta)$ is an irreducible component of the intersection $\cap_{j\in J}D_j$, where $J$
is a non-empty subset of $I$ (in particular, a stratum of $(X,\Delta)$ is always a proper
closed subset of $X$).  If we are given a morphism $\map X.T.$, then we say that
$(X,\Delta)$ has \textit{simple normal crossings over $T$} if $(X,\Delta)$ has simple
normal crossings and both $X$ and every stratum of $(X,D)$ is smooth over $T$.  We say
that the birational morphism $f\colon\map Y.X.$ only \textit{blows up strata} of
$(X,\Delta)$, if $f$ is the composition of birational morphisms $f_i\colon\map
X_{i+1}.X_i.$, $1\leq i\leq k$, with $X=X_0$, $Y=X_{k+1}$, and $f_i$ is the blow up of a
stratum of $(X_i,\Delta_i)$, where $\Delta_i$ is the sum of the strict transform of
$\Delta$ and the exceptional locus.

A \textit{log resolution} of the pair $(X,\Delta)$ is a projective birational morphism
$\mu\colon\map Y.X.$ such that the exceptional locus is the support of a $\mu$-ample
divisor and $(Y,G)$ has simple normal crossings, where $G$ is the support of the strict
transform of $\Delta$ and the exceptional divisors.  If we write
$$
K_Y+\Gamma+\sum b_iE_i=\mu^*(K_X + \Delta)
$$
where $\Gamma$ is the strict transform of $\Delta$, then $b_i$ is called the
\textit{coefficient of $E_i$ with respect to $(X,\Delta)$}.  The \textit{log discrepancy}
of $E_i$ is $a(E_{i},X,\Delta)=1-b_{i}$.  The pair $(X,\Delta)$ is \textit{kawamata log
  terminal} (respectively \textit{log canonical}; \textit{purely log terminal};
\textit{divisorially log terminal}) if $b_i<1$ for all $i$ and $\rfdown\Delta.=0$
(respectively $b_i\leq 1$ for all $i$ and for all log resolutions; $b_i<1$ for all $i$ and
for all log resolutions; the coefficients of $\Delta$ belong to $[0,1]$ and there exists a
log resolution such that $b_i<1$ for all $i$).  A \textit{non kawamata log terminal
  centre} is the centre of any valuation associated to a divisor $E_i$ with $b_i\geq 1$.
In this paper, we only consider valuations $\nu$ of $X$ whose centre on some birational
model $Y$ of $X$ is a divisor.

We now introduce some results some of which are well known to experts but which are
included for the convenience of the reader.

\subsection{The volume}

\begin{definition}\label{d_volume} Let $X$ be an irreducible projective variety
of dimension $n$ and let $D$ be an $\mathbb{R}$-divisor.  The \textbf{volume} of $D$ is
$$
\vol(X,D)=\limsup_{m\to \infty}\frac{n! h^0(X,\ring X.(mD))}{m^n}.
$$
We say that $D$ is \textbf{big} if $\vol(X,D)>0$.
\end{definition}

For more background, see \cite{Lazarsfeld04b}.  

\subsection{Divisorially log terminal modifications}

If $(X,\Delta)$ is not kawamata log terminal then we may find a modification which is
divisorially log terminal, so that the non kawamata log terminal locus is a divisor:
\begin{proposition}\label{p_dlt} Let $(X,\Delta)$ be a log pair where $X$ is a variety and
the coefficients of $\Delta$ belong to $[0,1]$.

Then there is a projective birational morphism $\pi\colon\map Y.X.$ such that
\begin{enumerate}
\item $Y$ is $\mathbb{Q}$-factorial,
\item $\pi$ only extracts divisors of log discrepancy at most zero, 
\item if $E=\sum E_i$ is the sum of the $\pi$-exceptional divisors and $\Gamma$ is the
strict transform of $\Delta$, then $(Y,\Gamma+E)$ is divisorially log terminal and
$$
K_Y+E+\Gamma=f^*(K_X+\Delta)+\sum_{a(E,X,B)<0} a(E,X,B)E.
$$
\item If in addition $(X,\Delta)$ is log canonical and the coefficients of $\Delta$ are
less than one, then we may choose $\pi$ so that there is a divisor with support equal to
$E$ which is nef over $X$.  In particular the inverse image of the non kawamata log
terminal locus of $(X,\Delta)$ is equal to the support of $E$.
\end{enumerate}
\end{proposition}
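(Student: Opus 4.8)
The plan is to construct $\pi$ by running a minimal model program over $X$, starting from a log resolution.  Fix a log resolution $g\colon\map W.X.$ of $(X,\Delta)$, let $\Gamma$ be the strict transform of $\Delta$, and let $E^W=\sum_jF_j$ be the reduced sum of the $g$-exceptional prime divisors, so that $(W,\Gamma+E^W)$ is log smooth, hence $\mathbb{Q}$-factorial and divisorially log terminal.  Since
\[
K_W+\Gamma+E^W=g^*(K_X+\Delta)+\sum_ja(F_j,X,\Delta)\,F_j
\]
and $g^*(K_X+\Delta)$ is $g$-trivial, the $g$-exceptional divisors of positive log discrepancy are exactly the components of the effective part of $\sum_ja(F_j,X,\Delta)F_j$, whereas those of non-positive log discrepancy appear in $\Gamma+E^W$ with coefficient one.

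The main step is to run the $(K_W+\Gamma+E^W)$-minimal model program over $X$, with scaling of a $g$-ample divisor supported on $E^W$ (one exists by the definition of a log resolution).  Because \cite{BCHM} is phrased for kawamata log terminal pairs, I would first perturb, replacing $\Gamma+E^W$ by $\Gamma+(1-\epsilon)E^W+\epsilon H$ for a general $g$-ample $\mathbb{Q}$-divisor $H\ge 0$ supported on $E^W$ and $0<\epsilon\ll 1$; this pair is kawamata log terminal and log smooth over $X$, and for $\epsilon$ small it induces the same partition of the exceptional divisors into those of positive and those of non-positive log discrepancy as above.  By \cite{BCHM} the program terminates and yields a projective birational morphism $\pi\colon\map Y.X.$.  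The program preserves $\mathbb{Q}$-factoriality and divisorial log terminality, so $Y$ is $\mathbb{Q}$-factorial, which is (1), and $(Y,\Gamma_Y+E_Y)$ is divisorially log terminal, where $\Gamma_Y$ is the strict transform of $\Delta$ and $E_Y$ is the reduced sum of the $\pi$-exceptional divisors.

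To read off (2) and (3), observe that $K_Y+\Gamma_Y+E_Y-\pi^*(K_X+\Delta)$ is $\pi$-exceptional, equals $\sum a(F_j,X,\Delta)F_j'$ over the surviving exceptional divisors $F_j'$, and is nef over $X$; by the negativity lemma it is anti-effective, so every $\pi$-exceptional divisor has log discrepancy at most zero, which is (2).  In particular no divisor of positive log discrepancy survives, and dropping the vanishing terms gives the formula in (3).  For (4), assume in addition that $(X,\Delta)$ is log canonical with all coefficients less than one.  Then $a(F_j,X,\Delta)\ge 0$ for all $j$, so $\sum a(F_j,X,\Delta)F_j$ is already effective and supported exactly on the divisors to be contracted, $K_Y+\Gamma_Y+E_Y=\pi^*(K_X+\Delta)$, and one checks that no curve lying in an exceptional log canonical place is ever contracted (the relevant restriction of $\sum a(F_k,X,\Delta)F_k$ is effective), so those places all survive.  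Hence $(Y,\Gamma_Y+E_Y)$ is crepant over $(X,\Delta)$, each component of $E_Y$ is a log canonical place, and every log canonical centre of $(X,\Delta)$ is the image of a stratum of the divisorially log terminal pair $(Y,\Gamma_Y+E_Y)$ (whose round-down is $E_Y$, as the coefficients of $\Delta$ are less than one); therefore the non kawamata log terminal locus of $(X,\Delta)$ is $\pi(E_Y)$, and its preimage under the birational morphism $\pi$ is the exceptional locus $E_Y$, that is, the support of $E_Y$.  A divisor with support equal to $E_Y$ which is nef over $X$ exists because the $\pi$-exceptional divisors span $N^1(Y/X)$, so a general relatively ample class is represented by an $\mathbb{R}$-divisor supported on all of $E_Y$; alternatively one may choose the program so that $-E_Y$ itself is nef over $X$.

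The step I expect to be the main obstacle is making the minimal model program above rigorous: guaranteeing termination over $X$ — via the perturbation to the kawamata log terminal case, scaling, and \cite{BCHM} (or special termination) — and, most importantly, verifying that it contracts \emph{precisely} the exceptional divisors of positive log discrepancy, not fewer (so that kawamata log terminality of $(X,\Delta)$ is correctly detected) and not more (so that the non kawamata log terminal places needed for (4) are retained).  The negativity lemma governs this dichotomy; the bookkeeping around the perturbation, together with arranging the last assertion of (4), is where the care is needed.
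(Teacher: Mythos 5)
Your argument for (1)--(3) is the standard construction of a divisorially log terminal modification (log resolution, perturb to a klt pair, run the relative MMP over $X$, read off discrepancies via the negativity lemma); the paper does not reprove it but cites Fujino, Koll\'ar--Kov\'acs, and Alexeev--Hacon for exactly this statement. For (4) there is a genuine gap, and in fact you flag it yourself.

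The problem is your justification that a divisor supported on $E$ is nef over $X$. Your first alternative claims that the $\pi$-exceptional divisors span $N^1(Y/X)$, so that a general relatively ample class has a representative supported on $E$. This spanning is not automatic: if $X$ is not $\mathbb{Q}$-factorial the morphism $\pi$ has a small part, and already for a small $\mathbb{Q}$-factorialization $N^1(Y/X)\neq 0$ while there are no exceptional divisors at all. The same defect can persist for the model produced by your single MMP, so the claim requires proof and in general is exactly the property one is trying to arrange. Your second alternative, ``one may choose the program so that $-E_Y$ itself is nef over $X$,'' is the correct route but is asserted without any construction: the MMP you describe makes $K_W+\Gamma+E^W$ (equivalently $\sum a(F_j,X,\Delta)F_j$) nef over $X$, not $-E$. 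In the log canonical case with coefficients less than one this MMP merely contracts the divisors of positive log discrepancy; it places no constraint on the sign of $E\cdot C$ for curves $C$ contracted by $\pi$, so $-E_Y$ need not be $\pi$-nef at the end. The paper's argument does something more: given a dlt modification $\pi\colon(Y,\Gamma+E)\to X$ with $K_Y+\Gamma+E=\pi^*(K_X+\Delta)$, one observes that $(Y,\Gamma)$ is klt, runs a \emph{second} MMP, namely the $(K_Y+\Gamma)$-MMP over $X$, which by $K_Y+\Gamma\sim_{\mathbb{R},X}-E$ is the $(-E)$-MMP over $X$ and makes $-E$ nef over $X$; then, since this may destroy dlt-ness, one passes to a dlt modification $g\colon W\to Y$ of $(Y,\Gamma+E)$, which is an isomorphism away from $\operatorname{Supp}E$ because $(Y,\Gamma)$ is klt and $Y$ is $\mathbb{Q}$-factorial, so $g^*(-E)$ is the required nef divisor with the right support. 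Finally, the ``in particular'' clause is not automatic from $\pi(E_Y)$ being the non-klt locus: the equality $\pi^{-1}(\pi(E))=\operatorname{Supp}E$ is deduced from the nefness of $-E$ over $X$ via the second part of the negativity lemma (for each $x\in X$ either $\pi^{-1}(x)\subset\operatorname{Supp}E$ or $\pi^{-1}(x)\cap\operatorname{Supp}E=\emptyset$). You state the conclusion but do not make this connection, and without the nefness it can fail.
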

\begin{proof} The proof of (1--3) is due to the first author and can be found in
\cite{Fujino09}, \cite[3.1]{KK09}, and also \cite{AH11}.  

Now suppose that $(X,\Delta)$ is log canonical and the coefficients of $\Delta$ are less
than one.  In this case
$$
K_Y+E+\Gamma=f^*(K_X+\Delta).
$$
Note that $(Y,\Gamma)$ is kawamata log terminal and $\Gamma$ is big over $X$.  By
\cite{BCHM06} we may replace $Y$ by a log terminal model of $(Y,\Gamma)$ over $X$, at the
expense of temporarily losing the property that $(Y,\Gamma+E)$ is divisorially log
terminal.  Hence $-E$ is nef over $X$.  If $g\colon \map W.Y.$ is a divisorially log
terminal modification of $(Y,\Gamma+E)$, then $g$ is an isomorphism outside of the support
of $E$, as $(Y,\Gamma)$ is kawamata log terminal and $Y$ is $\mathbb{Q}$-factorial.
Therefore if we replace $Y$ by $W$ then $g^*(-E)$ is a nef divisor whose support is equal
to the sum of the exceptional divisors over $X$.

Note that if $x\in X$ is a point of $X$ then either $E$ contains the whole fibre over $x$
or $x\notin \pi(E)$.  But the non kawamata log terminal locus of $X$ is equal to the image
of the non kawamata log terminal locus of $(Y,\Gamma+E)$, that is, the image of $E$ and so
the inverse image of the non kawamata log terminal locus of $(X,\Delta)$ is equal to the
support of $E$.
\end{proof}

\subsection{DCC sets}\label{sub_dcc}

We say that a set $I$ of real numbers satisfies the \textit{descending chain condition} or
DCC, if it does not contain any infinite strictly decreasing sequence.  For example, 
$$
I=\{\, \frac{r-1}r \,|\, r\in \mathbb{N}\,\},
$$
satisfies the DCC.  Let $I\subset [0,1]$. We define
$$
I_+:=\{\,0\,\}\cup \{\, j\in[0,1] \,|\, \text{$j=\sum_{p=1}^l i_p$, for some $\llist i.l.\in I$} \,\},
$$ 
and
$$
D(I):=\{\, a\leq 1\,|\, a=\frac{m-1+f} m, m\in \mathbb{N},f\in I_+\,\}.
$$

As usual, $\overline{I}$ denotes the closure of $I$.  
 
\begin{proposition}\label{p_der} Let $I\subset [0,1]$.  
\begin{enumerate}
\item $D(D(I))=D(I)\cup \{\,1\,\}$.
\item $I$ satisfies the DCC if and only if $\overline{I}$ satisfies the DCC.
\item $I$ satisfies the DCC if and only if $D(I)$ satisfies the DCC.
\end{enumerate}
\end{proposition}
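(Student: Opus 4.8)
The plan is to prove the three assertions in turn: part~(1) by an explicit computation with the formulas defining $I_+$ and $D(I)$, and parts~(2) and~(3) by soft arguments about monotone sequences, the last of which rests on the sublemma that $I_+$ satisfies the DCC whenever $I$ does.

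For part~(1) I would start from the two elementary closure properties $I\subseteq I_+$ and $(I_+)_+=I_+$ (concatenate the representations of the summands), so that $D$ depends only on $I_+$. The structural point is that every element $\frac{m-1+f}{m}$ of $D(I)$ with $m\ge 2$ is at least $\frac12$, whereas $\frac12=\frac{2-1+0}{2}$ lies in $D(I)$ for any $I$. I then compute $(D(I))_+$: given $g=\sum_p h_p\le 1$ with $h_p\in D(I)$, at most one $h_p$ can have denominator $\ge 2$ (two of them would force $g=1$ with both equal to $\frac12$); if none does, then $g\in(I_+)_+=I_+\subseteq D(I)$; if exactly one does, say $h_1=\frac{m-1+f_1}{m}$ with $m\ge 2$, then $f':=\sum_{p\ge 2}h_p<\frac12$ lies in $I_+$, the inequality $g\le 1$ reads $f_1+mf'\le 1$, and since $mf'$ is a sum of $m$ elements of $I$ we get $f_1+mf'\in I_+$, whence $g=\frac{m-1+(f_1+mf')}{m}\in D(I)$. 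Combined with $\frac12+\frac12=1\in(D(I))_+$, this gives $(D(I))_+=D(I)\cup\{1\}$. Finally, for $g=\frac{\ell-1+f}{\ell}\in D(I)$ the identity $\frac{m-1+g}{m}=\frac{m\ell-1+f}{m\ell}$ shows that $\{\,\frac{m-1+g}{m}\mid m\in\mathbb N,\ g\in D(I)\,\}=D(I)$, and therefore $D(D(I))=D(I)\cup\{1\}$.

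For part~(2), the implication ``$\overline I$ DCC $\Rightarrow I$ DCC'' is immediate since a subset of a DCC set is DCC. Conversely, an infinite strictly decreasing sequence $a_1>a_2>\cdots$ in $\overline I$ converges to $a:=\inf_k a_k$; choosing $b_k\in I$ with $|b_k-a_k|<\frac12\min(a_{k-1}-a_k,\,a_k-a_{k+1})$ (and $|b_1-a_1|<\frac12(a_1-a_2)$) yields $b_{k+1}<\frac{a_k+a_{k+1}}{2}<b_k$, an infinite strictly decreasing sequence in $I$, a contradiction. For part~(3), ``$D(I)$ DCC $\Rightarrow I$ DCC'' follows from $I\subseteq I_+\subseteq D(I)$. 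For the converse I first prove $I$ DCC $\Rightarrow I_+$ DCC: assuming $I$ is nonempty and $I\ne\{0\}$, we have $\eta:=\inf(I\setminus\{0\})>0$ (otherwise one extracts an infinite strictly decreasing sequence in $I$), so every nonzero element of $I_+$ is a sum of at most $\lfloor 1/\eta\rfloor$ elements of $I$; since a finite sumset of DCC sets is DCC (pass to a monotone subsequence of one coordinate and argue on the complementary sum) and a finite union of DCC sets is DCC, $I_+$ is DCC. Then, writing $D(I)=\{\,1-\frac{1-f}{m}\mid m\in\mathbb N,\ f\in I_+\,\}$, an infinite strictly decreasing sequence $d_k=1-\frac{t_k}{m_k}$ in $D(I)$ (with $t_k=1-f_k$, $f_k\in I_+$) would make $\frac{t_k}{m_k}$ strictly increasing in $[0,1]$; the $m_k$ must then be bounded (an unbounded subsequence would force $\frac{t_k}{m_k}\to 0$, impossible for a strictly increasing sequence of nonnegative reals), and on a subsequence with $m_k=m$ constant the $f_k$ form an infinite strictly decreasing sequence in $I_+$, contradicting the sublemma.

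I expect the main obstacle to be the bookkeeping in part~(1), specifically the ``mixed'' case in the computation of $(D(I))_+$ where one adds an element with denominator $m\ge 2$ to a sum of elements of $I_+$: what makes the set close up is precisely that $g\le 1$ forces $f_1+mf'\le 1$, so that $f_1+mf'$ is genuinely a member of $I_+$. Once the sublemma ``$I$ DCC $\Rightarrow I_+$ DCC'' is in place, the remaining parts of (2) and (3) are routine.
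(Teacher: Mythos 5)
Your proof is correct. The paper itself gives no argument — it declares the proposition ``straightforward'' and points to \cite[4.4]{MP04} — so there is no in-paper proof to compare against; what you've written fills in exactly the details that citation elides. The delicate point is part~(1), and your treatment is right: the threshold observation that any representation with $m\ge 2$ gives a value in $[\tfrac12,1]$, the case split on how many summands need $m\ge 2$, the check that $g\le 1$ forces $f_1+mf'\le 1$ and hence $f_1+mf'\in I_+$, and the collapsing identity $\tfrac{m-1+g}{m}=\tfrac{m\ell-1+f}{m\ell}$ for $g=\tfrac{\ell-1+f}{\ell}$. The sublemma that $I$ DCC implies $I_+$ DCC, via $\eta:=\inf(I\setminus\{0\})>0$ bounding the number of nonzero summands, together with the bounded-$m_k$ extraction, is likewise the right machinery for~(3), and~(2) is routine as you say.

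One small imprecision worth repairing in a final write-up of~(1): an element of $D(I)$ has no intrinsic ``denominator'' — it may simultaneously admit an $m=1$ and an $m\ge 2$ representation (for instance $\tfrac12$ when $\tfrac12\in I_+$) — so the phrase ``at most one $h_p$ can have denominator $\ge 2$'' should be recast as: fix a representation $h_p=\tfrac{m_p-1+f_p}{m_p}$ for each summand, after which the three cases (zero, one, or two indices with $m_p\ge 2$) are exhaustive, with the last forced to give $g=1$. This does not affect the substance of the argument.
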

\begin{proof} Straightforward, see for example \cite[4.4]{MP04}.
\end{proof}

\subsection{Bounded pairs}

We recall some results and definitions from \cite{HMX10}, stated in a convenient form.

\begin{definition}\label{d_birationally-bounded} We say that a set $\mathfrak{X}$
of varieties is \textbf{birationally bounded} if there is a projective morphism $\map
Z.T.$, where $T$ is of finite type, such that for every $X\in \mathfrak{X}$, there is a
closed point $t\in T$ and a birational map $f\colon\rmap Z_t.X.$.

We say that a set $\mathfrak{D}$ of log pairs is \textbf{log birationally bounded}
(respectively \textbf{bounded}) if there is a log pair $(Z,B)$, where the coefficients of
$B$ are all one, and a projective morphism $\map Z.T.$, where $T$ is of finite type, such
that for every $(X,\Delta)\in \mathfrak{D}$, there is a closed point $t\in T$ and a
birational map $f\colon\rmap Z_t.X.$ (respectively isomorphism of pairs) such that the support of
$B_t$ contains the support of the strict transform of $\Delta$ and any $f$-exceptional
divisor.  
\end{definition}

\begin{theorem}\label{t_birationally-bounded} Fix a positive integer $n$ and a 
set $I\subset [0,1]\cap \mathbb{Q}$, which satisfies the DCC.  Let $\mathfrak{B}_0$ be a
set of log canonical pairs $(X,\Delta)$, where $X$ is projective of dimension $n$,
$K_X+\Delta$ is big and the coefficients of $\Delta$ belong to $I$.

Suppose that there is a constant $M$ such that for every $(X,\Delta)\in \mathfrak{B}_0$
there is a positive integer $k$ such that $\phi_{k(K_X+\Delta)}$ is birational and
$$
\vol(X,k(K_X+\Delta))\leq M.
$$

Then the set 
$$
\{\, \vol(X,K_X+\Delta) \,|\, (X,\Delta)\in \mathfrak{B}_0 \,\},
$$
satisfies the DCC. 
\end{theorem}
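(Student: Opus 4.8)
The plan is to use the hypothesis that some bounded multiple $\phi_{k(K_X+\Delta)}$ is birational with bounded volume to place $(X,\Delta)$ in a fixed family up to birational modification, and then transport the DCC for volumes from that family. First I would apply the definition of birational boundedness: the condition that $\phi_{k(K_X+\Delta)}$ is birational, together with $\vol(X,k(K_X+\Delta))\le M$, should be exactly what is needed to show that the set of pairs $(X,\Delta)\in\mathfrak{B}_0$ is log birationally bounded in the sense of \eqref{d_birationally-bounded}; that is, there is a single log pair $(Z,B)$ with a projective morphism $\map Z.T.$ over a base of finite type, with all coefficients of $B$ equal to one, so that every $(X,\Delta)$ admits a birational map $\rmap Z_t.X.$ under which the support of $B_t$ dominates the strict transform of $\Delta$ plus the exceptional divisors. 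This is the step where one invokes the fact that a birational map defined by a linear system of bounded volume spreads out over a finite-type base — essentially a Hilbert-scheme/Chow-variety argument on the images, after which one resolves to a common $(Z,B)$.

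Next I would pass to the model $Z_t$. Writing $\Gamma_t$ for the strict transform of $\Delta$ on $Z_t$ together with the reduced exceptional locus of $\rmap Z_t.X.$, we have $\Supp\Gamma_t\subset\Supp B_t$, and since $K_X+\Delta$ is big and $Z_t\rmap X.$ is birational, $K_{Z_t}+\Gamma_t$ is big with $\vol(Z_t,K_{Z_t}+\Gamma_t)=\vol(X,K_X+\Delta)$ (the extra exceptional components do not change the volume, as they are contracted). So it suffices to prove the DCC for the set of volumes $\vol(Z_t,K_{Z_t}+\Gamma_t)$, where now $t$ ranges over a fixed finite-type base $T$ and the coefficients of $\Gamma_t$ lie in the DCC set $I$ (those on the strict transform of $\Delta$) together with $\{1\}$ (the reduced exceptional part), hence in the DCC set $I\cup\{1\}$.

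The heart of the matter is then a semicontinuity/finiteness argument for volumes in a bounded family with DCC coefficients. After stratifying $T$ and a further log resolution of $(Z,B)$ over $T$, one reduces to the situation where $(Z,B)$ has simple normal crossings over $T$ and $\Gamma_t$ is supported on the restriction of the components of $B$; writing $\Gamma_t=\sum_i c_i(t)B_i|_{Z_t}$ with $c_i(t)\in I\cup\{1\}$, the volume $\vol(Z_t,K_{Z_t}+\sum_i c_iB_i|_{Z_t})$ is, for fixed $t$, a function of the vector $(c_i)$ that is continuous and non-decreasing in each $c_i$ on the region where $K_{Z_t}+\sum c_iB_i|_{Z_t}$ is big. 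Given a putative infinite strictly decreasing sequence of volumes, the corresponding coefficient vectors have a subsequence converging coordinatewise (with $t$ eventually constant by finiteness of $T$), and DCC of $I\cup\{1\}$ forces each coordinate to be eventually non-increasing, hence eventually constant or strictly decreasing; monotonicity and continuity of the volume then force the volumes to be eventually non-decreasing, contradicting strict decrease.

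The main obstacle I expect is the first step — extracting genuine log birational boundedness from the numerical data $\bigl(\phi_{k(K_X+\Delta)}\text{ birational},\ \vol(X,k(K_X+\Delta))\le M\bigr)$ — since this requires controlling the image of the pluricanonical map inside a fixed projective space and bounding the complexity of the boundary there; this is exactly the technical content imported from \cite{HMX10}, and the secondary difficulty is ensuring the volume is genuinely monotone and continuous in the coefficients on the big locus, which one handles via the Okounkov-body description of the volume or by a direct limiting argument using $\vol(Z_t,K_{Z_t}+\sum c_iB_i|_{Z_t})=\lim\vol$ along rational approximations and the continuity results of \cite{Lazarsfeld04b}.
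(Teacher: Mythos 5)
The paper disposes of this statement in one line, citing (2.3.4), (3.1) and (1.9) of \cite{HMX10}, so your task was effectively to reconstruct the argument behind those citations. Your two-step plan --- first establish that $\mathfrak{B}_0$ is log birationally bounded, then deduce the DCC for volumes over the resulting bounded family --- is exactly the structure of the argument in \cite{HMX10}, and the first step is accurately described.

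The gap is in the second step, at the parenthetical ``(with $t$ eventually constant by finiteness of $T$).'' The base $T$ is of \emph{finite type}, not finite, so the parameter $t$ of your sequence $(Z_t,\Gamma_t)$ need not eventually stabilise, and stratifying $T$ does not reduce you to a finite set of points. The missing ingredient is deformation invariance of volumes in a log smooth family: after resolving so that $(Z,B)\to T$ is log smooth with $T$ irreducible and the coefficient vector is fixed (not varying with $t$), one needs that $\vol(Z_t,K_{Z_t}+\sum c_iB_i|_{Z_t})$ is \emph{independent of} $t\in T$. This is a hard result (it is essentially \cite[1.8]{HMX10}, building on Siu's invariance of plurigenera), and it is precisely what allows one to trade the continuously-varying parameter $t$ for a discrete one. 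Once you have it, your argument closes: pass to a subsequence where $t$ lies in a single stratum (so the volume depends only on the coefficients), then use the DCC of $I\cup\{1\}$ to extract a subsequence in which every coefficient is non-decreasing, and finally invoke monotonicity of volume in the coefficients to contradict strict decrease of volumes. Without deformation invariance, the last two steps do not apply because the volume still depends on $t$. I would also flag, more mildly, that your equality $\vol(Z_t,K_{Z_t}+\Gamma_t)=\vol(X,K_X+\Delta)$ needs a little care since $Z_t\dasharrow X$ is only a birational map: one should pass to a common resolution and check the difference of pullbacks is effective and exceptional in the right directions, which is the content of the lemma (2.3.4) from \cite{HMX10} cited in the paper.
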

\begin{proof} Follows from (2.3.4), (3.1) and (1.9) of \cite{HMX10}.
\end{proof}

Recall:
\begin{definition}\label{d_potential} Let $X$ be a normal projective variety and let
$D$ be a big $\mathbb{Q}$-Cartier $\mathbb{Q}$-divisor on $X$.

If $x$ and $y$ are two general points of $X$ then, possibly switching $x$ and $y$, we may
find $0\leq \Delta\sim_{\mathbb{Q}}(1-\epsilon) D$, for some $0<\epsilon <1$, where
$(X,\Delta)$ is not kawamata log terminal at $y$, $(X,\Delta)$ is log canonical at $x$ and
$\{x\}$ is a non kawamata log terminal centre, then we say that $D$ is \textbf{potentially
  birational}.
\end{definition}

Note that this is a slight variation on the definition which appears in \cite{HMX10},
where general is replaced by very general.

\begin{theorem}\label{t_recursive} Let $(X,\Delta)$ be a kawamata log terminal pair, where
$X$ is projective of dimension $n$ and let $H$ be an ample $\mathbb{Q}$-divisor.  Suppose
there is a constant $\gamma\geq 1$ and a family of subvarieties $\map V.B.$ with the
following property.

If $x$ and $y$ are two general points of $X$ then, possibly switching $x$ and $y$, we can
find $b\in B$ and $0\leq \Delta_b \sim_{\mathbb{Q}} (1-\delta)H$, for some $\delta>0$,
such that $(X,\Delta+\Delta_b)$ is not kawamata log terminal at $y$ and there is a unique
non kawamata log terminal place of $(X,\Delta+\Delta_b)$ containing $x$ whose centre is
$V_b$.  Further there is a divisor $D$ on $W$, the normalisation of $V_b$, such that
$\phi_D$ is birational and $\gamma H|_W-D$ is pseudo-effective.

Then $mH$ is potentially birational, where $m=2p^2\gamma+1$ and $p=\dim V$.
\end{theorem}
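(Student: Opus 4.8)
The plan is to run the standard "tie-breaking + cutting non kawamata log terminal centres" machinery of \cite{AS95} and \cite{Kollar95}, but starting from the family $\map V.B.$ rather than from a single point, and to feed the divisor $D$ on $W$ back in via induction on dimension. First I would fix two general points $x,y\in X$ and, using the hypothesis, produce $b\in B$ and $0\le\Delta_b\sim_{\mathbb Q}(1-\delta)H$ so that $(X,\Delta+\Delta_b)$ is not kawamata log terminal at $y$, and so that there is a \emph{unique} non kawamata log terminal place over $x$, with centre exactly $V_b=:V$. Let $W$ be the normalisation of $V$ and let $f\colon\map W.V.\hookrightarrow X$. The point of the uniqueness is that Kawamata's subadjunction (in the form of \eqref{t_coefficients}, which we are allowed to assume) applies: we may write $(K_X+\Delta+\Delta_b)|_W=K_W+\Theta+J$, and since the non kawamata log terminal place is unique the centre $V$ genuinely dominates a log canonical centre and the formula is clean; in particular $\Theta\ge 0$ and $J$ is pseudo-effective, so $(K_X+\Delta+\Delta_b)|_W-K_W$ is pseudo-effective.

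Next I would compare $H|_W$ with the divisor $D$ furnished by the hypothesis. We are told $\phi_D$ is birational and $\gamma H|_W-D$ is pseudo-effective. The idea is to use $D$ to separate points \emph{on $W$}: since $\phi_D$ is birational, a general pair of points of $W$ can be separated by sections of (a bounded multiple of) $D$, hence — absorbing the pseudo-effective difference $\gamma H|_W-D$ — by sections of $\gamma H|_W$ twisted by something effective. Then I would propagate this back up to $X$ by the usual lifting argument: sections of $K_W+\lceil(\text{something})\rceil$ lift to sections of $K_X+\lceil\cdots\rceil$ because the obstruction $H^1$ vanishes (the cokernel in the restriction sequence is a sum of appropriate log canonical/Nadel-type ideal sheaves, and one invokes Nadel vanishing using that the relevant divisor minus $K_X$ is big and nef away from the centre). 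Tracking constants through this two-step separation — first on $W$ using $D$ and the factor $\gamma$, then lifting to $X$ and paying for the factor $(1-\delta)$ in $\Delta_b\sim_{\mathbb Q}(1-\delta)H$ — is exactly what produces the bound $m=2p^2\gamma+1$ with $p=\dim V$; the $p^2$ is the familiar cost of bootstrapping a birational map on a $p$-dimensional variety (degree of Veronese-type re-embedding, essentially the argument of \cite[1.9]{HMX10}), and the factor $2$ together with the $+1$ come from clearing denominators and from the open condition on $x,y$ being general.

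Concretely the steps, in order, would be: (1) apply the hypothesis to get $V=V_b$, $W$, and the log canonical pair structure, arranging a unique non kawamata log terminal place over $x$; (2) perturb to make $\{x\}$ itself a non kawamata log terminal centre on $X$ by the standard "take a general very ample through $x$ and subtract a bit" trick, so that we are genuinely in the situation of \eqref{d_potential}; (3) on $W$, use $D$ and $\gamma H|_W-D$ pseudo-effective, together with $\phi_D$ birational, to write down an effective $\mathbb Q$-divisor on $W$, $\mathbb Q$-linearly equivalent to a controlled multiple of $H|_W$, that is singular enough at the image of $y'$ (a second general point) to create a non kawamata log terminal centre there; (4) lift the resulting section from $W$ to $X$ using Nadel vanishing / the Kawamata–Viehweg-type extension theorem, the key input being that $mH-(K_X+\Delta)$ is big and that the non kawamata log terminal locus of the auxiliary divisor is controlled; (5) bookkeep the constants to land exactly at $m=2p^2\gamma+1$. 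The main obstacle is step (4) together with the bookkeeping in step (5): one must ensure that after lifting, $\{x\}$ is still a non kawamata log terminal centre and $(X,\cdot)$ is log canonical there — i.e. that the tie-breaking did not accidentally create extra centres through $x$ — and this is where the uniqueness of the non kawamata log terminal place in the hypothesis is essential, and where the precise shape of the constant $2p^2\gamma+1$ gets pinned down. The comparison with $D$ in step (3) is the genuinely new ingredient relative to \cite{HMX10}, but given that $\phi_D$ is assumed birational it is essentially a black box once the multiple is chosen correctly.
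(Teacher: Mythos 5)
Your proposal does not match the structure of the theorem or the paper's proof, and I think there is a genuine gap in the approach. The central issue is that you are trying to prove too much: \eqref{t_recursive} only asserts that $mH$ is \emph{potentially birational} in the sense of \eqref{d_potential}, which is an existence statement about a boundary $\Delta_0\sim_{\Q}\lambda H$ making $\{x\}$ a non kawamata log terminal centre — it says nothing about sections. Your steps (3)--(4), where you "write down an effective $\Q$-divisor on $W$ ... singular enough at the image of $y'$", then "lift the resulting section from $W$ to $X$ using Nadel vanishing / Kawamata--Viehweg-type extension", are the machinery that converts a potentially birational divisor into an actually birational linear system; that step is \cite[2.3.4.1]{HMX10} and in this paper it lives in the proof of \eqref{t_inductive}, not in \eqref{t_recursive}. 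None of Nadel vanishing, lifting, or separation of points appears in the proof of \eqref{t_recursive}.

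The actual argument is a descending induction on the dimension $k$ of the non kawamata log terminal centre through $x$: one proves that for each $k$ from $p$ down to $0$ there is $\Delta_0\sim_{\Q}\lambda H$ with $\lambda<2(p-k)p\gamma+1$ such that $(X,\Delta+\Delta_0)$ is log canonical at $x$, not kawamata log terminal at $y$, and some non kawamata log terminal centre $Z\subset V_b$ of dimension at most $k$ contains $x$. The base case $k=p$ is the hypothesis with $\lambda=1-\delta<1$. For the inductive step, the hypotheses on $D$ give $\vol(Y,\gamma H|_Y)\geq\vol(Y,D|_Y)\geq 1$ for $Y\subset W$ the preimage of the minimal centre $Z$, and then by nefness $\vol(Z,2p\gamma H|_Z)>2k^k$; the key input you omit is \cite[2.3.5]{HMX10}, which converts this volume bound into a new divisor $\Delta_1\sim_{\Q}\mu H$, $\mu<2p\gamma$, and a convex combination $a_0\Delta_0+a_1\Delta_1$ cutting the dimension of the centre through $x$ by at least one. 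Summing $2p\gamma$ over at most $p$ such steps and adding $1$ for the base case gives exactly $m=2p^2\gamma+1$; your explanation of the $p^2$ as "the familiar cost of a Veronese-type re-embedding" is not where the constant comes from, and with your route you would have no way to recover it. So the missing idea is the cutting-down-of-centres lemma \cite[2.3.5]{HMX10} and the $(\flat)_k$ induction built on it; the section-lifting you describe is a separate later step and does not enter here.
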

\begin{proof} Let $x$ and $y$ be two general points of $X$.  Possibly switching $x$ and
$y$, we will prove by descending induction on $k$ that there is a $\mathbb{Q}$-divisor
$\Delta_0\geq 0$ such that:

\noindent ($\flat$)$_k$ $\Delta_0\sim_{\mathbb{Q}} \lambda H$, for some $\lambda <
2(p-k)p\gamma+1$, where $(X,\Delta+\Delta_0)$ is log canonical at $x$, not kawamata log
terminal at $y$ and there is a non kawamata log terminal centre $Z\subset V_b$ of
dimension at most $k$ containing $x$.

Suppose $k=p$.  $(X,\Delta+\Delta_b)$ is not kawamata log terminal but log canonical at
$x$ since there is a unique non kawamata log terminal place whose centre contains $x$.
Thus $\Delta_0=\Delta_b \sim_{\mathbb{Q}} \lambda H$, where $\lambda=1-\delta<1$,
satisfies ($\flat$)$_k$ and so this is the start of the induction.

Now suppose that we may find a $\mathbb{Q}$-divisor $\Delta_0$ satisfying ($\flat$)$_k$.
We may assume that $Z$ is the minimal non kawamata log terminal centre containing $x$ and
that $Z$ has dimension $k$.  Let $Y\subset W$ be the inverse image of $Z$.  As $\phi_D$ is
birational,
$$
\vol(Y,\gamma H|_Y)\geq \vol(Y,D|_Y)\geq 1,
$$
see, for example, \cite[2.3.2]{HMX10}.  Note that
$$
\vol(Z,\gamma H|_Z)=\vol(Y,\gamma H|_Y),
$$
as $H$ is nef, see for example \cite[VI.2.15]{Kollar96}.  Thus
$$
\vol(Z,2p\gamma H|_V)>\vol(Z,2k\gamma H|_V) \geq 2k^k,
$$
so that by \cite[2.3.5]{HMX10}, we may find $\Delta_1\sim_{\mathbb {Q}} \mu H$, where
$\mu<2p\gamma$ and constants $0<a_i\leq 1$ such that $(X,\Delta+a_0\Delta_0+a_1\Delta_1)$
is log canonical at $x$, not kawamata log terminal at $y$ and there is a non kawamata log
terminal centre $Z'$ containing $x$, whose dimension is less than $k$.  As
$$
a_0\Delta_0+a_1\Delta_1 \sim_{\mathbb{Q}} (a_0\lambda+a_1\mu)H,
$$
and 
$$
\lambda'=a_0\lambda+a_1\mu < 2(p-k)p\gamma+ 1 + 2p\gamma=2(p-(k-1))p\gamma+1,
$$
$a_0\Delta_0+a_1\Delta_1$ satisfies ($\flat$)$_{k-1}$.  This completes the induction and
the proof.
\end{proof}

\begin{theorem}\label{t_inductive} Fix a positive integer $n$.  Let $\mathfrak{B}_0$ 
be a set of kawamata log terminal pairs $(X,\Delta)$, where $X$ is projective of dimension
$n$ and $K_X+\Delta$ is ample.

Suppose that there are positive integers $p$, $k$ and $l$ such that for every
$(X,\Delta)\in \mathfrak{B}_0$ we have:
\begin{enumerate} 
\item There is a dominant family of subvarieties $\map V.B.$ such that if $b\in B$ then we
may find $0\leq \Delta_b \sim_{\mathbb{Q}} (1-\delta)H$, for some $\delta>0$, such that
there is a unique non kawamata log terminal place of $(X,\Delta+\Delta_b)$ whose centre is
$V_b$, where $H=k(K_X+\Delta)$.  Further there is a divisor $D$ on $W$ the normalisation
of $V_b$ such that $\phi_D$ is birational and $lH|_W-D$ is pseudo-effective.
\item Either $p\Delta$ is integral or the coefficients of $\Delta$ belong to 
$$
\{\, \frac{r-1}r \,|\, r\in \mathbb{N} \,\}.
$$
\end{enumerate} 

Then there is a positive integer $m$ such that $\phi_{mk(K_X+\Delta)}$ is birational, for
every $(X,\Delta)\in \mathfrak{B}_0$.
\end{theorem}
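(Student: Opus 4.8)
The plan is to deduce \eqref{t_inductive} from \eqref{t_recursive}. Set $H=k(K_X+\Delta)$. By hypothesis~(2) the coefficients of $\Delta$ are rational, and since $(X,\Delta)$ is a log pair $K_X+\Delta$ is $\mathbb{R}$-Cartier; as an $\mathbb{R}$-Cartier divisor with rational coefficients is automatically $\mathbb{Q}$-Cartier, $H$ is an ample, hence big, $\mathbb{Q}$-Cartier $\mathbb{Q}$-divisor. I will show that there is an integer $m_0$, depending only on $n$, $p$, $k$ and $l$, such that $m_0H$ is potentially birational for every $(X,\Delta)\in\mathfrak{B}_0$; the theorem then follows from a short bookkeeping argument given in the final paragraph.

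To produce this potential birationality I would apply \eqref{t_recursive} with $\gamma=l$ and with $H$ replaced by a fixed multiple $NH$. Its hypothesis requires, for two general points $x,y\in X$, after possibly switching them, a choice of $b\in B$ and $0\le\Delta_b\sim_{\mathbb{Q}}(1-\delta)NH$ such that $(X,\Delta+\Delta_b)$ is not kawamata log terminal at $y$, has a unique non kawamata log terminal place through $x$ whose centre is $V_b$, and such that on the normalisation $W$ of $V_b$ there is a divisor $D$ with $\phi_D$ birational and $l(NH)|_W-D$ pseudo-effective. Since $\map V.B.$ is dominant, for general $x$ we may choose $b$ with $x\in V_b$; the perturbation supplied by hypothesis~(1) — which we are free to enlarge by general effective $\mathbb{Q}$-divisors $\mathbb{Q}$-linearly equivalent to multiples of $H$ and missing $x$, in order to adjust its class exactly as needed below — then has a unique non kawamata log terminal place through $x$ with centre $V_b$, and $(W,D)$ is the data of hypothesis~(1), with $l(NH)|_W-D$ pseudo-effective since $lH|_W-D$ is and $H|_W$ is ample. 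The point that remains is to arrange that the same perturbation is not kawamata log terminal at $y$.

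I would obtain this by also working at $y$. Choose $b'\in B$ with $y\in V_{b'}$, let $W'$ be its normalisation and $D'$ the corresponding divisor from hypothesis~(1). As $\phi_{D'}$ is birational we have $\vol(W',D')\ge 1$, see \cite[2.3.2]{HMX10}, and as $lH|_{W'}-D'$ is pseudo-effective,
$$
\vol(V_{b'},H|_{V_{b'}})=\vol(W',H|_{W'})\ge l^{-\dim V_{b'}}\ge l^{-p},
$$
a lower bound depending only on $l$ and $p$. Feeding this bound into the descending induction on dimension of the proof of \eqref{t_recursive} — that is, the technique of cutting non kawamata log terminal centres of \cite[2.3.5]{HMX10}, cf. \cite{AS95} and \cite{Kollar95} — one cuts $V_{b'}$ down, one dimension at a time, until $\{y\}$ is a non kawamata log terminal centre of a perturbation $\Theta_y\sim_{\mathbb{Q}}cH$, where $c$ depends only on $n$, $p$ and $l$; and $\Theta_y$ may be chosen to avoid the general point $x$. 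Adding $\Theta_y$ to the perturbation constructed in the previous paragraph does not change the non kawamata log terminal places whose centre contains $x$, since near $x$ the divisor $\Theta_y$ vanishes and hence does not affect their log discrepancies; so the place through $x$ stays unique with centre $V_b$, while the pair is now not kawamata log terminal at $y$. Choosing $N$ large enough (depending only on $n$, $p$, $l$) to absorb $\Theta_y$ and fill out the class to $(1-\delta)NH$, the hypothesis of \eqref{t_recursive} is verified, and \eqref{t_recursive} gives that $m_0H$ is potentially birational with $m_0=(2p^2l+1)N$. I expect this coordination — converting the volume estimate on $V_{b'}$ into non kawamata log terminal behaviour at $y$ at a cost that is a uniformly bounded multiple of $H$, while keeping the place through $x$ rigid — to be the principal obstacle, and it is precisely where the exact form of hypothesis~(1) (a single place, centred on $V_b$, for a dominant family) is used.

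It remains to deduce the theorem from the potential birationality of $m_0H$. Since $(X,\Delta)$ is kawamata log terminal, the standard consequences of potential birationality, cf. \cite[\S2]{HMX10}, together with hypothesis~(2) (which bounds the denominators of $K_X+\Delta$), yield a positive integer $m'$, depending only on $n$, $p$, $k$ and $l$, such that $\phi_{m'(K_X+\Delta)}$ is birational; in particular $h^0(X,\mathcal{O}_X(\lfloor m'(K_X+\Delta)\rfloor))\ge 2$. Now $\lfloor m'(k-1)(K_X+\Delta)\rfloor\ge(k-1)\lfloor m'(K_X+\Delta)\rfloor$, which has a nonzero section, and $\lfloor m'k(K_X+\Delta)\rfloor\ge\lfloor m'(K_X+\Delta)\rfloor+\lfloor m'(k-1)(K_X+\Delta)\rfloor$; multiplying the birational linear system $|\lfloor m'(K_X+\Delta)\rfloor|$ by such a section therefore shows that $\phi_{m'k(K_X+\Delta)}$ is birational as well. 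Hence the theorem holds with the integer $m'$, which is uniform over $\mathfrak{B}_0$.
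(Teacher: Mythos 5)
The paper's own proof is a one-liner modulo bookkeeping: it applies \eqref{t_recursive} with $\gamma=l$ and $p\le n-1$ to conclude that $m_0H$ is potentially birational with $m_0=2(n-1)^2l+1$; it then invokes \cite[2.3.4.1]{HMX10} to get birationality of $\phi_{K_X+\lceil m_0jH\rceil}$ for all $j$, and uses hypothesis~(2) to identify $K_X+\lceil m_0kp(K_X+\Delta)\rceil$ with $\lfloor(m_0kp+1)(K_X+\Delta)\rfloor$, giving $m=(m_0+1)p$. Your overall strategy agrees with this; the divergence is in how you pass from hypothesis~(1) to the hypothesis of \eqref{t_recursive}, and in a detail of the bookkeeping.

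The step you yourself flag as the ``principal obstacle'' is indeed a gap as written. The hypothesis of \eqref{t_recursive} asks, for two general points $x,y$, for a \emph{single} perturbation $\Delta_b$ that is simultaneously not kawamata log terminal at $y$ and has a unique non klt place through $x$ with centre $V_b$. You read hypothesis~(1) of \eqref{t_inductive} literally --- a unique non klt place, centre $V_b$, no mention of a second general point --- and propose to manufacture the missing behaviour at $y$ by choosing another member $V_{b'}$ through $y$ and cutting it down to $\{y\}$. This detour is never carried out: you compute a volume bound $\vol(W',H|_{W'})\ge l^{-\dim V_{b'}}$ and declare that ``feeding this into the induction'' will cut $V_{b'}$ to a point, but that is not what the cutting argument of \cite[2.3.5]{HMX10} actually consumes. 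As in the proof of \eqref{t_recursive}, what makes the induction run is that $\phi_{D'}$ restricts to a birational map on the preimage in $W'$ of each successive non klt centre, so one always has $\vol\ge 1$ after twisting by $\gamma H$; the crude numerical bound $l^{-\dim V_{b'}}$ is neither what is used nor strong enough to be fed blindly into the dimension-drop step. You also need to verify that each auxiliary divisor produced by the cutting can be chosen to avoid the general point $x$ (so that the unique place through $x$ is undisturbed), which you assert but do not argue. More to the point, the detour is unnecessary: in the paper's intended reading of hypothesis~(1) --- which is exactly the output of \eqref{l_volume}, its only source of input --- the perturbation $\Delta_b$ already comes with both the non klt behaviour at $y$ and the unique place through $x$, so \eqref{t_recursive} applies directly with no intermediate construction.

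A smaller issue in the final paragraph: from potential birationality of $m_0H$, \cite[2.3.4.1]{HMX10} gives birationality of $\phi_{K_X+\lceil m_0jH\rceil}$, not of $\phi_{m'(K_X+\Delta)}$ for some unspecified $m'$; it is precisely hypothesis~(2) that is then used to rewrite $K_X+\lceil m_0kp(K_X+\Delta)\rceil$ as $\lfloor(m_0kp+1)(K_X+\Delta)\rfloor$. Your appeal to ``standard consequences of potential birationality together with hypothesis~(2)'' glosses over this round-down identity, which is the only place hypothesis~(2) enters; once it is made explicit your subsequent multiply-by-a-section argument becomes superfluous, since the paper's choice $m=(m_0+1)p$ already lands on a multiple of $k(K_X+\Delta)$.
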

\begin{proof} Let $m_0=2(n-1)^2l+1$.  \eqref{t_recursive} implies that $m_0H$ is
potentially birational.  But then \cite[2.3.4.1]{HMX10} implies that $\phi_{K_X+\rfup
  m_0jH.}$ is birational for all positive integers $j$.

If $p\Delta$ is integral then
$$
K_X+\rfup m_0kp(K_X+\Delta).=\rfdown (m_0kp+1)(K_X+\Delta).,
$$
and if the coefficients of $\Delta$ belong to 
$$
\{\, \frac{r-1}r \,|\, r\in \mathbb{N} \,\},
$$
then 
$$
K_X+\rfup m_0kp(K_X+\Delta).=\rfdown (m_0kp+1)(K_X+\Delta)..
$$
Let $m=(m_0+1)p$.  \end{proof}

\makeatletter
\renewcommand{\thetheorem}{\thesection.\arabic{theorem}}
\@addtoreset{theorem}{section}
\makeatother

\section{Adjunction}
\label{s_adjunction}

We will need the following basic result about adjunction (see for example \S 16 in
\cite{Kollaretal}).
\begin{lemma}\label{l_coefficients} Let $(X,\Delta=S+B)$ be a log canonical pair, 
where $S$ has coefficient one in $\Delta$.  If $S$ is normal then there is a divisor
$\Theta=\Diff_S(B)$ on $S$ such that
$$
(K_X+\Delta)|_S=K_S+\Theta.
$$
\begin{enumerate}
\item If $(X,\Delta)$ is purely log terminal then $(S,\Theta)$ is kawamata log terminal.  
\item If $(X,\Delta)$ is divisorially log terminal then $(S,\Theta)$ is divisorially log terminal.  
\item If $B=\sum b_iB_i$ then the coefficients of $\Theta$ belong to the set $D(\{\, \llist b.m. \,\})$.
\end{enumerate}
In particular, if $(X,\Delta)$ is divisorially log terminal and the coefficients of $B$
belong to the set $I$ then the coefficients of $\Theta$ belong to the set $D(I)$.
\end{lemma}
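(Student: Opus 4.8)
The plan is to obtain the existence of $\Theta$ and statements (1) and (2) from standard adjunction theory, and to concentrate the work on the coefficient bound (3).

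That there is an effective divisor $\Theta=\Diff_S(B)$ on $S$ with $(K_X+\Delta)|_S=K_S+\Theta$, and that $(S,\Theta)$ is kawamata log terminal when $(X,\Delta)$ is purely log terminal and divisorially log terminal when $(X,\Delta)$ is, is the adjunction formula for the different together with its compatibility with the various flavours of log terminality; for this I would cite \S 16 and \S 17 of \cite{Kollaretal}. I would record two facts for later use: over the generic point of $S$, where $X$ is regular and $S$ is Cartier, the formula reduces to ordinary adjunction, so $\Theta$ agrees with $B|_S$ there and differs from it only along the codimension-one locus of $S$ over which $X$ is singular; and, since $(X,\Delta)$ is log canonical, so is $(S,\Theta)$, whence every coefficient of $\Theta$ is at most one.

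For (3), let $B'\subseteq S$ be a prime divisor with generic point $\eta$; the coefficient of $B'$ in $\Theta$ is determined by the germ of $(X,S+B)$ at $\eta$, so I may assume $X$ is a surface germ, $S$ a curve through $\eta$, and $(X,S+B)$ log canonical at $\eta$. By the explicit description of the different at a codimension-one point of $S$ — in our situation $X$ is a cyclic quotient $\af 2./\mu_m$ at $\eta$ with $S$ the image of a coordinate axis, or equivalently one passes to the index-one cover $\pi\colon\map X'.X.$ along which $S$, and hence $K_X+S$, becomes Cartier — restricting the pulled-back equation to $S'=\pi^{-1}S$ (ordinary adjunction upstairs) and pushing down along the degree $m$ cover $\map S'.S.$, which is totally ramified over $\eta$, gives
$$
\operatorname{coeff}_{B'}\Theta=\frac{m-1+\sum_i c_ib_i}{m},
$$
where $c_i\geq 0$ is the order of vanishing along $S'$ of the pullback of a local equation for $B_i$. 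Thus $\sum_i c_ib_i$ is a finite sum of elements of $\{\,\llist b.m.\,\}$, so it belongs to $\{\,\llist b.m.\,\}_+$, and since the displayed number is at most one it belongs to $D(\{\,\llist b.m.\,\})$ by the definition of $D$. The last assertion is then immediate: if the coefficients of $B$ lie in $I$ then $\{\,\llist b.m.\,\}\subseteq I$, hence $\{\,\llist b.m.\,\}_+\subseteq I_+$ and $D(\{\,\llist b.m.\,\})\subseteq D(I)$.

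The only non-formal step is the local computation in (3): determining the codimension-two structure of $X$ along $B'$ and carrying out the index-one cover (or different) calculation while keeping honest track of the ramification of $S$ and of each $B_i$ — which is exactly the content one imports from \cite{Kollaretal}. Everything else, and statements (1) and (2), is routine bookkeeping.
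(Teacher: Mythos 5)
You take the same approach the paper does --- cite \cite{Kollaretal} \S 16 for the existence of the different and for parts (1) and (2), then unpack the local coefficient computation to obtain (3) --- so the overall shape is right. But there is a gap in the step ``$X$ is a cyclic quotient $\af 2./\mu_m$ at $\eta$'': that is precisely the assertion that $(X,S)$ is \emph{purely log terminal} at $\eta$ (equivalently, that the index-one cover of $K_X+S$ is smooth there), and it does not follow merely from log canonicity of $(X,S+B)$ together with normality of $S$. When $(X,S)$ is lc but not plt at $\eta$ the coefficient of $B'$ in $\Theta$ is $1$, and $1$ belongs to $D(\{\llist b.m.\})$ only if $1\in\{\llist b.m.\}_+$. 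A concrete instance: take $X$ to be a $D_4$ surface singularity and $S$ a smooth curve through the singular point whose strict transform meets an end curve of the minimal resolution; then $(X,S)$ is lc but not plt, $S$ is normal, and $\Diff_S(0)$ is the singular point with coefficient one, which does not lie in $D(\emptyset)=\{\,(m-1)/m : m\in\mathbb{N}\,\}$.

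This is harmless for the paper's applications, all of which are to divisorially log terminal pairs: if $(X,\Delta)$ is dlt then $(X,S)$ is plt at every codimension-two point of $X$ lying on $S$ (the locus where $(X,\Delta)$ fails to be log smooth contains no non-klt centre, and every exceptional divisor with centre inside it has positive log discrepancy for $(X,\Delta)$ and hence for $(X,S)$), so your cyclic-quotient picture and coefficient bookkeeping are then correct. But as written your argument for (3) --- and, strictly speaking, (3) as stated --- tacitly assumes this plt reduction; you should either make it explicit, or handle the coefficient-one case separately.
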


\begin{theorem}\label{t_coefficients} Let $I$ be a subset of $[0,1]$ which contains $1$.  
Let $X$ be a projective variety of dimension $n$ and let $V$ be an irreducible closed
subvariety, with normalisation $W$.  Suppose we are given a log pair $(X,\Delta)$ and an
$\mathbb{R}$-Cartier divisor $\Delta'\geq 0$, with the following properties:
\begin{enumerate} 
\item the coefficients of $\Delta$ belong to $I$,  
\item $(X,\Delta)$ is kawamata log terminal, and
\item there is a unique non kawamata log terminal place $\nu$ for $(X,\Delta+\Delta')$,
with centre $V$.
\end{enumerate} 

Then there is a divisor $\Theta$ on $W$ whose coefficients belong to
$$
\{\, a \,|\, 1-a\in \Lct_{n-1}(D(I)) \,\}\cup \{\,1\,\},
$$
such that the difference
$$
(K_X+\Delta+\Delta')|_W-(K_W+\Theta),
$$ 
is pseudo-effective.  

Now suppose that $V$ is the general member of a covering family of subvarieties of $X$.
Let $\psi\colon\map U.W.$ be a log resolution of $W$ and let $\Psi$ be the sum of the
strict transform of $\Theta$ and the exceptional divisors.  Then
$$
K_U+\Psi\geq (K_X+\Delta)|_U.
$$
\end{theorem}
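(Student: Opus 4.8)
The plan is to follow the outline given in the sketch in \S 2. First I would produce the divisor $\Theta$ and prove the pseudo-effectivity statement. By hypothesis (3) there is a unique non kawamata log terminal place $\nu$ for $(X,\Delta+\Delta')$ with centre $V$. Applying \eqref{p_dlt} to a suitable pair, I extract a divisorially log terminal modification $g\colon\map Y.X.$ on which the centre of $\nu$ is a divisor $S$, and $S$ dominates $W$ via a morphism $f\colon\map S.W.$, giving the commutative square in the statement of the theorem. Pulling back $K_X+\Delta$ to $Y$ and restricting to $S$ produces a divisor $\Phi$ on $S$; by \eqref{l_coefficients} (the adjunction/different computation), since the coefficients of $\Delta$ lie in $I\supset\{1\}$ and the construction only introduces strata of a dlt pair, the coefficients of $\Phi$ belong to $D(I)$. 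Now for each prime divisor $B$ on $W$ I would define, exactly as in the sketch,
$$
\lambda_B=\sup\{\, t \,|\, \text{$(S,\Phi+tf^*B)$ is log canonical over the generic point of $B$}\,\},
$$
and set the coefficient of $B$ in $\Theta$ to be $1-\lambda_B$ (and $1$ if $\lambda_B\le 0$). Since $(S,\Phi)$ has dimension $n-1$ and its coefficients lie in $D(I)$, each $\lambda_B$ is an element of $\Lct_{n-1}(D(I))$, so the coefficients of $\Theta$ lie in the asserted set $\{\,a\mid 1-a\in\Lct_{n-1}(D(I))\,\}\cup\{1\}$.

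Next I would establish that $(K_X+\Delta+\Delta')|_W-(K_W+\Theta)$ is pseudo-effective. The standard route here (Kawamata, Ambro, Fujino-Gongyo) is via the canonical bundle formula: on the dlt model $Y$, write $K_Y+S+\Gamma=g^*(K_X+\Delta+\Delta')+E$ with $E\ge 0$ exceptional, restrict to $S$, and then push down to $W$ along $f$, using that $S\to W$ has connected fibres so that the "discriminant + moduli" decomposition $(K_S+\Phi+\dots)|$-pushforward $=K_W+\Theta_b+J$ holds with moduli part $J$ pseudo-effective (this is the only property of $J$ we use). The point is that $\Theta$, as constructed from the $\lambda_B$, is dominated by the true discriminant divisor $\Theta_b$, i.e. $\Theta_b\ge\Theta$, because the true discriminant is computed with the possibly-larger divisor $\Phi'$ coming from $\Delta+\Delta'$ rather than from $\Delta$, and adding an effective divisor can only decrease the log canonical threshold hence increase the discriminant coefficient; combined with $J$ pseudo-effective this gives the claimed pseudo-effectivity. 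I would cite \cite{Kollaretal}, \S 16, and the pseudo-effectivity of the moduli part (e.g. \cite{Kollar07} or Fujino-Gongyo) for the inputs.

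For the second assertion, suppose $V$ moves in a covering family and let $\psi\colon\map U.W.$ be a log resolution with $\Psi$ the sum of the strict transform of $\Theta$ and the $\psi$-exceptional divisors. Here I would use that, because $V$ passes through a general point, we can spread the construction out over the base $B$ of the family: pulling back $K_X+\Delta$ to the total space $\mathcal V\to B$ and restricting to a general fibre, the difference between the restricted divisor and $K_W+\Theta$ differs from a pseudo-effective class by the relative canonical class of the family, and on a smooth model the relevant normal-bundle / conormal contribution is effective (in the smooth case this is literally the statement that $c_1$ of the normal bundle of a general member of a covering family is pseudo-effective, indeed represented by an effective divisor after passing to $U$). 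Tracking coefficients through $\psi$: every exceptional divisor of $\psi$ appears in $\Psi$ with coefficient $1$, which is the maximum possible, and the strict transform of $\Theta$ already dominates the strict transform of $(K_X+\Delta)|_W$ by the first part; hence $K_U+\Psi\ge (K_X+\Delta)|_U$.

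The main obstacle I expect is bookkeeping the coefficients correctly through the two reductions: first making sure that \eqref{l_coefficients} genuinely applies on the dlt model $Y$ so that $\Phi$ has coefficients in $D(I)$ (this requires that the relevant components of $\Delta$ pulled back to $Y$ meet $S$ in the dlt locus, which is where \eqref{p_dlt}(4) and the uniqueness of the place $\nu$ are used), and second verifying the inequality $\Theta_b\ge\Theta$ rather than just comparing with $\Delta$. The pseudo-effectivity of the moduli part is a black box, so the real work is purely in the comparison of discriminant coefficients and in the normal-bundle positivity statement for covering families; neither is deep but both need care to state precisely enough to feed into the later applications of this theorem.
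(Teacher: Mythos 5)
Your treatment of the first assertion is essentially the paper's argument and is sound: take a dlt modification of $(X,\Delta+\Delta')$ on which $\nu$ becomes a divisor $S$, restrict the pullback of $K_X+\Delta$ to $S$ to get $\Phi$ with coefficients in $D(I)$ via \eqref{l_coefficients}, define $\Theta$ by the lct of $(S,\Phi+tf^*B)$, and compare with Kawamata's discriminant divisor $\Theta_b$ defined via $\Phi'$. Since $\Phi\leq\Phi'$ one gets $\Theta\leq\Theta_b$, and pseudo-effectivity follows from Kawamata's subadjunction. That is exactly what the paper does, with Kawamata's theorem cited as a black box rather than re-derived via the canonical bundle formula, but this is cosmetic.

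The second assertion is where your plan has a genuine gap. You assert that ``the strict transform of $\Theta$ already dominates the strict transform of $(K_X+\Delta)|_W$ by the first part,'' but the first part gives no such divisor-level inequality: it gives pseudo-effectivity of $(K_X+\Delta+\Delta')|_W-(K_W+\Theta)$, which involves the uncontrolled divisor $\Delta'$ and is a statement about numerical classes, not about coefficients of divisors. The normal-bundle heuristic you then invoke is the intuition offered in the paper's sketch section, not a proof, and it is not obvious how to turn it into one because $\Theta$ was defined intrinsically on $S$ by a log canonical threshold rather than as a restriction. What the paper actually does is substantially more work: it first proves the auxiliary inequality $\mu\leq\xi$, where $\mu$ is the lct on $S$ used to define the coefficient of $\Theta$ at $B$ and $\xi$ is the lct of $(X,\Delta+tH)$ at the generic point of $B$ for a suitable $H$ with $\mult_B(H|_W)=1$ (this uses inversion of adjunction, via \cite{Kawakita05}, on the dlt model $Y$). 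It then passes to a piece $R$ of the Hilbert scheme with universal family $h\colon Z\to X$ generically finite, writes $K_Z+\Xi=h^*(K_X+\Delta)$, and after blowing up works on a log resolution that has simple normal crossings over a dense open $R_1\subset R$. Because $V$ is general, the fibre $U$ over $r\in R_1$ satisfies $(U,\Xi|_U)$ sub log canonical, and for each prime divisor $C$ on $U$ one builds an appropriate $H$ from a component of $\Xi$, uses that $(X,\Delta+\xi H)$ is log canonical at the generic point of $B=\psi(C)$ together with generality of $r$ to deduce $\mult_C\Xi|_U+\xi\leq 1$, and hence $\mult_C\Psi=1-\mu\geq 1-\xi\geq\mult_C\Xi|_U$. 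Your proposal contains none of this: the Hilbert-scheme spreading-out, the inversion of adjunction step relating $\mu$ to $\xi$, and the divisor-by-divisor coefficient comparison are all missing, and they are exactly what makes the second statement true.
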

\begin{proof} Since there is a unique non kawamata log terminal place with centre $V$, it
follows that $(X,\Delta+\Delta')$ is log canonical but not kawamata log terminal at the
generic point of $V$, see (2.31) of \cite{KM98}.  Let $g\colon\map Y.X.$ be a divisorially
log terminal modification of $(X,\Delta+\Delta')$, \eqref{p_dlt}, so that the centre of
$\nu$ is a divisor $S$ on $Y$ and this is the only exceptional divisor with centre $V$.
As $S$ is normal, there is a commutative diagram
$$
\begin{diagram}
 S  &   \rTo    & Y   \\
\dTo^f &  & \dTo^g \\
 W  &   \rTo    & X.
\end{diagram}
$$
We may write
$$
K_Y+S+\Gamma=g^*(K_X+\Delta)+E \qquad \text{and} \qquad K_Y+S+\Gamma+\Gamma'=g^*(K_X+\Delta+\Delta'),
$$
where $\Gamma$ is the sum of the strict transform of $\Delta$ and the exceptional
divisors, apart from $S$.  In particular the coefficients of $\Gamma$ belong to $I$.  As
$(X,\Delta)$ is kawamata log terminal, $E\geq 0$.  As $g$ is a divisorially log terminal
modification of $(X,\Delta+\Delta')$, $\Gamma'\geq 0$ and $(Y,S+\Gamma)$ is divisorially
log terminal.  We may write
$$
(K_Y+S+\Gamma)|_S=K_S+\Phi \qquad \text{and} \qquad (K_Y+S+\Gamma+\Gamma')|_S=K_S+\Phi'.
$$
Note that the coefficients of $\Phi$ belong to $D(I)$.  Let $B$ be a prime divisor on $W$.
Let
\begin{multline*}
\mu=\sup\{\, t\in \mathbb{R} \,|\, \text{$(S,\Phi+tf^*B)$ is log canonical over a}\\
\text{neighbourhood of the generic point of $B$}\,\},
\end{multline*}
be the log canonical threshold over a neighbourhood of the generic point of $B$.  
We define $\Theta$ by 
$$
\mult_B(\Theta)=1-\mu.
$$
It is clear that the coefficients of $\Theta$ belong to 
$$
\{\, a \,|\, 1-a\in \Lct_{n-1}(D(I)) \,\}\cup \{\,1\,\}.
$$

Let
\begin{multline*}
\lambda=\sup\{\, t\in \mathbb{R} \,|\, \text{$(S,\Phi'+tf^*B)$ is log canonical over a}\\
\text{neighbourhood of the generic point of $B$}\,\},
\end{multline*}
be the log canonical threshold over a neighbourhood of the generic point of $B$.  We
define a divisor $\Theta_b$ on $W$ by
$$
\mult_B(\Theta_b)=1-\lambda.
$$
As $\Gamma'\geq 0$ we have $\Phi\leq \Phi'$, so that $\lambda\leq \mu$.  But then
$$
\Theta\leq\Theta_b.
$$
Note that $\Theta_b$ is precisely the divisor defined in Kawamata's subadjunction formula,
see Theorem 1 and 2 of \cite{Kawamata98} and also (8.5.1) and (8.6.1) of \cite{Kollar07}.
It follows that the difference
$$
(K_X+\Delta+\Delta')|_W-(K_W+\Theta_b),
$$ 
is pseudo-effective, so that the difference
$$
(K_X+\Delta+\Delta')|_W-(K_W+\Theta),
$$
is certainly pseudo-effective.  

Now suppose that $V$ is the general member of a covering family of subvarieties of $X$.
We first relate the definition of $\Theta$, which uses the log canonical threshold on $S$,
to a log canonical threshold on $X$.  Let $B$ be a prime divisor on $W$.  Pick any 
$\mathbb{Q}$-divisor $H\geq 0$ on $X$ which is $\mathbb{Q}$-Cartier in a neighbourhood of
the generic point of $B$ such that
$$
\mult_B(H|_W)=1.
$$
We have
$$
K_Y+S+\Gamma+tg^*H=g^*(K_X+\Delta+tH)+E,
$$
and so
$$
(K_Y+S+\Gamma+tg^*H)|_S=K_S+\Phi+tf^*B,
$$
over a neighbourhood of the generic point of $B$.  Now if $(X,\Delta+tH)$ is not log
canonical in a neighbourhood of the generic point of $B$ then $K_Y+S+\Gamma+tg^*H$ is not
log canonical over a neighbourhood of the generic point of $B$.  Inversion of adjunction
on $Y$, cf \cite{Kawakita05}, implies that $K_Y+S+\Gamma+tg^*H$ is log canonical over a
neighbourhood of the generic point of $B$ if and only if $K_S+\Phi+tf^*B$ is log canonical
over a neighbourhood of the generic point of $B$.  It follows that if
\begin{multline*}
\mu=\sup\{\, t\in \mathbb{R} \,|\, \text{$(S,\Phi+tf^*B)$ is log canonical over a}\\
\text{neighbourhood of the generic point of $B$}\,\},
\end{multline*}
the log canonical threshold of $f^*B$ over a neighbourhood of the generic point of $B$,
and
$$
\xi=\sup\{\, t\in \mathbb{R} \,|\, \text{$(X,\Delta+tH)$ is log canonical at the generic point of $B$}\,\},
$$
the log canonical threshold of $H$ at the generic point of $B$, then $\mu\leq \xi$.

Pick a component $\mathcal{H}$ of the Hilbert scheme whose universal family dominates $X$,
to which $V$ belongs.  By assumption $V$ is the general member of a family $R_0\subset
\mathcal{H}$ which covers $X$.  Cutting by hyperplanes in $R_0$, we may find $R\subset
R_0$ such that if $\pi\colon\map Z.R.$ is the restriction of the normalisation of the
universal family, then the natural morphism $h\colon\map Z.X.$ is generically finite.  We
may write
$$
K_Z+\Xi=h^*(K_X+\Delta).
$$
Possibly blowing up, we may assume that $(Z,\Xi)$ has simple normal crossings over a dense
open subset $R_1$ of $R$.  Let $U$ be the fibre of $\pi$ corresponding to $W$.  As $V$ is
a general member of $R$, we may assume that $r=\pi(U)\in R_1$ and so $(U,\Xi|_U)$ has
simple normal crossings.  As the coefficients of $\Xi|_U$ are at most one it follows that
$(U,\Xi|_U)$ is sub log canonical.  Therefore it is enough to check that
$$
K_U+\Psi\geq (K_X+\Delta)|_U=K_U+\Xi|_U,
$$
on the given model and in fact we just have to check that $\Psi\geq \Xi|_U$.

Let $C$ be a prime divisor on $U$.  If $\mult_C\Xi|_U\leq 0$ there is nothing to prove as
$\Psi\geq 0$.  If $C$ is an exceptional divisor of $\map U.V.$ then $\mult_C\Psi=1$ and
there is again nothing to prove as $\mult_C\Xi|_U\leq 1$.

Otherwise pick a prime component $G$ of $\Xi$ such that $\mult_C(G|_U)=1$.  If $h(G)$ is a
divisor then let $H=h(G)/e$ where $e$ is the ramification index at $G$.  Note that $H$ is
$\mathbb{Q}$-Cartier in a neighbourhood of the generic point of $B=\psi(C)$.  Otherwise,
pick a $\mathbb{Q}$-Cartier divisor $H\geq 0$, which does not contain $V$, such that
$\mult_G(h^*H)=1$.  Either way, as $r\in R$ is general it follows that
$\mult_C(h^*H|_U)=1$.  But then
$$
\mult_B(H|_W)=\mult_C(h^*H|_U)=1.
$$

We may write
$$
K_Z+\Xi+\xi h^*H=h^*(K_X+\Delta+\xi H).
$$
As $(X,\Delta+\xi H)$ is log canonical in a neighbourhood of the generic point of $B$,
$K_Z+\Xi+\xi h^*H$ is sub log canonical in a neighbourhood of the generic point of $C$.
Note that in a neighbourhood of the generic point of $C$, 
$$
(K_Z+\Xi+\xi h^*H)|_U=K_U+\Xi|_U+\xi C+J,
$$
where $J\geq 0$.  As $r$ is a general point of $R$, $(U,\Xi|_U+\xi C+J)$ is sub log
canonical in a neighbourhood of the generic point of $C$.  It follows that
$$
\mult_C \Xi|_U+\xi\leq 1,
$$
so that 
$$
\mult_C\Psi=\mult_B\Theta=1-\mu\geq 1-\xi\geq \mult_C\Xi|_U.
$$
Thus $\Psi\geq \Xi|_U$.  \end{proof}

\section{Global to local}
\label{s_local-global}

\begin{lemma}\label{l_global-to-local} Fix a positive integer $n$ and a set $1\in I\subset
[0,1]$.

Suppose $(X,\Delta)$ is a log canonical pair where $X$ is a variety of dimension $n+1$,
the coefficients of $\Delta$ belong to $I$ and there is a non kawamata log terminal centre
$V\subset X$.  Suppose that $c\in I$ is the coefficient of some component $M$ of $\Delta$
which contains $V$.

Then we may find a log canonical pair $(S,\Theta)$ where $S$ is a projective variety of
dimension at most $n$, the coefficients of $\Theta$ belong to $D(I)$, $K_S+\Theta$ is
numerically trivial and some component of $\Theta$ has coefficient
$$
\frac{m-1+f+kc}m,
$$
where $m$, $k\in \mathbb{N}$ and $f\in D(I)$.  
\end{lemma}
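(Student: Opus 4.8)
The plan is to pass from the local situation on $X$ to a global numerically trivial pair in lower dimension by combining the divisorially log terminal modification of Proposition~\ref{p_dlt} with a run of the MMP and adjunction along a component of coefficient one. First I would apply \eqref{p_dlt} to $(X,\Delta)$ to obtain $g\colon\map Y.X.$ such that $(Y,\Gamma+E)$ is divisorially log terminal, $K_Y+\Gamma+E=g^*(K_X+\Delta)$ (using that $(X,\Delta)$ is log canonical), and the inverse image of the non kawamata log terminal locus is the support of $E$. Since $V$ is a non kawamata log terminal centre there is a component $S$ of $E$ whose image contains $V$, and $S$ has coefficient one in $\Gamma+E$. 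Writing $\Psi=\Gamma+E-S$, we have that $(Y,S+\Psi)$ is divisorially log terminal, $S$ is normal, and $K_Y+S+\Psi=g^*(K_X+\Delta)$ is numerically trivial over $X$, hence numerically $g$-trivial.

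Next I would restrict to $S$. By \eqref{l_coefficients}, since $(Y,S+\Psi)$ is divisorially log terminal and the coefficients of $\Psi$ belong to $I$ (they are either coefficients of $\Delta$, pulled back, or $1$, which lies in $I$ by hypothesis), there is a divisor $\Theta_S=\Diff_S(\Psi)$ on $S$ with $(K_Y+S+\Psi)|_S=K_S+\Theta_S$, the pair $(S,\Theta_S)$ divisorially log terminal, and the coefficients of $\Theta_S$ in $D(I)$. Moreover the component $M$ of $\Delta$ with coefficient $c$ containing $V$ pulls back to a divisor on $Y$ meeting $S$; its strict transform $M_Y$ has coefficient $c$ in $\Psi$, and when we restrict, the adjunction formula of \eqref{l_coefficients}(3) produces a component of $\Theta_S$ whose coefficient has the shape $\frac{m-1+f+kc}{m}$ with $m,k\in\mathbb{N}$ and $f\in D(I)$ — here $k$ is the local intersection multiplicity of $M_Y$ with a curve in $S$ (or the coefficient contribution from the different), and $f$ records the contributions of the other components of $\Psi$ meeting $S$ at that point. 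One must be a little careful that such a component actually survives: choosing $S$ to be a component of $E$ dominating $V$ and $M$ to be a component through $V$, the intersection $M_Y\cap S$ is nonempty of the expected codimension, so it contributes a genuine component of the different.

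Now $S$ is projective (it is a divisor on the projective — or at least complete over $X$ — variety $Y$, but $X$ need not be projective, so one takes $Y\to X$ and notes $S$ is projective when $X$ is; if $X$ is only quasi-projective one restricts attention to a projective completion, or, more simply, $S$ as constructed maps to $V$ which is complete, and one can arrange $S$ projective by the standard construction of the dlt modification). To make $K_S+\Theta_S$ numerically trivial rather than merely numerically trivial over something, I would run the $(K_S+\Theta_S)$-MMP: since $K_Y+S+\Psi$ is numerically $g$-trivial and $g(S)=V$, the divisor $K_S+\Theta_S$ is numerically trivial on the fibres of $f\colon\map S.W.$ (where $W$ is the normalisation of $V$); running the relative MMP over $W$ and then taking the generic fibre, we reduce to the case where $K_S+\Theta_S$ is numerically trivial and $\dim S\le n$, renaming the generic fibre $S$ and the restricted boundary $\Theta$. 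The coefficients of $\Theta$ still lie in $D(I)$ and still include one of the desired form, since the relevant component dominates $W$ (it is horizontal, being the image of $M_Y$ which dominates $V$).

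\textbf{Main obstacle.} The delicate point is the bookkeeping on the distinguished coefficient: one must verify that after the dlt modification, the adjunction of \eqref{l_coefficients}, and the MMP, a component with coefficient exactly $\frac{m-1+f+kc}{m}$ with $f\in D(I)$ (as opposed to $f\in I_+$ or some other set) actually persists, and that $k\ge 1$, i.e. the contribution of $M$ does not get absorbed or cancelled. This requires tracking $M_Y$ through the MMP — ensuring it is not contracted, which follows because it is horizontal over $W$ and $K_S+\Theta_S$ is trivial on fibres so the MMP steps are over $W$ — and reading off the coefficient from the $D(I)$-valued formula in \eqref{l_coefficients}(3), using $D(D(I))=D(I)\cup\{1\}$ from \eqref{p_der}(1) to see that iterating adjunction keeps us inside $D(I)$. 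Everything else is a routine assembly of \eqref{p_dlt}, \eqref{l_coefficients}, and the MMP.
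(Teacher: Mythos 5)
Your argument has a genuine gap: you apply the dlt modification and then look for a component $S$ of the exceptional divisor $E$ whose image contains $V$, invoking part (4) of \eqref{p_dlt} to say that the inverse image of the non kawamata log terminal locus is $\Supp E$. But (4) of \eqref{p_dlt} is stated only under the hypothesis that the coefficients of $\Delta$ are less than one, and in this lemma $1\in I$, so $\Delta$ may well have components of coefficient one (indeed $c$ itself could be $1$, or $V$ could be a divisor). If $\rfdown\Delta.\neq 0$ — for example if $(X,\Delta)$ is already divisorially log terminal with $V$ a stratum of $\rfdown\Delta.$ — then the dlt modification is an isomorphism, $E=0$, and there is no component $S$ of $E$ at all; your construction produces nothing. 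This is not a corner case: the coefficient-one components are precisely the source of the non kawamata log terminal locus in that situation, and they are not exceptional.

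The paper's proof treats this case separately. If $V$ is itself a divisor one takes $(S,\Theta)=(\mathbb{P}^1,p+q)$ directly. If $\Delta$ has some component of coefficient one, one normalises that component, applies \eqref{l_coefficients} to land in $D(I)$ in dimension $n$, notes (by inversion of adjunction, \cite{Kawakita05}) that the preimage of $V$ is still a non kawamata log terminal centre, and then applies the lemma \emph{inductively in $n$} to the lower-dimensional pair; an explicit arithmetic computation then shows the resulting coefficient is again of the form $\frac{m-1+f+kc}{m}$ with $f\in D(D(I))=D(I)$. Only when $\rfdown\Delta.=0$ does the argument revert to the dlt modification, exactly as you propose, and there (4) of \eqref{p_dlt} applies and guarantees a component $S$ of $E$ meeting the strict transform of $M$ over the generic point of $V$. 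The remaining steps of your sketch — restrict to $S$, get coefficients in $D(I)$ by \eqref{l_coefficients}, pass to a general fibre of $\map S.V.$ to obtain a projective numerically trivial pair — are the same as the paper's; the MMP over $W$ you propose to run is vacuous since $K_S+\Theta_S$ is already numerically trivial over $W$, and the paper simply takes the general fibre directly. So your outline is essentially right for the $\rfdown\Delta.=0$ branch, but you need the induction-on-$n$ branch for coefficient-one components before you can appeal to \eqref{p_dlt}(4).
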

\begin{proof} Possibly passing to an open subset of $X$ and replacing $V$ by a maximal
(with respect to inclusion) non kawamata log terminal centre, we may assume that $X$ is
quasi-projective, $V$ is the only non kawamata log terminal centre of $(X,\Delta)$
contained in $M$ and that every component of $\Delta$ contains $V$.  If $V$ is a divisor
then $M=V$ is a component of $\Delta$ with coefficient one so that $c=1$.  As $1\in I$ we
may take $(S,\Theta)=(\pr 1.,p+q)$, where $p$ and $q$ are two points of $\pr 1.$.

Suppose $Y$ is the normalisation of a component of $\Delta$ of coefficient one.  Then we
may write
$$
(K_X+\Delta)|_Y=K_Y+\Gamma,
$$
for some divisor $\Gamma$ whose coefficients belong to $D(I)$, $(Y,\Gamma)$ is log
canonical and there is a component $N$ of $\Gamma$ with coefficient
$$
d=\frac{l-1+g+jc}l,
$$
containing $V$.  Here $j$ and $l$ are positive integers and $g\in I_+$.  Moreover, by
inversion of adjunction, cf \cite{Kawakita05}, every component of the inverse image of $V$
is a non kawamata log terminal centre of $(Y,\Gamma)$.  By induction on $n$ we may find a
log canonical pair $(S,\Theta)$ where $S$ is a projective variety of dimension at most
$n-1$, the coefficients of $\Theta$ belong to $D(D(I))=D(I)$, $K_S+\Theta$ is numerically
trivial and some component of $\Theta$ has coefficient
$$
\frac{m-1+f+kd}m,
$$
where $m$, $k\in \mathbb{N}$ and $f\in D(D(I))=D(I)$.  As the coefficients of $\Theta$ are
at most one, either $l=1$ or $k=1$.  Now
$$
\frac{l(m-1+f)+k(l-1)+kg+kjc}{lm}=\frac{lm-l+(lf+kg)+kl-k+(kj)c}{lm}.
$$
If $l=1$ then we have 
$$
\frac{m-1+(f+kg)+(kj)c}m
$$
and if $k=1$ then we have 
$$
\frac{lm-1+(lf+g)+jc}{lm}.
$$
Either way we are done. 

Otherwise we may assume that $\rfdown \Delta.=0$.  Let $f\colon\map Y.X.$ be a
divisorially log terminal model of $(X,\Delta)$.  Then $Y$ is $\mathbb{Q}$-factorial and
we may write
$$
K_Y+T+\Gamma=f^*(K_X+\Delta),
$$ 
where $\Gamma$ is the strict transform of $\Delta$, $T$ is the sum of the exceptional
divisors and the pair $(Y,T+\Gamma)$ is divisorially log terminal.  By (4) of
\eqref{p_dlt} we may choose $f$ so that $T$ contains the inverse image of $V$.  

Let $S$ be an irreducible component of $T$ which intersects the strict transform $N$ of
$M$ above the generic point of $V$.  Then we may write
$$
(K_Y+T+\Gamma)|_S=K_S+\Theta,
$$
by adjunction, where $(S,\Theta)$ is divisorially log terminal, the coefficients of
$\Theta$ belong to $D(I)$ and some component of $\Theta$ has a coefficient of the form
$$
\frac{m-1+f+kc}m,
$$
where $m$, $k\in \mathbb{N}$ and $f\in D(I)$.  As $S$ is a non kawamata log terminal
centre of $(Y,T+\Gamma)$ whose image contains $V$, the centre of $S$ on $X$ is $V$ so that
there is a morphism $\map S.V.$.  Note that $N\cap S$ dominates $V$.  If $v\in V$ is a
general point then $(S_v,\Theta_v)$ is divisorially log terminal, $S_v$ is projective of
dimension at most $n$, the coefficients of $\Theta_v$ belong to $D(I)$, some component of
$\Theta$ has a coefficient of the form
$$
\frac{m-1+f+kc}m,
$$
and $K_{S_v}+\Theta_v$ is numerically trivial.  
\end{proof}

\begin{lemma}\label{l_dcc-acc} Let $I\subset [0,1]$ be a set which satisfies the DCC.

If $J_0\subset [0,1]$ is a finite set then 
$$
I_0=\{\, c\in I \,|\, \text{$\frac{m-1+f+kc}m\in J_0$, for some $k$, $m\in \mathbb{N}$ and $f\in D(I)$} \,\}
$$
is a finite set.  
\end{lemma}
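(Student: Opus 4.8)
The plan is to argue by contradiction: assume $I_0$ is infinite and manufacture an infinite strictly decreasing sequence inside $D(I)$, which contradicts the fact that $D(I)$ satisfies the DCC (this last point is \eqref{p_der}, since $I$ satisfies the DCC by hypothesis).

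First I would extract a convenient sequence. Since $I_0\subset[0,1]$ is bounded and infinite, it contains a sequence of distinct elements, and any such sequence has a monotone subsequence; being made of distinct terms it is strictly monotone, and a strictly decreasing one would contradict the DCC for $I$. So we obtain $c_1<c_2<\cdots$ in $I_0$, and after discarding the first term we may assume $c_i\ge c_1>0$ for all $i$. By the definition of $I_0$, for each $i$ we may choose $k_i,m_i\in\mathbb{N}$ and $f_i\in D(I)$ with $\tfrac{m_i-1+f_i+k_ic_i}{m_i}\in J_0$; as $J_0$ is finite, after passing to a subsequence this value is a constant $j\in J_0$, and clearing denominators gives
$$
f_i+k_ic_i=m_i(j-1)+1 .
$$

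Next I would bound the auxiliary integers along a further subsequence. The left-hand side is positive (as $k_i c_i>0$), so $m_i(1-j)<1$: if $j<1$ then $m_i$ is bounded, while if $j=1$ the right-hand side is identically $1$. In either case, after a subsequence the right-hand side equals a constant $\beta\in(0,1]$ (take $m_i$ constant when $j<1$). Then $k_ic_i\le\beta\le 1$ forces $k_i\le \beta/c_1$, so after one more subsequence $k_i=k$ is constant, with $k\ge 1$. The displayed equation now reads $f_i=\beta-kc_i$, which is strictly decreasing in $i$ because $(c_i)$ is strictly increasing and $k\ge 1$; since every $f_i$ lies in $D(I)$, this contradicts the DCC for $D(I)$, and hence $I_0$ is finite.

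I do not expect a genuine obstacle: the entire content is the order of the three reductions — first pin down $j\in J_0$, then use $j\le 1$ to make the right-hand side of the cleared equation constant, then use $c_i\ge c_1>0$ to make $k$ constant — after which the equation forces $f_i$ to be a strictly monotone sequence in $D(I)$. One should just be a little careful that $j<1$ is exactly what makes the $m_i$ bounded, and that the strict increase of the $c_i$ is what is used both to bound the $k_i$ and to conclude strict decrease of the $f_i$.
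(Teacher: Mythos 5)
Your proof is correct and uses essentially the same ingredients as the paper's: the bound $kc\le 1$ together with $c$ bounded below (from the DCC for $I$) pins down $k$; the bound $\beta=m(j-1)+1>0$ pins down $m$ when $j<1$; and then the affine relation $f=\beta-kc$ plays a DCC set ($I$, via the $c_i$) against a DCC set ($D(I)$, via the $f_i$) to force finiteness. The only difference is presentational: the paper argues directly, bounding $k$, $m$, $l$ element by element and then invoking "a set satisfying both DCC and ACC is finite," whereas you unfold that last step into an explicit proof by contradiction with a strictly increasing sequence $(c_i)$ producing a strictly decreasing sequence $(f_i)$ in $D(I)$.
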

\begin{proof} We may assume that $c\neq 0$.  Suppose that 
$$
l=\frac{m-1+f+kc}m\in J_0.
$$
Then $kc\leq 1$.  As $I$ satisfies the DCC, we may find $\delta>0$ such that $c>\delta$.
It follows that $k < 1/\delta$ so that $k$ can take on only finitely many values.  As
$J_0$ is finite, we may find $\epsilon>0$ such that if $l<1$ then $l<1-\epsilon$.  But
then $m<\frac 1{\epsilon}$.  If $l=1$ then $f+kc=1$, in which case we may take $m=1$.
Either way, we may assume that $m$ takes on only finitely many values.

Fix $k$, $m$ and $l$.  Then 
$$
c=\frac {(ml-m+1)-f}k.
$$
The LHS belongs to $I$, a set which satisfies the DCC.  The RHS belongs to a set which
satisfies the ACC.  But the only set which satisfies both the DCC and the ACC is a finite
set.
\end{proof}

\begin{lemma}\label{l_D-to-A} Theorem~\ref{t_numerically-trivial}$_{n-1}$ implies 
Theorem~\ref{t_acc}$_n$.  
\end{lemma}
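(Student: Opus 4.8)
The plan is to chain together the global--to--local construction \eqref{l_global-to-local}, the ACC for numerically trivial pairs in dimension $n-1$, and the arithmetic lemma \eqref{l_dcc-acc}. Enlarging $I$ to $I\cup\{1\}$ if necessary --- this is harmless, since $I\cup\{1\}$ still satisfies the DCC and a finite subset of $I\cup\{1\}$ with the required property meets $I$ in a finite subset of $I$ with the required property --- we may assume $1\in I$. It then suffices to produce a finite set $I_0\subset I$, depending only on $n$ and $I$, such that whenever $(X,\Delta)$ satisfies (1)--(4) of Theorem~\ref{t_acc}$_n$ and $c\in I$ is the coefficient of a component $M$ of $\Delta$, one has $c\in I_0$; applying this to every component of $\Delta$ then finishes the proof.

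So fix such $(X,\Delta)$, $M$ and $c$, and let $Z$ be the non kawamata log terminal centre, which by hypothesis (4) is contained in every component of $\Delta$, and in particular in $M$. Apply \eqref{l_global-to-local}, with its $n$ replaced by $n-1$ (so that $X$ has dimension $(n-1)+1=n$), to $(X,\Delta)$, with $V=Z$ and with the component $M$ of coefficient $c$. This produces a log canonical pair $(S,\Theta)$ with $S$ projective of dimension at most $n-1$, the coefficients of $\Theta$ in $D(I)$, $K_S+\Theta$ numerically trivial, and a component of $\Theta$ with coefficient
$$
\frac{m-1+f+kc}{m},\qquad m,k\in\mathbb N,\ f\in D(I).
$$

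By \eqref{p_der} the set $D(I)$ satisfies the DCC. Say $(S,\Theta)$ has dimension $d\le n-1$, and form the product $(S\times A,\Theta_A)$, where $A$ is an abelian variety with $\dim A=n-1-d$ (a point if $d=n-1$) and $\Theta_A$ denotes the pullback of $\Theta$ from the first factor. This pair is log canonical, of dimension exactly $n-1$, $K_{S\times A}+\Theta_A$ is numerically trivial, and the coefficients of $\Theta_A$ coincide with those of $\Theta$, hence lie in $D(I)$. Theorem~\ref{t_numerically-trivial}$_{n-1}$ therefore provides a finite set $J_0\subset D(I)$, depending only on $n$ and $I$, which contains all the coefficients of $\Theta_A$, and hence of $\Theta$; in particular $\tfrac{m-1+f+kc}{m}\in J_0$.

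Finally, \eqref{l_dcc-acc}, applied to the DCC set $I$ and the finite set $J_0$, shows that
$$
I_0=\{\, c\in I \,|\, \tfrac{m-1+f+kc}{m}\in J_0 \text{ for some } k,m\in\mathbb N,\ f\in D(I) \,\}
$$
is finite, and $c\in I_0$ by construction. Since $n$, $I$, $D(I)$, $J_0$ and hence $I_0$ do not depend on the particular pair $(X,\Delta)$, this is exactly what is needed. Essentially all the content here is contained in the cited results, so I do not anticipate a real obstacle; the one point requiring a little care is that \eqref{l_global-to-local} bounds $\dim S$ only from above, which is why Theorem~\ref{t_numerically-trivial}$_{n-1}$ must be applied after stabilising the dimension to $n-1$ by the product construction above (equivalently, the induction hypothesis in all dimensions $\le n-1$ is what is really used).
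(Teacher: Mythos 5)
Your proof is correct and follows essentially the same route as the paper: apply \eqref{l_global-to-local} to produce a numerically trivial pair $(S,\Theta)$ in dimension at most $n-1$, invoke Theorem~\ref{t_numerically-trivial}$_{n-1}$ to confine the coefficients of $\Theta$ to a finite set $J_0$, and conclude by the arithmetic lemma \eqref{l_dcc-acc}. You actually patch two small gaps that the paper's own proof glosses over: you explicitly add $1$ to $I$ so that the hypothesis $1\in I$ of \eqref{l_global-to-local} is met, and you use the product--with--an--abelian--variety trick to upgrade the bound $\dim S\le n-1$ to an application of Theorem~\ref{t_numerically-trivial}$_{n-1}$ in dimension exactly $n-1$ (the paper silently treats the theorem as available in all dimensions $\le n-1$). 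You also, correctly, take $f\in D(I)$ in the definition of $I_0$, as \eqref{l_global-to-local} and \eqref{l_dcc-acc} require, whereas the paper writes $f\in I_+$ there, which is a slight mismatch since $I_+\subsetneq D(I)$ in general.
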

\begin{proof} As $I$ satisfies the DCC so does $J=D(I)$.  As we are assuming
Theorem~\ref{t_numerically-trivial}$_{n-1}$, there is a finite set $J_0\subset J$ such
that if $(S,\Theta)$ is a log canonical pair where $S$ is projective of dimension at most
$n-1$, the coefficients of $\Theta$ belong to $J$ and $K_S+\Theta$ is numerically trivial,
then the coefficients of $\Theta$ belong to $J_0$.  Let
$$
I_0=\{\, c\in I \,|\, \text{$\frac{m-1+f+kc}m\in J_0$ for some $k$ and $m\in \mathbb{N}$ and $f\in I_+$} \,\}.
$$
As $J_0$ is a finite set, \eqref{l_dcc-acc} implies that $I_0$ is also a finite set.  

Suppose that $(X,\Delta)$ is a log canonical pair where $X$ is a quasi-projective variety
of dimension $n$, the coefficients of $\Delta$ belong to $I$, and there is a non kawamata
log terminal centre $Z\subset X$ which is contained in every component of $\Delta$.
\eqref{l_global-to-local} implies that the coefficients of $\Delta$ belong to
$I_0$. \end{proof}

\section{Upper bounds on the volume}
\label{s_upper}

\begin{lemma}\label{l_epsilon} Theorem~\ref{t_numerically-trivial}$_{n-1}$ 
and Theorem~\ref{t_acc}$_{n-1}$ imply that there is a constant $\epsilon>0$ with the
following property:

If $(X,\Delta)\in \mathfrak{D}$, where $X$ has dimension $n$, $\Delta$ is big and
$K_X+\Phi$ is numerically trivial, where
$$
\Phi\geq (1-\delta)\Delta,
$$
for some $\delta<\epsilon$, then $(X,\Phi)$ is kawamata log terminal.  
\end{lemma}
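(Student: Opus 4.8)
The plan is to argue by contradiction and use adjunction to transport the problem to dimension $n-1$, where the inductive statements Theorem~\ref{t_numerically-trivial}$_{n-1}$ and Theorem~\ref{t_acc}$_{n-1}$ are available. Suppose no such $\epsilon$ exists. Then there are pairs $(X_i,\Delta_i)\in\mathfrak{D}$ with $\dim X_i=n$ and $\Delta_i$ big, real numbers $\delta_i\to 0$, and $\mathbb{R}$-divisors $\Phi_i\geq(1-\delta_i)\Delta_i$ such that $K_{X_i}+\Phi_i$ is numerically trivial but $(X_i,\Phi_i)$ is not kawamata log terminal. Since $K_{X_i}+\Delta_i$ is numerically trivial too, $\Phi_i$ is numerically equivalent to $\Delta_i$. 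Replacing $\Phi_i$ by $(1-\delta_i)\Delta_i+\lambda_i\bigl(\Phi_i-(1-\delta_i)\Delta_i\bigr)$, where $\lambda_i\in(0,1]$ is chosen as large as possible subject to the resulting pair being log canonical — this keeps $\Phi_i\geq(1-\delta_i)\Delta_i$ and only shrinks $\delta_i$ to a number still tending to zero — we may assume that $(X_i,\Phi_i)$ is log canonical but not kawamata log terminal.

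Using \eqref{p_dlt}, pass to a divisorially log terminal modification $g_i\colon Y_i\to X_i$ of $(X_i,\Phi_i)$, let $S_i$ be a prime divisor of log discrepancy zero, which therefore occurs with coefficient one, and let $\Gamma_i$ be the strict transform of $\Delta_i$. The hypothesis $\Phi_i\geq(1-\delta_i)\Delta_i$ shows, coefficient by coefficient, that $S_i+(1-\delta_i)\Gamma_i$ is dominated by the log canonical boundary produced in \eqref{p_dlt}; hence $(Y_i,S_i+(1-\delta_i)\Gamma_i)$ is log canonical, and moreover $K_{Y_i}+S_i+(1-\delta_i)\Gamma_i$ is numerically equivalent to the negative of a non-zero effective divisor, so it is not pseudo-effective. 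Running a minimal model program for $K_{Y_i}+S_i+(1-\delta_i)\Gamma_i$ contracts the auxiliary exceptional divisors and, keeping track of $S_i$, reduces us to the case in which $X_i$ has Picard number one, $g_i$ extracts only $S_i$, the pair $(Y_i,S_i+(1-\delta_i)\Gamma_i)$ is still log canonical, and (using numerical triviality of $K_{X_i}+\Delta_i$ and bigness of $\Delta_i$, so that $\Gamma_i\neq 0$ and $S_i$ survives) both $K_{Y_i}+S_i+\Gamma_i$ and $-(K_{Y_i}+S_i+(1-2\delta_i)\Gamma_i)$ are ample. As the Picard number is one, there is a real number $t_i\in(1-2\delta_i,1)$, so $t_i\to 1$, for which $K_{Y_i}+S_i+t_i\Gamma_i$ is numerically trivial.

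Finally restrict $K_{Y_i}+S_i+t_i\Gamma_i$ to $S_i$ and apply adjunction, \eqref{l_coefficients}. If $(Y_i,S_i+t_i\Gamma_i)$ is log canonical at the generic point of every component of $\Gamma_i|_{S_i}$, we obtain a log canonical pair $(S_i,\Theta_i)$ with $\dim S_i\leq n-1$, with $K_{S_i}+\Theta_i$ numerically trivial, and with every coefficient of $\Theta_i$ of the shape $\tfrac{m-1+t_if}{m}$ for some $m\in\mathbb{N}$ and $f\in I_+$; since $t_i<1$ and $f\leq 1$ every such coefficient is strictly less than one, and all of them lie in $\bigcup_j D(t_jI)$, a set which satisfies the DCC because $t_j\to 1$ (compare \eqref{p_der}). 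By Theorem~\ref{t_numerically-trivial}$_{n-1}$ the coefficients of all the $\Theta_i$ then lie in a finite set $J_0$. But $\Gamma_i$ meets $S_i$, since the Picard number is one and $\Gamma_i\neq 0$, so $\Gamma_i|_{S_i}\neq 0$; looking at a component of $\Gamma_i|_{S_i}$, finiteness of $J_0$ first bounds the integers $m$ that can occur, and then, along a suitable subsequence, forces the corresponding elements $f\in I_+$ to form an infinite strictly decreasing sequence (because $t_i\to 1$), contradicting the DCC of $I$. In the remaining case, $(Y_i,S_i+t_i\Gamma_i)$ fails to be log canonical over the generic point of some prime divisor on $S_i$; since $(Y_i,S_i+(1-\delta_i)\Gamma_i)$ is log canonical and $t_i\to 1$, inversion of adjunction then produces a log canonical threshold on the $(n-1)$-dimensional variety $S_i$ that is strictly smaller than one yet tends to one, contradicting Theorem~\ref{t_acc}$_{n-1}$.

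The main obstacle I expect is the minimal model program in the second step: one must run it so as to contract exactly the auxiliary exceptional divisors while keeping $S_i$ and the strict transform of $\Delta_i$, and reach Picard number one — dealing along the way with Mori fibre spaces whose base has positive dimension — all the while checking that log canonicity of the pairs $(Y_i,S_i+(1-\delta_i)\Gamma_i)$ is preserved. It is precisely the comparison of $S_i+(1-\delta_i)\Gamma_i$ with the boundary of the divisorially log terminal modification of $(X_i,\Phi_i)$, which is the one place where $\Phi_i\geq(1-\delta_i)\Delta_i$ enters, that confines the coefficients of the resulting lower-dimensional pair to a fixed DCC set and so lets the ACC statements in dimension $n-1$ apply.
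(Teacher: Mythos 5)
Your overall strategy — pass to a dlt modification, run an MMP to reach Picard number one, restrict to the divisor of log discrepancy zero, and invoke the dimension $n-1$ theorems via adjunction — is the same as the paper's, but there is a genuine gap precisely at the step you yourself flag as the ``main obstacle,'' and the paper resolves it in a way your argument does not reproduce.

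The crucial difference lies in which MMP is run. The paper first perturbs so that $(X,\Phi)$ is not merely log canonical but purely log terminal with irreducible nklt locus, which forces the dlt modification to extract a \emph{single} divisor $S$. It then runs the $(K_Y+\Psi-S)$-MMP, where $K_Y+\Psi=\phi^*(K_X+\Phi)$. Since $K_Y+\Psi$ is numerically trivial, $K_Y+\Psi-S\equiv -S$, so every step of this MMP is automatically $S$-positive: $S$ is never contracted and its strict transform dominates the base of the resulting Mori fibre space, and one finishes by restricting to a general fibre (not by forcing the base to be a point). You instead run the $(K_{Y_i}+S_i+(1-\delta_i)\Gamma_i)$-MMP. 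Numerically this divisor is $-G_i$ with $G_i=\Psi_i-S_i-(1-\delta_i)\Gamma_i\geq 0$, and $G_i$ need not be supported on $S_i$; the MMP contracts $G_i$-positive rays, and nothing prevents such a ray from having negative intersection with $S_i$. So your MMP may contract $S_i$, or leave $S_i$ not dominating the base, and the assertions that you ``contract exactly the auxiliary exceptional divisors,'' keep $S_i$, and reach Picard number one are unjustified. Because you also skipped the perturbation to plt, there really are multiple auxiliary exceptional divisors whose fate you cannot control. This is not a cosmetic point: the whole contradiction in the last paragraph relies on having $S_i$ of Picard number one with $\Gamma_i|_{S_i}\neq 0$, and without the $S$-positive MMP there is no guarantee of arriving there.

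Beyond that, the paper's proof is noticeably cleaner than a sequence-based contradiction. It isolates, up front, a single $\epsilon>0$ such that any sandwich $(1-\epsilon)\Theta\leq\Theta'\leq\Theta$ with coefficients of $\Theta$ in $D(I)$ and $(S,\Theta')$ log canonical forces $(S,\Theta)$ log canonical, and forces $\Theta=\Theta'$ whenever $K_S+\Theta'$ is numerically trivial. After the MMP and restriction, one writes three adjuncted boundaries $\Theta_1\leq\Theta_2\leq\Theta$ with $(1-\epsilon)\Theta\leq\Theta_1$ and reads off an immediate contradiction. Your final paragraph works towards a similar contradiction but is entangled with subsequence extractions and auxiliary scalings $t_i$; these can be made to work but add complications (for instance, the need to pass to increasing subsequences of the $t_i$, and the separate case analysis on whether $(Y_i,S_i+t_i\Gamma_i)$ is lc) that the paper avoids. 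The main thing you are missing, though, is the correct choice of MMP: replace your MMP by the $(K_{Y_i}+\Psi_i-S_i)$-MMP, note that it is $S_i$-positive because $K_{Y_i}+\Psi_i$ is numerically trivial, and pass to a general fibre of the resulting Mori fibre space.
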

\begin{proof} Theorem~\ref{t_numerically-trivial}$_{n-1}$ and Theorem~\ref{t_acc}$_{n-1}$
imply that we may find $\epsilon>0$ with the following property: if $S$ is a projective
variety of dimension $n-1$, $(S,\Theta)$ and $(S,\Theta')$ are two log pairs, the
coefficients of $\Theta$ belong to $D(I)$, and
$$
(1-\epsilon)\Theta\leq \Theta'\leq \Theta, 
$$
then $(S,\Theta)$ is log canonical if $(S,\Theta')$ is log canonical, and 
moreover $\Theta=\Theta'$ if in addition $K_S+\Theta'$ is numerically trivial.  

Suppose that $(X,\Phi)$ is not kawamata log terminal, where
$$
\Phi\geq (1-\delta)\Delta,
$$
for some $\delta<\epsilon$ and $K_X+\Phi$ is numerically trivial.  Pick $\lambda\in (0,1]$
such that $(X,(1-\lambda)\Delta+\lambda\Phi)$ is log canonical but not kawamata log
terminal.  As $\Phi$ is big, $\delta<\epsilon$ and $(X,\Delta)$ is kawamata log terminal,
perturbing $\Phi$ we may assume that $(X,(1-\lambda)\Delta+\lambda\Phi)$ is purely log
terminal and the non kawamata log terminal locus is irreducible.

Replacing $\Phi$ by $(1-\lambda)\Delta+\lambda\Phi$ we may assume that $(X,\Phi)$ is
purely log terminal.  Let $\phi\colon\map Y.X.$ be a divisorially log terminal
modification of $(X,\Phi)$.  We may write
$$
K_Y+\Psi=\phi^*(K_X+\Phi) \qquad \text{and} \qquad K_Y+\Gamma+aS=\phi^*(K_X+\Delta),
$$
where $S=\rfdown \Psi.$ is a prime divisor, $\Gamma$ is the strict transform of $\Delta$
and $a<1$, as $(X,\Delta)$ is kawamata log terminal.

As $K_Y+\Psi$ is numerically trivial, $K_Y+\Psi-S$ is not pseudo-effective.  By
\cite{BCHM06}, we may run $f\colon\rmap Y.W.$ the $(K_Y+\Psi-S)$-MMP until we end with a
Mori fibre space $\pi\colon\map W.Z.$.  As $K_Y+\Psi$ is numerically trivial, every step
of this MMP is $S$-positive, so that the strict transform $T$ of $S$ dominates $Z$.  Let
$F$ be the general fibre of $\pi$.  Replacing $Y$, $\Gamma$ and $\Psi$ by $F$ and the
restriction of $\pi_*\Gamma$ and $\pi_*\Psi$ to $F$, we may assume that $S$, $\Psi$ and
$\Gamma$ are $\mathbb{Q}$-linearly equivalent to multiples of the same ample divisor.

In particular $K_Y+\Gamma+S$ is ample.  As $\Psi\geq (1-\epsilon)\Gamma+S$, it follows
that $K_Y+(1-\eta)\Gamma+S$ is numerically trivial, for some $0<\eta<\epsilon$, and
$K_Y+(1-\epsilon)\Gamma+S$ is log canonical.  We may write
\begin{align*} 
(K_Y+(1-\epsilon)\Gamma+S)|_S&=K_S+\Theta_1,  \\ 
(K_Y+(1-\eta)\Gamma+S)|_S&=K_S+\Theta_2, \quad \text{and} \\ 
(K_Y+\Gamma+S)|_S&=K_S+\Theta,
\end{align*} 
where the coefficients of $\Theta$ belong to $D(I)$.  Note that 
$$
(1-\epsilon)\Theta\leq \Theta_1 \leq \Theta_2 \leq \Theta,
$$
where by \eqref{l_coefficients} the first inequality follows from the inequality
$$
t\left (\frac{m-1+f}m\right )\leq \frac{m-1+tf}m \qquad \text{for any} \qquad t\leq 1.
$$
As $(S,\Theta_1)$ is log canonical, it follows that $(S,\Theta)$ is log canonical.  In
particular $(S,\Theta_2)$ is also log canonical.  As $K_S+\Theta_2$ is numerically
trivial, $\Theta=\Theta_2$, a contradiction.  \end{proof}

\begin{lemma}\label{l_A-to-B} Theorem~\ref{t_numerically-trivial}$_{n-1}$ and Theorem~\ref{t_acc}$_{n-1}$ 
imply Theorem~\ref{t_upper}$_n$.
\end{lemma}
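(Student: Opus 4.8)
The plan is to deduce Theorem~\ref{t_upper}$_n$ from \eqref{l_epsilon} by a contradiction argument, exactly along the lines of the sketch in \S2: a kawamata log terminal pair $(X,\Delta)$ with $K_X+\Delta\equiv 0$ whose volume is very large must carry an arbitrarily small effective perturbation of $\Delta$ which is not kawamata log terminal, and this is precisely what \eqref{l_epsilon} forbids. Note that \eqref{l_epsilon} is available to us, since we are assuming Theorem~\ref{t_numerically-trivial}$_{n-1}$ and Theorem~\ref{t_acc}$_{n-1}$.

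Concretely, I would first fix the constant $\epsilon>0$ produced by \eqref{l_epsilon} and let $N=N(n,\epsilon)$ be a constant depending only on $n$ and $\epsilon$, of the rough size $(n/\epsilon)^n$ (with a little slack to absorb the lower order terms of a Riemann--Roch estimate). I claim $\vol(X,\Delta)\le N$ for every $(X,\Delta)\in\mathfrak{D}$, which is the assertion of Theorem~\ref{t_upper}$_n$. If $\Delta$ is not big then $\vol(X,\Delta)=0$ and there is nothing to prove, so assume $\Delta$ is big; then $-K_X\equiv\Delta$ is a big $\mathbb{Q}$-Cartier $\mathbb{Q}$-divisor (as $X$ is kawamata log terminal) with $\vol(X,-K_X)=\vol(X,\Delta)$, the volume being a numerical invariant. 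Suppose for contradiction that $\vol(X,\Delta)>N$, and pick a rational number $t$ with $n/\vol(X,\Delta)^{1/n}<t<\epsilon$; then $0<t<1$ and $\vol(X,-tK_X)=t^n\vol(X,\Delta)>n^n$. By the standard Riemann--Roch estimate that produces, from a large volume, an effective divisor with prescribed multiplicity at a chosen smooth point (see \cite[2.3.5]{HMX10}, \cite{Lazarsfeld04b}), for any smooth point $x\in X$ there is an effective $\mathbb{Q}$-divisor $\Pi\sim_{\mathbb{Q}}-tK_X$ with $\mult_x\Pi\ge n$. Blowing up such an $x$ produces a divisor $E$ over $X$ with log discrepancy $a(E,X,\Pi)=n-\mult_x\Pi\le 0$, so $(X,\Pi)$ is not kawamata log terminal at $x$.

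Finally I would set $\Phi=(1-t)\Delta+\Pi\ge 0$. Since $\Pi\equiv -tK_X\equiv t\Delta$ and $K_X+\Delta\equiv 0$,
$$
K_X+\Phi=(K_X+\Delta)-t\Delta+\Pi\equiv \Pi-t\Delta\equiv 0,
$$
and $\Phi\ge (1-t)\Delta$ with $\delta:=t<\epsilon$. Thus all the hypotheses of \eqref{l_epsilon} hold for $(X,\Delta)\in\mathfrak{D}$ and the perturbation $\Phi$, so $(X,\Phi)$ is kawamata log terminal. But $\Phi\ge\Pi\ge 0$ gives $a(E,X,\Phi)\le a(E,X,\Pi)\le 0$, so $(X,\Phi)$ is not kawamata log terminal at $x$, a contradiction. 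Hence $\vol(X,\Delta)\le N$ for all $(X,\Delta)\in\mathfrak{D}$. The genuine difficulty has already been isolated in \eqref{l_epsilon}; here the only points that need a little care are keeping track of numerical equivalence so that $K_X+\Phi\equiv 0$ survives the perturbation (which is why $\Pi$ is only required to be $\mathbb{Q}$-linearly, equivalently numerically, equivalent to $t\Delta$ rather than literally equal to it), and the observation that a large volume forces $\Delta$, equivalently $-K_X$, to be big so that \eqref{l_epsilon} may be invoked. I expect no further obstacle.
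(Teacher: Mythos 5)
Your argument is correct and is essentially the paper's proof: fix $\epsilon>0$ from \eqref{l_epsilon}, and if $\vol(X,\Delta)>(n/\epsilon)^n$ then a small numerical multiple of $\Delta$ supports an effective divisor of high multiplicity at a general point, giving a non-klt perturbation $\Phi$ with $K_X+\Phi\equiv 0$ and $\Phi\geq(1-t)\Delta$ for some $t<\epsilon$, contradicting \eqref{l_epsilon}. The only small slip is the parenthetical that $-K_X$ (equivalently $\Delta$) is a $\mathbb{Q}$-Cartier $\mathbb{Q}$-divisor ``as $X$ is kawamata log terminal''; the klt hypothesis only makes $K_X+\Delta$ $\mathbb{R}$-Cartier, not $K_X$ or $\Delta$ separately, so $\Pi\sim_{\mathbb{Q}}-tK_X$ need not make sense. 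The cosmetic repair is what the paper does: take $\Pi\sim_{\mathbb{R}}t\Delta$ instead, whence $\Phi=(1-t)\Delta+\Pi\sim_{\mathbb{R}}\Delta$ so $K_X+\Phi\sim_{\mathbb{R}}K_X+\Delta$ is $\mathbb{R}$-Cartier and numerically trivial, and the remainder of your argument goes through verbatim with the multiplicity estimate applied to the $\mathbb{R}$-divisor $t\Delta$.
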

\begin{proof} Let $\epsilon>0$ be the constant given by \eqref{l_epsilon}.  If
$(X,\Delta)\in \mathfrak{D}$, $\Pi\sim_{\mathbb{R}} \eta\Delta$ and
$(X,\Pi+(1-\eta)\Delta)$ is not kawamata log terminal, then $\eta\leq \epsilon$.  It
follows that
\[
\vol(X,\Delta)\leq \left (\frac n{\epsilon}\right )^n.\qedhere
\]
\end{proof}

\section{Birational boundedness}
\label{s_birational}

\begin{lemma}\label{l_volume} Let $(X,\Delta)$ be a log pair, where $X$ is a projective variety 
of dimension $n$ and let $D$ be a big $\mathbb{R}$-divisor.

If $\vol(X,D)>(2n)^n$ then there is a family $\map V.B.$ of subvarieties of $X$ such that
if $x$ and $y$ are two general points of $X$ then we may find $b\in B$ and $0\leq
D_b\sim_{\mathbb{R}} D$ such that $(X,\Delta+D_b)$ is log canonical but not kawamata log
terminal at both $x$ and $y$ and there is a unique non kawamata log terminal place of
$(X,\Delta+D_b)$ with centre $V_b$ containing $x$.
\end{lemma}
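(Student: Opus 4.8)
The plan is to follow the standard method of creating and then cutting down non kawamata log terminal centres, going back to \cite{AS95} and \cite{Kollar95} and developed in the form we need in \cite{HMX10}; the hypothesis $\vol(X,D)>(2n)^n$ enters only so that, after halving $D$, there are enough sections to work at the two points $x$ and $y$ at the same time. Since $(X,\Delta)$ and $D$ are fixed, it is enough to exhibit one family $\map V.B.$ with $B$ of finite type together with, for every point $x$ in a dense open $U\subseteq X$ (determined below) and every general $y$, a member $V_b$ and a divisor $D_b$ with the stated properties; ``general point'' then means ``point of $U$'', and it is near such $x$ and $y$ that ``log canonical but not kawamata log terminal'' is to be read (recall a general point of $X$ is a smooth point lying on no component of $\Delta$, so $(X,\Delta)$ is kawamata log terminal there).

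The first step is to create a very singular divisor at each of $x$ and $y$. By homogeneity of the volume, $\vol(X,\tfrac12 D)=2^{-n}\vol(X,D)>n^n$, so a standard asymptotic Riemann--Roch computation (cf.~\cite{HMX10}), writing $D$ as an ample $\mathbb{Q}$-divisor plus an effective divisor, produces for a general point $x\in X$ an effective divisor $A_x\sim_{\mathbb{R}}\tfrac12 D$ with $\mult_xA_x>n$; in particular $(X,\Delta+A_x)$ is not log canonical at $x$. Keeping $x$ fixed, $\Supp A_x$ lies in a proper closed subset of $X$, so for $y$ general we may assume $y\notin\Supp A_x$; applying the same construction at $y$ gives an effective $A_y\sim_{\mathbb{R}}\tfrac12 D$ with $\mult_yA_y>n$ and with $x\notin\Supp A_y$.

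The second step is to pass to a log canonical threshold and then tie-break. Let $t_x$ be the log canonical threshold of $A_x$ with respect to $(X,\Delta)$ over a neighbourhood of $x$; since $\mult_xA_x>n$ we have $t_x<1$, and $(X,\Delta+t_xA_x)$ has a unique minimal non kawamata log terminal centre through $x$ and is log canonical near $x$. Koll\'ar's tie-breaking procedure (see \cite{Kollar95} and \cite{HMX10}) --- add a small general effective divisor through that centre and lower the coefficient slightly --- then produces an effective $\Gamma_x\sim_{\mathbb{R}}s_x\tfrac12 D$ with $s_x<1$, supported near $\Supp A_x$, such that $(X,\Delta+\Gamma_x)$ is log canonical near $x$, not kawamata log terminal at $x$, and has a \emph{unique} non kawamata log terminal place whose centre $V_b$ contains $x$. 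At $y$ we likewise pass to the log canonical threshold of $A_y$ to obtain an effective $\Gamma_y\sim_{\mathbb{R}}s_y\tfrac12 D$ with $s_y<1$ which is log canonical near $y$, not kawamata log terminal at $y$, with $x\notin\Supp\Gamma_y$ (and $y\notin\Supp\Gamma_x$). Finally, since $D$ is big and $1-\tfrac12 s_x-\tfrac12 s_y>0$, choose an effective $G\sim_{\mathbb{R}}(1-\tfrac12 s_x-\tfrac12 s_y)D$ whose support avoids $x$ and $y$, and put $D_b:=\Gamma_x+\Gamma_y+G\sim_{\mathbb{R}}D$. Near $x$ one has $(X,\Delta+D_b)=(X,\Delta+\Gamma_x)$ and near $y$ one has $(X,\Delta+D_b)=(X,\Delta+\Gamma_y)$, so $(X,\Delta+D_b)$ is log canonical but not kawamata log terminal at both $x$ and $y$, with a unique non kawamata log terminal place whose centre $V_b$ contains $x$; after possibly switching $x$ and $y$ this is the required $D_b$. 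I expect the tie-breaking to be the main obstacle, both in guaranteeing a genuinely \emph{unique} non kawamata log terminal place over $x$ and in tracking the coefficients so that $D_b\sim_{\mathbb{R}}D$ exactly; this is exactly what the corresponding lemmas of \cite{HMX10} supply, and the rest is routine bookkeeping.

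For the family, fix a large integer $m$ as above. Every $A_x$, and hence every $\Gamma_x$, $D_b$ and centre $V_b$, is cut out inside the linear system of the \emph{fixed} divisor $\rfup mD.$ by multiplicity conditions at the varying point $x$ together with finitely many bounded auxiliary choices, so all of these are parametrised by a scheme of finite type. By generic flatness the centres $V_b$ sweep out a dense open subset $U\subseteq X$; taking $\map V.B.$ to be the normalisation of the universal family over an appropriate locally closed subset of that parameter scheme, and ``general'' to mean ``in $U$'', completes the argument.
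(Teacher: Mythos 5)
Your proof is correct in its essentials, but it takes a genuinely different route from the paper's, and the comparison is instructive. The paper works at the geometric generic point: it passes to $X_K$ over $K=\overline{k(X)}$, produces a \emph{single} divisor $D_\xi\sim_{\mathbb{R}}D_K/2$ that fails to be klt at the universal point $\xi$, and spreads this out to a family $\{D_t\}_{t\in T}$ of $\mathbb{Q}$-divisors $\sim_{\mathbb{Q}}D/2$. It then picks $t$, $s$ so that $D_t$, $D_s$ are not klt at $x$, $y$ respectively, considers the log canonical threshold $\beta$ of the \emph{sum} $D_t+D_s$, and uses the genericity of $(x,t)$ and $(y,s)$ --- not any disjointness of supports --- to conclude $\beta_x=\beta_y$; after tie-breaking, $B$ is simply an open $U\times U\subset T\times T$, so the parametrisation is automatic. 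You instead keep the two cutting divisors $A_x$, $A_y$ with mutually avoiding supports, pass to the log canonical threshold and tie-break at $x$ and at $y$ \emph{separately}, and then glue $D_b=\Gamma_x+\Gamma_y+G$, topping up with $G$ so that $D_b\sim_{\mathbb{R}}D$ on the nose (a point the paper's proof glosses over, since $\beta(D_s+D_t)\sim_{\mathbb{R}}\beta D$ with $\beta\leq 1$ and a correction term should really be inserted). What the paper's route buys is a clean construction of the family $B$ and a built-in treatment of genericity; what your route buys is that you never need the slightly subtle ``$\beta_x=\beta_y$ by symmetry and genericity'' step, at the price of having to justify the disjointness $x\notin\Supp A_y$ (which is fine for $(x,y)$ general --- the base locus of the linear system imposing $\mult_y>n$ sweeps out a proper subset of $X\times X$ as $y$ varies, but you should say so rather than just assert it) and having to work harder for the ``finitely many bounded auxiliary choices'' that make $B$ of finite type, which as written is the weakest point of your argument. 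None of these is a real gap, but spelling out the base-locus argument and the parametrisation of the tie-breaking divisors would make the write-up airtight.
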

\begin{proof} Let $K$ be the algebraic closure of the function field of $X$.  
There is a fibre square
$$
\begin{diagram}
X_K   &   \rTo    &   X \\
\dTo     &            & \dTo \\
\sp K   &   \rTo  &   \sp k.
\end{diagram}
$$

Let $\xi$ be the closed point of $X_K$ corresponding to the generic point of $X$ and let
$D_K$ be the pullback of $D$.  As $\xi$ is a smooth point of $X_K$ and
$\vol(X_K,D_K/2)>n^n$, we may find $0\leq D_{\xi} \sim_{\mathbb{R}} D_K/2$ such that
$(X_K,\Delta_K+D_{\xi})$ is not kawamata log terminal at $\xi$.  By standard arguments we
may spread out $D_{\xi}$ to a family of divisors $D_t$, $t\in T$ such that if $x$ is a
general point of $X$ then we may find $t\in T$ such that $(X,\Delta+D_t)$ is not kawamata
log terminal at $x$, where $D_t \sim_{\mathbb{Q}} D/2$.

Let $y$ be a general point of $X$.  Pick $s$ such that $(X,\Delta+D_s)$ is not kawamata
log terminal at $y$, where $D_s \sim_{\mathbb{Q}} D/2$.  Let
$$
\beta=\beta_x=\sup \{\, \lambda\in \mathbb{R} \,|\, \text{$(X,\Delta+\lambda(D_t+D_s))$ is log canonical at $x$} \,\},
$$
be the log canonical threshold.  As $x$ and $y$ are general points of $X$,
$\beta_x=\beta_y$.  Thus $(X,\Delta+\beta(D_s+D_t))$ is log canonical but not kawamata log
terminal at $x$ and $y$.  Perturbing we may assume that there is a unique non kawamata log
terminal place of $(X,\Delta+\beta(D_t+D_s))$ containing $x$ with centre $V_{(s,t)}$.
Then we may take for $B$ an open subset of $T\times T$ of the form $U\times U$.
\end{proof}

\begin{lemma}\label{l_klt-case} Assume Theorem~\ref{t_birational}$_{n-1}$
and Theorem~\ref{t_acc}$_{n-1}$.  Fix a positive integer $p$.

Let $\mathfrak{B}_1$ be the set of kawamata log terminal pairs $(X,\Delta)$, where $X$ is
projective of dimension $n$, $K_X+\Delta$ is big and either $p\Delta$ is integral or the
coefficients of $\Delta$ belong to
$$
\{\, \frac{r-1}r \,|\, r\in \mathbb{N} \,\}.
$$

Then there is a positive integer $m$ such that $\phi_{m(K_X+\Delta)}$ is birational, for 
every $(X,\Delta)\in \mathfrak{B}_1$.  
\end{lemma}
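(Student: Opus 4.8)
The plan is to reduce to \eqref{t_inductive}. Condition (2) of \eqref{t_inductive} is precisely the hypothesis defining $\mathfrak{B}_1$, so the content is to produce fixed positive integers $k$ and $l$ so that condition (1) holds for every $(X,\Delta)\in \mathfrak{B}_1$: a dominant family $\map V.B.$ of subvarieties such that for each $b\in B$ one can find $0\le \Delta_b \sim_{\mathbb{Q}} (1-\delta)H$, where $H=k(K_X+\Delta)$ and $\delta>0$, with a unique non kawamata log terminal place of $(X,\Delta+\Delta_b)$ with centre $V_b$, together with a divisor $D$ on the normalisation $W$ of $V_b$ with $\phi_D$ birational and $lH|_W-D$ pseudo-effective. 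First I would reduce to the case that $K_X+\Delta$ is ample, by replacing $(X,\Delta)$ with its log canonical model: this exists by \cite{BCHM06} since $(X,\Delta)$ is kawamata log terminal and $K_X+\Delta$ is big, and being a birational contraction it leaves the volume unchanged, preserves kawamata log terminality and the coefficients of $\Delta$ (hence membership in $\mathfrak{B}_1$), and does not affect whether $\phi_{m(K_X+\Delta)}$ is birational.

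Granting for the moment a fixed $k$ with $\vol(X,k(K_X+\Delta))>(2n)^n$ for every $(X,\Delta)\in\mathfrak{B}_1$, apply \eqref{l_volume} to $k(K_X+\Delta)$: this gives the dominant family $\map V.B.$ so that for general $x$, $y$ there are $b\in B$ and $0\le D_b\sim_{\mathbb{Q}} k(K_X+\Delta)$ with $(X,\Delta+D_b)$ log canonical but not kawamata log terminal at $x$ and $y$ and a unique non kawamata log terminal place through $x$ with centre $V_b$; with $H=(k+1)(K_X+\Delta)$ and $\delta=1/(k+1)$ one has $D_b\sim_{\mathbb{Q}}(1-\delta)H$. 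Since $V_b$ is a general member of a covering family, \eqref{t_coefficients}, applied to $(X,\Delta)$ with $\Delta'=D_b$ (after enlarging the coefficient set of $\Delta$ to contain $1$), gives a divisor $\Theta$ on $W$ whose coefficients lie in $\{\, a \,|\, 1-a\in \Lct_{n-1}(D(I)) \,\}\cup\{\,1\,\}$, with $(K_X+\Delta+D_b)|_W-(K_W+\Theta)$ pseudo-effective and, on a log resolution $\psi\colon\map U.W.$ (with $\Psi$ the sum of the strict transform of $\Theta$ and the exceptional divisors), $K_U+\Psi\ge (K_X+\Delta)|_U$. As $K_X+\Delta$ is ample, $(K_X+\Delta)|_U$ is big, hence so is $K_U+\Psi$; moreover $(U,\Psi)$ is log canonical and, by \eqref{p_der} together with Theorem~\ref{t_acc}$_{n-1}$ (which gives the ACC for $\Lct_{n-1}(D(I))$), the coefficients of $\Psi$ lie in a set satisfying the DCC. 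Theorem~\ref{t_birational}$_{n-1}$ then supplies a fixed $m_1$ with $\phi_{m_1(K_U+\Psi)}$ birational; pushing this system down to $W$ we obtain a divisor $D$ on $W$ with $\phi_D$ birational and $D\le m_1(K_W+\Theta)$. Finally, since $D_b\sim_{\mathbb{Q}} k(K_X+\Delta)$ and $H=(k+1)(K_X+\Delta)$ we have $(K_X+\Delta+D_b)|_W\sim_{\mathbb{Q}}H|_W$, so $H|_W-(K_W+\Theta)$ is pseudo-effective; combined with $D\le m_1(K_W+\Theta)$ and the ampleness of $K_X+\Delta$, this shows $m_1 H|_W-D$ is pseudo-effective, so $l=m_1$ works. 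Hence condition (1) of \eqref{t_inductive} holds with these $k$ and $l$, and \eqref{t_inductive} completes the proof.

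The step I expect to be the main obstacle is precisely the one granted above: \eqref{l_volume} requires $\vol(X,k(K_X+\Delta))=k^n\vol(X,K_X+\Delta)>(2n)^n$, so a single $k$ works for all of $\mathfrak{B}_1$ only if $\vol(X,K_X+\Delta)$ is bounded away from $0$ over $\mathfrak{B}_1$, which is not clear a priori. I would address this by a Noetherian argument: were the lemma false there would be pairs $(X_i,\Delta_i)\in\mathfrak{B}_1$ with $\phi_{i(K_{X_i}+\Delta_i)}$ not birational, and by the previous paragraph their volumes would have to tend to $0$; the remaining task is to rule this out, for which I would use the birational boundedness results of \cite{HMX10} recorded in \eqref{t_birationally-bounded}, together with the lower dimensional Theorems~\ref{t_acc}$_{n-1}$ and \ref{t_birational}$_{n-1}$, to show that pairs of small volume lie in a bounded family and then conclude by semicontinuity of plurigenera. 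Making this reduction precise without circularity is the technical heart of the lemma.
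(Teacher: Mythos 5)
Your reduction to \eqref{t_inductive} is sound as far as it goes: passing to the log canonical model, applying \eqref{l_volume} to obtain the covering family, invoking \eqref{t_coefficients} for $\Theta$, getting the DCC for the coefficients from Theorem~\ref{t_acc}$_{n-1}$, and getting $l$ from Theorem~\ref{t_birational}$_{n-1}$ all match the paper's argument. You have correctly identified the remaining difficulty --- that a uniform $k$ with $\vol(X,k(K_X+\Delta))>(2n)^n$ requires the volumes to be bounded away from zero, which is not known in advance --- and you have correctly guessed that \eqref{t_birationally-bounded} is the tool to use. But this is exactly the point where the proposal stops short: the ``Noetherian argument'' and ``semicontinuity of plurigenera'' you gesture at do not fill the gap, and you explicitly concede that making the reduction ``precise without circularity is the technical heart of the lemma.''

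The paper closes the gap without any contradiction argument, by exploiting the shape of the constant produced by \eqref{t_inductive}. Inspecting the proof of \eqref{t_inductive}, the integer $m_0$ it produces depends only on $n$, $l$ and $p$ --- not on $k$. Hence: first fix $l$ and so $m_0$ once and for all, as you do. Now for a given pair $(X,\Delta)\in\mathfrak{B}_1$, if $\vol(X,K_X+\Delta)\geq 1$, take $k=2(n+1)$ uniformly and conclude $\phi_{2m_0(n+1)(K_X+\Delta)}$ is birational. If instead $\vol(X,K_X+\Delta)<1$, choose $k$ (depending on the pair) so that $(2n)^n < \vol(X,k(K_X+\Delta)) \leq (4n)^n$. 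Then $\phi_{m_0k(K_X+\Delta)}$ is birational and
$$
\vol(X,m_0k(K_X+\Delta)) \leq (4m_0n)^n.
$$
This is precisely the hypothesis of \eqref{t_birationally-bounded} with $M=(4m_0n)^n$, which then gives that the set of volumes $\vol(X,K_X+\Delta)$ satisfies the DCC, hence has a positive minimum $\delta$. With that lower bound in hand a single $k$ with $k>2n/\delta$ works for all small-volume pairs, and one may take $m=\max(m_0\rup 2n/\delta.,\,2m_0(n+1))$. The key observation you were missing is that \eqref{t_inductive} already yields a pair-dependent birationality statement with a \emph{uniform} multiplier $m_0$, so \eqref{t_birationally-bounded} can be invoked directly rather than via an indirect compactness or semicontinuity argument.
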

\begin{proof} Passing to a log canonical model of $(X,\Delta)$ we may assume that
$K_X+\Delta$ is ample.

Pick a positive integer $k$ such that $\vol(X,k(K_X+\Delta))>(2n)^n$.  We will apply
\eqref{t_inductive} to $k(K_X+\Delta)$.  (2) holds by hypothesis.  

Let 
$$
J=\{\, 1-a\,|\, a\in \Lct_{n-1}(D(I)) \,\}\cup \{\,1\}.
$$
Theorem~\ref{t_acc}$_{n-1}$ implies that $J$ satisfies the DCC.

Theorem~\ref{t_birational}$_{n-1}$ implies that there is a positive integer $l$ such that
if $(U,\Psi)$ is a log canonical pair, where $U$ is projective of dimension at most $n-1$,
the coefficients of $\Psi$ belong to $J$ and $K_U+\Psi$ is big, then $\phi_{l(K_U+\Psi)}$
is birational.

Apply \eqref{l_volume} to $k(K_X+\Delta)$ to get a family $\map V.B.$.  Let $b\in B$ be a
general point.  Let $\nu\colon\map W.V_b.$ be the normalisation of $V_b$.
\eqref{t_coefficients}$_n$ implies that we may find $\Theta$ on $W$ such that
$$
(K_X+\Delta+\Delta_b)|_W-(K_W+\Theta),
$$
is pseudo-effective, where the coefficients of $\Theta$ belong to $J$.  

Let $\psi\colon\map U.W.$ be a log resolution of $(W,\Theta)$ and let $\Psi$ be the sum of
the strict transform of $\Theta$ and the exceptional divisors.  \eqref{t_coefficients}$_n$
implies that
$$
(K_U+\Psi)\geq (K_X+\Delta)|_U,
$$
so that $K_U+\Psi$ is big.  As the coefficients of $\Theta$ belong to $J$, it follows that
the coefficients of $\Psi$ belong to $J$.  But then $\phi_{l(K_U+\Psi)}$ is birational.
It is easy to see (1) of \eqref{t_inductive} holds.  

As the hypotheses of \eqref{t_inductive} hold, there is a positive integer $m_0$ such that
$\phi_{m_0k(K_X+\Delta)}$ is birational.  If $\vol(X,K_X+\Delta)\geq 1$ then
$\vol(X,2(n+1)(K_X+\Delta))>(2n)^n$ and $\phi_{2m_0(n+1)(K_X+\Delta)}$ is birational.  

Otherwise, if $\vol(X,K_X+\Delta)<1$, then we may find $k$ such that
$$
(2n)^n<\vol(X,k(K_X+\Delta))\leq (4n)^n.
$$
It follows that 
$$
\vol(X,m_0k(K_X+\Delta))\leq (4m_0n)^n.
$$
\eqref{t_birationally-bounded} implies that there is a constant $0<\delta<1$ such that if
$(X,\Delta)\in \mathfrak{B}$, then
$$
\vol(X,K_X+\Delta)>\delta.
$$
In this case, 
$$
\vol(X,\alpha(K_X+\Delta))>(2n)^n,
$$
where 
$$
\alpha=\frac {2n}{\delta},
$$
and we may take $m=\max(m_0\rup \alpha.,2m_0(n+1))$.  \end{proof}

\begin{lemma}\label{l_psef} Assume Theorem~\ref{t_birational}$_{n-1}$, Theorem~\ref{t_acc}$_{n-1}$, and
Theorem~\ref{t_upper}$_n$.

Then there is a constant $\beta<1$ such that if $(X,\Delta)\in \mathfrak{B}$ then the
pseudo-effective threshold
$$
\lambda=\inf \{\, t\in \mathbb{R} \,|\, \text{$K_X+t\Delta$ is big}\,\},
$$
is at most $\beta$.  
\end{lemma}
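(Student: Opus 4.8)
The plan is to argue by contradiction. Suppose no such $\beta$ exists, and choose $(X_i,\Delta_i)\in\mathfrak B$ whose pseudo-effective thresholds $\lambda_i$ tend to $1$; discarding the case $\Delta_i=0$ (where $\lambda_i=-\infty$) and passing to a subsequence, I would arrange $\tfrac12<\lambda_1<\lambda_2<\cdots<1$. Since $\lambda_i<1$, the pair $(X_i,\lambda_i\Delta_i)$ is kawamata log terminal and $K_{X_i}+\lambda_i\Delta_i$ is pseudo-effective but not big. Running a $(K_{X_i}+\lambda_i\Delta_i)$-MMP (it terminates, the divisor being pseudo-effective) and contracting further, I would replace $X_i$ by a model of Picard number one and $\Delta_i$ by its strict transform; then $K_{X_i}+\lambda_i\Delta_i$, being nef with top self-intersection zero on a variety of Picard number one, is numerically trivial, so $-K_{X_i}\equiv\lambda_i\Delta_i$ and $K_{X_i}+\Delta_i\equiv(1-\lambda_i)\Delta_i$ are ample and the pseudo-effective threshold is unchanged.

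Next I would bound volumes. The coefficients of $\lambda_i\Delta_i$ all lie in $I'':=\bigcup_i\lambda_i I$, and a short argument — using that the $\lambda_i$ strictly increase, $I$ satisfies the DCC and each $\lambda_i<1$ — shows $I''$ satisfies the DCC and is contained in $[0,1)$. As $(X_i,\lambda_i\Delta_i)$ is kawamata log terminal of dimension $n$ with $K_{X_i}+\lambda_i\Delta_i\equiv0$, Theorem~\ref{t_upper}$_n$ gives a constant $M$ with $\vol(X_i,\lambda_i\Delta_i)\le M$, hence $\vol(X_i,\Delta_i)\le 2^nM$. Letting $\delta>0$ be the least positive element of $\overline I$ and $D_i$ the reduced divisor on the support of $\Delta_i$, the inequality $D_i\le\delta^{-1}\Delta_i$ yields $\vol(X_i,D_i)\le(2/\delta)^nM=:M_1$.

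I would then take a log resolution $\mu_i\colon Y_i\to X_i$ of $(X_i,\Delta_i)$ and let $B_i$ be the reduced divisor supported on the strict transform of $D_i$ and the $\mu_i$-exceptional divisors, so that $(Y_i,B_i)$ is log smooth, $K_{Y_i}+B_i$ is big (it dominates $\mu_i^*(K_{X_i}+\Delta_i)$), and $(Y_i,\tfrac{r-1}rB_i)$ is kawamata log terminal with $K_{Y_i}+\tfrac{r-1}rB_i$ big for $r\gg0$. The delicate point — and I expect this volume bookkeeping, together with the Picard-number-one reduction above, to be the main technical obstacles — is that $\vol(Y_i,K_{Y_i}+B_i)\le M_1$ still holds: writing $K_{Y_i}+\Lambda_i=\mu_i^*(K_{X_i}+\lambda_i\Delta_i)\equiv 0$ and splitting $\Lambda_i=\Lambda_i^+-\Lambda_i^-$, one has $K_{Y_i}+\Lambda_i^+\equiv\Lambda_i^-$ with $\Lambda_i^-$ effective and $\mu_i$-exceptional and all coefficients of $\Lambda_i^+$ less than one; hence every coefficient of $B_i-\Lambda_i^+$ lies in $(0,1]$, so $K_{Y_i}+B_i\equiv\Lambda_i^-+(B_i-\Lambda_i^+)$ is bounded above by $\Lambda_i^-$ plus the reduced exceptional divisor plus $\mu_i^*D_i$, and pushing forward gives the claimed bound. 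Choosing $r_i$ with $K_{Y_i}+\tfrac{r_i-1}{r_i}B_i$ big, Lemma~\ref{l_klt-case} (whose hypotheses, Theorems~\ref{t_birational}$_{n-1}$ and \ref{t_acc}$_{n-1}$, are available here) produces a single integer $m$, independent of $i$, with $\phi_{m(K_{Y_i}+\frac{r_i-1}{r_i}B_i)}$ birational and $\vol(Y_i,m(K_{Y_i}+\tfrac{r_i-1}{r_i}B_i))\le m^nM_1$.

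Finally, feeding $k=m$ and $M=m^nM_1$ into Theorem~\ref{t_birationally-bounded} (and the birational boundedness that underlies it) shows $\{(Y_i,B_i)\}$ is log birationally bounded, say by $Z\to T$ with $T$ of finite type and a reduced divisor $B$ on $Z$ such that $B_{t_i}$ contains the strict transform of $B_i$ and every exceptional divisor. On such a bounded family the pseudo-effective threshold is bounded away from one — the step the sketch calls ``easy'' — which I would establish by refining $T$ so that $(Z,B)$ is log smooth over each stratum and then running Noetherian induction: the condition ``$K_{X_i}+t\Delta_i$ is big'' becomes ``$K_{Z_{t_i}}+\Xi_i(t)$ is big'' for a divisor $\Xi_i(t)$ supported on $B_{t_i}$, with only finitely many combinatorial types along the family and coefficients in a set satisfying the DCC, so openness of bigness yields a uniform $\beta<1$ with $\lambda_i\le\beta$ for all $i$. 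This contradicts $\lambda_i\to1$ and proves the lemma.
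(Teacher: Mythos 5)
Your argument tracks the paper's strategy for the first three stages: reduce by MMP to Picard number one with $K_X+\lambda\Delta\equiv 0$, invoke Theorem~\ref{t_upper}$_n$ to bound $\vol(X,\lambda\Delta)$ (and hence $\vol$ of the reduced support) from above, and then use Lemma~\ref{l_klt-case} to conclude that a family built from the reduced boundary is log birationally bounded. The volume bookkeeping you carry out on the log resolution, while a bit more convoluted than the paper's direct computation with $K_Y+G$ for $G$ the reduced support of $\Gamma$, does go through. So up to that point the proposal is essentially correct and aligned with the paper.

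The gap is in the final step, which is the actual crux. Having established log birational boundedness of $\{(Y_i,B_i)\}$, you claim that ``on such a bounded family the pseudo-effective threshold is bounded away from one'' and propose to establish this by refining the parameter space, Noetherian induction, and openness of bigness. That is precisely the statement Lemma~\ref{l_psef} is asserting, and the route you sketch is not actually carried out: openness of bigness in a family gives you an open condition in the parameter $t\in T$ for each fixed coefficient vector, but here the coefficients of $\Xi_i(t)$ themselves vary continuously with the number $t$, so one needs a uniform estimate in both the base parameter and the coefficient parameter, and ``finitely many combinatorial types'' plus DCC does not immediately produce it. The paper sidesteps this entirely: log birational boundedness plus \cite[1.9]{HMX10} yields a \emph{lower} bound $\delta>0$ on $\vol(X,K_X+\Delta)$. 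Combining the numerical triviality of $K_Y+\Gamma$ with the push-pull inequality for volumes and the upper bound $\vol(Y,\Gamma)\le C$ gives
$$\delta\le\vol(X,K_X+\Delta)\le\vol\!\left(Y,K_Y+\tfrac{1}{\lambda}\Gamma\right)=\left(\tfrac{1}{\lambda}-1\right)^{n}\vol(Y,\Gamma)\le\left(\tfrac{1}{\lambda}-1\right)^{n}C,$$
whence $\lambda\le\beta:=1/\bigl(1+(\delta/C)^{1/n}\bigr)<1$. You should replace the Noetherian-induction sketch with this clean volume comparison: the whole point of setting up the two-sided volume bounds is that the threshold then drops out of elementary algebra, with no further geometry required.
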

\begin{proof} We may assume that $1\in I$.  Suppose that $(X,\Delta)\in \mathfrak{B}$.
Let $\pi\colon\map Y.X.$ be a log resolution of $(X,\Delta)$.  We may write
$$
K_Y+\Gamma=\pi^*(K_X+\Delta)+F,
$$
where $\Gamma$ is the strict transform of $\Delta$ plus the sum of the exceptional 
divisors and $F\geq 0$ is exceptional as $(X,\Delta)$ is log canonical.  Let 
$$
\mu=\inf \{\, t\in \mathbb{R} \,|\, \text{$K_Y+t\Gamma$ is big}\,\}.
$$
be the pseudo-effective threshold.  As $\pi_*(K_Y+\mu\Gamma)=K_X+\mu\Delta$ is
pseudo-effective it follows that $\lambda\leq \mu$ and so it suffices to bound $\mu$ away
from one.  Replacing $(X,\Delta)$ by $(Y,\Gamma)$ we may assume that $(X,\Delta)$ has
simple normal crossings.

We may assume that $\lambda>1/2$, so that $K_X$ is not pseudo-effective.  As $K_X+\Delta$ 
is big we may find $0\leq D \sim_{\mathbb{R}} (K_X+\Delta)$.  If $\epsilon>0$ then 
$$
(1+\epsilon)(K_X+\lambda \Delta) \sim_{\mathbb{R}} K_X+\mu\Delta+\epsilon D,
$$
where $\mu=(1+\epsilon)\lambda-\epsilon<\lambda$.  It follows that if $\epsilon$ is
sufficiently small then $K_X+\mu\Delta+\epsilon D$ is kawamata log terminal.  By
\cite{BCHM06}, we may run $f\colon\rmap X.Y.$ the $(K_X+\lambda\Delta)$-MMP with scaling
until $K_Y+\Gamma$ is kawamata log terminal and nef, where $\Gamma=f_*(\lambda \Delta)$.
Now we may run the $(K_Y+\mu f_*\Delta)$-MMP with scaling of $f_*D$ until we get to a Mori
fibre space $\pi\colon\map Y.Z.$; all steps of this MMP are $(K_Y+\Gamma)$-trivial, as all
steps of this MMP are $(K_Y+\mu f_*\Delta+\epsilon f_*D)$-trivial, so that $(Y,\Gamma)$
remains kawamata log terminal and nef.  Replacing $(X,\Delta)$ by a log resolution, we may
assume that $f$ is a morphism.  Replacing $X$ by the general fibre of the composition of
$f$ and $\pi$, we may assume that $Z$ is a point, so that $K_Y+\Gamma$ is numerically
trivial.

Suppose that we have a sequence of such log pairs $(X_l,\Delta_l)\in \mathfrak{B}$.  We
may assume that the pseudo-effective threshold is an increasing sequence,
$$
\ialist \lambda.<.,
$$
and it suffices to bound this sequence away from one.  Let
$$
J=\{\, \lambda_li \,|\, i\in I, l\in \mathbb{N} \,\}.
$$
Then $J$ satisfies the DCC, as $\lambda_l$ are an increasing sequence.  

Theorem~\ref{t_upper}$_n$ implies that there is a constant $C$ such that
$\vol(Y,\Gamma)<C$ for any $\Gamma$ whose coefficients belong to $J$.  Let $\alpha$ be the
smallest non-zero element of $J$ and let $G=G_i$ be the sum of the components of
$\Gamma=\Gamma_i$.  Let $Y=Y_i$.  Then
\begin{align*} 
\vol(Y,K_Y+G)&=\vol(Y,G-\Gamma) \\
             &\leq \vol(Y,G) \\
             &\leq \vol(Y,\frac 1{\alpha}\Gamma) \\
             &\leq \frac C{\alpha^n}.
\end{align*} 

Let $D$ be the sum of the components of $\Delta$.  Certainly $K_X+D$ is big.  We may write
$$
K_X+D=f^*(K_Y+G)+F,
$$
where $F$ is supported on the exceptional locus.  It follows that 
$$
\vol(X,K_X+D)\leq \vol(Y,K_Y+G)\leq \frac C{\alpha^n}.
$$

Given $(X_l,D_l)$ we may pick $r\in \mathbb{N}$ such that 
$$
K_{X_l}+\Theta_l=K_{X_l}+\frac{r-1}rD_l
$$
is big.  As the coefficients of $\Theta_l$ belong to 
$$
\{\, \frac{r-1}r \,|\, r\in \mathbb{N} \,\},
$$
\eqref{l_klt-case} implies that 
$$
\{\, (X_l,\Theta_l) \,|\, l\in \mathbb{N}\,\},
$$
is log birationally bounded.  But then 
$$
\{\, (X_l,\Delta_l) \,|\, l\in \mathbb{N}\,\},
$$
is log birationally bounded.  In particular, \cite[1.9]{HMX10} implies that there is a
constant $\delta>0$ such that
$$
\vol(X_l,K_{X_l}+\Delta_l)\geq \delta,
$$ 
for every $l\in \mathbb{N}$.  In this case 
$$
\delta\leq \vol(X,K_X+\Delta)\leq \vol(Y,K_Y+\frac 1{\lambda}\Gamma)=(\frac 1\lambda-1)^n\vol(Y,\Gamma)\leq (\frac 1\lambda-1)^nC,
$$
so that we may take 
\[
\beta=\frac 1{1+\left (\frac{\delta}C\right )^{1/n}}.\qedhere
\]
\end{proof}

\begin{lemma}\label{l_B-to-C} Theorem~\ref{t_birational}$_{n-1}$, Theorem~\ref{t_acc}$_{n-1}$, and
Theorem~\ref{t_upper}$_n$ imply Theorem~\ref{t_birational}$_n$.
\end{lemma}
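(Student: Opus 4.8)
The plan is to deduce \eqref{l_B-to-C} from \eqref{l_psef} and \eqref{l_klt-case}, both of whose hypotheses are among the hypotheses of \eqref{l_B-to-C}: the point is simply to use the bound on the pseudo-effective threshold to replace the boundary by one with coefficients in a \emph{fixed finite} set of rationals, all strictly less than one, and then quote \eqref{l_klt-case}. Fix $(X,\Delta)\in\mathfrak{B}$. First I would reduce to the log smooth situation: enlarging $I$ by $1$ if necessary (which only enlarges $\mathfrak{B}$), and replacing $(X,\Delta)$ by $(Y,\Gamma)$ where $\pi\colon\map Y.X.$ is a log resolution and $\Gamma$ is the sum of the strict transform of $\Delta$ and the reduced exceptional divisor, I may assume that $X$ is smooth and $\Delta$ has simple normal crossings support. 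Indeed $K_Y+\Gamma=\pi^*(K_X+\Delta)+F$ with $F\geq 0$ exceptional since $(X,\Delta)$ is log canonical, so $(Y,\Gamma)$ again lies in $\mathfrak{B}$, and $\phi_{m(K_Y+\Gamma)}$ is birational if and only if $\phi_{m(K_X+\Delta)}$ is. This reduction is what makes passage to a fractional boundary produce a kawamata log terminal pair below.

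Now \eqref{l_psef} furnishes a constant $\beta<1$, depending only on $n$ and $I$, with $\lambda=\inf\{\,t\,|\,\text{$K_X+t\Delta$ is big}\,\}\leq\beta$ for every $(X,\Delta)\in\mathfrak{B}$. Since $I$ satisfies the DCC its non-zero elements are bounded below by a fixed $\delta_0>0$. Fix $t=\tfrac12(1+\beta)\in(\beta,1)$, a positive integer $r$ with $(r-1)t/r>\beta$, and a positive integer $p$ with $1/p<(1-t)\delta_0$; all of these depend only on $n$ and $I$. Then $\bdd\Delta.p.\geq t\Delta$ coefficient by coefficient, so $\Theta:=\tfrac{r-1}{r}\bdd\Delta.p.\geq\tfrac{(r-1)t}{r}\Delta$, and since $(r-1)t/r>\beta\geq\lambda$ the divisor $K_X+\Theta$ is big. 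The coefficients of $\Theta$ lie in the fixed finite set $\{\,(r-1)i/(rp)\mid 0\leq i\leq p\,\}$, all strictly less than one; as $X$ is smooth and the support of $\Theta$ is contained in the simple normal crossings divisor supporting $\Delta$, the pair $(X,\Theta)$ is kawamata log terminal, and $q\Theta$ is integral for the fixed integer $q=rp$. Finally $\Theta\leq\Delta$, so $H^0(X,\ring X.(\rfdown m(K_X+\Theta).))\subseteq H^0(X,\ring X.(\rfdown m(K_X+\Delta).))$ for every $m$, and birationality of $\phi_{m(K_X+\Theta)}$ forces birationality of $\phi_{m(K_X+\Delta)}$.

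Hence it is enough to find a single $m$ working for all the pairs $(X,\Theta)$. These are kawamata log terminal, projective of dimension $n$, have $K_X+\Theta$ big, and satisfy $q\Theta$ integral for the fixed $q$; so \eqref{l_klt-case}, applied with the integer $q$ and using the assumed Theorem~\ref{t_birational}$_{n-1}$ and Theorem~\ref{t_acc}$_{n-1}$, produces such an $m$, which proves Theorem~\ref{t_birational}$_n$. The one step that requires care, and where the whole argument really rests, is the uniform choice of the integers $p$ and $r$: one must know that the bigness of $K_X+\bdd\Delta.p.$, and then of $K_X+\Theta$, persists after rounding the coefficients down and rescaling, with an amount of slack that is independent of the pair. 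That uniformity is precisely what \eqref{l_psef} delivers — and through it \eqref{t_upper}$_n$ together with the inductive statements in dimension $n-1$. The remainder is bookkeeping: reducing to a smooth model, checking that a log smooth pair with fractional boundary is kawamata log terminal, and tracking that birationality of the pluricanonical map is preserved under the log resolution and under decreasing the boundary.
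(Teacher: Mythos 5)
Your proof is correct and rests on the same two key lemmas as the paper's, namely \eqref{l_psef} for the uniform bound $\beta<1$ on the pseudo-effective threshold and \eqref{l_klt-case} to extract a single $m$ once the coefficients lie in a fixed finite set. The only genuine difference is how the two arguments dispose of the coefficient-one parts of $\Delta$ (the exceptional divisors and $\rfdown\Delta.$ after passing to a log resolution). The paper first shrinks $\rfdown\Delta.$ to $\tfrac{r-1}{r}\rfdown\Delta.$ for a pair-dependent $r$ chosen to keep $K_X+\Delta'$ big, obtaining a kawamata log terminal pair, and only then rounds with $\bdd\cdot.p.$ for a uniform $p>\tfrac{2}{\alpha(1-\beta)}$; the pair-dependent $r$ disappears when the round-down maps the coefficients into $\{i/p\}$. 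You instead keep the coefficient-one components through the round-down $\bdd\cdot.p.$ and push everything strictly below one by a single uniform rescaling $\Theta=\tfrac{r-1}{r}\bdd\Delta.p.$ with $r$ fixed in advance so that $\tfrac{(r-1)t}{r}>\beta$; bigness of $K_X+\Theta$ then follows from $\Theta\geq\tfrac{(r-1)t}{r}\Delta$ exactly as in the paper. Your variant is arguably slightly tidier in that every constant ($t$, $r$, $p$, $q=rp$) is chosen once and for all, whereas the paper makes one pair-dependent choice that is immediately rendered irrelevant by the rounding; but this is a matter of bookkeeping rather than substance, and the numerical estimate $\frac{\rfdown pa.}{p}>a-\tfrac1p\geq ta$ is the same in both.
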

\begin{proof} Replacing $I$ by 
$$
I\cup \{\, \frac{r-1}r \,|\, r\in \mathbb{N}\,\}\cup \{1\},
$$
we may assume that $1$ is both an accumulation point of $I$ and an element of $I$.  Let
$\alpha$ be the smallest non-zero element of $I$.  By \eqref{l_psef} there is a constant
$\beta<1$ such that if $(X,\Delta)\in \mathfrak{B}$ then the pseudo-effective threshold
$$
\lambda=\inf \{\, t\in \mathbb{R} \,|\, \text{$K_X+t\Delta$ is big}\,\},
$$
is at most $\beta$.  

Pick $(X,\Delta)\in \mathfrak{B}$.  Let $\pi\colon\map Y.X.$ be a log resolution 
of $(X,\Delta)$.  Then we may write 
$$
K_Y+\Gamma=\pi^*(K_X+\Delta)+E,
$$
where $\Gamma$ is the strict transform of $\Delta$ plus the sum of the exceptional
divisors.  Replacing $(X,\Delta)$ by $(Y,\Gamma)$ we may assume that $(X,\Delta)$ is log
smooth.  If $S=\rfdown \Delta.$, then we may pick $r\in \mathbb{N}$ such that
$$
K_X+\Delta'=K_X+\frac{r-1}r S+\{\Delta\},
$$
is big.  Replacing $(X,\Delta)$ by $(X,\Delta')$, we may assume that $(X,\Delta)$ is
kawamata log terminal.

Pick $p$ such that
$$
p>\frac 2{\alpha(1-\beta)}.
$$
If $a$ is the coefficient of a component of $\Delta$ then 
\begin{align*} 
\frac{\rfdown pa.}p&> a-\frac 1p\\
                   &>a-\frac{\alpha(1-\beta)}2\\
                   &\geq a-\frac{a(1-\beta)}2\\
                   &=\frac{a(1+\beta)}2.
\end{align*} 
It follows that 
$$
\frac{\beta+1}2 \Delta\leq \bdd \Delta.p.\leq \Delta,
$$
so that $K_X+\bdd \Delta.p.$ is big.  Since the coefficients of
$\bdd\Delta.p.$ belong to
$$
I_0=\{\, \frac ip \,|\, 1\leq i\leq p-1 \,\},
$$
\eqref{l_klt-case} implies that there is a positive integer $m$ such that
$\phi_{m(K_X+\bdd \Delta.p.)}$ is birational.  But then $\phi_{m(K_X+\Delta)}$ is
birational as well.  \end{proof}
\section{Numerically trivial log pairs}
\label{s_trivial}

\begin{lemma}\label{l_C-to-D} Theorem~\ref{t_numerically-trivial}$_{n-1}$ 
and Theorem~\ref{t_birational}$_n$ imply Theorem~\ref{t_numerically-trivial}$_n$.
\end{lemma}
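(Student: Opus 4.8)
The plan is to argue by contradiction. If no finite $I_0$ works then, as $I$ satisfies the DCC, we may choose log pairs $(X_i,\Delta_i)$ as in Theorem~\ref{t_numerically-trivial}$_n$ together with prime components $B_i$ of $\Delta_i$ whose coefficients $b_i$ form a strictly increasing sequence, say $b_i\to b\in(0,1]$. Replacing $(X_i,\Delta_i)$ by a divisorially log terminal modification, \eqref{p_dlt}, I may assume that $X_i$ is $\mathbb{Q}$-factorial, that $(X_i,\Delta_i)$ is divisorially log terminal, and (as $b_i<1$) that $b_i$ is still the coefficient of $B_i$, with $K_{X_i}+\Delta_i$ numerically trivial. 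Since $B_i\geq 0$ is nonzero, $K_{X_i}+\Delta_i-\epsilon B_i\equiv-\epsilon B_i$ is not pseudo-effective for $0<\epsilon\ll 1$; running the $(K_{X_i}+\Delta_i-\epsilon B_i)$-MMP with scaling, \cite{BCHM06}, every step is $(K_{X_i}+\Delta_i)$-trivial and $B_i$-positive, so $B_i$ is never contracted and we reach a Mori fibre space $Y_i\to Z_i$ on which $K_{Y_i}+\Delta_i\equiv 0$ and $B_i$ still has coefficient $b_i$. If $\dim Z_i>0$, restricting to a general fibre yields a log canonical numerically trivial pair of dimension $<n$ with coefficients in $I$ still containing $b_i$; as only finitely many dimensions occur, Theorem~\ref{t_numerically-trivial}$_{n'}$ for $n'<n$ confines $b_i$ to a finite set, a contradiction. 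So I may assume $\dim Z_i=0$, that is $\rho(X_i)=1$, whence $-K_{X_i}\equiv\Delta_i$ is ample, $X_i$ is a Fano variety, and in particular $B_i$ is ample.

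If some $\lfloor\Delta_i\rfloor\neq 0$, I would restrict $K_{X_i}+\Delta_i$ to the normalisation $S_i$ of a component of $\lfloor\Delta_i\rfloor$, exactly as in the proof of \eqref{l_global-to-local}. Since that component is ample it meets $B_i$, and adjunction, \eqref{l_coefficients}, gives a log canonical numerically trivial pair $(S_i,\Theta_i)$ of dimension $n-1$, with coefficients in $D(I)$ --- a DCC set, by \eqref{p_der} --- in which some coefficient of the form $\frac{l-1+g+jb_i}{l}$, with $l,j\in\mathbb{N}$ and $g\in I_+\subset D(I)$, occurs. By Theorem~\ref{t_numerically-trivial}$_{n-1}$ the coefficients of $\Theta_i$ lie in a finite subset of $D(I)$, and then \eqref{l_dcc-acc} once more confines $b_i$ to a finite set, a contradiction. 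Hence from now on I may assume every $(X_i,\Delta_i)$ is kawamata log terminal, with $\rho(X_i)=1$ and $-K_{X_i}\equiv\Delta_i$ ample.

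This is where Theorem~\ref{t_birational}$_n$ enters. Apply it to the DCC set $I$ itself to obtain one positive integer $m$ such that $\phi_{m(K_X+\Delta)}$ is birational for every log canonical pair $(X,\Delta)$ of dimension $n$ with $K_X+\Delta$ big and coefficients in $I$. A strictly increasing sequence cannot consist of multiples of $\frac 1m$, so after passing to a subsequence and reindexing I may assume $b_i<b_{i+1}$ with both strictly between two consecutive multiples $\frac km$ and $\frac{k+1}m$. Set $\Lambda_i=\Delta_i+(b_{i+1}-b_i)B_i$, so that $B_i$ acquires the coefficient $b_{i+1}\in I$; then the coefficients of $\Lambda_i$ still lie in $I$, and $K_{X_i}+\Lambda_i\equiv(b_{i+1}-b_i)B_i$ is ample, hence big. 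Suppose first that $(X_i,\Lambda_i)$ is log canonical for infinitely many $i$; passing to such a subsequence, Theorem~\ref{t_birational}$_n$ shows that $\phi_{m(K_{X_i}+\Lambda_i)}$ is birational, so that (as $mK_{X_i}$ is integral) $K_{X_i}+\frac{\rfdown m\Lambda_i.}m$ is big. But $b_{i+1}$ rounds down to $\frac km<b_i$, while every other coefficient of $\Lambda_i$ agrees with that of $\Delta_i$ and can only decrease under rounding down; hence $\frac{\rfdown m\Lambda_i.}m\leq\Delta_i$, with strict inequality along $B_i$. As $K_{X_i}+\Delta_i\equiv 0$, the divisor $K_{X_i}+\frac{\rfdown m\Lambda_i.}m\equiv\frac{\rfdown m\Lambda_i.}m-\Delta_i$ is then both big and anti-effective, hence zero, contradicting the strict inequality. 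This is the crux of the argument.

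It remains to treat the case that $(X_i,\Lambda_i)$ is log canonical for only finitely many $i$, so that, for all large $i$, the log canonical threshold $\sigma_i$ of $B_i$ with respect to $(X_i,\Delta_i)$ satisfies $0<\sigma_i<b_{i+1}-b_i\to 0$ and $(X_i,\Delta_i+\sigma_iB_i)$ is log canonical but not kawamata log terminal, with $K_{X_i}+\Delta_i+\sigma_iB_i\equiv\sigma_iB_i$ ample. I would rule this out exactly as in the proof of \eqref{l_epsilon}: extract the non kawamata log terminal place as a divisor $S$, run a minimal model program until $S$ and the strict transforms of $\Delta_i$ and $\sigma_iB_i$ become proportional to a common ample divisor, then restrict to $S$ and apply adjunction, \eqref{l_coefficients}; the resulting pairs in dimension $n-1$ violate Theorem~\ref{t_acc}$_{n-1}$ or Theorem~\ref{t_numerically-trivial}$_{n-1}$. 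I expect the main obstacle to be exactly this last case --- where, as in \eqref{l_epsilon}, arranging the minimal model program so that the restriction to $S$ is controlled is the delicate point --- together with the bookkeeping in the reduction to Picard number one: keeping $B_i$ alive through the MMP, handling a possibly positive-dimensional base of the Mori fibration, and carrying out adjunction onto a possibly non-normal non kawamata log terminal divisor.
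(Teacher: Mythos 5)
Your reduction to a klt Fano of Picard number one, the invocation of Theorem~\ref{t_birational}$_n$ to get a uniform $m$, and the rounding contradiction in your log canonical case all mirror the paper's proof. But your final case --- where $(X_i,\Lambda_i)$ fails to be log canonical --- is a genuine gap, and you are right to flag it. I do not see how to close it with the available tools: the thresholds $\sigma_i\to 0$ along a possibly decreasing sequence, which neither Theorem~\ref{t_acc}$_{n-1}$ nor Theorem~\ref{t_acc}$_n$ (available via \eqref{l_D-to-A}) rules out, while the mechanism of \eqref{l_epsilon} does not transfer because there the perturbed pair is numerically trivial whereas here $K_{X_i}+\Delta_i+\sigma_i B_i\equiv\sigma_iB_i$ is ample. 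The paper never meets this case: rather than increasing a coefficient on $X$ itself, it passes to a log resolution $\pi\colon\map Y.X.$ of $(X,\Delta)$ and sets $\Gamma=\Gamma_0+E+(j-i)C$ there, where $\Gamma_0$, $C$ are the strict transforms of $\Delta$, $B$ and $E$ is the reduced exceptional divisor. On $Y$ the pair $(Y,\Gamma)$ has simple normal crossings with coefficients in $I\subset[0,1]$, hence is automatically log canonical, and no case split arises. Bigness of $K_Y+\Gamma$ comes from $K_Y+\Gamma_0+E=\pi^*(K_X+\Delta)+F$ with $F\geq\epsilon E$ (kltness of $(X,\Delta)$); applying Theorem~\ref{t_birational}$_n$ to $(Y,\Gamma)$ and pushing forward gives bigness of $K_X+\bdd\Lambda.m.$ with $\Lambda=\Delta+(j-i)B$, and on a Picard number one $X$ this forces $\rfdown mj./m>i$, which is exactly your rounding argument. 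Since you have already taken a log resolution to build the dlt modification, defining the modified boundary there rather than pushing back down to $X$ removes the gap.

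There is also a smaller bookkeeping issue: a step of your $(K_{X_i}+\Delta_i-\epsilon B_i)$-MMP may contract a component $S$ of $\rfdown\Delta_i.$; such an $S$ necessarily meets $B_i$, since the contracted curves are $B_i$-positive, and the paper applies adjunction to $S$ at this moment, landing in the numerically trivial $(n-1)$-dimensional case. Restricting to $\rfdown\Delta_i.$ only after reaching Picard number one, as you do, silently discards these contracted components.
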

\begin{proof} We may assume that $1\in I$ and $n>1$.  

As we are assuming Theorem~\ref{t_numerically-trivial}$_{n-1}$ there is a finite set
$J_0\subset J=D(I)$ with the following property.  If $(S,\Theta)$ is a log pair such that
$S$ is projective of dimension $n-1$, the coefficients of $\Theta$ belong to $J$,
$(S,\Theta)$ is log canonical, and $K_S+\Theta$ is numerically trivial, then the
coefficients of $\Theta$ belong to $J_0$.  Let $I_1$ be the largest subset of $I$ such
that $D(I_1)\subset J_0$.  \eqref{l_dcc-acc} implies that $I_1$ is finite.  

Theorem~\ref{t_birational}$_n$ implies that there is a constant $m$ with the following
property: if $(Y,\Gamma)$ is log canonical, $Y$ is a projective variety of dimension $n$,
$K_Y+\Gamma$ is big and the coefficients of $\Gamma$ belong to $I$, then
$\phi_{m(K_Y+\Gamma)}$ is birational.

For every $1\leq l\leq m$, let 
$$
A_l=[(l-1)/m,l/m),
$$
and $A_{m+1}=\{\,1\,\}$ so that
$$
[0,1]=\bigcup_{l=1}^{m+1} A_l.
$$
Let $I_2$ be the union of the largest elements of $A_l\cap I$ (if $A_l\cap I$ does not
have a largest element, either because it is empty or because it has infinitely many
elements, then we ignore the elements of $A_l\cap I$).  Then $I_2$ has at most $m+1$
elements, so that $I_2$ is certainly finite.  Let $I_0$ be the union of $I_1$ and $I_2$.

Suppose that $(X,\Delta)$ satisfies (1--4) of Theorem~\ref{t_numerically-trivial}$_n$.
Passing to a divisorially log terminal modification, we may assume that $X$ is
$\mathbb{Q}$-factorial.  Further $(X,\Delta)$ is kawamata log terminal if and only if
$\rfdown\Delta.=0$.  Suppose that $B$ is a prime component of $\Delta$ with coefficient
$i$.  It suffices to prove that $i\in I_0$.  We may assume that $i\neq 1$.  Suppose that
$B$ intersects a component of $\rfdown \Delta.$.  If $S$ is the normalisation of this
component then by adjunction we may write
$$
(K_X+\Delta)|_S=K_S+\Theta,
$$
where the coefficients of $\Theta$ belong to $J=D(I)$ by \eqref{l_coefficients}.  As $S$
is projective of dimension $n-1$, $(S,\Theta)$ is log canonical, and $K_S+\Theta$ is
numerically trivial, the coefficients of $\Theta$ belong to $J_0$.  But then $i\in I_1$.

As $K_X+\Delta$ is numerically trivial, $K_X+\Delta-iB$ is not pseudo-effective.  By
\cite{BCHM06} we may run $f\colon\rmap X.Y.$ the $(K_X+\Delta-iB)$-MMP until we reach a
Mori fibre space.  As $K_X+\Delta$ is numerically trivial, it follows that every step of
this MMP is $B$-positive.  If at some step of this MMP we contract a component $S$ of
$\rfdown \Delta.$ then this component intersects $B$ and $i\in I_1$ by the argument above.
It follows that $(Y,f_*\Delta)$ is kawamata log terminal if and only if $\rfdown
f_*\Delta.=0$.  Further $B$ is not contracted and so replacing $(X,\Delta)$ by
$(Y,f_*\Delta)$, we may assume that $X$ is a Mori fibre space $\pi\colon\map X.Z.$, where
$B$ dominates $Z$.

If $Z$ is not a point, then replacing $X$ by the general fibre of $\pi$ we are done by
induction.  So we may assume that $X$ has Picard number one.  If $\rfdown \Delta.\neq 0$
then any component $S$ of $\rfdown \Delta.$ intersects $B$ and so $i\in I_1$.  Otherwise
$\rfdown \Delta.=0$ and we may assume that $(X,\Delta)$ is kawamata log terminal.

Suppose that $j\in I$ and $j>i$.  Let $\pi\colon\map Y.X.$ be a log resolution of
$(X,\Delta)$.  Let $\Gamma_0$ be the strict transform of $\Delta$, let $E$ by the sum of
the exceptional divisors, and let $C$ be the strict transform of $B$.  Set
$$
\Gamma=\Gamma_0+E+(j-i)C.
$$
Then $(Y,\Gamma)$ is log canonical and the coefficients of $\Gamma$ belong to $I$.  We may
write
$$
K_Y+\Gamma_0+E=\pi^*(K_X+\Delta)+F,
$$
where $F\geq 0$ contains the full exceptional locus.  Pick $\epsilon>0$ such that $F\geq
\epsilon E$.  Then
$$
K_Y+\Gamma=(K_Y+\Gamma_0+(1-\epsilon)E)+(j-i)C+\epsilon E,
$$
is big.  Hence $\phi_{m(K_Y+\Gamma)}$ is birational, so that $K_Y+\bdd \Gamma.m.$ is big.
But then $K_X+\bdd \Lambda.m.$ is big, where
$$
\Lambda=\pi_*\Gamma=\Delta+(j-i)B.
$$
It follows that if $i\in A_l$, then $j\geq l/m$, so that $i$ is the largest element of the
interval $A_l$ which also belongs to $I$.  Hence $i\in I_2$.
\end{proof}
\section{Proof of Theorems}
\label{s_theorems}

\begin{proof}[Proof of \eqref{t_num} and \eqref{t_simple}] This is Theorem~\ref{t_acc}
and Theorem~\ref{t_numerically-trivial}.  
\end{proof}

\begin{proof}[Proof of \eqref{t_lct}] Suppose that $\ilist c.\in \Lct_n(I,J)$, where
$c_i\leq c_{i+1}$.  It suffices to show that $c_i=c_{i+1}$ for $i$ sufficiently large.  By
assumption we may find log canonical pairs $(X_i,\Delta_i)$ and $\mathbb{R}$-Cartier
divisors $M_i$, where $X_i$ is a variety of dimension $n$, the coefficients of $\Delta_i$
belong to $I$, the coefficients of $M_i$ belong to $J$ and $c_i$ is the log canonical
threshold,
$$
c_i=\sup \{\, t\in \mathbb{R} \,|\, \text{$(X_i,\Delta_i+c_iM_i)$ is log canonical} \,\}.
$$
Let $\Theta_i=\Delta_i+c_iM_i$ and 
$$
K=I\cup \{\, c_ij \,|\, i\in \mathbb{N}, j\in J \,\}.
$$
Then $(X_i,\Theta_i)$ is log canonical, $X_i$ is a variety of dimension $n$, the
coefficients of $\Theta_i$ belong to $K$ and there is a non kawamata log terminal centre
$V$ contained in the support of $M_i$.  Possibly throwing away components of $\Theta_i$
which don't contain $V$ and passing to an open subset which contains the generic point of
$V$, we may assume that every component of $\Theta_i$ contains $V$.

As $K$ satisfies the DCC, \eqref{t_num} implies that the coefficients of $\Theta_i$
belong to a finite subset $K_0$ of $K$.  It follows $c_i=c_{i+1}$ for $i$ sufficiently
large.
\end{proof}

\begin{proof}[Proof of \eqref{t_volume}] (3) is Theorem~\ref{t_birational}.  

Fix a constant $V>0$ and let
$$
\mathfrak{D}_V=\{\, (X,\Delta)\in \mathfrak{D} \,|\, 0<\vol(X,K_X+\Delta)\leq V \,\}.
$$
(3) implies that $\phi_{m(K_X+\Delta)}$ is birational.  \eqref{t_birationally-bounded} implies that
the set
$$
\{\, \vol(X,K_X+\Delta) \,|\, (X,\Delta)\in \mathfrak{D}_V \,\},
$$
satisfies the DCC, which implies that (1) and (2) of \eqref{t_volume} hold 
in dimension $n$.   
\end{proof}

\begin{lemma}\label{l_lc-model} Let $\map Z.T.$ be a projective morphism to a variety 
and suppose that $(Z,\Phi)$ has simple normal crossings over $T$.  Suppose that the
restriction of any irreducible component of $\Phi$ to any fibre is irreducible.  Suppose
that $(Z,\Phi)$ is kawamata log terminal and there is a closed point $0\in T$ such that
$K_{Z_0}+\Phi_0$ is big.  Let $\Theta\leq \Phi$ be any divisor with the same support as
$\Phi$.

Then we may find finitely many birational contractions $f_i\colon\rmap Z.X_i.$ over $T$ such
that if $f\colon\rmap Z_t.Y.$ is the log canonical model of $(Z_t,\Psi)$ for some $t\in T$
and $\Theta_t\leq \Psi\leq \Phi_t$ then $f=f_{it}$ for some index $i$.
\end{lemma}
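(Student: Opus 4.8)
The plan is to reduce the statement, via a standard polytope construction, to finiteness of log terminal models in a family, and then to pass from the relative models over $T$ to the log canonical models of the individual fibres, treating general and special fibres separately.

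First I would set up the polytope. Since $(Z,\Phi)$ has simple normal crossings over $T$, the variety $Z$ is smooth over $T$, hence $\mathbb{Q}$-factorial, and we may assume $T$ is quasi-projective. Write $\Phi=\sum_{i=1}^k\phi_i\Phi_i$ and $\Theta=\sum_{i=1}^k\theta_i\Phi_i$, where $\Phi_1,\dots,\Phi_k$ are the prime components of $\Phi$ and $0\le\theta_i\le\phi_i$; inside the span $V$ of $\Phi_1,\dots,\Phi_k$ in $\WDiv_{\mathbb{R}}(Z)$ let
$$
\mathcal P=\Big\{\,\textstyle\sum_{i} c_i\Phi_i\ \Big|\ \theta_i\le c_i\le\phi_i\ \text{for all }i\,\Big\}
$$
be the resulting rational polytope. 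If $\Gamma=\sum_i c_i\Phi_i\in\mathcal P$ then $0\le\Gamma\le\Phi$, so $(Z,\Gamma)$ is kawamata log terminal; moreover, since each $\Phi_i$ restricts to an irreducible divisor on every fibre and $(Z,\Phi)$ has simple normal crossings over $T$, for every $t\in T$ the restriction $\Gamma_t=\sum_i c_i(\Phi_i)_t$ is a kawamata log terminal boundary on $Z_t$, and every $\Psi$ with $\Theta_t\le\Psi\le\Phi_t$ arises in this way. Hence it suffices to produce finitely many birational contractions $f_i\colon\rmap Z.X_i.$ over $T$ such that whenever $t\in T$, $\Gamma\in\mathcal P$, and $(Z_t,\Gamma_t)$ has a log canonical model, that model is $f_{it}$ for some $i$.

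Next I would apply finiteness of models. Since $K_{Z_0}+\Phi_0$ is big, $K_Z+\Phi$ is big over $T$, so finiteness of log terminal models in the relative setting \cite{BCHM06}, applied to $\mathcal P$ over $T$, gives finitely many birational contractions $f_i\colon\rmap Z.X_i.$ over $T$ such that for every $\Gamma\in\mathcal P$ with $K_Z+\Gamma$ pseudo-effective over $T$ the relative log canonical model of $(Z,\Gamma)$ over $T$ is $f_i$ for some $i$. For a general point $t\in T$ this relative model restricts on $Z_t$ to the log canonical model of $(Z_t,\Gamma_t)$: because $(Z,\Gamma)$ has simple normal crossings over $T$, the steps of the minimal model program for $(Z,\Gamma)$ over $T$, together with the final contraction to the canonical model, may be run compatibly with the family, so they restrict over a general fibre to the corresponding program for $(Z_t,\Gamma_t)$; equivalently, the graded sheaf computing the relative canonical model commutes with base change to a general fibre. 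Thus over some dense open $T^\circ\subseteq T$ the collection $\{f_i\}$ already has the required property. For the proper closed subset $T\setminus T^\circ$ I would argue by Noetherian induction on $\dim T$, the case $\dim T=0$ being finiteness of models for a single pair \cite{BCHM06}. Decompose $T\setminus T^\circ$ into irreducible components $C_j$; after replacing each $C_j$ by a resolution and $Z$ by the induced base change — which preserves the hypotheses and does not change the fibres over points of $C_j$ — we are back to the same problem over $C_j$. If no fibre over $C_j$ has $K_{Z_t}+\Phi_t$ big then, since $\Psi\le\Phi_t$, no $(Z_t,\Psi)$ with $t\in C_j$ admits a log canonical model and $C_j$ contributes nothing; otherwise the hypotheses of the lemma hold over $C_j$ and induction produces finitely many birational contractions over a dense open of $C_j$ accounting for the log canonical models of all fibres over it. After finitely many steps $T$ is exhausted, and the union of all these contractions is the required finite collection.

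The main obstacle, in my view, is the compatibility for a general fibre between the relative log canonical model over $T$ and the log canonical model of the fibre: this is exactly where the hypotheses that $(Z,\Phi)$ have simple normal crossings over $T$ and that the components of $\Phi$ restrict irreducibly to fibres are genuinely used, and it requires running the minimal model program over $T$ so that it specialises correctly to the general fibre. The bigness of $K_{Z_0}+\Phi_0$ is precisely what makes \cite{BCHM06} applicable to the polytope $\mathcal P$, and keeping track of the base-change reductions in the Noetherian induction, while routine, needs a little care.
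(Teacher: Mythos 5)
Your proposal has the right general shape but contains a genuine gap in the application of \cite{BCHM06}, and it also replaces the paper's single clean argument by a substantially more complicated one that does not quite prove the literal statement.

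The gap: you apply ``finiteness of log terminal models'' to the polytope $\mathcal P$ directly, citing the fact that $K_Z+\Phi$ is big over $T$. But the finiteness theorem from \cite{BCHM06} (Theorem~E / Corollary~1.1.5) requires the \emph{boundaries} in the polytope to contain a fixed general ample (or at least big) $\mathbb{R}$-divisor over $T$; it is not enough that $K_Z+\Phi$ be big, and for $\Gamma\in\mathcal P$ the divisor $\Gamma$ itself need not be big. The paper handles this by first choosing $0\leq D\sim_{\mathbb{R},T}K_Z+\Phi$, setting $B=\tfrac{\epsilon}{1-\epsilon}D$ (a big divisor over $T$), and writing $K_Z+\Xi\sim_{\mathbb{R},T}(1-\epsilon)(K_Z+B+\Xi')$ with $\Theta'\leq\Xi'\leq\Xi$; the finiteness result from \cite{BCHM06} is then applied to the translated polytope in which the boundary has the fixed big part $B$. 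Without some such manoeuvre the citation of \cite{BCHM06} does not stand.

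The second point is a genuine difference of route, and is where your approach becomes both heavier and subtly weaker than what is stated. To pass from the relative log canonical model over $T$ to the log canonical model of the fibre, you argue that the canonical ring commutes with base change to a \emph{general} fibre and then invoke a Noetherian induction over the strata of $T$ to treat special fibres. The paper instead appeals to the deformation-invariance theorem \cite[1.7]{HMX10}, which asserts that (after passing to an affine $T$ and sufficiently divisible $k$) the restriction
$R(Z,k(K_Z+\Xi))\to R(Z_t,k(K_{Z_t}+\Psi))$
is surjective for \emph{every} closed point $t$, not just general $t$. This immediately identifies $g_t$ with $f$ for all $t$ simultaneously and dispenses with the stratification. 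Your Noetherian induction may be made to yield enough for the application in the proof of \eqref{t_ample-bounded}, but note that after base-changing to a resolution of a closed stratum $C_j$ the birational contractions you obtain live over $C_j$, not over $T$; you never explain how to lift them back to contractions $Z\dashrightarrow X_i$ over $T$, which is what the lemma literally asserts. The deformation-invariance result is precisely what lets the paper avoid both the stratification and this lifting problem, and it is the intended input here.
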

\begin{proof} \cite[1.7]{HMX10} implies that $K_Z+\Phi$ is big over $T$.  
Pick 
$$
0\leq D \sim_{\mathbb{R},T}(K_Z+\Phi).
$$
Let 
$$
B=\frac{\epsilon}{1-\epsilon}D.  
$$
If we pick $\epsilon>0$ sufficiently small then $K_Z+B+\Phi$ is kawamata log terminal and
we may find a divisor $0\leq \Theta'\leq \Theta$ with
$$
K_Z+\Theta=\epsilon(K_Z+\Phi)+(1-\epsilon)(K_Z+\Theta').
$$
If $\Theta\leq \Xi\leq \Phi$ then
$$
K_Z+\Xi\sim_{\mathbb{R},T}(1-\epsilon)(K_Z+B+\Xi'),
$$
where $\Theta'\leq \Xi'\leq \Xi$.  It is proved in \cite{BCHM06} that there are finitely
many $\llist f.k.$ birational contractions $f_i\colon\rmap Z.X_i.$ over $T$ such that if
$g\colon\rmap Z.X.$ is the log canonical model of $K_Z+\Xi$ over $T$ then $g=f_i$ for some
index $1\leq i\leq k$.

It suffices to show that if $\Xi|_{Z_t}=\Psi$ and $g$ is the log canonical model of
$K_Z+\Xi$ then $f=g_t$.  For this we may assume that $T$ is affine.

In this case the (relative) log canonical model is given by taking Proj
$$
X_i=\Proj(Z,R(Z,k(K_Z+\Xi))),
$$
of the (truncation of the) canonical ring
$$
R(Z,k(K_Z+\Xi))=\bigoplus_{m\in \mathbb{N}} H^0(Z,\ring Z.(mk(K_Z+\Xi))).
$$
On the other hand \cite[1.7]{HMX10} implies that if $k$ is sufficiently divisible then
$$
\map R(Z,k(K_Z+\Xi)).R(Z_t,k(K_{Z_t}+\Psi)).,
$$
is surjective and so $f=g_t$.  
\end{proof}

\begin{proof}[Proof of \eqref{t_ample-bounded}] By definition there is a log pair $(Z,B)$
and a projective morphism $\map Z.T.$, where $T$ is of finite type with the following
property.  If $(X,\Delta)\in \mathfrak{D}$ then there is a closed point $t\in T$ and a
birational map $f\colon\rmap X.Z_t.$ such that the support of $B_t$ contains the support
of the strict transform of $\Delta$ and any $f^{-1}$-exceptional divisor.

We may assume that $T$ is reduced.  Blowing up and decomposing $T$ into a finite union of
locally closed subsets, we may assume that $(Z,B)$ has simple normal crossings; passing to
an open subset of $T$, we may assume that the fibres of $\map Z.T.$ are log pairs, so that
$(Z,B)$ has simple normal crossings over $T$; passing to a finite cover of $T$, we may
assume that every stratum of $(Z,B)$ has irreducible fibres over $T$; decomposing $T$ into
a finite union of locally closed subsets, we may assume that $T$ is smooth; finally
passing to a connected component of $T$, we may assume that $T$ is integral.

Let $a=1-\epsilon<1$.  Let $\Phi=aB$ and $\Theta=\delta B$, so that $\Phi$, $\Theta$ and
$B$ have the same support but the coefficients of $\Phi$ are all $a$, the coefficients of
$\Theta$ are all $\delta$ and the coefficients of $B$ are all one.  As $(Z,\Phi)$ is
kawamata log terminal it follows that there are only finitely many valuations of log
discrepancy at most one with respect to $(Z,\Phi)$.  As $(Z,\Phi)$ has simple normal
crossings there is a sequence of blow ups $\map Y.Z.$ of strata, which extracts every
divisor of log discrepancy at most one.  Note that as $(Z,\Phi)$ has simple normal
crossings over $T$, it follows that if $t\in T$ is a closed point then every valuation of
log discrepancy at most one with respect to $(Z_t,\Phi_t)$ has centre a divisor on $Y_t$.

Suppose that $(X,\Delta)\in \mathfrak{D}$.  Then there is a closed point $t\in T$ and a
birational map $f\colon\rmap X.Z_t.$ such that the support of $B_t$ contains the support
of the strict transform of $\Delta_t$ and any $f^{-1}$-exceptional divisor.  Let
$p\colon\map W.X.$ and $q\colon\map W.Z_t.$ resolve $f$.  Let $S$ be the sum of the
$p$-exceptional divisors and let $\Xi$ be the sum of the strict transform of $\Delta$ and
$aS$, so that $S$ and $\Xi$ are divisors on $W$.  We may write
$$
K_W+\Xi=p^*(K_X+\Delta)+E,
$$
where $E$ is a sum of $p$-exceptional divisors and $E\geq 0$ as the log discrepancy of
$(X,\Delta)$ is greater than $\epsilon$.

Let $\Psi=q_*\Xi$.  We may write
$$
p^*(K_X+\Delta)+E+F=q^*(K_{Z_t}+\Psi),
$$
where $F$ is $q$-exceptional.  As $p^*(K_X+\Delta)$ is nef, it is $q$-nef so that $E+F\geq
0$ by negativity of contraction.  If $\nu$ is any valuation whose centre is a divisor on
$X$ then
\begin{align*} 
a(Z_t,\Phi_t,\nu)& \leq a(Z_t,\Psi,\nu)  && \text{as $\Phi_t\geq \Psi$} \\
                 & \leq a(X,\Delta,\nu)  && \text{as $E+F\geq 0$} \\
                 & \leq 1                && \text{as the centre of $\nu$ is a divisor on $X$.}
\end{align*} 

Therefore the induced birational map $\rmap Y_t.X.$ is a birational contraction.  Thus
replacing $Z$ by $Y$ and $B$ by its strict transform union the exceptional divisor, we may
assume that $g=f^{-1}\colon\rmap Z_t.X.$ is a birational contraction.  In this case $F$ is
$p$-exceptional and so $g$ is the log canonical model of $(Z_t,\Theta_t)$.

Since there are only finitely integral divisors $0\leq B'\leq B$, replacing $B$ we may
assume that $\Psi$ has the same support as $B_t$.  $K_{Z_t}+\Phi_t$ is big as
$K_{Z_t}+\Psi$ is big and $\Phi_t\geq \Psi$.  Finally $\Theta_t\leq\Psi\leq \Phi_t$ and so
we are done by \eqref{l_lc-model}.
\end{proof}

\section{Proof of Corollaries}
\label{s_corrollaries}

\begin{proof}[Proof of \eqref{c_ter}] This follows from \eqref{t_lct} and the main
result of \cite{Birkar05}.
\end{proof}

\begin{proof}[Proof of \eqref{c_strong-bounded}] \eqref{t_num} implies that there is a
finite subset $I_0\subset I$ such that the coefficients of $\Delta$ belong to $I_0$.  Thus
there is a positive integer $r$ such that $r\Delta$ is integral.  

On the other hand, Theorem~\ref{t_upper} implies that there is a constant $C$ such that
$\vol(X,\Delta)<C$.  Let $D$ be the sum of the components of $\Delta$.  Then $K_X+D$ is
big and
\begin{align*} 
\vol(X,K_X+D)&=\vol(X,D-\Delta) \\
             &\leq \vol(X,D) \\
             &\leq \vol(X,r\Delta) \\
             &\leq Cr^n.
\end{align*} 

Let $\pi\colon\map Y.X.$ be a log resolution of $(X,\Delta)$.  Let $G$ be the sum of the
strict transform of the components of $\Delta$ and the exceptional divisors.  Then $(Y,G)$
has simple normal crossings.  Pick $\eta>0$ such that $(X,(1+\eta)\Delta)$ is kawamata log
terminal and the log discrepancy is greater than $\epsilon$.  Then $K_X+(1+\eta)\Delta$ is
ample and we may write
$$
K_Y+\Gamma=\pi^*(K_X+(1+\eta)\Delta),
$$
where $\Gamma\leq G$.  As $K_Y+\Gamma$ is big it follows that $K_Y+G$ is big.
\eqref{t_volume} implies that there is a positive integer $m$ such that $\phi_{m(K_Y+G)}$
is birational, for every $(X,\Delta)\in \mathfrak{D}$.  But then $\mathfrak{D}$ is log
birationally bounded by \cite[2.4.2.3-4]{HMX10}.  Now apply \eqref{t_ample-bounded}.
\end{proof}

\begin{proof}[Proof of \eqref{c_batyrev}] Let $D=-r(K_X+\Delta)$.  Then $D$ is an ample
Cartier divisor and $D-(K_X+\Delta)$ is ample.  By Koll\'ar's effective base point free
theorem (cf. \cite{Kollar93b}), there is a fixed positive integer $m$ such that the linear
system $|mD|$ is base point free.  Pick a general divisor $H\in |mD|$.  Then
$(X,\Lambda=\Delta+\frac 1{mr} H)$ is kawamata log terminal and
$$
K_X+\Lambda\sim_{\mathbb{Q}} 0.
$$

Note the coefficients of $\Lambda$ belong to the finite set
$$
I=\{\, \frac ir \,|\, 1\leq i\leq r-1\,\}\cup \{\, \frac 1{mr}\,\}.
$$
There are two ways to proceed.  On the one hand we may apply \eqref{c_strong-bounded}.

Here is a more direct approach.  Theorem~\ref{t_upper} implies that
$$
\vol(X,\Lambda), 
$$
is bounded from above.  But then
$$
\vol(X,mD)\leq (mr)^n\vol(X,\Lambda),
$$
is bounded from above.  
\end{proof}

\begin{proof}[Proof of \eqref{c_fano-index}] Suppose that $r_1\leq r_2\leq \dots$ is a
non-decreasing sequence in $R$.  For each $i$ we may find $(X,\Delta)=(X_i,\Delta_i)\in
\mathfrak{D}$ and a Cartier divisor $H$ such that $-(K_X+\Delta) \sim_{\mathbb{R}} rH$.
By the cone theorem we may find a curve $C$ such that $-(K_X+\Delta)\cdot C\leq 2n$, cf.
Theorem 18.2 of \cite{Fujino09}.  In particular $r\leq 2n$, as $H\cdot C\geq 1$.  By
Fujino's extension, \cite{Fujino09a}, of Koll\'ar's effective base point free theorem,
\cite{Kollar93b}, to the case of log canonical pairs, there is a fixed positive integer
$m$ such that the linear system $|mH|$ is base point free.  Possibly replacing $m$ by a
multiple we may assume that $m>2n$.  Pick a general divisor $D\in |mH|$.

Then $(X,\Lambda=\Delta+\frac rm D)$ is log canonical and
$$
K_X+\Lambda\sim_{\mathbb{R}} 0.
$$
Then the coefficients of $\Lambda_i=\Lambda$ belong to the set
$$
I\cup \{\, \frac{r_i}m \,|\, i\in \mathbb{N} \,\},
$$
which satisfies the DCC.  \eqref{t_simple} implies that the coefficients of $\Lambda$
belong to a finite subset.  But then $r_i=r_{i+1}$ is eventually constant and so $R$
satisfies the ACC.
\end{proof}

\section{Accumulation points}
\label{s_accumulation}

\begin{definition}\label{d_associatedset} Given $I\subset [0,1]$ and $c\in [0,1]$ let
$$
D_c(I)=\{\, a\leq 1\,|\, a=\frac{m-1+f+kc} m, k, m\in \mathbb{N},f\in I_+\,\}\subset D(I\cup\{\,c\,\}).
$$
Let $\mathfrak{N}_n(I,c)$ be the set of log canonical pairs $(X,\Delta)$ such that $X$ is
a projective variety of dimension $n$, $K_X+\Delta$ is numerically trivial and we may
write $\Delta=B+C$, where the coefficients of $B$ belong to $D(I)$ and the coefficients of
$C\neq 0$ belong to $D_c(I)$.

Let 
$$
N_n(I)=\{\, c\in [0,1] \,|\, \text{$\mathfrak{N}_n(I,c)$ is non-empty}\,\}.
$$
\end{definition} 

\begin{lemma}\label{l_simple} Let $n\in \mathbb{N}$ and $I\subset [0,1]$. 
\begin{enumerate}
\item $\Lct_n(I)\subset \Lct_{n+1}(I)$.
\item $N_n(I)\subset N_{n+1}(I)$.
\item If $f\in I_+$ and $k\in \mathbb{N}$ then 
$$
c=\frac{1-f}k\in N_n(I).
$$
\end{enumerate} 
\end{lemma}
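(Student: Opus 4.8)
The plan is to prove all three parts by elementary product constructions. The two facts I will use are: (i) if $g\colon Z\to X$ is a smooth surjective morphism and $D$ is an $\mathbb{R}$-Cartier divisor on $X$, then $(Z,g^*D)$ is log canonical if and only if $(X,D)$ is, and the coefficients of $g^*D$ coincide with those of $D$ (the pullback of a prime divisor along $g$ is prime of multiplicity one); and (ii) $K_{X\times Y}=p_X^*K_X+p_Y^*K_Y$ when $Y$ is smooth, where $p_X,p_Y$ are the two projections. Both are standard.

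For (1): suppose $c=\lct(X,\Delta;M)\in\Lct_n(I)$, with $(X,\Delta)\in\mathfrak{T}_n(I)$ and $M\geq 0$ an integral $\mathbb{R}$-Cartier divisor. I would pass to $(X\times\mathbb{A}^1,\,p_X^*\Delta,\,p_X^*M)$ (any smooth variety would do in place of $\mathbb{A}^1$). By (i) the coefficients of $p_X^*\Delta$ still lie in $I$ and those of $p_X^*M$ are still positive integers, $(X\times\mathbb{A}^1,p_X^*\Delta)$ is log canonical, and for every $t\geq 0$ the pair $(X\times\mathbb{A}^1,p_X^*(\Delta+tM))$ is log canonical exactly when $(X,\Delta+tM)$ is. Hence $\lct(X\times\mathbb{A}^1,p_X^*\Delta;p_X^*M)=c$, so $c\in\Lct_{n+1}(I)$.

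For (2): given $c\in N_n(I)$, choose $(X,\Delta)\in\mathfrak{N}_n(I,c)$ and a decomposition $\Delta=B+C$ as in \eqref{d_associatedset}, and fix an elliptic curve $E$. I would take $X'=X\times E$ and $\Delta'=p_X^*\Delta=p_X^*B+p_X^*C$. Then $X'$ is projective of dimension $n+1$; by (i), $(X',\Delta')$ is log canonical and the coefficients of $p_X^*B$ and of $p_X^*C$ still lie in $D(I)$ and $D_c(I)$ respectively, with $p_X^*C\neq 0$; and by (ii), $K_{X'}+\Delta'=p_X^*(K_X+\Delta)+p_E^*K_E$ is numerically trivial since $K_E\sim 0$ and $K_X+\Delta$ is numerically trivial. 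Thus $(X',\Delta')\in\mathfrak{N}_{n+1}(I,c)$, so $c\in N_{n+1}(I)$; iterating gives $N_1(I)\subseteq N_m(I)$ for every $m\geq 1$.

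For (3): since $c=(1-f)/k$ we have $f+kc=1$. I would take $X=\mathbb{P}^1$, pick two distinct points $p,q$, and set $\Delta=p+q$; then $(\mathbb{P}^1,\Delta)$ is log smooth with all coefficients at most one, hence log canonical, and $K_{\mathbb{P}^1}+\Delta$ has degree $-2+1+1=0$, hence is numerically trivial. Writing $\Delta=B+C$ with $B=0$ (its empty set of coefficients lying in $D(I)$ vacuously) and $C=p+q\neq 0$, each coefficient of $C$ equals $1=f+kc=\frac{m-1+f+kc}{m}$ with $m=1$, $f\in I_+$ and $k\in\mathbb{N}$, so it lies in $D_c(I)$. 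Hence $(\mathbb{P}^1,p+q)\in\mathfrak{N}_1(I,c)$, so $c\in N_1(I)$, and therefore $c\in N_n(I)$ by part (2). The only points needing any care are the invariance of log canonicity and of the coefficients under smooth pullback used in (1)--(2), and the identity $f+kc=1$ in (3) that makes a coefficient-one divisor an element of $D_c(I)$; the argument is otherwise entirely formal, so the main thing to get right is this coefficient bookkeeping rather than any real obstacle.
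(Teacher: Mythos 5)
Your proof is correct and, for parts (1) and (2), essentially identical to the paper's: both arguments pull back the pair along the projection from a product with a smooth curve, using that smooth pullback preserves log canonicity, coefficients, and the log canonical/numerically trivial threshold (the paper uses $X\times E$ in all cases; you use $X\times\mathbb{A}^1$ for (1), which is also fine since $\mathfrak{T}_n(I)$ only requires a variety, not a projective one).

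For (3) you take a genuinely different (and cleaner) example. The paper sets $X=\pr 1.$ with $\Delta=B+C$, $B=fp+fq$, $C=2kc\,r$ at three points; numerical triviality then reads $-2+2f+2kc=0$, which is the identity $f+kc=1$. You instead take $\Delta=p+q$ with $B=0$ and $C=p+q$, observing that the single coefficient $1$ equals $\frac{m-1+f+kc}{m}$ with $m=1$ and the given $f\in I_+$, $k\in\mathbb{N}$, so $1\in D_c(I)$ and $(\pr 1.,p+q)\in\mathfrak{N}_1(I,c)$. This is both shorter and avoids a delicate point in the paper's version: the paper's $C$ has coefficient $2kc=2(1-f)$ at $r$, which exceeds $1$ whenever $f<\tfrac12$, so $(\pr 1.,B+C)$ would fail to be log canonical (and $2kc\notin D_c(I)$, since membership requires $a\le 1$). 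Your construction sidesteps that entirely by putting the whole boundary at coefficient one, which always lies in $D_c(I)$ precisely because $f+kc=1$. So the strategy is the same but your instance of it in (3) is the one that actually works without caveats.
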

\begin{proof} Let $E$ be an elliptic curve.  If $(X,\Delta=\sum d_i\Delta_i)$ is a log
pair then $(Y,\Gamma)$ is a log pair, where $Y=X\times E$ and $\Gamma=\sum
d_i(\Delta_i\times E)$.  By construction $\Gamma$ has the same coefficients as $\Delta$.

Note that $(X,\Delta)$ is log canonical if and only if $(Y,\Gamma)$ is log canonical.
This gives (1).  Further if $c\in [0,1]$ and $(X,\Delta)\in \mathfrak{N}_n(I,c)$ then
$(Y,\Gamma)\in \mathfrak{N}_{n+1}(I,c)$.  This is (2).

Using (2), it suffices to prove (3) when $n=1$.  Let $X=\pr 1.$ and $\Delta=B+C$, where
$B=fp+fq$, $C=2kc r$, and $p$, $q$ and $r$ are three points of $\pr 1.$.  Then
$(X,\Delta)\in \mathfrak{N}_1(I,c)$ (take $m=1$) so that $c\in N_1(I)$.  This is (3).
\end{proof}

For technical reasons, it is convenient to introduce a smaller set than 
$\mathfrak{N}_n(I,c)$:
\begin{definition}\label{d_klt-version} Given $I\subset [0,1]$ and $c\in [0,1]$ let 
$\mathfrak{K}_n(I,c)\subset \mathfrak{N}_n(I,c)$ be the subset consisting of kawamata
log terminal pairs $(X,\Delta)$, where $X$ is $\mathbb{Q}$-factorial of Picard number one.

Let 
$$
K_n(I)=\{\, c\in [0,1] \,|\, \text{$\mathfrak{K}_m(I,c)$ is non-empty, for some $m\leq n$}\,\}.
$$
\end{definition}

\begin{lemma}\label{l_complex} If $n\in \mathbb{N}$ and $I\subset [0,1]$ then 
$$
N_n(I\cup \{\,1\,\})=K_n(I).
$$
In particular, $N_n(I\cup \{\,1\,\})=N_n(I)$.  
\end{lemma}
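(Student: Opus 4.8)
The plan is to prove the two inclusions separately, the inclusion $N_n(I\cup\{\,1\,\})\subseteq K_n(I)$ by induction on $n$. The inclusion $K_n(I)\subseteq N_n(I\cup\{\,1\,\})$ is immediate: since $D(I)\subseteq D(I\cup\{\,1\,\})$ and $D_c(I)\subseteq D_c(I\cup\{\,1\,\})$ we have $\mathfrak{K}_m(I,c)\subseteq\mathfrak{N}_m(I,c)\subseteq\mathfrak{N}_m(I\cup\{\,1\,\},c)$, so if $\mathfrak{K}_m(I,c)\neq\emptyset$ for some $m\leq n$ then $c\in N_m(I\cup\{\,1\,\})\subseteq N_n(I\cup\{\,1\,\})$ by (2) of \eqref{l_simple}. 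The same reasoning gives $K_n(I)\subseteq N_n(I)$, and since $N_n(I)\subseteq N_n(I\cup\{\,1\,\})$ is trivial, the equality $N_n(I\cup\{\,1\,\})=N_n(I)$ follows once $N_n(I\cup\{\,1\,\})=K_n(I)$ is established.

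For the main inclusion, fix $c\in N_n(I\cup\{\,1\,\})$ with a witness $(X,\Delta)\in\mathfrak{N}_n(I\cup\{\,1\,\},c)$, write $\Delta=B+C$ with $C\neq 0$, and fix a prime component $C_0$ of $C$. First I would replace $(X,\Delta)$ by a divisorially log terminal modification $g\colon\map Y.X.$, \eqref{p_dlt}: as $K_X+\Delta$ is numerically trivial, $g$ only extracts divisors of log discrepancy zero, so $K_Y+\Gamma=g^*(K_X+\Delta)$ is numerically trivial, $(Y,\Gamma)$ is dlt, and $Y$ is projective and $\mathbb{Q}$-factorial; writing $\Gamma=B'+C'$ with $B'$ the strict transform of $B$ together with the reduced exceptional divisor and $C'$ the strict transform of $C$, we still have $(Y,\Gamma)\in\mathfrak{N}_n(I\cup\{\,1\,\},c)$ with $C'\neq 0$. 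If $\rho(Y)>1$, running the $(K_Y+\Gamma-\epsilon C_0)$-MMP for $0<\epsilon\ll 1$ keeps the pair dlt, $\mathbb{Q}$-factorial and numerically trivial (every step being $(K_Y+\Gamma)$-trivial), never contracts $C_0$ (every step being $C_0$-positive), and, since $-\epsilon C_0$ is not pseudo-effective, terminates in a Mori fibre space $\pi\colon\map Y'.Z.$ over which $C_0$ is horizontal. If $\dim Z>0$, restriction to a general fibre $F$ yields $(F,\Gamma'|_F)\in\mathfrak{N}_{\dim F}(I\cup\{\,1\,\},c)$ — a dlt pair with $K_F+\Gamma'|_F$ numerically trivial, the same coefficients, and $C_0$ meeting $F$ in a non-zero divisor — of dimension $<n$, and the induction applies. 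If $Z$ is a point (in particular if $\rho(Y)=1$) then $Y'$ is $\mathbb{Q}$-factorial of Picard number one; and if in addition $(Y',\Gamma')$ is kawamata log terminal then $\lfloor\Gamma'\rfloor=0$, all its coefficients are $<1$ and hence lie in $D(I)$ (for components of $B'$) and in $D_c(I)$ (for components of $C'$), so $(Y',\Gamma')\in\mathfrak{K}_n(I,c)$ and $c\in K_n(I)$.

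It remains to treat the case that $Z$ is a point and $(Y',\Gamma')$ is not kawamata log terminal, so $\lfloor\Gamma'\rfloor\neq 0$; assume also $n\geq 2$. Since $\rho(Y')=1$, any component $S$ of $\lfloor\Gamma'\rfloor$ meets $C_0$ in a non-zero divisor of $S$, and adjunction \eqref{l_coefficients} gives $(K_{Y'}+\Gamma')|_S=K_S+\Theta$ with $(S,\Theta)$ dlt of dimension $n-1$, $K_S+\Theta$ numerically trivial, and $\Theta$ non-zero along $S\cap C_0$. The heart of the matter — and the step I expect to be the main obstacle — is the coefficient bookkeeping for $\Theta$: one must check that the coefficients of $\Theta$ coming from components of $B'$ lie in $D(D(I\cup\{\,1\,\}))=D(I\cup\{\,1\,\})$ (using (1) of \eqref{p_der} and $1\in D(I\cup\{\,1\,\})$) and that those coming from components of $C'$ lie in $D_c(I\cup\{\,1\,\})$; this last point is governed by the adjunction formula of \eqref{l_coefficients} and the constraint that these coefficients be $\leq 1$, and is of exactly the same nature as the coefficient computation in the proof of \eqref{l_global-to-local}. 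Granting it, $(S,\Theta)\in\mathfrak{N}_{n-1}(I\cup\{\,1\,\},c)$ and the induction gives $c\in K_{n-1}(I)\subseteq K_n(I)$. Finally, the base case $n=1$ is handled directly: there $X\cong\pr 1.$ and $\deg\Delta=2$, so either $(X,\Delta)$ is kawamata log terminal and already lies in $\mathfrak{K}_1(I,c)$, or some coefficient of $\Delta$ is one, in which case — when it is a coefficient of $C$ — one gets $c=\tfrac{1-f}{k}$ for some $f\in I_+$ and $k\in\mathbb{N}$, and a suitable pair on $\pr 1.$ with coefficients drawn from $D(I)$ and $D_c(I)$ (in the spirit of (3) of \eqref{l_simple}) witnesses $c\in K_1(I)$; the degenerate values $c\in\{\,0,1\,\}$ are checked by hand.
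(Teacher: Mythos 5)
Your proposal follows essentially the same route as the paper's proof: reduce via a dlt modification, run an MMP that is trivial for $K+\Gamma$ and positive along $C$ until reaching a Mori fibre space, induct on the fibre dimension if the base is positive-dimensional, and otherwise split on klt versus non-klt at Picard number one, using adjunction to a coefficient-one component in the latter case. The only cosmetic difference is that the paper separates out the coefficient-one part as $\Delta=A+B+C$ and runs the $(K_X+A+B)$-MMP (numerically the $(-C)$-MMP), whereas you fix a single component $C_0$ and run the $(-\epsilon C_0)$-MMP; the coefficient bookkeeping for $\Theta$ that you flag as the main obstacle is indeed the crux, and the paper resolves it exactly as you sketch, via $D_c(I)\subset D(I\cup\{c\})$ and \eqref{p_der}, after moving all coefficient-one pieces into $A$ so that the remaining $B$- and $C$-coefficients stay below one.
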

\begin{proof} By (2) of \eqref{l_simple}, it suffices to show that
$$
N_n(I\cup \{\,1\,\})\subset K_n(I).
$$
Suppose that $c\in N_n(I\cup \{\,1\,\})$.  Then we may find $(X,\Delta)\in
\mathfrak{N}_n(I\cup \{\,1\,\},c)$.  By assumption we may write $\Delta=A+B+C$, where the
coefficients of $A$ are one, the coefficients of $B$ belong to $D(I)$ and the coefficients
of $C\neq 0$ belong to $D_c(I)$.

Let $\pi\colon\map X'.X.$ be a divisorially log terminal modification of $(X,\Delta)$.  If
we write
$$
K_{X'}+\Delta'=\pi^*(K_X+\Delta),
$$
then $X'$ is projective of dimension $n$, $X'$ is $\mathbb{Q}$-factorial, $(X',\Delta')$
is divisorially log terminal and $K_{X'}+\Delta'$ is numerically trivial.  Let $B'$ and
$C'$ be the strict transforms of $B$ and $C$ and let $A'=\Delta'-B'-C'$.  Then the
coefficients of $A'$ are one, the coefficients of $B'$ belong to $D(I)$ and the
coefficients of $C'\neq 0$ belong to $D_c(I)$.  Thus $(X',\Delta')\in \mathfrak{N}_n(I\cup
\{\,1\,\},c)$.  Replacing $(X,\Delta)$ by $(X',\Delta')$ we may assume that $X$ is
$\mathbb{Q}$-factorial and $(X,A+B)$ is divisorially log terminal.  Note that $(X,\Delta)$
is kawamata log terminal if and only if $A=0$.

Suppose that $A$ and $C$ intersect.  Let $S$ be an irreducible component of $A$ which
intersects $C$.  Then we may write
$$
(K_X+\Delta)|_S=K_S+\Theta,
$$
by adjunction, where $(S,\Theta)$ is divisorially log terminal and moreover we may write
$\Theta=A'+B'+C'$, where the coefficients of $A'$ are one, the coefficients of $B'$ belong
to $D(I)$ and the coefficients of $C'\neq 0$ belong to $D_c(I)$.  Thus $(S,\Theta)\in
\mathfrak{N}_{n-1}(I\cup \{\,1\,\},c)$.  Hence $c\in N_{n-1}(I\cup \{\,1\,\})$ and so
$c\in K_{n-1}(I)\subset K_n(I)$, by induction on $n$.

Let $f\colon\rmap X.X'.$ be a step of the $(K_X+A+B)$-MMP.  As $K_X+\Delta$ is numerically
trivial, $f$ is automatically $C$-positive.  Suppose that $f$ is birational.  Let
$A'=f_*A$, $B'=f_*B$ and $C'=f_*C$, so that $\Delta'=f_*\Delta=A'+B'+C'$.  $C'\neq 0$, as
$f$ is $C$-positive.  $X'$ is a projective variety of dimension $n$, $(X',\Delta')$ is log
canonical, $K_{X'}+\Delta'$ is numerically trivial, the coefficients of $A'$ are all one,
the coefficients of $B'$ belong to $D(I)$ and the coefficients of $C'\neq 0$ belong to
$D_c(I)$.  Thus $(X',\Delta')\in \mathfrak{N}_n(I\cup \{\,1\,\},c)$.  Further $X'$ is
$\mathbb{Q}$-factorial and $(X',A'+B')$ is divisorially log terminal.  If a component of
$A$ is contracted then $A$ and $C$ intersect and we are done.  Otherwise $(X',\Delta')$ is
kawamata log terminal if and only if $A'=0$.

If we run the $(K_X+A+B)$-MMP with scaling of an ample divisor then we end with a Mori
fibre space.  Therefore, replacing $(X,\Delta)$ by $(X',\Delta')$ finitely many times, we may assume
that $f\colon\rmap X.Z=X'.$ is a Mori fibre space and $C$ dominates $Z$.  If $\dim Z>0$
then let $z\in Z$ be a general point.  Then $(X_z,\Delta_z)\in
\mathfrak{N}_{n-k}(I\cup\{\,1\,\},c)$, where $k=\dim Z$, and we are done by induction on
the dimension.

So we may assume that $Z$ is a point in which case $X$ has Picard number one.  If $A\neq
0$ then $A$ and $C$ intersect and we are done.  If $A=0$ then $(X,\Delta)$ is kawamata log
terminal and so $(X,\Delta)\in \mathfrak{K}_n(I,c)$.  But then $c\in K_n(I)$.  
\end{proof}

\begin{proposition}\label{p_local-global} If $I\subset [0,1]$, $I=I_+$ and $n\in \mathbb{N}$ then
$\Lct_{n+1}(I)=N_n(I)$.
\end{proposition}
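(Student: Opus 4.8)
The plan is to prove the two inclusions $\Lct_{n+1}(I)\subseteq N_n(I)$ and $N_n(I)\subseteq\Lct_{n+1}(I)$ separately. The first is the ``global to local'' passage of \S\ref{s_local-global} applied to $\Delta+cM$; the second is a cone construction over an $n$-dimensional numerically trivial pair. Both are immediate when $c=0$: on the one hand $(\mathbb{P}^1,\tfrac{1}{2}(p_1+p_2+p_3+p_4))$ lies in $\mathfrak{N}_1(I,0)$ (note $\tfrac{1}{2}\in D(I)$ and $D_0(I)=D(I)$), so $0\in N_1(I)\subseteq N_n(I)$ by \eqref{l_simple}; on the other $0\in\Lct_2(I)$, realised by a cone over an elliptic curve with an integral divisor through its vertex, so $0\in\Lct_{n+1}(I)$ by \eqref{l_simple}. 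So I may assume $c>0$.

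For $\Lct_{n+1}(I)\subseteq N_n(I)$, I would write $c=\lct(X,\Delta;M)$ with $(X,\Delta)$ log canonical of dimension $n+1$, the coefficients of $\Delta$ in $I$, and $M\geq 0$ a non-zero integral divisor, and set $\Theta=\Delta+cM$. Then $(X,\Theta)$ is log canonical with a non kawamata log terminal centre $V\subseteq\Supp M$, so $V$ lies in a component $M_0$ of $\Theta$ of coefficient $c'=\delta_0+k_0 c$, where $\delta_0\in I$ and $k_0=\mult_{M_0}M\geq 1$. Every coefficient of $\Theta$ has the shape $\delta+kc$ with $\delta\in I$ and $k$ a non-negative integer; letting $K\subseteq[0,1]$ be the set of such numbers, \eqref{l_global-to-local} applied with coefficient set $K\cup\{1\}$ and designated coefficient $c'$ gives a log canonical pair $(S,\Theta_S)$ with $S$ projective of dimension $m\leq n$, $K_S+\Theta_S$ numerically trivial, the coefficients of $\Theta_S$ in $D(K\cup\{1\})$, and a distinguished component of coefficient $\tfrac{a-1+f+kc'}{a}$ for some $a,k\in\mathbb{N}$ and $f\in(K\cup\{1\})_+$. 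The hypothesis $I=I_+$ now enters in the coefficient bookkeeping: since $c'>0$, $k\geq1$ and the coefficient is at most one, $f<1$, so $f$ involves no copy of $1$ and $f=g+k'c$ with $g\in I_+$ a sum of elements of $I$ and $k'$ a non-negative integer; substituting $c'=\delta_0+k_0 c$ and using $\delta_0\in I\subseteq I_+$, the distinguished coefficient becomes $\tfrac{a-1+F+hc}{a}$ with $F=g+k\delta_0\in I_+$ (it is $\leq1$ because the whole coefficient is) and $h=k'+kk_0\geq1$, so it lies in $D_c(I)$. The same computation shows every coefficient of $\Theta_S$ lies in $D(I)\cup D_c(I)\cup\{1\}$, with a coefficient $1$ possible only when $1\in D(I)$ or $1\in D_c(I)$. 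Collecting the $D(I)$-coefficients into $B$ and the rest (including the distinguished one) into $C\neq0$ then shows $(S,\Theta_S)\in\mathfrak{N}_m(I,c)$, so $c\in N_m(I)\subseteq N_n(I)$ by \eqref{l_simple}.

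For $N_n(I)\subseteq\Lct_{n+1}(I)$ I would use \eqref{l_complex} to choose $(X,\Delta)\in\mathfrak{K}_m(I,c)$ for some $m\leq n$: $X$ is $\mathbb{Q}$-factorial of Picard number one, $(X,\Delta)$ is kawamata log terminal with $K_X+\Delta$ numerically trivial, $-K_X$ is ample, and $\Delta=B+C$ with the coefficients of $B$ in $D(I)$ and those of $C\neq0$ in $D_c(I)$. The plan is to realise $c$ as a log canonical threshold on a cone over $X$: take $Y$ a cone over $X$ with respect to a suitable ample $\mathbb{Q}$-divisor, together with a boundary $\Gamma$ whose coefficients lie in $I$ and a non-zero integral divisor $M$ — cones over carefully chosen divisors on $X$, with $M$ rescaled so that its weighted multiplicity at the vertex is the correct one — arranged so that, on the blow up $\pi\colon Y'\to Y$ of the vertex with exceptional divisor $E$ lying over $X$, the coefficient of $E$ in $\pi^{*}(K_Y+\Gamma+tM)$ equals $1$ precisely when $t=c$, while restriction to $E$ together with adjunction \eqref{l_coefficients} recovers $(X,\Delta)$: the $\tfrac{a-1}{a}$-parts of the coefficients of $\Delta$ come from cyclic quotient singularities of $Y$ along $E$, the $I_+$-parts $f_i$ from the honest ($I$-valued) coefficients of the components of $\Gamma$ meeting $E$, and the $kc$-parts from the weighted multiplicities of $M$. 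Inversion of adjunction along $E$ would then force $\lct(Y,\Gamma;M)=c$, so $c\in\Lct_{m+1}(I)\subseteq\Lct_{n+1}(I)$ by \eqref{l_simple}.

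The step I expect to be the main obstacle is exactly this cone construction. One must produce all the coefficients $\tfrac{a-1+f+kc}{a}$ of $\Delta$ at once, splitting each into a quotient-singularity contribution, a contribution of boundary components of $Y$ whose own coefficients must remain inside $I$ (which is what $I=I_+$ and the definition of $I_+$ make possible), and an $M$-contribution, while keeping $(Y,\Gamma)$ log canonical, $\Gamma$ supported in divisors with coefficients in $I$, and the log canonical threshold equal to $c$ rather than larger. This amounts to running the adjunction-coefficient computation of \eqref{l_coefficients} and \eqref{l_global-to-local} in reverse, and verifying that $E$ is the divisor that first leaves the log canonical range as $t$ increases past $c$ — so that the threshold is not pushed up by some other valuation — is the bulk of the work.
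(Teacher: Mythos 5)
Your first inclusion, $\Lct_{n+1}(I)\subseteq N_n(I)$, is essentially the paper's: the paper re-runs the argument of \eqref{l_global-to-local} on a dlt model, explicitly tracking the $A+B+C$ decomposition, while you invoke \eqref{l_global-to-local} with the enlarged set $K\cup\{1\}$ and then do the bookkeeping.  There is one real loose end: your claim that a coefficient-$1$ component of $\Theta_S$ ``is possible only when $1\in D(I)$ or $1\in D_c(I)$'' is an assertion, not a deduction, and it need not hold (e.g.\ if $1\notin I_+$ and a coefficient-one exceptional divisor appears in the dlt model, that $1$ does not naturally lie in $D(I)\cup D_c(I)$).  The paper resolves exactly this by landing in $\mathfrak{N}_m(I\cup\{1\},c)$ --- where $1\in D(I\cup\{1\})$ by fiat --- and then invoking \eqref{l_complex} to conclude $N_m(I\cup\{1\})=N_m(I)$.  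You should do the same; otherwise the decomposition into $B$ and $C$ is not justified.

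The second inclusion is where the proposal genuinely breaks.  You want a single cone $Y$ over $X$, with boundary $\Gamma$ having coefficients in $I$ and an integral $M$, such that after blowing up the vertex, adjunction \eqref{l_coefficients} along $E$ reproduces each coefficient $d_i=\frac{m_i-1+f_i+k_ic}{m_i}$ of $\Delta$, with the $\frac{m_i-1}{m_i}$ parts coming from ``cyclic quotient singularities of $Y$ along $E$.''  This cannot be arranged by choosing the ample $\mathbb{Q}$-divisor defining the cone.  If $Y$ is the cone over $X\hookrightarrow\mathbb{P}^N$ by a Cartier polarization, then the blowup $Y'$ of the vertex is the total space of a line bundle over $X$ and is \emph{smooth} along $E$ at the generic point of every $\Gamma'_i\cap E$ (these lie over smooth points of $X$), so $\Diff_E$ contributes no $\frac{m-1}{m}$ term at all.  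If instead one takes the Seifert cone over a genuinely $\mathbb{Q}$-Cartier ample divisor, one gets quotient singularities of a \emph{uniform} index along $E$, not indices $m_i$ varying component by component, and in particular still not what you need.  The indices in $\Diff_E$ are dictated by the geometry of $Y'$ along $E$, which for a cone is fixed by $X$ and the polarization; they are not free parameters you can dial in per component.

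The paper sidesteps this entirely by \emph{not} trying to reconstitute the $\frac{m_i-1}{m_i}$ parts on the cone side.  It builds the ordinary cone $Y$ over $X$ embedded by $|-dK_X|$, so that $(Y,\Gamma=\sum d_i\Gamma_i)$ is log canonical but not klt at the vertex with $E$ of log discrepancy zero (this uses $K_X+\Delta\equiv 0$ and $-K_X$ ample).  It then takes a cyclic cover $\pi\colon\tilde Y\to Y$ ramified over each $\Gamma_i$ to index $m_i$; the Hurwitz computation $K_{\tilde Y}+\tilde\Gamma=\pi^*(K_Y+\Gamma)$ strips the $\frac{m_i-1}{m_i}$ off the coefficients, leaving $\tilde\Gamma$ with coefficients $f_i+k_ic$.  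On $\tilde Y$ the decomposition $\tilde\Gamma=\Theta+cM$ with $\Theta$ having coefficients $f_i\in I_+=I$ and $M=\sum k_i\tilde\Gamma_i$ integral is immediate, and $\lct(\tilde Y,\Theta;M)=c$ because the cover preserves log canonicity and the failure of klt over the vertex.  Note also that no inversion of adjunction or careful verification that ``$E$ is the first divisor to leave the lc range'' is needed --- one only needs $(\tilde Y,\Theta+cM)$ to be lc but not klt, which follows from the same fact on $Y$.  Your worry that some other valuation might push up the threshold is resolved for free once you pass to the cover, but as sketched (cone with orbifold structure, inversion of adjunction along $E$) the argument does not close.
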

\begin{proof} We first show that $\Lct_{n+1}(I)\subset N_n(I)$.  Pick $0\neq c\in
\Lct_{n+1}(I)$.  By definition we may find a log canonical pair $(X,\Delta+cM)$ where $X$
has dimension $n+1$, the coefficients of $\Delta$ belong to $I$, $M$ is an integral
$\mathbb{Q}$-Cartier divisor and there is a non kawamata log terminal centre $V$ contained
in the support of $M$.  Possibly passing to an open subset of $X$ and replacing $V$ by a
maximal non kawamata log terminal centre, we may assume that $V$ is the only non kawamata
log terminal centre of $(X,\Delta+cM)$.  In particular, $(X,\Delta)$ is kawamata log
terminal.

If $V$ is a component of $M$ then $V$ has coefficient one in $\Delta+cM$ and $c=\frac
{1-f}k\in N_n(I)$ by (3) of \eqref{l_simple}.  Otherwise let $f\colon\map Y.X.$ be a
divisorially log terminal model of $(X,\Delta+cM)$.  Then $Y$ is $\mathbb{Q}$-factorial
and we may write
$$
K_Y+T+\Delta'+cM'=f^*(K_X+\Delta+cM)
$$ 
where $\Delta'$ and $M'$ are the strict transforms of $\Delta$ and $M$, $T$ is the sum of
the exceptional divisors and the pair $(Y,T+\Delta'+cM')$ is divisorially log terminal.
By (4) of \eqref{p_dlt} we may choose $f$ so that $T$ contains the inverse image of $V$.
Let $S$ be an irreducible component of $T$ which intersects $M'$.  Then we may write
$$
(K_Y+T+\Delta'+cM')|_S=K_S+\Theta,
$$
by adjunction, where $(S,\Theta)$ is divisorially log terminal and moreover we may write
$\Theta=A+B+C$, where the coefficients of $A$ are one, the coefficients of $B$ belong to
$D(I)$ and the coefficients of $C\neq 0$ belong to $D_c(I)$.  As $S$ is a non kawamata log
terminal centre, the centre of $S$ on $X$ is $V$ so that there is a morphism $\map S.V.$.
If $v\in V$ is a general point then $(S_v,\Theta_v)\in \mathfrak{N}_k(I\cup \{\,1\,\},c)$,
for some $k\leq n$.  Thus $c\in N_k(I\cup \{\,1\,\})\subset N_n(I)$.

We now show that $\Lct_{n+1}(I)\supset N_n(I)$.  Pick $0\neq c\in N_n(I)$.  Then we may
find a pair $(X,\Delta)\in \mathfrak{K}_m(I,c)$, some $m\leq n$.  If $m<n$ then we are
done by induction on the dimension.  Otherwise $X$ has dimension $n$.  As $-K_X$ is ample,
we may pick $d$ such that $-dK_X$ is very ample and embed $X$ into projective space by the
linear system $|-dK_X|$.  

Let $Y$ be the cone over $X$ and let $\Gamma_j$ be the cone over $\Delta_j$.  Then $Y$ is
a quasi-projective variety of dimension $n+1$.  $Y$ is $\mathbb{Q}$-factorial as $X$ has
Picard number one.  $(Y,\Gamma=\sum d_i\Gamma_i)$ is log canonical but not kawamata log
terminal at the vertex $p$ of the cone.  By assumption we may write
$$
d_i=\frac{m_i-1+f_i+k_ic}{m_i},
$$
for each $i$, where $m_i$ is a positive integer, $k_i$ is a non-negative integer ($k_i=0$
if $\Gamma_i$ is a component of $B_i$ and $k_i>0$ if $\Gamma_i$ is a component of $C_i$)
and $f_i\in I_+$.  Since we are working locally around $p$, the vertex of $Y$, we may find
a cover of $\pi\colon\map \tilde Y.Y.$ which ramifies over $\Gamma_i$ to index $m_i$ for
every $i$ and is otherwise unramified at the generic point of any divisor.  We may write
$$
K_{\tilde Y}+\tilde {\Gamma}=\pi^*(K_Y+\Gamma),
$$
where the coefficients of $\tilde \Gamma$ belong to the set
$$
\{\, f_i+k_ic \,|\, i \,\}.
$$
$\tilde Y$ is a $\mathbb{Q}$-factorial quasi-projective variety of dimension $n+1$ and
$(\tilde Y,\tilde \Gamma)$ is log canonical but not kawamata log terminal over any point
$q$ lying over $p$.  Let
$$
\Theta=\sum f_i\Gamma_i \qquad \text{and} \qquad M_i=\sum k_i\Gamma_i.
$$
Then the coefficients of $\Theta$ belong to $I_+=I$, $M_i$ is an integral
$\mathbb{Q}$-Cartier divisor and
$$
c=\sup \{\, t\in \mathbb{R} \,|\, \text{$(X,\Theta+tM)$ is log canonical} \,\},
$$
is the log canonical threshold.  But then $c\in \Lct_{n+1}(I)$.
\end{proof}

\begin{lemma}\label{l_number} Let $(X,\Delta)$ be a log canonical pair, where $X$ is
$\mathbb{Q}$-factorial of dimension $n$ and Picard number one and $K_X+\Delta$ 
is numerically trivial.  

 If the coefficients of $\Delta$ are at least $\delta$ then $\Delta$ has 
at most $\frac{n+1}{\delta}$ components. 
\end{lemma}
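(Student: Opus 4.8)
The plan is to reduce everything to a single low-degree rational curve and one intersection computation. We may assume $\Delta\neq 0$, since otherwise $\Delta$ has no components and there is nothing to prove. Write $\Delta=\sum_{i=1}^k d_iD_i$ with the $D_i$ distinct prime divisors and each $d_i\in[\delta,1]$ (so $\delta\leq 1$); we must show $k\leq (n+1)/\delta$. Since $X$ has Picard number one and $-K_X\equiv \Delta\geq 0$ is a nonzero $\mathbb{R}$-Cartier divisor, $-K_X$ is ample, because any nonzero effective $\mathbb{R}$-Cartier divisor on a projective variety of Picard number one is ample (it has positive intersection with every curve, being numerically nonzero, hence is ample by Kleiman's criterion). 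Thus $X$ is a Fano variety. For the same reason each $D_i$, being a nonzero effective divisor, is ample.

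The first step is to produce a rational curve $C$ on $X$, contained in the smooth locus $X^{\mathrm{sm}}$, with $0<-K_X\cdot C\leq n+1$. Since $X$ is a Fano variety it is uniruled, so a general point of $X$ — which is a smooth point — lies on a rational curve meeting $X^{\mathrm{sm}}$ in a dense open set; applying bend and break to a minimal such rational curve through the fixed general point yields $C$ with $-K_X\cdot C\leq \dim X+1=n+1$ and $X$ smooth along $C$. Now for each $i$, since $C\subseteq X^{\mathrm{sm}}$ the divisor $D_i$ is Cartier in a neighbourhood of $C$, so $D_i\cdot C\in\mathbb{Z}$, and it is positive because $D_i$ is ample; hence $D_i\cdot C\geq 1$. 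Finally, $K_X+\Delta\equiv 0$ gives
$$
\delta k\leq \delta\sum_{i=1}^k (D_i\cdot C)\leq \sum_{i=1}^k d_i(D_i\cdot C)=\Delta\cdot C=-K_X\cdot C\leq n+1,
$$
so $k\leq (n+1)/\delta$, as required.

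The step that needs care is the construction of $C$: one needs a rational curve of anticanonical degree at most $n+1$ that is contained in the smooth locus, so that the intersection numbers $D_i\cdot C$ are genuine positive integers rather than small positive rationals (this failure at a singular point is exactly why the naive numerical bound coming from the minimal effective divisor class is too weak, e.g. on weighted projective spaces). When $X$ has klt singularities — the essential case — this is standard bend and break for minimal rational curves through a general point, together with the fact that the general point is smooth and a general minimal rational curve through it avoids the singular locus. In the remaining log canonical generality one argues that it suffices to treat this case; once a suitable $C$ is in hand, the displayed chain of inequalities completes the proof.
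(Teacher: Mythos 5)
Your overall strategy---intersect $K_X+\Delta\equiv 0$ with a suitable curve $C$ to get $\delta k\le \Delta\cdot C=-K_X\cdot C\le n+1$---is appealing, and the two ingredients you need, $-K_X\cdot C\le n+1$ and $D_i\cdot C\ge 1$ for every $i$, are exactly the right ones to isolate.  The paper instead simply invokes \cite[18.24]{Kollaretal}, which says directly that for a log canonical pair $(X,\Delta)$ with $X$ $\mathbb{Q}$-factorial projective of Picard number one and $K_X+\Delta$ numerically trivial, the sum of the coefficients of $\Delta$ is at most $n+1$; the conclusion is then immediate.  So the paper treats the essential inequality as a black box, and you are trying to re-derive it.

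Unfortunately there is a genuine gap: you cannot in general produce a rational curve $C$ contained in the smooth locus with $-K_X\cdot C\le n+1$, and the two requirements you need are in tension.  Take $X=\mathbb{P}(1,1,2)$, the quadric cone in $\mathbb{P}^3$, so $n=2$ and $n+1=3$.  Here $\operatorname{Pic}(X)$ is generated by the hyperplane class $H$ and $-K_X=2H$.  Any curve contained in the smooth locus satisfies $H\cdot C\in\mathbb{Z}_{>0}$, hence $-K_X\cdot C=2(H\cdot C)\ge 2$; but the only curves with $H\cdot C=1$ are the rulings, and every ruling passes through the vertex.  Thus every rational curve in the smooth locus has $-K_X\cdot C\ge 4>3$.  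Mori's bound does hold if one allows the curve to hit the singularity (a ruling $L$ has $-K_X\cdot L=2\le 3$), but then $D_i\cdot L$ need not be an integer: if $D_i$ is another ruling one has $D_i\cdot L=\tfrac12<1$.  Bend and break does not rescue this: a degree-$4$ conic through a general point degenerates precisely into two rulings, each of which passes through the vertex, so the ``minimal free curve avoiding the singular locus'' already has $-K_X\cdot C=4>n+1$.  In short, on this klt Fano surface you can have $-K_X\cdot C\le n+1$ \emph{or} $D_i\cdot C\ge 1$, but not both, and your chain of inequalities fails.  (Your own caveat about weighted projective spaces is pointing at exactly this phenomenon, but the proposed fix---minimal rational curves through a general point avoiding the singular locus---does not deliver the degree bound.)  To repair the argument along your lines one would have to establish the bound $\sum d_i\le n+1$ by some other means (e.g.\ adjunction and induction on dimension), which is essentially what the cited result \cite[18.24]{Kollaretal} does.
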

\begin{proof} \cite[18.24]{Kollaretal} implies that the sum of the coefficients of
$\Delta$ is at most $n+1$.  
\end{proof}

\begin{proposition}\label{p_stronger} Fix a positive integer $n$ and a set $I\subset
[0,1]$ whose only accumulation point is one such that $I=I_+$.

Let $\ilist c.\in [0,1]$ be a strictly decreasing sequence with limit $c\neq 0$ with the
following property.  There is a sequence of log canonical pairs $(X_i,\Delta_i)$ such that
$X_i$ is a projective variety of dimension $n$, $K_{X_i}+\Delta_i$ is numerically trivial
and we may write $\Delta_i=A_i+B_i+C_i$, where the coefficients of $A_i$ are approaching
one, the coefficients of $B_i$ belong to $D(I)$ and the coefficients of $C_i\neq 0$ belong
to $D_{c_i}(I)$.

Then $c\in N_{n-1}(I)$.  
\end{proposition}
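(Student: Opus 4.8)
The plan is to imitate the proof of \eqref{l_complex}, inserting one new device to cope with the fact that here the coefficients of $A_i$ only tend to one instead of being equal to one. I would argue by induction on $n$ (the case $n=1$ being immediate), beginning with the usual normalisations. After passing to a subsequence I may assume that the numbers of components of $A_i$, $B_i$, $C_i$ are constant and that the coefficients of $B_i$ are constant: once we have reduced to Picard number one (see below) every coefficient of $\Delta_i$ is at least $\delta:=\min\{1/2,\,c,\,\min(I\setminus\{0\})\}>0$, because a coefficient $\frac{m-1+f+kc_i}{m}$ with $k\ge 1$ is $\ge 1/2$ for $m\ge 2$ and is $\ge c_i\ge c$ for $m=1$ (and similarly for $D(I)$), so \eqref{l_number} bounds the number of components; and since $1$ is the only accumulation point of $I$ the same holds for $D(I)$, which therefore satisfies the DCC by \eqref{p_der}, so after a further subsequence the coefficients of $B_i$ stabilise (those tending to $1$ being absorbed into $A_i$). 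Finally I would replace each $(X_i,\Delta_i)$ by a $\mathbb{Q}$-factorial dlt modification using \eqref{p_dlt}; the strict transforms of $A_i,B_i,C_i$ together with the exceptional divisors (which have coefficient one, hence join $A_i$) give a decomposition of the same shape, and $K+\Delta$ stays numerically trivial.

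Next I would run the MMP exactly as in \eqref{l_complex}: choosing a component $C_{i1}$ of $C_i$, the divisor $K_{X_i}+\Delta_i-c_{i1}C_{i1}$ is not pseudo-effective since $K_{X_i}+\Delta_i\equiv 0$, so running this MMP with scaling ends in a Mori fibre space $\pi_i\colon X_i\to Z_i$ with every step $C_{i1}$-positive and $C_{i1}$ dominating $Z_i$. If $\dim Z_i>0$ I restrict to a general fibre, obtaining a lower-dimensional family of pairs of the same type (the coefficients restrict to coefficients of the same shape, as in \eqref{l_global-to-local}), and conclude by the inductive hypothesis together with $N_m(I)\subset N_{n-1}(I)$ from \eqref{l_simple}; if a step of the MMP contracts a component of $\lfloor\Delta_i\rfloor$ then, that step being $C_{i1}$-positive, the contracted component meets $C_{i1}$, and restricting to it and using adjunction (\eqref{l_coefficients}, with the coefficient computation of \eqref{l_global-to-local}) drops the dimension; and if $X_i$ ends with Picard number one and $\lfloor\Delta_i\rfloor\neq 0$ I finish as in \eqref{l_complex}. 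So I may assume $X_i$ has Picard number one, $-K_{X_i}$ is ample, any two components of $\Delta_i$ meet, and $(X_i,\Delta_i)$ is kawamata log terminal, i.e. every coefficient of $A_i$ is strictly less than one.

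The new step is the following. Writing $A_i^{+}$ for $A_i$ with every coefficient raised to one, and the coefficient of the $j$-th component of $C_i$ as $\frac{m_j-1+f_j+k_jc_i}{m_j}$ with $k_j\ge1$, the fact that $\rho(X_i)=1$ forces every nonzero effective class to be positive, so there is a unique $c_i'\in[0,c_i)$ with
$$
\sum_l(1-a_{il})A_{il}\equiv(c_i-c_i')\sum_j\frac{k_j}{m_j}C_{ij},
$$
equivalently with $K_{X_i}+A_i^{+}+B_i+C_i'\equiv 0$, where $C_i'$ is $C_i$ with $c_i$ replaced by $c_i'$. Since the coefficients of $A_i$ tend to one the left-hand class tends to $0$, so $c_i-c_i'\to 0$ and hence $c_i'\to c$; in particular $c_i'\in[0,1]$ and the coefficients of $C_i'$ lie in $D_{c_i'}(I)$ and tend to the same limits as those of $C_i$. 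Now $(X_i,A_i^{+}+B_i+C_i')$ has a coefficient-one component $A_i^{+}\neq 0$ which, by Picard number one, meets $C_i'\neq 0$; restricting to a component $S_i$ of $A_i^{+}$, passing to a general fibre $S_{i,v}$ of $S_i$ over its image, and applying adjunction as in \eqref{l_complex} (via \eqref{l_coefficients} and the coefficient identity in the proof of \eqref{l_global-to-local}) yields $(S_{i,v},\Theta_{i,v})\in\mathfrak{N}_m(I\cup\{1\},c_i')$ for some $m\le n-1$; hence $c_i'\in N_m(I\cup\{1\})=N_m(I)\subset N_{n-1}(I)$ by \eqref{l_complex} and \eqref{l_simple}.

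It then remains to deduce $c\in N_{n-1}(I)$ from $c_i'\in N_{n-1}(I)$ and $c_i'\to c$. After a subsequence $c_i'$ is monotone; if it is eventually constant then it equals $c$ and we are done; if it is non-decreasing then, since $N_{n-1}(I)=\Lct_n(I)$ by \eqref{p_local-global} satisfies the ACC by \eqref{t_lct}, it is eventually constant equal to $c$; and if it is strictly decreasing to $c$ then the pairs $(S_{i,v},\Theta_{i,v})$ — whose $A$-part has coefficients identically one, hence trivially tending to one — satisfy the hypotheses of the present proposition in dimension $m\le n-1$, so by induction $c\in N_{m-1}(I)\subset N_{n-1}(I)$. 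The hard part will be the third paragraph: one must verify that $c_i'$ stays in $[0,1]$ and converges to $c$ with enough control — this is precisely where the bound on the number of components and the DCC of $D(I)$ enter — and that the resulting lower-dimensional pair genuinely lands in $\mathfrak{N}_m(I\cup\{1\},c_i')$, which is a matter of propagating the coefficient-forms through adjunction and is handled by \eqref{l_coefficients} and the computation in the proof of \eqref{l_global-to-local}; the MMP reductions of the second paragraph are routine given \eqref{p_dlt} and the template of \eqref{l_complex}.
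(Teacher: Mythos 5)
Your central device—raising $A_i$ to $A_i^{+}$ and solving $K_{X_i}+A_i^{+}+B_i+C_i(c_i')\equiv 0$ for $c_i'$ using Picard number one—is in fact very close to what the paper does in what it calls Case B, Step 6 (there $T_i=A_i^{+}$ and the quantity controlling $c_i'$ is called the pseudo-effective threshold $e_i$). So that part of your plan captures the key new mechanism. But your proof has a genuine gap: it only applies when $A_i\neq 0$. After your normalisations, your ``new step'' explicitly requires a coefficient-one component $A_i^{+}\neq 0$ to restrict to. If the coefficients of $\Delta_i$ are bounded away from one (so $A_i=0$ after absorbing the components tending to one), the equation defining $c_i'$ forces $c_i'=c_i$ and there is no non-klt centre to restrict to; your argument then stalls. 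This is not a degenerate corner case — it is exactly the situation produced in the proof of \eqref{t_accumulation}, where the given pairs have $\Delta_i=B_i+C_i$ with no $A_i$ at all.

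The paper handles this as Case A and needs a genuinely different argument there. After reducing to klt of Picard number one via \eqref{l_complex}, it fixes the $B_i$ coefficients, compares $c_i$ to the lct $h_i$ of $B_i+C_i(t)$, and then splits on whether $\vol(X_i,C_i)$ is bounded. If bounded, it invokes \eqref{t_volume} to get a fixed $m$ with $\phi_{m(K_{X_i}+B_i+C_i(d))}$ birational, hence log birational boundedness, and then the boundedness theorem \eqref{t_ample-bounded} to produce a fixed polarisation $H_i$; constancy of the intersection numbers $C_i\cdot H_i^{n-1}$ then contradicts $c_i$ strictly decreasing. If unbounded, it uses the large volume of the ample divisor $C_i$ to manufacture a purely log terminal pair $(X_i,\Phi_i)$ with a unique divisor $S_i$ of log discrepancy zero, extracts it, runs the MMP to a Mori fibre space, and restricts to $S_i$. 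None of this machinery — in particular the appeal to \eqref{t_upper}/\eqref{t_volume} and \eqref{t_ample-bounded} — appears in your proposal, and without it you cannot create the non-klt centre you need when $A_i=0$. You would also need to verify, as the paper does via \eqref{t_simple} and the lct/pseudo-effective threshold dichotomy, that $(X_i,A_i^{+}+B_i+C_i')$ is actually log canonical before applying adjunction; as written you assume this.
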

\begin{proof} We may assume that $A_i$ and $B_i+C_i$ have no common components.  Replacing
$B_i$ by $B_i-\rfdown B_i.$ and $A_i$ by $A_i+\rfdown B_i.$ we may assume that $\rfdown
\Delta_i.=\rfdown A_i.$.  As the coefficients of $A_i+B_i$ belong to a set which satisfies
the DCC, \eqref{t_num} implies that not all of the coefficients of $C_i$ are increasing.
In particular at least one coefficient of $C_i$ is bounded away from one.

Let $a_i$ be the log discrepancy of $(X_i,\Delta_i)$.

\textbf{Case A:} $\lim a_i>0$.  

In this case, we assume that $a_i$ is bounded away from zero.  

\textbf{Case A, Step 1:} We reduce to the case $X_i$ is $\mathbb{Q}$-factorial and the
Picard number of $X_i$ is one.

As we are assuming that $a_i$ is bounded away from zero, $A_i=0$ and so $(X_i,\Delta_i)\in
\mathfrak{N}_n(I,c_i)$, so that $c_i\in N_n(I)=K_n(I)$, by \eqref{l_complex}.  Thus we may
assume that $(X_i,\Delta_i)\in \mathfrak{K}_m(I,c_i)$, for some $m\leq n$.  If $m<n$ then
we are done by induction.  Otherwise we may assume that $X_i$ is $\mathbb{Q}$-factorial
and the Picard number of $X_i$ is one.

Possibly passing to a subsequence, \eqref{l_number} implies that we may assume that the
number of components of $B_i$ and $C_i$ is fixed.  As the only accumulation point of
$D(I)$ is one and the coefficients of $B_i$ are bounded away from one, possibly passing to
a subsequence we may assume that the coefficients of $B_i$ are fixed and that the
coefficients of $C_i$ have the form
$$
\frac{r-1}r+\frac fr+\frac {kc_i}r,
$$
where $k$, $r$ and $f$ depend on the component but not on $i$. 

Given $t\in [0,1]$, let $C_i(t)$ be the divisor with the same components as $C_i$ but now
with coefficients
$$
\frac{r-1}r+\frac fr+\frac {kt}r,
$$
so that $C_i=C_i(c_i)$.  Let
$$
h_i=\sup \{\, t \,|\, \text{$(X_i,B_i+C_i(t))$ is log canonical} \,\},
$$
be the log canonical threshold.  Set $h=\lim h_i$.  

\textbf{Case A, Step 2:} We reduce to the case $h>c$. 

Suppose that $h\leq c$.  As $c_i\leq h_i$, it follows that $h=c$.  
Now
$$
h_i\in \Lct_n(D(I))=N_{n-1}(I),
$$
so that we are done by induction in this case.  

\textbf{Case A, Step 3:} We reduce to the case $\vol(X_i,C_i)$ is unbounded.  

Suppose not, suppose that $\vol(X_i,C_i)$ is bounded from above.  Let
$$
d_i=\frac{c_i+h_i}2 \qquad \text{and} \qquad d=\frac{c+h}2.
$$
Then the coefficients of $(X_i,B_i+C_i(d))$ are fixed.  The log discrepancy of
$(X_i,B_i+C_i(d_i))$ is at least $a_i/2$ so that the log discrepancy of $(X_i,B_i+C_i(d))$
is bounded away from zero.  As $h>c$, possibly passing to a tail of the sequence, we may
assume that $d>c_i$ so that $K_{X_i}+B_i+C_i(d)$ is ample.  Note that
$$
\vol(X_i,K_{X_i}+B_i+C_i(d))=\vol(X_i,C_i(d)-C_i)
$$
is bounded from above by assumption.  \eqref{t_volume} implies that there is a positive
integer $m$ such that $\phi_{m(K_{X_i}+B_i+C_i(d))}$ is birational.  But then $\{\,
(X_i,\Delta_i) \,|\, i\in \mathbb{N} \,\}$ is log birationally bounded by
\cite[2.4.2.4]{HMX10}.  \eqref{t_ample-bounded} implies that $(X_i,\Delta_i)$ belongs to a
bounded family.  Thus we may find an ample Cartier divisor $H_i$ such that the
intersection numbers $T_i\cdot H_i^{n-1}$ and $-K_{X_i}\cdot H_i^{n-1}$ are bounded, where
$T_i$ is any component of $\Delta_i$.  Possibly passing to a subsequence, we may assume
that these intersection numbers are constant.  But then
$$
(K_{X_i}+\Delta_i)\cdot H_i^{n-1}=0, \qquad A_i\cdot H_i^{n-1}=0 \qquad \text{and} \qquad B_i\cdot H_i^{n-1} 
$$
is independent of $i$, whilst $C_i\cdot H_i^{n-1}$ is not constant, a contradiction.

\textbf{Case A, Step 4:} We finish case A.  

As $\vol(X_i,C_i)$ is unbounded and $C_i$ is ample, passing to a subsequence, we may find 
$g_i<c_i$ and a divisor
$$
0\leq \Theta_i\sim_{\mathbb{R}} C_i-C_i(g_i) \qquad \text{with} \qquad \lim g_i=c,
$$ 
such that $(X_i,\Phi_i=B_i+C_i(g_i)+\Theta_i)$ is purely log terminal but not kawamata log
terminal.  If $\phi\colon\map Y_i.X_i.$ is a divisorially log terminal modification then
$\phi$ extracts a unique prime divisor $S_i$ of log discrepancy zero with respect to
$(X_i,\Phi_i)$.  We may write
$$
K_{Y_i}+\Psi_i=\phi^*(K_{X_i}+\Phi_i)\qquad\text{and}\qquad K_{Y_i}+B'_i+C'_i+s_iS_i=\phi^*(K_{X_i}+\Delta_i),
$$ 
where $S_i=\rfdown\Psi_i.$, $B'_i$ and $C'_i$ are the strict transform of $B_i$ and $C_i$,
and $s_i<1$, as $(X_i,\Delta_i)$ is kawamata log terminal.

As $K_{Y_i}+\Psi_i$ is numerically trivial, $K_{Y_i}+\Psi_i-S_i$ is not pseudo-effective.
By \cite{BCHM06}, we may run $f\colon\rmap Y_i.W_i.$ the $(K_{Y_i}+\Psi_i-S_i)$-MMP until
we end with a Mori fibre space $\pi_i\colon\map W_i.Z_i.$.  As $K_{Y_i}+\Psi_i$ is
numerically trivial, every step of this MMP is $S_i$-positive, so that the strict
transform $T_i$ of $S_i$ dominates $Z_i$.  Let $F_i$ be the general fibre of $\pi_i$.
Replacing $Y_i$, $B'_i$, $C'_i$ and $\Psi_i$ by $F_i$ and the restriction of $f_*B'_i$,
$f_*C'_i$ and $f_*\Psi_i$ to $F_i$, we may assume that $S_i$, $\Psi_i$, $B'_i$ and $C'_i$
are multiples of the same ample divisor.  In particular $K_{Y_i}+B'_i+C'_i+S_i$ is ample.  

We let $C'_i(t)$ denote the strict transform of $C_i(t)$.  We may write 
$$
(K_{Y_i}+S_i+B'_i+C'_i(t))|_{S_i}=K_{S_i}+B''_i+C''_i(t),
$$
where the coefficients of $B''_i$ belong to $D(I)$ and the coefficients 
of $C''_i(t)\neq 0$ belong to $D_t(I)$.  We let $C''_i=C''_i(c_i)$.  

There are two cases.  Suppose that $(S_i,B''_i+C''_i)$ is not log canonical.  Let
$$
k_i=\sup \{\, t \,|\, \text{$(S_i,B''_i+C''_i(t))$ is log canonical} \,\},
$$
be the log canonical threshold.  Then $k_i\in \Lct_{n-1}(D(I))=N_{n-2}(I)$.  Then $k=\lim
k_i\in N_{n-2}(I)\subset N_{n-1}(I)$ by induction on $n$.  As $(S_i, B''_i+C''_i(g_i))$ is
kawamata log terminal, $k_i\in (g_i,c_i)$.  Thus
$$
c=\lim c_i=\lim k_i=k\in N_{n-1}(I).
$$

Otherwise we may suppose that $(S_i,B''_i+C''_i)$ is log canonical.  Let
$$
l_i=\sup \{\, t \,|\, \text{$(S_i,B''_i+C''_i(t))$ is pseudo-effective} \,\},
$$
be the pseudo-effective threshold.  Then $l_i\in N_{n-1}(I)$ and $l=\lim l_i\in
N_{n-1}(I)$ by induction on $n$.  On the other hand $l_i\in (g_i,c_i)$.  Thus
$$
c=\lim c_i=\lim l_i=l\in N_{n-1}(I).
$$

\textbf{Case B:} $\lim a_i=0$.

In this case, we assume that $a_i$ approaches $0$.  

\textbf{Case B, Step 1:} We reduce to the case $A_i\neq 0$, $X_i$ is
$\mathbb{Q}$-factorial and $(X_i,\Delta_i)$ is kawamata log terminal if and only if
$\rfdown A_i.=0$.

Possibly passing to a subsequence we may assume that $a_i\geq a_{i+1}$ and $a_i\leq 1$.
If $(X_i,\Delta_i)$ is not divisorially log terminal or $A_i\neq 0$ but $X_i$ is not
$\mathbb{Q}$-factorial then let $\pi_i\colon\map X'_i.X_i.$ be a divisorially log terminal
modification.  If $A_i=0$ then let $\pi_i\colon\map X'_i.X_i.$ extract a divisor of
minimal log discrepancy $a_i$, where $X'_i$ is $\mathbb{Q}$-factorial.  Either way, we may
write
$$
K_{X'_i}+\Delta'_i=\pi_i^*(K_{X_i}+\Delta_i),
$$
where $\Delta'_i$ is the strict transform of $\Delta_i$.  Let $B'_i$ and $C'_i$ be the
strict transforms of $B_i$ and $C_i$ and let $A'_i=\Delta'_i-B'_i-C'_i\neq 0$.  Then
$X'_i$ is a $\mathbb{Q}$-factorial projective variety of dimension $n$, $(X'_i,\Delta'_i)$
is a divisorially log terminal pair, $K_{X'_i}+\Delta'_i$ is numerically trivial, the
coefficients of $A'_i\neq 0$ are approaching one, the coefficients of $B'_i$ belong to
$D(I)$ and the coefficients of $C'_i\neq 0$ belong to $D_{c_i}(I)$.  Replacing
$(X_i,\Delta_i)$ by $(X'_i,\Delta'_i)$, we may assume that $A_i\neq 0$ and $X_i$ is
$\mathbb{Q}$-factorial.  Moreover $(X_i,\Delta_i)$ is kawamata log terminal if and only if
$\rfdown A_i.=0$.

\textbf{Case B, Step 2:} We are done if the support of $C_i$ and $\rfdown A_i.$ intersect.

Suppose that a component of $C_i$ intersects the normalisation of a component $S_i$ of
$\rfdown A_i.$.  Then we may write
$$
(K_{X_i}+\Delta_i)|_{S_i}=K_{S_i}+\Theta_i,
$$
by adjunction.  $S_i$ is projective of dimension $n-1$, $(S_i,\Theta_i)$ is log canonical,
$K_{S_i}+\Theta_i$ is numerically trivial, and we may write $\Theta_i=A'_i+B'_i+C'_i$,
where the coefficients of $A'_i$ approach one, the coefficients of $B'_i$ belong to $D(I)$
and the coefficients of $C'_i\neq 0$ belong to $D_{c_i}(I)$.  In this case, the limit $c$
belongs to $N_{n-2}(I)\subset N_{n-1}(I)$ by induction.

\textbf{Case B, Step 3:} We are done if $f_i\colon\map X_i.Z_i.$ is a Mori fibre 
space, $A_i$ dominates $Z_i$ and $\dim Z_i>0$.  

Let $F_i$ be the general fibre of $f_i$.  We may
write
$$
(K_{X_i}+\Delta_i)|_{F_i}=K_{F_i}+\Theta_i,
$$
by adjunction.  $F_i$ is projective of dimension at most $n-1$, $(F_i,\Theta_i)$ is log
canonical, $K_{F_i}+\Theta_i$ is numerically trivial, and we may write
$\Theta_i=A'_i+B'_i+C'_i$, where the coefficients of $A'_i$ approach one, the coefficients
of $B'_i$ belong to $D(I)$ and the coefficients of $C'_i$ belong to $D_{c_i}(I)$.

There are two cases.  Suppose that $C'_i=0$.  Then \eqref{t_num} implies that the
coefficients of $A'_i$ are fixed, so that $\rfdown A'_i.=A'_i$.  But then $\rfdown
A_i.\neq 0$ dominates $Z_i$.  On the other hand, as $C'_i=0$, $C_i$ does not intersect
$F_i$, that is, $C_i$ does not dominate $Z_i$.  But then $C_i$ must contain a fibre so
that $A_i$ and $C_i$ intersect and we are done by Case B, Step 2.  Otherwise $C'_i\neq 0$.
In this case $c_i\in N_{n-1}(I)$ so that
$$
c=\lim c_i\in N_{n-2}(I)\subset N_{n-1}(I),
$$
by induction.

\textbf{Case B, Step 4:} We reduce to the case $(X_i,\Delta_i)$ is kawamata log terminal.  

Suppose not, suppose that $(X_i,\Delta_i)$ is not kawamata log terminal.  By Case B, Step
1, this implies that $S_i=\rfdown A_i.$ is not the zero divisor.  Let
$\Theta_i=\Delta_i-S_i$.  We run the $(K_{X_i}+\Theta_i)$-MMP with scaling of some ample
divisor.  Let $f_i\colon\rmap X_i.X'_i.$ be a step of the $(K_{X_i}+\Theta_i)$-MMP.  As
$K_{X_i}+\Delta_i$ is numerically trivial, $f_i$ is automatically $S_i$-positive.  Let
$A'_i=f_{i*}A_i$, $B'_i=f_{i*}B_i$ and $C'_i=f_{i*}C_i$.  First suppose that $f_i$ is
birational.  If $C'_i=0$ then \eqref{t_num} implies that the coefficients of $A'_i$ are
all one.  As $f_i$ contracts $C_i$ it does not contract a component of $A_i$ and so it
follows that the coefficients of $A_i$ are all one, that is, $S_i=A_i$.  As $f_i$
contracts $C_i$ and $f_i$ is $S_i$-positive, $C_i$ intersects $S_i$ and we are done by
Case B, Step 2.  Therefore we may assume that $C'_i\neq 0$ and we may replace
$(X_i,\Delta_i)$ by $(X'_i,\Delta'_i)$.  As the MMP must terminate with a Mori fibre
space, replacing $(X_i,\Delta_i)$ with $(X'_i,\Delta'_i)$ finitely many times, we may
assume that $f_i\colon\map X_i.Z_i=X'_i.$ is a Mori fibre space and $S_i$ dominates $Z_i$.
By Case B, Step 3, we may assume that $Z_i$ is a point.  But then the support of $S_i$ and
$C_i$ intersect and we are done by Case B, Step 2.

\textbf{Case B, Step 5:} We reduce to the case $X_i$ has Picard number one.    

We run the $(K_{X_i}+B_i+C_i)$-MMP with scaling of some ample divisor.  Let
$f_i\colon\rmap X_i.X'_i.$ be a step of the $(K_{X_i}+B_i+C_i)$-MMP.  As
$K_{X_i}+\Delta_i$ is numerically trivial $f_i$ is automatically $A_i$-positive.  Let
$A'_i=f_{i*}A_i$, $B'_i=f_{i*}B_i$ and $C'_i=f_{i*}C_i$.  First suppose that $f_i$ is
birational.  Suppose $C'_i=0$.  As $f_i$ contracts only one divisor and $A_i$ and $C_i$
are non-zero by assumption, it follows that $A'_i\neq 0$.  \eqref{t_num} implies that the
coefficients of $A'_i$ are all one, which contradicts the fact that $(X_i,\Delta_i)$ is
kawamata log terminal.  Therefore we may assume that $C'_i\neq 0$ and we may replace
$(X_i,\Delta_i)$ by $(X'_i,\Delta'_i)$.  As the MMP must terminate with a Mori fibre
space, replacing $(X_i,\Delta_i)$ with $(X'_i,\Delta'_i)$ finitely many times, we may
assume that $f_i\colon\map X_i.Z_i=X'_i.$ is a Mori fibre space and $A_i$ dominates $Z_i$.

By Case B, Step 3 we may assume that $Z_i$ is a point, so that $X_i$ has Picard number
one.

\textbf{Case B, Step 6:} We finish case B and the proof.   

Possibly passing to a subsequence, \eqref{l_number} implies that we may assume that the
number of components of $B_i$ and $C_i$ is fixed.  As the only accumulation point of
$D(I)$ is one and the coefficients of $B_i$ are bounded away from one, possibly passing to
a subsequence we may assume that the coefficients of $B_i$ are fixed and that the
coefficients of $C_i$ have the form
$$
\frac{r-1}r+\frac fr+\frac {kc_i}r,
$$
where $k$, $r$ and $f$ depend on the component but not on $i$. 

Given $t\in [0,1]$, let $C_i(t)$ be the divisor with the same components as $C_i$ but now
with coefficients
$$
\frac{r-1}r+\frac fr+\frac {kt}r,
$$
so that $C_i=C_i(c_i)$. 

Let $T_i$ be the sum of the components of $A_i$, so that $T_i$ has the same components as
$A_i$ but now every component has coefficient one.  Then $A_i\leq T_i$ and $C_i(c)\leq
C_i$.  Note that $(X_i,A_i+B_i+C_i(c))$ is kawamata log terminal as $(X_i,A_i+B_i+C_i)$ is
kawamata log terminal.  As the coefficients of $A_i+B_i+C_i(c)$ belong to a set which
satisfies the DCC, possibly passing to a tail of the sequence, \eqref{t_simple} implies
that $(X_i,T_i+B_i+C_i(c))$ is log canonical.

Suppose that $(X_i,T_i+B_i+C_i)$ is not log canonical.  Let 
$$
d_i=\sup \{\, t\in [c,c_i) \,|\, \text{$(X_i,T_i+B_i+C_i(t))$ is log canonical} \,\},
$$
be the log canonical threshold.  Then $d_i\in \Lct_n(D(I))=N_{n-1}(I)$ and $c=\lim d_i$
and so we are done by induction on the dimension.

Thus we may assume that $(X_i,T_i+B_i+C_i)$ is log canonical.  Let 
$$
e_i=\sup \{\, t\in \mathbb{R} \,|\, \text{$K_{X_i}+T_i+B_i+C_i(t))$ is pseudo-effective} \,\},
$$
be the pseudo-effective threshold.  Suppose that $e_i<c$.  Let 
$$
f_i=\sup \{\, t\in \mathbb{R} \,|\, \text{$K_{X_i}+tT_i+B_i+C_i(c))$ is pseudo-effective} \,\},
$$
be the pseudo-effective threshold.  As $e_i<c$, $f_i<1$ and $\lim f_i=1$, so that the
coefficients of $f_iT_i+B_i+C_i(c)$ belong to a set which satisfies the DCC, which
contradicts \eqref{t_num}.  Thus $e_i\geq c$.  On the other hand $e_i<c_i$ as
$K_{X_i}+T_i+B_i+C_i$ is strictly bigger than $K_{X_i}+A_i+B_i+C_i$, which is numerically
trivial.  Thus $\lim e_i=c$.  Possibly passing to a subsequence we may assume that either
$e_i>e_{i+1}$ for all $i$ or $e_i=c$.  In the former case we might as well replace
$C_i=C_i(c_i)$ by $C_i(e_i)$.  In this case some component of $C_i$ intersects a component
$S_i$ of $T_i$ and we are done by Case B, Step 2.  In the latter case we restrict to a
component $S_i$ of $T_i$ and apply adjunction to conclude that $c=e_i\in N_{n-1}(I)$.
\end{proof}

\begin{proof}[Proof of \eqref{t_accumulation}] By \eqref{p_local-global} it suffices to
prove that the accumulation points of $N_{n}(I)$ belong to $N_{n-1}(I)$.  Suppose that
$\ilist c.\in [0,1]$ is a strictly decreasing sequence of real numbers such that
$\mathfrak{N}(I,c_i)$ is non-empty.  Pick $(X_i,\Delta_i)\in \mathfrak{N}(I,c_i)$.  By
assumption we may write $\Delta_i=B_i+C_i$ where the coefficients of $B_i$ belong to
$D(I)$ and the coefficients of $C_i\neq 0$ belong to $D_{c_i}(I)$, and so
\eqref{p_stronger} implies that the limit $c$ belongs to $N_{n-1}(I)$.
\end{proof}

\bibliographystyle{hamsplain}
\bibliography{/home/mckernan/Jewel/Tex/math}

%\addcontentsline{toc}{section}{References}

\end{document}